\documentclass[a4paper,11pt]{article}
\usepackage[utf8x]{inputenc}

\usepackage{a4wide}
\usepackage{amssymb}
\usepackage{amsmath}
\usepackage{amsthm}
\usepackage[mathscr]{euscript}
\usepackage[normalem]{ulem}
\usepackage[colorlinks=true]{hyperref}
\usepackage[usenames,dvipsnames]{xcolor}
\usepackage{appendix}
\usepackage{tikz}
\usetikzlibrary{arrows,angles}
\usetikzlibrary{matrix}
\usetikzlibrary{decorations}
\usetikzlibrary{arrows,calc,shapes,decorations.pathreplacing}
\usepackage{tikz-cd}
\usepackage{todonotes}

\usepackage{mathabx} 
\usepackage[style=numeric, giveninits=true, sorting=nyt, maxbibnames=99]{biblatex}
\DeclareNameAlias{default}{family-given}
\numberwithin{equation}{section}

\theoremstyle{definition}

\newtheorem{Definition}{Definition}[section]

\newtheorem{Remark}[Definition]{Remark}
\newtheorem{Example}[Definition]{Example}

\theoremstyle{plain}

\newtheorem{Theorem}[Definition]{Theorem}
\newtheorem{Proposition}[Definition]{Proposition}
\newtheorem{Lemma}[Definition]{Lemma}
\newtheorem{Corollary}[Definition]{Corollary}

\newcommand{\R}{\mathbb R}
\newcommand{\N}{\mathbb N}

\newcommand{\eps}{\varepsilon}

\usepackage[xcolor]{changebar}
\cbcolor{blue}
\newcommand{\Ric}{\mathrm{Ric}}

\newcommand{\m}{\mathfrak m}

\usepackage[inline]{enumitem}
\newcommand{\enumlabelformat}{\roman}
\newcommand{\enumlabelfont}[1]{#1}
\newlength{\thelabelsep}
\setlist{labelsep=\thelabelsep}
\setlist[enumerate,1]{font=\enumlabelfont,label=(\enumlabelformat*),leftmargin=2.10em}
\setlist[itemize]{leftmargin=2.10em,label=$-$}

\newcounter{inlineenum}
\renewcommand{\theinlineenum}{\enumlabelformat{inlineenum}}

\makeatletter
\let\save@mathaccent\mathaccent
\newcommand*\if@single[3]{%
  \setbox0\hbox{${\mathaccent"03102{#1}}^H$}%
  \setbox2\hbox{${\mathaccent"03102{\kern0pt#1}}^H$}%
  \ifdim\ht0=\ht2 #3\else #2\fi
  }
\newcommand*\rel@kern[1]{\kern#1\dimexpr\macc@kerna}
\newcommand*\wide@bar[2]{\if@single{#1}{\wide@bar@{#1}{#2}{1}}{\wide@bar@{#1}{#2}{2}}}
\newcommand*\wide@bar@[3]{%
  \begingroup
  \def\mathaccent##1##2{%
    \let\mathaccent\save@mathaccent
    \if#32 \let\macc@nucleus\first@char \fi
    \setbox\z@\hbox{$\macc@style{\macc@nucleus}_{}$}%
    \setbox\tw@\hbox{$\macc@style{\macc@nucleus}{}_{}$}%
    \dimen@\wd\tw@
    \advance\dimen@-\wd\z@
    \divide\dimen@ 3
    \@tempdima\wd\tw@
    \advance\@tempdima-\scriptspace
    \divide\@tempdima 10
    \advance\dimen@-\@tempdima
    \ifdim\dimen@>\z@ \dimen@0pt\fi
    \rel@kern{0.10}\kern-\dimen@
    \if#31
      \overline{\rel@kern{-0.10}\kern\dimen@\macc@nucleus\rel@kern{0.4}\kern\dimen@}%
      \advance\dimen@0.4\dimexpr\macc@kerna
      \let\final@kern#2%
      \ifdim\dimen@<\z@ \let\final@kern1\fi
      \if\final@kern1 \kern-\dimen@\fi
    \else
      \overline{\rel@kern{-0.10}\kern\dimen@#1}%
    \fi
  }%
  \macc@depth\@ne
  \let\math@bgroup\@empty \let\math@egroup\macc@set@skewchar
  \mathsurround\z@ \frozen@everymath{\mathgroup\macc@group\relax}%
  \macc@set@skewchar\relax
  \let\mathaccentV\macc@nested@a
  \if#31
    \macc@nested@a\relax111{#1}%
  \else
    \def\gobble@till@marker##1\endmarker{}%
    \futurelet\first@char\gobble@till@marker#1\endmarker
    \ifcat\noexpand\first@char A\else
      \def\first@char{}%
    \fi
    \macc@nested@a\relax111{\first@char}%
  \fi
  \endgroup
}
\makeatother

\newcommand*\diff{\mathop{}\!\mathrm{d}}

\newcommand{\loc}{\mathrm{loc}}
\DeclareMathOperator{\supp}{supp}

\title{The Hawking and Penrose singularity theorems\\ for low regularity Finsler spacetimes}

\author{Darius Er\"os\thanks{Department of Mathematics, University of Vienna, Oskar-Morgenstern-Platz 1, 1090 Vienna, Austria (\textsf{darius.eroes@univie.ac.at})}\\
Ettore Minguzzi\thanks{Dipartimento di Matematica, Università degli Studi di Pisa, Via B. Pontecorvo 5, I-56127
Pisa, Italy (\textsf{ettore.minguzzi@unipi.it})}\\
Argam Ohanyan\thanks{Department of Mathematics, University of Toronto, 45 St. George Street, M5S 2E5 Toronto, Ontario, Canada (\textsf{argam.ohanyan@utoronto.ca})}\\
Shin-ichi Ohta\thanks{
Department of Mathematics, University of Osaka, Osaka 560-0043, Japan
(\textsf{s.ohta@math.sci.osaka-u.ac.jp});
RIKEN Center for Advanced Intelligence Project (AIP),
1-4-1 Nihonbashi, Tokyo 103-0027, Japan}
}
\begin{document}

\date{\today}


\maketitle

\begin{abstract}
We prove the Hawking and Penrose singularity theorems for Lorentz--Finsler structures of mixed horizontal/vertical regularity $C^{(1,2)}$ in the $C^1$ weighted case and $C^{(1,3)}$ in the unweighted case.
At this regularity, geodesics need not be uniquely determined by their initial conditions, and the (weighted) Ricci curvature must be interpreted distributionally.
Our approach relies on a two-stage approximation procedure: first, a homogeneity-preserving convolution, which is of independent interest in the broader semi-Riemann--Finsler setting, and second, a causality-adapted approximation à la Chru{\'s}ciel--Grant.
In the special case of $C^1$ Lorentzian metrics, our results extend the $C^1$ singularity theorems of Graf to the $C^1$ weighted case.

\vspace{1em}

\noindent
\emph{Keywords:} Finsler spacetime, low regularity, singularity theorem, Ricci curvature
\medskip

\noindent
\emph{MSC2020:} 53B30, 53B40, 53C50, 53C60, 83C75

\end{abstract}
\tableofcontents
\section{Introduction}

The study of nonsmooth spacetimes has garnered considerable interest in recent years. 
Central questions about the nature of singularities in general relativity and extendibility of spacetimes in the context of the landmark singularity theorems of Hawking and Penrose necessitate the study of a wider range of geometries under reduced smoothness assumptions. 
Currently, the class of $C^1$ Lorentzian metrics is the one which is best understood in terms of singularity \cite{Graf-SingularityC1, KOSS, SchinnerlSteinbauer} and rigidity \cite{KOV, quintetnonsmooth} theorems, as well as recently emergent synthetic descriptions of energy conditions based on optimal transport \cite{BraunCalisti}.

In the smooth case, Lorentz--Finsler geometry provides a physically meaningful extension of Lorentzian geometry which incorporates an additional directional dependence of the metric tensor and its derived curvature quantities. 
Finsler spacetimes have been considered both as a 
geometric description of gravity beyond general relativity (see, e.g., \cite{BeJaSa20, KoStSt09, Pfeife19}) and in the classical limit of modified dispersion relations
encompassing Lorentz violation in quantum gravity and in the Standard Model Extension (see, e.g., \cite{GiLiSi06, Vacaru11, Kostel11, KoRuTs12, Russel15, Collad17}). 
Remarkably, they provide a counter-example to the Shiff conjecture \cite{LaPeHa12}, but they are compatible with a slight modification of Ehlers--Pirani--Schild axioms for general relativity (see \cite{LaePer18, BeJaSa20}). 

Previous work on smooth Finsler spacetimes covers singularity \cite{MinguzziRaychaudhuri, AJ16, LuMinguzziOhtaSingularity}, rigidity \cite{LuMinguzziOhtaSplitting, CaponioOhanyanOhta}, and comparison \cite{LuMinguzziOhtaComparison} theorems, as well as optimal transport \cite{BraunOhta}.
However, so far, there is no comparison-geometric treatment of \emph{low regularity} Lorentz--Finsler structures. Their causality theory is well captured and understood within the framework of cone structures due to the second author \cite{Minguzziconestructure}, but the effects of curvature bounds on their global geometry have not yet been investigated. 
The aim of this article is to initiate this study with the goal of establishing the Hawking and Penrose singularity theorems for this class of spacetimes.
A major difference from the purely Lorentzian setting is the additional directional dependence of the curvature quantities. 
Moreover, in the low regularity formulation, one can incorporate additional nonsmoothness in the vertical (tangent) directions. 

Many of our results for nonsmooth Finsler structures, their curvatures, and approximations, are signature-independent; see Remark \ref{Remark: general semi Riemann}. 
Hence, they can also be used in the study of Riemann--Finsler structures as well as more general semi-Riemann--Finsler structures of low regularity. 
Moreover, we expect that relaxing the regularity assumptions can contribute to the construction of interesting classes of both Riemann-- and Lorentz--Finsler manifolds.

We now outline our program and present our main results.

\begin{Definition}[$C^{(k,l)}$ Lorentz--Finsler structures]
Let $M$ be a smooth manifold and $k,l \in \N \cup \{0,+\infty\}$ with $k + l \geq 2$. A \emph{Lorentz--Finsler structure of regularity $C^{(k,l)}$} is a map $L: TM \to \R$ such that the following hold:
\begin{enumerate}
    \item For each fiberwise linear chart $(x^i,v^i)$ on $TM \setminus \mathbf 0$, $\partial_x^\beta \partial_v^\sigma L$ exists and is continuous for all multi-indices $\beta,\sigma$ satisfying $|\beta| \leq k$ and $|\sigma| \leq k + l - |\beta|$;
    \item $L(cv) = c^2 L(v)$ for all $c > 0$ and $v \in TM$;
    \item For all $v \in TM \setminus \mathbf 0$, the vertical Hessian $g_{ij}(v):=\partial_{v^i} \partial_{v^j} L$ has Lorentzian signature $(-,+,\dots,+)$.
\end{enumerate}
\end{Definition}

General low regularity semi-Riemann--Finsler structures (in particular, positive definite Riemann--Finsler ones) are defined in complete analogy, changing the signature in (iii) accordingly. 
The purpose of condition (i) above is to provide a coordinate-invariant way of phrasing the condition ``jointly $C^k$ in $x$ and $C^l$ in $v$" for a map on the tangent bundle $TM$; see the discussion in Subsection \ref{subsection: mixed regularity of maps}. 
The majority of our results will assume $k = 1$ and $l = 2$; in particular, it covers the case of $C^1$ Lorentzian metrics $g$, which naturally induce Lorentz--Finsler structures
\begin{equation*}
    L(v):=\frac{1}{2} g(v,v)
\end{equation*}
of class $C^{(1,\infty)}$.

In the setting most relevant to us, namely $L \in C^{(1,2)}$ or $L \in C^{(1,3)}$, many basic properties of smooth Lorentz--Finsler geometry fail to hold.
In particular, geodesics are, in general, no longer uniquely determined by their initial conditions, and curvature quantities can only be defined in a distributional sense, similar to the $C^1$ Lorentzian case \cite{Graf-SingularityC1, KOSS, SchinnerlSteinbauer, quintetnonsmooth}.

To study the geometry and causality induced by a nonsmooth Lorentz--Finsler structure $L$, we therefore argue by approximation. 
Adapting from the $C^1$ Lorentzian case \cite{Graf-SingularityC1, KOSS}, we construct two types of smooth Lorentz--Finsler structures: the convolution-based approximation $L_\varepsilon$, as well as the causality-adapted approximations $\widecheck L_\varepsilon$ and $\widehat L_\varepsilon$.
We obtain $L_\varepsilon$ through a \emph{homogeneity-preserving} convolution in local charts (see Subsection \ref{ssc:L_e}) which works irrespective of the signature of $L$ (see Remark \ref{Remark: general semi Riemann}) and is therefore of independent interest in the more general framework of low regularity semi-Riemann--Finsler geometries.
At this stage, however, we lose control of any causality relation between $L_\eps$ and $L$.
To remedy this deficiency, we locally align the future cones of $L_\varepsilon$ and $L$ (see Proposition \ref{prop:ChruscielGrantApprox}) and further construct two families of (inner and outer) approximations $\widecheck L_\eps$ and $\widehat L_\eps$ in the spirit of Chru\'sciel--Grant (cf.\ \cite{ChruscielGrant}) that satisfy 
\[
\widecheck L_\varepsilon \prec L \prec \widehat L_\varepsilon,
\]
where $\prec$ indicates the nesting of the future cones (see Definition \ref{df:order}).
These have the same analytic approximation properties as $L_\varepsilon$ (in fact, $\widecheck L_\varepsilon - L_\varepsilon \to 0$ in $C^\infty_{\mathrm{loc}}(TM \setminus \mathbf{0})$, same for $\widehat L_\varepsilon$), but their causality is closely tied to that of $L$, which is crucial for the proof of singularity theorems.

The energy conditions for $L$, which are key assumptions in the singularity theorems, are formulated in a distributional sense on $TM$. 
This is in sharp contrast to the purely Lorentzian case, in which distributional curvature can still be evaluated along smooth vector fields (in particular, in timelike directions) and which can therefore be pulled back to a distribution on $M$. 
Just as in the Lorentzian case, however, our distributional energy conditions on $TM$ imply surrogate energy conditions for the approximations $\widecheck L_\varepsilon$, which can be treated in a standard way. 
In the case of the strong energy condition, with $\Omega^T \subseteq TM$ denoting the set of future timelike vectors, this takes the following form (see Subsections \ref{ssc:curvature}, \ref{ssc:convergence}):

\begin{Definition}[Distributional strong energy condition]
We say that a $C^{(1,2)}$ Finsler spacetime $(M,L)$ satisfies the \emph{distributional strong energy condition} if $\Ric \geq 0$ on $\Omega^T$ in the distributional sense, i.e., $\langle \Ric, \mu\rangle \geq 0$ for any nonnegative volume density $\mu$ on $TM$ compactly supported in $\Omega^T$.
Similarly, we say that $\Ric \geq K$ \emph{timelike distributionally} for $K \in \R$ if the distribution $\Ric + 2 K L$ is nonnegative.
\end{Definition}

\begin{Proposition}[Approximation of strong energy condition]
    Let $(M,L)$ be a $C^{(1,3)}$ Finsler spacetime satisfying the distributional strong energy condition, and let $\mathcal{K} \subseteq \Omega^T$ be compact.
    Then for any $\delta >0$, there exists $\eps_0 =\eps_0(\mathcal{K},\delta) >0$ such that for all $\eps \in (0,\eps_0)$ and $v \in \mathcal{K}$ with $2 \widecheck{L}_\eps(v)=-1$, we have
    \begin{equation*}
        \Ric[\widecheck{L}_\eps](v) \ge - \delta.
    \end{equation*}
\end{Proposition}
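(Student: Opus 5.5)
The plan is to adapt the Friedrichs-lemma argument of Graf and Kunzinger--Oberguggenberger--Steinbauer--Stojković (KOSS) from the $C^1$ Lorentzian case, in three steps.

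\emph{Step 1 (pass to $L_\eps$).} Since $\widecheck L_\eps - L_\eps \to 0$ in $C^\infty_{\loc}(TM\setminus\mathbf 0)$, and $\Ric[\cdot](v)$ is a polynomial expression in finitely many derivatives of the Finsler structure, we get $\Ric[\widecheck L_\eps] - \Ric[L_\eps] \to 0$ uniformly on a compact neighbourhood $\mathcal K'$ of $\mathcal K$. Likewise the normalizations $\widecheck L_\eps$ and $L_\eps$ differ by $o(1)$ there, and homogeneity $\Ric(cv)=c^2\Ric(v)$ lets us rescale between the two unit hyperboloids at the cost of $o(1)$. It therefore suffices to prove $\Ric[L_\eps](v)\ge -\delta$ for $v\in\mathcal K$ with $2L_\eps(v)=-1$, or equivalently, by homogeneity, $\Ric[L_\eps]\ge -\delta'$ on $\mathcal K$ uniformly.

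\emph{Step 2 (commutator decomposition).} Work in finitely many fiberwise linear charts covering $\mathcal K$, where $L_\eps$ is the homogeneity-preserving convolution of $L$. In coordinates, $\Ric[L]$ has the schematic form $\partial(\Gamma) + \Gamma\cdot\Gamma$. Here $\Gamma$ involves $\partial_x\partial_v L$ and $\partial_v$-derivatives, i.e.\ nonlinear spray coefficients $G^i$ built from $\partial_x L$ and $g^{ij}$. This is where $C^{(1,3)}$ enters: the Ricci scalar needs one $x$-derivative and up to three $v$-derivatives of $G$, so that the term "linear in second derivatives" is a distribution of order one, while all remaining terms are continuous. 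We then write
\[
\Ric[L_\eps] = (\Ric[L])*_\eps\rho_\eps + \big(\Ric[L_\eps] - (\Ric[L])*_\eps\rho_\eps\big).
\]
The first term is nonnegative on $\mathcal K$ for small $\eps$, since convolution of a nonnegative distribution with a nonnegative mollifier supported in a small ball stays nonnegative on $\{v: \mathrm{dist}(v,\partial\Omega^T)>\text{supp radius}\}$. The homogeneity-preserving construction must be checked to act as a positive averaging operator, which I take from its definition.

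\emph{Step 3 (commutator estimate).} The second term is a sum of (a) differences of products of continuous quantities, e.g.\ $\Gamma_\eps\Gamma_\eps - (\Gamma\Gamma)_\eps$, which tend to $0$ locally uniformly, and (b) terms $\partial(\Gamma_\eps) - (\partial\Gamma)_\eps$ arising because the coefficients of the leading operator (such as $g^{ij}$ multiplying $\partial_x\partial_v G$-type terms) depend on $L$ itself. These are commutators $[a,\rho_\eps*]\partial f$ with $a$ continuous and $f\in C^0$. By the Friedrichs lemma they tend to $0$ locally uniformly when $a\in C^1$, since $a$ only needs to be $C^{0,1}$, and here $a$ involves $v$-derivatives up to order at most two of $L$ together with $\partial_x L$.

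\emph{Main obstacle.} I expect the hard part to be verifying that every coefficient of the top-order part is indeed $C^1$ (Lipschitz) under $C^{(1,3)}$. I would first expand $\Ric = \partial_{x^i}N^i_j v^j$-type terms in the Berwald/Chern form, $\Ric(v)=2\partial_{x^i}G^i - v^j\partial_{x^j}\partial_{v^i}G^i + 2G^j\partial_{v^j}\partial_{v^i}G^i - \partial_{v^j}G^i\partial_{v^i}G^j$. This form is linear in the only non-continuous derivatives ($\partial_x G$), so the commutator only involves smooth coefficients like $v^j$ and $G^j$. Here $G^j$ is $C^{(0,\cdot)}$, and its product with $\partial_v\partial_vG\in C^0$ needs no commutator at all. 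If needed, I would rewrite the top-order terms in divergence form relative to the chart to match Friedrichs exactly. Combining the three steps over the finite cover gives $\eps_0(\mathcal K,\delta)$.
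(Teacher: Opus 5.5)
Your architecture matches the paper's: reduce from $\widecheck L_\eps$ to $L_\eps$, compare $\Ric[L_\eps]$ with the mollified distribution $\mathcal H_\eps\Ric[L]$ via a Friedrichs commutator, and use $\mathcal H_\eps\Ric[L]\ge 0$ on $\mathcal K\subseteq\Omega^T$. However, Step~1 is not justified as written, and the missing idea also affects Step~3.

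Knowing that $\widecheck L_\eps-L_\eps\to0$ in $C^\infty_{\loc}$, and that $\Ric$ is a rational expression in derivatives of the structure, would suffice only if the derivatives of $L_\eps$ entering $\Ric$ stayed bounded. They do not. $\Ric[L_\eps]$ contains second horizontal derivatives such as $\partial_{x^i}\partial_{x^j}\partial_{v^l}L_\eps$ and $\partial_{x^i}\partial_{x^l}L_\eps$, which are in general unbounded as $\eps\to0$ because $L$ is only $C^1$ in $x$. For instance, $\partial_{x^i}(\widecheck G^i_\eps-G^i_\eps)$ contains $(\check g^{il}_\eps-g^{il}_\eps)\,\partial_{x^i}(A_\eps)_l$, which is an $O(\eps)$ factor times an unbounded one. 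Two facts rescue this:
\begin{enumerate}
\item[(a)] The construction gives the exact structure $\widecheck L_\eps=L_\eps+\eps Q$ on compact sets, with $Q$ smooth and independent of $\eps$. This yields an $O(\eps)$ rate for all coefficient differences; an unspecified $o(1)$ rate would not do.
\item[(b)] For continuous $f$ with modulus of continuity $\omega_K$, the mollifier estimate
\[
\eps\,\|\partial_{x^i}(f\star\rho_\eps)\|_{\infty,K}\le\int_{\R^{2n}}\omega_K(\eps|y|)\,|\partial_{x^i}\rho(y)|\,\mathrm{d}y\longrightarrow 0
\]
holds, i.e.\ mollified horizontal derivatives of continuous functions grow only like $o(\eps^{-1})$.
\end{enumerate}
This is exactly the content of the paper's separate lemma on the uniform convergence of $\Ric[\widecheck L_\eps]-\Ric[L_\eps]$. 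It is the crux of the argument, not a formality.

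Step~3 needs the same ingredient, and your identification of the commutator coefficients is also off. Passing to the Berwald form does not reduce the top-order terms to commutators with $v^j$. Since $G_\eps=g_\eps^{il}(A_\eps)_l$ is nonlinear in $L_\eps$, the difference $\partial_{x^i}G^i_\eps-(\partial_{x^i}G^i)\star\rho_\eps$ is a Friedrichs commutator with coefficient $g^{il}_\eps$. The term $v^j\partial_{x^j}\partial_{v^i}G^i_\eps$ produces, in addition, one with coefficient $\partial_{v^i}g^{il}_\eps$, via $\partial_vG=(\partial_vg^{-1})A+g^{-1}\partial_vA$.

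Because these coefficients depend on $\eps$, you need the Friedrichs lemma in the form where $a_\eps\to a_0$ in $C^1_{\loc}$ \emph{and} $\|a_\eps-a_0\|_{\infty,K}=O(\eps)$. Only then does the extra term $(a_\eps-a_0)\,\partial(f\star\rho_\eps)$ vanish, by (b). The requirement that $\partial_{v^i}g^{il}$ be $C^1$ jointly in $(x,v)$, i.e.\ that $\partial_x\partial_v^3L$ and $\partial_v^4L$ be continuous, is precisely where $C^{(1,3)}$ enters. Conversely, $\partial_xL$ cannot appear in a Friedrichs coefficient, as you suggest: it is not differentiable in $x$, so it must sit in $f$.

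Your observation that $G^j\partial_{v^j}\partial_{v^i}G^i$ and $\partial_{v^j}G^i\partial_{v^i}G^j$ are continuous products needing no commutator is correct under $C^{(1,3)}$. Finally, replacing $\mathcal H_\eps L$ by plain convolutions in charts requires showing that the homogenization factor $r_a$ and the cutoffs produce vanishing errors, including at second horizontal order. This again uses (b).
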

Interestingly, favorable cancellations of certain terms allow us to prove an analogous approximation result in the weighted case for the less regular $C^{(1,2)}$ class (see Remark \ref{rmk:WeightedRicciSimplified}, Proposition \ref{Prop: approx weighted energy conditions}), which is the reason for the discrepancy in regularity in the unweighted and weighted singularity theorems below.

Let us now present our main results, which are the two versions of the Hawking singularity theorem as well as the Penrose singularity theorem, for $L \in C^{(1,3)}$ in the unweighted case and for $L \in C^{(1,2)}$ in the case where the Finsler spacetime is equipped with a $C^1$ reference measure. 
We refer to Section \ref{section: singularity theorems} for notions that appear below and which we have not discussed so far.

\begin{Theorem}[Unweighted $C^{(1,3)}$ Hawking singularity theorem,
version 1]
Let $(M,L)$ be a globally hyperbolic $C^{(1,3)}$
Finsler spacetime satisfying
$\operatorname{Ric}\geq K$ timelike distributionally
for some $K\in\mathbb R$.
Suppose that $\Sigma\subseteq M$ is a $C^2$ spacelike
Cauchy hypersurface that is $\beta$-contracting,
where $\beta>0$ and additionally
$\beta>\sqrt{(n-1)|K|}$ if $K<0$. Then
\begin{equation*}
    \sup_M d_\Sigma\leq T_{K,\beta},
\end{equation*}
where 
\begin{equation*}
    T_{K,\beta}:=
    \begin{cases}
        \dfrac{n-1}{\beta},
        & K=0,\\[1.2ex]
        \sqrt{\dfrac{n-1}{K}}\,
        \operatorname{arctan}\!
        \left(\dfrac{\sqrt{(n-1)K}}{\beta}\right),
        & K>0,\\[1.6ex]
        \sqrt{\dfrac{n-1}{|K|}}\,
        \operatorname{arctanh}\!
        \left(\dfrac{\sqrt{(n-1)|K|}}{\beta}\right),
        & K<0.
    \end{cases}
\end{equation*}
In particular, $(M,L)$ is future timelike geodesically
incomplete.
\end{Theorem}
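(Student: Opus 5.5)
We argue by contradiction combined with approximation, following Graf's $C^1$ Lorentzian proof. Suppose $\sup_M d_\Sigma > T_{K,\beta}$. Then there is a point $p \in I^+(\Sigma)$ with $d_\Sigma(p) > T_{K,\beta} + 2\eta$ for some $\eta>0$. By global hyperbolicity of $(M,L)$ and the Avez--Seifert type existence result for $C^{(1,2)}$ structures (via the cone-structure causality theory), there is a maximizing causal curve $\gamma$ from $\Sigma$ to $p$ realizing $d_\Sigma(p)$. The plan is to transfer this configuration to the smooth inner approximations $\widecheck L_\eps \prec L$ and obtain a contradiction with the smooth Hawking theorem, or more precisely with its focal-point comparison.

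\textbf{Step 1: stability of global hyperbolicity and of the hypersurface.} Since $\widecheck L_\eps \prec L$, every $\widecheck L_\eps$-causal curve is $L$-causal. Hence $\Sigma$ remains a Cauchy hypersurface for $\widecheck L_\eps$; it is acausal and met by every inextendible causal curve, the latter after a standard argument using $\widecheck L_\eps\to L$. Also $(M,\widecheck L_\eps)$ is globally hyperbolic, because its causal diamonds are closed subsets of the compact $L$-diamonds. Moreover, $\Sigma$ is spacelike for $\widecheck L_\eps$ for small $\eps$, at least on compact sets. Next we use $C^1$ convergence of $\widecheck L_\eps$ on $TM\setminus\mathbf 0$. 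This gives convergence of the Legendre-type unit normals, and of the mean curvatures of $\Sigma$ computed with $C^1$ coefficients, since $\Sigma\in C^2$ and the second fundamental form involves only first derivatives of $L$. Therefore $\Sigma$ is $(\beta-\delta)$-contracting for $\widecheck L_\eps$ on a compact part $\Sigma\cap J^-(p)$, for any $\delta>0$ once $\eps$ is small.

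\textbf{Step 2: lower semicontinuity of the time separation.} Next we show $\liminf_{\eps\to0} \widecheck d_{\Sigma,\eps}(p)\ge d_\Sigma(p)$. To do this, take the $L$-maximizer $\gamma$, or a nearby $L$-timelike curve of length $> d_\Sigma(p)-\eta$. Since $\gamma$ is timelike with compact image of velocity directions, it lies in the interior of the $L$-cones. For small $\eps$ it is therefore $\widecheck L_\eps$-timelike, and its $\widecheck L_\eps$-length converges to its $L$-length by uniform convergence on compact subsets of $\Omega^T$. Thus $\widecheck d_{\Sigma,\eps}(p) > T_{K,\beta}+\eta$ for small $\eps$.

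\textbf{Step 3: smooth comparison with almost-nonnegative curvature.} For each small $\eps$, take the smooth $\widecheck L_\eps$-maximizing geodesic $\gamma_\eps$ from $\Sigma$ to $p$. By global hyperbolicity these lie in the compact set $J^+(\Sigma)\cap J^-(p)$ (in the $L$ sense). To apply the approximation result for the strong energy condition, rescaled for $\Ric+2KL$ (Proposition on approximation of the strong energy condition and its $K$-version), we need the unit velocities $\dot\gamma_\eps$ to lie in a fixed compact $\mathcal K\subseteq\Omega^T$.

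This is the main obstacle. We must exclude velocities degenerating toward the $L$-null cone. We would try to handle it with a standard limit-curve argument. The maximizers $\gamma_\eps$ have $\widecheck L_\eps$-length bounded below by $T_{K,\beta}$, and they subconverge to an $L$-maximizing curve from $\Sigma$ to $p$ of positive length. That limit is timelike, via the $C^{1}$ regularity results for maximizers in $C^{(1,2)}$ structures. Using the geodesic equation with uniformly bounded Christoffel-type coefficients (from $C^1$ bounds of $\widecheck L_\eps$) together with the initial normality to $\Sigma$, we then get $C^1$ convergence of the $\gamma_\eps$. Hence their unit velocities stay in a compact subset of $\Omega^T$.

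Once this is done, we have $\Ric[\widecheck L_\eps](\dot\gamma_\eps)\ge (n-1)K-\delta$ along $\gamma_\eps$, and the mean curvature of $\Sigma$ is at most $-(\beta-\delta)$. The Finsler Raychaudhuri equation along $\gamma_\eps$ (using the Chern connection of the smooth $\widecheck L_\eps$ and the osculating Lorentzian metric $g_{\dot\gamma_\eps}$) then forces a $\Sigma$-focal point before time $T_{K-\delta',\beta-\delta}$. This contradicts maximality for small $\delta$, by continuity of $T_{K,\beta}$ in $(K,\beta)$, where the assumption $\beta>\sqrt{(n-1)|K|}$ for $K<0$ gives stability. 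Incompleteness follows because each $L$-maximizing geodesic from $\Sigma$ has length at most $T_{K,\beta}$.
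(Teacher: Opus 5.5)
Your overall architecture matches the paper's proof. You argue by contradiction, pass to the inner approximations $\widecheck L_\eps\prec L$ (for which $\Sigma$ remains a spacelike Cauchy hypersurface), take $\widecheck L_\eps$-maximizers from $\Sigma$, apply Corollary~\ref{cor:GeneralRicciBound} on a fixed compact set of velocities, and run the Riccati comparison of Proposition~\ref{Prop: quantitative focusing for general bounds}. Perturbing $(K,\beta)$ and using continuity of $T_{K,\beta}=\int_\beta^\infty \diff u/(K+u^2/(n-1))$ on $\{K+\beta^2/(n-1)>0\}$ is a harmless substitute for the paper's factor $r$.

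Two smaller corrections. First, with the paper's convention $\Ric\ge K$ means $\Ric+2KL\ge0$. For unit vectors you therefore get $\Ric[\widecheck L_\eps](\dot\gamma_\eps)\ge K-\delta$, not $(n-1)K-\delta$; the latter is not what the hypothesis gives and would not reproduce $T_{K,\beta}$. Second, convergence of the expansions needs $C^{(1,2)}_{\loc}$-convergence, not just $C^1$-convergence of $L$. The reason is that $\mathrm{trace}(w\mapsto D^V_wV)$ involves $\Gamma(V)$, i.e.\ $\partial_x g$ and $\partial_v g$. This convergence is available from Propositions~\ref{prop:PropertiesApproximation} and~\ref{prop:ChruscielGrantApprox}.

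The genuine gap is that Steps 2 and 3 use regularity of $L$-maximizers as an \emph{input}.
In Step 2 you treat the Avez--Seifert maximizer $\gamma$ as timelike with velocities in a compact subset of $\Omega^T$. A priori it is only a Lipschitz causal curve, and your fallback (a nearby $L$-timelike curve of almost maximal length) is a push-up statement you would still have to prove.
In Step 3 you invoke ``$C^1$ regularity results for maximizers in $C^{(1,2)}$ structures'', which are not available. At this regularity geodesics are not even unique, and regularity of maximizers is an \emph{output} of the approximation.

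The paper closes both points with Corollary~\ref{Corollary: hypersurfacemaxaslimit}, i.e.\ the construction behind the point-to-point result. Using Theorem~\ref{th:gh-stab} and outer approximations $L\prec\widehat L_\eps\prec L'$, it first produces an $L$-maximizer as a $C^2$-limit, which is therefore a timelike $C^2$ geodesic. That curve is $\widecheck L_\eps$-timelike for small $\eps$, which is exactly your Step 2. The $\widecheck L_\eps$-maximizers then converge in $C^2$ with $l_m\to d_\Sigma(p)$.

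For Step 3 alone, bounded Christoffel coefficients on a bounded parameter interval do not stop the velocities from escaping along the unit hyperboloid, since the geodesic equation is quadratic. What does work, without any regularity input, is the following.
\begin{enumerate}
\item By $2$-homogeneity and locally uniform convergence of $\widecheck G_\eps$, you have $\bigl|\tfrac{\diff}{\diff t}|\dot\gamma_\eps|_{\tilde g}\bigr|\le C|\dot\gamma_\eps|_{\tilde g}^2$ over $J^+(\Sigma)\cap J^-(p)$, uniformly in $\eps$. Hence $\log|\dot\gamma_\eps|_{\tilde g}$ is $C$-Lipschitz in $\tilde g$-arclength.
\item Non-total imprisonment of $L$ bounds the $\tilde g$-lengths of these $L$-causal curves uniformly.
\item The initial unit normals lie in a compact set, so $|\dot\gamma_\eps|_{\tilde g}$ is uniformly bounded.
\item Combined with $2\widecheck L_\eps(\dot\gamma_\eps)=-1$ and $\widecheck L_\eps\to L$, all velocities lie in a fixed compact subset of $\Omega^T$.
\end{enumerate}
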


\begin{Theorem}[Unweighted $C^{(1,3)}$ Hawking singularity theorem, version 2]
Let $(M,L)$ be a $C^{(1,3)}$ Finsler spacetime satisfying the distributional strong energy condition, $\Sigma \subseteq M$ a $C^2$ compact spacelike hypersurface that is contracting. 
Then $(M,L)$ is future timelike geodesically incomplete.
\end{Theorem}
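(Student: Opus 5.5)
We argue by contradiction and reduce the statement to the smooth Lorentz--Finsler Hawking theorem (as in \cite{LuMinguzziOhtaSingularity}) applied to the inner approximations $\widecheck L_\eps$. Suppose $(M,L)$ is future timelike geodesically complete. As in the $C^1$ Lorentzian proof of Graf \cite{Graf-SingularityC1}, we first reduce to the globally hyperbolic case with $\Sigma$ acting as a compact piece of a Cauchy surface. Compactness and the contracting condition give a uniform bound: the mean curvature of $\Sigma$ with respect to the future unit normal is bounded above by $-\beta<0$ for some $\beta>0$. Version~1 then gives
\[
\sup_{D^+(\Sigma)} d_\Sigma\le T_{0,\beta}=(n-1)/\beta .
\]
We also need a maximizing timelike geodesic from $\Sigma$ of length larger than $T_{0,\beta}$. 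For this we use completeness of $L$ together with the existence of maximizers for $C^{(1,2)}$ structures; these follow from the limit-curve theorems for cone structures in \cite{Minguzziconestructure}.

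Concretely, I would follow the standard smooth argument. Consider the future Cauchy development $D^+(\Sigma)$ and its Cauchy horizon $H^+(\Sigma)$.

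\emph{Case 1: $H^+(\Sigma)=\emptyset$.} Then $\Sigma$ is Cauchy for $I^+(\Sigma)\cup\Sigma\cup I^-(\Sigma)$, and $d_\Sigma$ on $D^+(\Sigma)$ is bounded by $T_{0,\beta}$ by version~1. This contradicts completeness: the normal geodesic from any point of $\Sigma$ is complete and, being timelike, reaches $d_\Sigma$-values beyond $T_{0,\beta}$. Making this precise at low regularity needs only $d_\Sigma(\gamma(t))\ge t$ along the geodesic, which is elementary.

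\emph{Case 2: $H^+(\Sigma)\ne\emptyset$.} We use that its generators are future inextendible null curves contained in the compact set $\Sigma$'s edge region. Since $\Sigma$ is compact without edge, we get $H^+(\Sigma)\subseteq J^+(\Sigma)$ compact-achronal. The interior $\mathrm{int}\,D^+(\Sigma)$ is globally hyperbolic with Cauchy surface $\Sigma$, so version~1 applies there and gives $d_\Sigma\le T_{0,\beta}$ on $D^+(\Sigma)$. By lower semicontinuity of the time separation, the same bound extends to points of $H^+(\Sigma)$. Choose $p\in H^+(\Sigma)$ and a future complete timelike curve, or follow the Hawking--Ellis argument: take a point $q\in D^+(\Sigma)$ near $H^+$ and a maximizing curve to $\Sigma$. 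The contradiction comes from showing that along the future complete normal geodesics one must leave $D^+(\Sigma)$ after time at most $T_{0,\beta}$. This confines $\Sigma$'s normal flow, and hence $H^+(\Sigma)$ would be generated over a compact set, which yields a compact Cauchy horizon.

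The main obstacle is precisely this causal-theoretic part at $C^{(1,2)}$/$C^{(1,3)}$ regularity. Geodesics are not unique, so ``the normal geodesic'' and horizon generators must be handled via limit curves. Here I would again pass to $\widecheck L_\eps\prec L$. For $\widecheck L_\eps$ the set $\Sigma$ remains spacelike and uniformly contracting for small $\eps$, since the mean curvature converges in $C^0$ thanks to $C^1$ convergence of $\widecheck L_\eps$ on compact subsets of $\Omega^T$. In addition, the approximation of the strong energy condition (the Proposition above) gives $\Ric[\widecheck L_\eps]\ge-\delta$ on a compact set of unit vectors containing the relevant normal directions. Smooth Finsler comparison for $\widecheck L_\eps$ then produces, on $D^+_{\widecheck L_\eps}(\Sigma)\supseteq$ a cone-nested region, focal points before $T_{-\delta,\beta}$, which tends to $T_{0,\beta}$ as $\delta\to0$. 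Since $I^+_{\widecheck L_\eps}\subseteq I^+_L$ and the lengths $\widecheck L_\eps$ and $L$ of curves converge uniformly on compacta, the bound transfers to $L$. The remaining step is to run the smooth argument: either find an $L$-complete timelike geodesic whose length from $\Sigma$ exceeds $T_{0,\beta}+1$ inside $D^+(\Sigma)$, or reach a contradiction with compactness of the horizon. Controlling uniformity in $\eps$ of the compact sets involved is the delicate point, and I would handle it by fixing one compact neighbourhood $\mathcal K$ of unit normals over a compact subset of $J^+(\Sigma)\cap J^-(\text{far point})$ before choosing $\eps$.
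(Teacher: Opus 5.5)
Your proposal has a genuine gap: it never actually reaches a contradiction. In Case~2 you end with ``$H^+(\Sigma)$ would be generated over a compact set, which yields a compact Cauchy horizon''. A compact horizon is not contradictory by itself. The smooth proof needs a further step that produces a future $\Sigma$-ray, i.e.\ a normal geodesic maximizing the distance to $\Sigma$ for all time, and you explicitly leave this as ``the remaining step''.

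Moreover, you want to carry out the horizon analysis for $L$ itself. That requires $\mathrm{int}\,D(\Sigma)$ to be globally hyperbolic with Cauchy surface $\Sigma$ (so that Version~1 applies), null generators of $H^+(\Sigma)$, and limit curves of nonunique geodesics. None of this is available at $C^{(1,3)}$ regularity here.

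Your fallback via $\widecheck L_\eps$ does not close the gap either. Since $\widecheck L_\eps\prec L$, the structure $L$ has \emph{more} causal curves, so an upper bound on $\widecheck L_\eps$-distances from $\Sigma$ does not transfer to an upper bound on $d_\Sigma$. Also, choosing unit normals ``over a compact subset of $J^+(\Sigma)\cap J^-(\text{far point})$'' presupposes global hyperbolicity, which Version~2 does not assume.

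The missing idea is that nothing needs to be transferred back to $L$ except the completeness hypothesis. Completeness enters only through the existence-time estimate (Proposition~\ref{Prop: Existence time for Leps geodesics}, Corollary~\ref{cr:Fcpt}). The paper's argument runs as follows.
\begin{enumerate}
\item Let $\mathcal K$ be the compact set of future unit normals to $\Sigma$ for all $\widecheck L_\eps$, $\eps\in[0,1]$, and set $b:=4(n-1)/\beta$. Completeness of $L$ gives that, for small $\eps$, all $\widecheck L_\eps$-normal geodesics exist on $[0,b]$ with velocities in the compact set $\mathcal F_{\mathcal K,b,\eps_0}$.
\item On that compact set, Proposition~\ref{Prop: ApproxtimelikeRiccibound} gives $\Ric[\widecheck L_\eps]\ge-\delta$ with $\delta<3\beta/(2b)$.
\item Proposition~\ref{Prop: Hawking quantitative focusing} with $r=1/2$ then shows that no $\widecheck L_\eps$-normal geodesic maximizes the $\widecheck L_\eps$-distance to $\Sigma$ beyond time $2(n-1)/\beta<b$.
\item Now fix one such $\eps$ and work entirely in the \emph{smooth} spacetime $(M,\widecheck L_\eps)$. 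After passing to a covering, $\Sigma$ is a compact acausal partial Cauchy hypersurface for $\widecheck L_\eps$; this is inherited from $L$ because $\widecheck L_\eps\prec L$.
\item The smooth theory \cite[Thm.~8.11]{LuMinguzziOhtaSingularity} then yields a future timelike $\Sigma$-ray for $\widecheck L_\eps$. It is a normal geodesic, hence defined and maximizing on $[0,b]$, which contradicts step~3.
\end{enumerate}
This route avoids all Cauchy-horizon analysis at low regularity and does not use Version~1 at all.
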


\begin{Theorem}[Unweighted $C^{(1,3)}$ Penrose singularity theorem]
Let $(M,L)$ be a globally hyperbolic $C^{(1,3)}$ Finsler spacetime which satisfies the distributional null energy condition and contains a $C^2$ trapped surface $S$ and a noncompact Cauchy hypersurface. 
Then there exists a future null geodesic $\eta$ emanating orthogonally from $S$ that is future incomplete.
\end{Theorem}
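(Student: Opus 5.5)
We argue by contradiction, following Graf's $C^1$ Lorentzian strategy together with the causality-adapted approximations $\widecheck L_\eps \prec L \prec \widehat L_\eps$. Suppose every future null geodesic of $L$ emanating orthogonally from $S$ is future complete. The aim is to show that $\partial J^+_L(S)$ is compact. Combined with global hyperbolicity and the noncompact Cauchy hypersurface, this gives the usual contradiction: the projection along the flow of a timelike vector field onto the Cauchy hypersurface $\Sigma$ makes $\partial J^+_L(S)$ homeomorphic to a closed subset of $\Sigma$ that is also open by invariance of domain, so it equals $\Sigma$, which is therefore compact.

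\textbf{Step 1: trapping survives approximation.} Since $\widehat L_\eps \to L$ in $C^1$ on compact subsets of $TM\setminus\mathbf 0$, and the null expansions of $S$ depend only on first derivatives of $L$ (the mean curvature of a $C^2$ surface involves Christoffel-type terms, i.e.\ first derivatives), $S$ remains a trapped surface for $\widehat L_\eps$ once $\eps$ is small. Here I would use the outer approximation, because the wider cones $L\prec \widehat L_\eps$ give $J^+_L(S)\subseteq J^+_{\widehat L_\eps}(S)$. Note also that $\widehat L_\eps$ stays globally hyperbolic with the same Cauchy hypersurface for small $\eps$; this is the Chru\'sciel--Grant stability argument.

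\textbf{Step 2: uniform focusing.} The analogue of the strong energy approximation proposition for the null energy condition should give, for each compact set $\mathcal K$ of null vectors of $\widehat L_\eps$ (normalized against a fixed Riemannian metric $h$), the bound $\Ric[\widehat L_\eps](v)\ge -\delta$ once $\eps<\eps_0(\mathcal K,\delta)$. Let $\theta_0<0$ be a uniform bound on the expansion of $S$ (by compactness). The smooth Raychaudhuri equation for the Finsler spacetime $(M,\widehat L_\eps)$, with the Ricci term perturbed by $\delta$, then yields the following: every $\widehat L_\eps$-null geodesic from $S$, $h$-normalized, that is defined up to parameter $T\approx (n-2)/|\theta_0|$ (plus a margin) has a focal point before $T$, as long as $\delta$ is small relative to $\theta_0^2$. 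Consequently $\partial J^+_{\widehat L_\eps}(S)$ is contained in the image of $[0,T]\times$(compact set of normals) under the $\widehat L_\eps$-exponential map, and in particular it is compact.

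\textbf{Step 3: limit curves (main obstacle).} The difficulty is that the relevant compact set $\mathcal K$ of null directions must be chosen before $\eps$, and we need that the $\widehat L_\eps$-null geodesics from $S$ actually remain in a fixed compact region up to time $T$. I would argue by contradiction. If this fails along a sequence $\eps_k\to0$, there are $\widehat L_{\eps_k}$-geodesics leaving every compact set before time $T$. Since $\widehat L_\eps\to L$ in $C^1$ and the geodesic equation involves only first derivatives, Arzel\`a--Ascoli applied to the ODE (Peano) shows that these geodesics converge to an $L$-null geodesic orthogonal to $S$ that is inextendible before $T$. This contradicts the assumed completeness; non-uniqueness of $L$-geodesics is harmless here because we only need existence of a limit. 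With this in hand, we fix a compact $\mathcal K$ containing the velocities of all such geodesics, choose $\delta$ and then $\eps$ accordingly, and Step 2 applies.

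\textbf{Step 4: conclusion.} We have $\partial J^+_L(S)\subseteq J^+_L(S)\subseteq J^+_{\widehat L_\eps}(S)$, and the latter's boundary is compact. Using global hyperbolicity, $J^+_{\widehat L_\eps}(S)\cap J^-(\Sigma')$ is compact for a Cauchy hypersurface $\Sigma'$ lying beyond the compact set $\partial J^+_{\widehat L_\eps}(S)$. From this I would deduce that $E^+_L(S)=\partial J^+_L(S)$ is closed and contained in a compact set, hence compact. An alternative route is to run the projection argument directly for $\widehat L_\eps$: then $\Sigma$ would be compact, which is again a contradiction.
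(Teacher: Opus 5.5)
Your overall strategy is viable, but as written it rests on the outer approximation $\widehat L_\eps$, and three steps fail.

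\emph{Step~2 (the main gap).} You need $\Ric[\widehat L_\eps](v)\ge-\delta$ for $\widehat L_\eps$-null $v$. Proposition~\ref{Prop: Null approximate Ricci} is proved only for $\widecheck L_\eps$, and its proof does not carry over. It uses that an $\widecheck L_\eps$-null vector satisfies $L(v)\le-\eps\bigl(Q(v)-A\bigr)$. Then the mollifying density $\mu_{\eps,v}$ with $\mathcal H_\eps\Ric[L](v)=\langle\Ric[L],\mu_{\eps,v}\rangle$ is supported in $\{|L|\le c\eps\}\cap\Omega^C$, and after normalization it lies in $M_{c\eps}(\mathcal K')$, where Definition~\ref{Definition: null energy condition} applies. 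Since $L\prec\widehat L_\eps$, an $\widehat L_\eps$-null vector is instead $L$-spacelike, with $L(v)$ of order $\eps$. That is the same scale as the mollifier, so $\mu_{\eps,v}$ charges (possibly only) $L$-spacelike vectors. There the distributional null energy condition, whose test densities must live in $\Omega^C$, says nothing. Closing this would need a new estimate, e.g.\ that $\mathcal H_\eps\Ric[L]$ changes only by $o(1)$ under $O(\eps)$ displacements in $v$, which you neither state nor prove.

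\emph{Global hyperbolicity of $\widehat L_\eps$.} A fixed $\widehat L_\eps$ need not be globally hyperbolic on noncompact $M$. The given Cauchy hypersurface also need not be Cauchy for any wider structure, e.g.\ if it is somewhere tangent to the $L$-null cone. The paper needs Theorem~\ref{th:gh-stab} and a base-dependent parameter just to obtain some globally hyperbolic $L\prec L_m$.

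\emph{Step~4, first route.} This is a non sequitur. Knowing $J^+_L(S)\subseteq J^+_{\widehat L_\eps}(S)$ and that $\partial J^+_{\widehat L_\eps}(S)$ is compact gives no bound on $\partial J^+_L(S)$. It may lie deep inside $J^+_{\widehat L_\eps}(S)$, and nothing places it in $J^-(\Sigma')$.

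All three problems disappear if you use $\widecheck L_\eps$ together with your alternative route.
\begin{itemize}
\item Since $\widecheck L_\eps\prec L$, every inextendible $\widecheck L_\eps$-causal curve is $L$-causal (indeed a.e.\ $L$-timelike). So the given noncompact Cauchy hypersurface is Cauchy for $\widecheck L_\eps$, and $\widecheck L_\eps$ is globally hyperbolic.
\item $S$ is $\widecheck L_\eps$-spacelike and, for small $\eps$, trapped.
\item Your Step~3 is the existence-time argument of Proposition~\ref{Prop: Existence time for Leps geodesics} and Corollary~\ref{cr:Fcpt}, including the Hartman limit argument that handles non-uniqueness.
\item Proposition~\ref{Prop: Null approximate Ricci} then gives the Ricci bound on the resulting fixed compact set of velocities.
\item Proposition~\ref{Prop: Null quantitative focusing} confines $E^+_{\widecheck L_\eps}(S)$ to the compact image of $[0,T]$ times the null normals under the smooth $\widecheck L_\eps$-exponential map. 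This contradicts the smooth Penrose argument for $(M,\widecheck L_\eps)$.
\end{itemize}

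This corrected argument differs genuinely from the paper's. The paper runs the topological argument for $L$ itself. It first shows $J^+(S)\setminus I^+(S)\subseteq F_{\mathcal K,1}$ via Corollary~\ref{Corollary: Trappedsurfacemaxaslimit}: each such point is reached by an $L$-maximizing null geodesic that is a $C^2$-limit of $\widecheck L_{\eps_m}$-maximizers, whose lengths are bounded by quantitative focusing. It then concludes with low-regularity causality via Lemma~\ref{Lemma: lowregFinsler cone structure}. Your repaired route avoids the maximizer-limit machinery and low-regularity causality in the last step. The paper's route gives the stronger information that $E^+_L(S)$ itself is compact.
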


\begin{Theorem}[$C^1$ weighted $C^{(1,2)}$ Hawking singularity theorem, version 1]
Let $(M,L)$ be a globally hyperbolic $C^{(1,2)}$ Finsler spacetime, equipped with a $C^1$ measure $\mathfrak m$, satisfying 
\[
\Ric^N_\m \geq K\exp\biggl( \frac{4(\kappa-1)}{n-1}\Psi \biggr)
\]
timelike distributionally for some $K\in\R$, $N \in (-\infty,1] \cup [n,\infty]$ and $\kappa$ timelike admissible for $N$.
Suppose that $\Sigma\subseteq M$ is a $C^2$ spacelike Cauchy hypersurface with $\theta_\kappa <-\beta$, where $\beta>0$ and, additionally, $\beta>\sqrt{|K|/c_{N,\kappa}}$ when $K<0$.
Then we have
\[
\sup_\eta \lim_{t \uparrow T} \varphi_\eta(t) \leq T^{(N,\kappa)}_{K,\beta},
\]
where $\eta:[0,T) \to M$ runs over all future unit speed timelike geodesics maximizing the distance from $\Sigma$ and we set 
\[
    T^{(N,\kappa)}_{K,\beta}
    :=
    \begin{cases}
        \dfrac{1}{c_{N,\kappa}\beta},
        & K=0,\\[1.2ex]
        \dfrac{1}{\sqrt{c_{N,\kappa}K}}\,
        \arctan\!\left(
            \dfrac{1}{\beta}\sqrt{\dfrac{K}{c_{N,\kappa}}}
        \right),
        & K>0,\\[1.6ex]
        \sqrt{\dfrac{1}{c_{N,\kappa}|K|}}\,
        \operatorname{arctanh}\!\left(
            \dfrac{1}{\beta}\sqrt{\dfrac{|K|}{c_{N,\kappa}}}
        \right),
        & K<0.
    \end{cases}
\]
In particular, every such $\eta$ is future $\kappa$-incomplete and, when $N \in [n,\infty)$ and $\kappa=1$, we have $\sup_M d_\Sigma \le T^{(N,1)}_{K,\beta}$ and $(M,L)$ is future timelike geodesically incomplete.
\end{Theorem}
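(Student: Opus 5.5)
We argue by contradiction and approximation, following Graf's $C^1$ strategy with the causality-adapted inner approximations $\widecheck L_\eps \prec L$. Suppose there is a future unit speed timelike geodesic $\eta:[0,T)\to M$, maximizing the distance from $\Sigma$, with $\lim_{t\uparrow T}\varphi_\eta(t) > T^{(N,\kappa)}_{K,\beta}$. Here $\varphi_\eta$ is the $\kappa$-reparametrization, $\varphi_\eta' = \exp(\frac{2(\kappa-1)}{n-1}\Psi(\dot\eta))$. Fix $\tau$ with $\varphi_\eta(\tau)$ strictly larger than $T^{(N,\kappa)}_{K,\beta}+\delta_0$ for some $\delta_0>0$. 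The segment $\eta([0,\tau])$ and its velocities lie in a compact subset of $\Omega^T$, so we can choose a compact $\mathcal K\subseteq\Omega^T$ containing a neighbourhood of them.

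\textbf{Step 1 (approximate data).} By Proposition~\ref{Prop: approx weighted energy conditions}, for every $\delta>0$ and all small $\eps$ we have $\Ric^N_\m[\widecheck L_\eps]\ge K\exp(\frac{4(\kappa-1)}{n-1}\Psi_\eps)-\delta$ on unit vectors in $\mathcal K$. This is where the $C^{(1,2)}$ sufficiency and the $C^1$ measure enter. Since $\widecheck L_\eps\to L$ in $C^1_{\loc}$ horizontally and $C^2$ vertically, the quantities attached to $\Sigma$ converge locally uniformly on the relevant compact portion. These are the unit normals, the $\kappa$-weighted mean curvature $\theta_\kappa$ (involving only first derivatives of $g$ and of the density of $\m$ together with the $C^2$ hypersurface), and $\Psi$. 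Hence $\theta_\kappa^\eps<-\beta+\delta$ near $\eta(0)$. Since $\widecheck L_\eps\prec L$, $\Sigma$ remains spacelike for $\widecheck L_\eps$, and it is still Cauchy because $\widecheck L_\eps$ has narrower cones.

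\textbf{Step 2 (smooth comparison).} For the smooth structure $\widecheck L_\eps$, take a $\widecheck L_\eps$-maximizing timelike geodesic $\eta_\eps$ from $\Sigma$ to $\eta(\tau)$; it exists by global hyperbolicity of $\widecheck L_\eps$, inherited from $L$ since the cones are narrower. We apply the smooth weighted Raychaudhuri/Riccati comparison of Lu--Minguzzi--Ohta in the $\kappa$-parametrization. Along $\eta_\eps$, the $\kappa$-expansion satisfies $\theta'\le -c_{N,\kappa}\theta^2-(K-\delta')$, so $\theta$ blows up to $-\infty$ before parameter $T^{(N,\kappa)}_{K,\beta-\delta}+o(1)$. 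Because $\eta_\eps$ maximizes the distance, no focal point occurs before its endpoint. Therefore $\varphi_{\eta_\eps}$, taken at the endpoint, is at most $T^{(N,\kappa)}_{K,\beta-\delta}+o(1)$.

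\textbf{Step 3 (limit).} As $\eps\to0$, a limit curve argument applies. The $\widecheck L_\eps$-lengths converge, and $L$-maximizers are limits of the $\eta_\eps$, which are uniformly timelike and stay in $\mathcal K$. This gives that $\eta_\eps$ converges (subsequentially, in $C^1$) to an $L$-maximizer from $\Sigma$ to $\eta(\tau)$ whose $\kappa$-length equals $\varphi_\eta(\tau)$. This fact requires care, since geodesics are nonunique, but $\Psi$ is evaluated along the limit in the same way. Letting $\delta\to0$ contradicts the choice of $\tau$.

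The incompleteness and, for $\kappa=1$ with $N\in[n,\infty)$, the bound $\varphi=t$, $\sup d_\Sigma\le T$, are immediate consequences. The main obstacle is Step 3 combined with Step 1's uniformity. We must ensure that the approximating maximizers stay in a fixed compact $\mathcal K\subseteq\Omega^T$, i.e., that they do not become almost null. Only then can the distributional energy condition be used, and only then do the $\kappa$-weights $\Psi$ pass to the limit. To handle this, we first prove uniform timelike bounds via the local cone alignment of Proposition~\ref{prop:ChruscielGrantApprox} and $C^1$ convergence of maximizers.
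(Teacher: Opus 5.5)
Your route is the paper's: approximate by the inner structures $\widecheck L_\eps$ together with $\m_\eps$, run the smooth weighted Riccati comparison in the $\varphi$-parametrization along $\widecheck L_\eps$-maximizers from $\Sigma$, and pass to the limit with Proposition~\ref{Prop: approx weighted energy conditions}\ref{appRN1} and Corollary~\ref{Corollary: hypersurfacemaxaslimit}.

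\textbf{The gap.} It lies in the sentence of Step 3 asserting that the limit of the $\eta_\eps$ is an $L$-maximizer from $\Sigma$ to $\eta(\tau)$ ``whose $\kappa$-length equals $\varphi_\eta(\tau)$''. The limit-curve argument only produces \emph{some} $L$-maximizer $\bar\eta$ from $\Sigma$ to $\eta(\tau)$. Suppose one grants that $\bar\eta$ reaches $\eta(\tau)$ with velocity $\dot\eta(\tau)$; that is the most a corner-cutting argument could give, using that $\eta$ keeps maximizing past $\tau$. Even then, concluding $\bar\eta=\eta|_{[0,\tau]}$ would require uniqueness for the geodesic equation. That uniqueness is unavailable in $C^{(1,2)}$, because $G^i\in C^{(0,1)}$ need not be locally Lipschitz in the base variable. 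For $\kappa\neq 1$, moreover, $\varphi$ is not determined by the endpoints of a maximizer. It integrates $\exp(\frac{2(\kappa-1)}{n-1}\Psi(\dot\gamma))$ along the particular curve $\gamma$, so $\varphi_{\bar\eta}$ and $\varphi_\eta(\tau)$ need not agree. Your remark that ``$\Psi$ is evaluated along the limit in the same way'' does not close this.

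The same issue affects Step 1. The compact set $\mathcal K$ and the expansion bound must be taken along the $\dot\eta_\eps$ and their limit, as in the proof of Theorem~\ref{Theorem: unweighted Hawking 1}, not along $\dot\eta$. By contrast, the obstacle you single out, keeping the $\eta_\eps$ uniformly timelike, is harmless. It follows from the compactness in Corollary~\ref{Corollary: hypersurfacemaxaslimit} together with $d_\Sigma(\eta(\tau))>0$.

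\textbf{What the argument actually proves.} Your argument yields $\lim_{t\uparrow T}\varphi_{\bar\eta}(t)\le T^{(N,\kappa)}_{K,\beta}$ only for maximizers $\bar\eta$ that arise as $C^2$-limits of $\widecheck L_{\eps_m}$-maximizers. The same holds for the paper's proof, which is declared analogous to Theorem~\ref{Theorem: unweighted general Hawking 1} and therefore works with the approximable maximizer furnished by Corollary~\ref{Corollary: hypersurfacemaxaslimit}.

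For $N\in[n,\infty)$ and $\kappa=1$ this suffices. There $\varphi$ is arclength and every maximizer to $y$ has length $d_\Sigma(y)$, so the identification problem disappears exactly as in the unweighted case. Hence $\sup_M d_\Sigma\le T^{(N,1)}_{K,\beta}$ and timelike incompleteness follow.

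For $\kappa\neq 1$ and a \emph{given} maximizer $\eta$, you need an additional ingredient. This could be an argument that $\eta$ itself is such a limit, or some device forcing the approximating maximizers to converge to $\eta$. Step 3 should not be presented as automatic. The paper's sketch does not spell out such an ingredient either. So the ``every such $\eta$'' part of the statement is exactly where this approximation scheme, yours or the paper's, needs more than is written.
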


\begin{Theorem}[$C^1$ weighted $C^{(1,2)}$ Hawking singularity theorem, version 2]
Let $(M,L,\mathfrak m)$ be a $C^{(1,2)}$ Finsler spacetime equipped with a $C^1$ measure $\mathfrak m$ satisfying $\Ric^N_{\mathfrak m}\geq0$ timelike distributionally for some $N\in(-\infty,1]\cup[n,\infty]$.
Suppose that $\Sigma\subseteq M$ is a compact $C^2$ spacelike hypersurface such that $\theta_\kappa<0$ for some $\kappa$ which is timelike admissible for $N$.

Then there exists a future inextendible unit speed timelike geodesic $\eta:[0,T)\to M$, starting orthogonally to $\Sigma$, that is future $\kappa$-incomplete.
In particular, when $N\in[n,\infty)$ and $\kappa=1$, this geodesic is future incomplete in the usual sense.
\end{Theorem}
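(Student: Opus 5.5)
We argue by contradiction, combining the causality-adapted approximation with the smooth weighted Hawking theorem (version 2) for Finsler spacetimes and a limit-curve argument. Suppose every future inextendible unit speed timelike geodesic starting orthogonally to $\Sigma$ is future $\kappa$-complete. Compactness of $\Sigma$ together with $\theta_\kappa<0$ gives a uniform bound $\theta_\kappa\le-\beta<0$ on $\Sigma$ for some $\beta>0$.

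The first step is to pass to the approximations $\widecheck L_\eps$. Since $\widecheck L_\eps\to L$ in $C^1$ (locally, with two vertical derivatives) and $\Sigma$ is compact and $C^2$, $\Sigma$ stays spacelike for $\widecheck L_\eps$ when $\eps$ is small. Its $\widecheck L_\eps$-unit normal and the associated weighted expansion $\theta_\kappa^\eps$ converge uniformly to their $L$-counterparts, because the expansion involves only first derivatives of $\widecheck L_\eps$ and of the density of $\m$ together with the second fundamental form of $\Sigma$. Hence $\theta^\eps_\kappa\le-\beta/2$ on $\Sigma$. By Proposition \ref{Prop: approx weighted energy conditions}, on any compact $\mathcal K\subseteq\Omega^T$ we get $\Ric^N_{\m}[\widecheck L_\eps]\ge-\delta$ on unit vectors for $\eps<\eps_0(\mathcal K,\delta)$.

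The second step is the smooth Raychaudhuri/Riccati comparison along $\widecheck L_\eps$-geodesics normal to $\Sigma$, run in the $\kappa$-reparametrized time $\varphi$. This yields a weighted focal point within $\varphi$-parameter at most roughly $1/(c_{N,\kappa}(\beta/2-C\delta))$, as in the $K=0$ case of version 1. Consequently no $\widecheck L_\eps$-normal geodesic from $\Sigma$ maximizes distance beyond that $\varphi$-time. The difficulty is that the curvature bound is only available on the compact set $\mathcal K$. I would handle this by choosing $\mathcal K$ as the set of unit normal velocities of the $L$-geodesics leaving $\Sigma$, restricted to a bounded parameter range, thickened slightly. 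This set is compact because the geodesics are $\kappa$-complete by assumption, the map from $t$ to $\varphi$ is continuous, and limits of $C^1$ geodesics stay in compact sets by the $C^{(1,2)}$ ODE estimates. For small $\eps$, the $\widecheck L_\eps$-geodesics stay close to $L$-geodesics (limit curve theorem / Arzelà–Ascoli for the geodesic equation with $C^0$ Christoffel symbols), so their velocities remain in $\mathcal K$ up to the relevant time.

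The third step follows the standard proof of the smooth theorem, via the Cauchy development of $\Sigma$ in the compact case. Define $T_0:=2T^{(N,\kappa)}_{0,\beta/4}$. Working inside $D(\Sigma)$, or treating the case where $\Sigma$ is acausal in $M$ separately, we show that every point of $D^+(\Sigma)$ has $\widecheck L_\eps$-distance to $\Sigma$ bounded in $\varphi$-terms by $T_0$. Since $\widecheck L_\eps\prec L$, $L$-timelike curves are also $\widecheck L_\eps$-causal up to small errors. More precisely, use $\widehat L_\eps$ for the cone inclusion, so that $L$-curves are $\widehat L_\eps$-timelike, and take the limit $\eps\to0$. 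This transfers the bound to $L$: $D^+(\Sigma)$ has compact closure and $H^+(\Sigma)\neq\emptyset$ is compact. The classical argument on null generators of the Cauchy horizon only uses causality theory, which is available for $C^0$ cone structures (Minguzzi). Together with maximizing $L$-geodesics from $\Sigma$ to points of $H^+(\Sigma)$, obtained via limit curves, it produces a maximizing $L$-normal geodesic whose $\varphi$-length exceeds $T_0$. Approximating it by $\widecheck L_\eps$-maximizers then contradicts the focal bound from the second step.

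I expect the main obstacle to be the control of $\mathcal K$ and the convergence of maximizers. Geodesics are non-unique, and the $\kappa$-parametrization $\varphi$ depends on $\Psi$ along the curve. So one must show that $\widecheck L_\eps$-maximizers from $\Sigma$ subconverge to $L$-maximizing normal geodesics with convergence of velocities, and with $\varphi_\eps\to\varphi$ uniformly, in order to pass the bound $\varphi\le T_0$ to the limit. Finally, when $N\in[n,\infty)$ and $\kappa=1$, $\varphi$ is affine time and $c_{N,1}$ reduces to the usual constant, giving ordinary incompleteness.
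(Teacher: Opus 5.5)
Your Steps 1 and 2 are fine in outline. Step 3 has a genuine gap, and it sits exactly where the weighted, low-regularity setting matters.

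\emph{The gap.} Your contradiction is meant to come from an $L$-maximizing normal geodesic $\eta$ with $\varphi_\eta(b)>T_0$, which you then approximate by $\widecheck L_\eps$-maximizers. That approximation is not available. The $\widecheck L_{\eps_m}$-maximizers from $\Sigma$ to $\eta(b)$ subconverge only to \emph{some} $L$-maximizer with the same endpoint. This need not be $\eta$, since $L$-geodesics are not determined by their initial data. Knowing lengths or endpoints does not help, because $\varphi_\eta(b)=\int_0^b\exp\bigl(\tfrac{2(\kappa-1)}{n-1}\Psi(\dot\eta)\bigr)\,\diff t$ depends on $\Psi$ along the particular curve. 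Two $L$-maximizers of equal length can therefore have different $\varphi$-lengths, and the focal bound on the smooth curves never reaches the curve to which you applied $\kappa$-completeness. For the same reason, your bound on $D^+(\Sigma)$ says only that each point is reached by \emph{some} normal geodesic within $\varphi$-time $T_0$. That is compatible with a long $L$-ray.

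Moreover, the existence of the long $L$-maximizer is asserted rather than proved. It amounts to a low-regularity version of the existence of future $\Sigma$-rays, which needs more than $C^0$ causality: maximizers from $\Sigma$ up to the horizon, and corner cutting for lengths. Two further problems in this step:
\begin{enumerate}
\item $\widecheck L_\eps\prec L$ gives the opposite inclusion from the one you first state.
\item Switching to $\widehat L_\eps$ does not repair the transfer, because $D^+_L(\Sigma)$ need not lie in $D^+_{\widehat L_\eps}(\Sigma)$, and your focal estimate was proved for $\widecheck L_\eps$.
\end{enumerate}

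\emph{The fix.} Keep all causality theory at the smooth level, and apply the completeness hypothesis only to a limit of smooth maximizers.
\begin{enumerate}
\item After passing to a covering, $\Sigma$ is a compact acausal partial Cauchy hypersurface for $L$, hence also for every $\widecheck L_{\eps_m}$ (narrower cones). So \cite[Thm.\ 8.11]{LuMinguzziOhtaSingularity} gives future timelike $\Sigma$-rays $\eta_m$ for $\widecheck L_{\eps_m}$.
\item Their initial velocities lie in a compact set. As in Proposition \ref{Prop: Existence time for Leps geodesics}, a subsequence converges in $C^2$ on compact subintervals to an inextendible $L$-geodesic $\eta:[0,T)\to M$ normal to $\Sigma$.
\item $\kappa$-completeness of this $\eta$ gives $b<T$ with $\varphi_\eta(b)=B:=4/(c_{N,\kappa}\beta)$. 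Then $\varphi_{\eta_m}(b)\to B$.
\item The velocities of $\eta_m|_{[0,b]}$ lie in a fixed compact subset of $\Omega^T$, so Proposition \ref{Prop: approx weighted energy conditions} applies along them.
\item Since $\eta_m|_{[0,b]}$ is itself an $\widecheck L_{\eps_m}$-maximizer, Proposition \ref{Prop: weighted timelike quant focusing} with $r=1/2$ yields $\varphi_{\eta_m}(b)\le B/2$, a contradiction.
\end{enumerate}
Alternatively, your Step 2 already suffices, once it also gives that all $\widecheck L_\eps$-normal geodesics exist up to $\varphi$-time $T_0$. It then shows $\Sigma$ has no $\widecheck L_\eps$-ray, contradicting the same smooth theorem.

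\emph{Two smaller points.} First, the approximate curvature bound must be for $\Ric^N_{\m_\eps}[\widecheck L_\eps]$, with the regularized measure $\m_\eps$, not $\m$. Second, Proposition \ref{Prop: approx weighted energy conditions} excludes $N=n$, since the convention $S\Psi\equiv0$ is not stable under regularization. That case is handled by replacing $N$ with some $N'>n$ close to $n$, for which $\Ric^{N'}_\m=\Ric\geq0$ and $\kappa$ stays admissible.
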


\begin{Theorem}[$C^1$ weighted $C^{(1,2)}$ Penrose singularity theorem]
Let $(M,L,\mathfrak m)$ be a globally hyperbolic $C^{(1,2)}$ Finsler spacetime of dimension $n\geq3$, equipped with a $C^1$ measure  $\mathfrak m$, satisfying the distributional null $N$-energy condition for some $N\in(-\infty,2]\cup[n,\infty]$.
Suppose that $M$ contains a noncompact Cauchy hypersurface and a compact orientable $C^2$ spacelike submanifold $S$ of codimension $2$ satisfying $\theta_\kappa^\pm<0$ for some $\kappa$ which is null admissible for $N$. 

Then there exists a future inextendible null geodesic $\eta:[0,T)\to M$, starting orthogonally to $S$, which is future $\kappa$-incomplete.
In particular, if $N\in[n,\infty)$ and $\kappa=1$, $\eta$ is future incomplete in the usual sense.
\end{Theorem}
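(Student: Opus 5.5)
We argue by contradiction, following the approximation scheme of Graf and Kunzinger--Ohanyan--Schinnerl--Steinbauer for the $C^1$ Lorentzian case, adapted to the weighted Finsler setting. Suppose that every future inextendible null geodesic emanating orthogonally from $S$ is future $\kappa$-complete.

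\textbf{Step 1 (compactness of the boundary).} Since $(M,L)$ is globally hyperbolic, the boundary $\partial J^+(S)$ is generated by null geodesics of $L$ starting orthogonally from $S$. The next step is to show that $\partial J^+(S)$ is compact. Once this is known, the standard topological argument applies: flowing along a smooth timelike vector field gives a homeomorphism of $\partial J^+(S)$ onto an open and closed subset of the noncompact Cauchy hypersurface $\Sigma$, which is a contradiction. This topological argument only uses causality of cone structures, which is available for $C^{(1,2)}$ structures through \cite{Minguzziconestructure}.

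\textbf{Step 2 (passing to an outer approximation).} To obtain compactness, we use the outer approximations $\widehat L_\eps$ with $L\prec\widehat L_\eps$. Following Graf and Chruściel--Grant, for $\eps$ small the structure $\widehat L_\eps$ is still globally hyperbolic with the same Cauchy hypersurface. By $C^1$ convergence, $S$ remains a trapped submanifold for $\widehat L_\eps$: the conditions $\theta^\pm_{\kappa}<0$ involve only first derivatives of $L$ and $\m$ together with the $C^2$ embedding of $S$, and hold uniformly on the compact set $S$. We then show that
\[
\partial J^+_{\widehat L_\eps}(S)\subseteq \widehat J^+_\eps\bigl(S\bigr)\cap\{\text{points reached within parameter } \le T_0\}
\]
for a uniform bound $T_0$. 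This gives compactness for $\widehat L_\eps$. Using $J^+_L(S)\subseteq J^+_{\widehat L_\eps}(S)$ and a limit-curve argument in the $C^{(1,2)}$ setting, we then conclude that $\partial J^+_L(S)$ is compact. Alternatively, we can argue directly that $L$-null generators maximize and must stop within parameter $T_0$.

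\textbf{Step 3 (focusing for the smooth approximations, the key step).} We must show that every $\widehat L_\eps$-null geodesic orthogonal to $S$ develops a focal point within uniformly bounded $\kappa$-parameter, provided the corresponding $L$-geodesics are complete. The tool is the weighted Raychaudhuri equation for smooth Finsler spacetimes \cite{LuMinguzziOhtaSingularity} in the $\kappa$-reparametrization. Its input is the surrogate null $N$-energy condition $\Ric^N_\m[\widehat L_\eps](v)\ge-\delta$ on compact sets of null vectors, obtained from Proposition~\ref{Prop: approx weighted energy conditions}, which is the null analogue of the timelike statement and exploits the cancellations from Remark~\ref{rmk:WeightedRicciSimplified}.

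Completeness of $L$-geodesics must be transferred to the approximating geodesics. Since geodesics of $L$ are not unique, we proceed indirectly. We take $\widehat L_\eps$-geodesics $\eta_\eps$ from $S$ and use Arzelà--Ascoli, with uniform $C^1$ bounds on the geodesic spray coming from $L\in C^{(1,2)}$, to extract limits that are $L$-geodesics orthogonal to $S$. Completeness of these limits then yields long existence intervals for $\eta_\eps$ on compact parameter ranges. The weighted comparison ODE with initial value $\theta_\kappa<-\beta$ and error $\delta$ then forces a focal point before parameter $T_0\approx c/\beta$ once $\delta$ is small.

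\textbf{Main obstacle.} The hardest part is Step 3. One difficulty is controlling the $\kappa$-parameter, which depends on $\m$ and $\Psi$ and hence must converge with $\m_\eps\to\m$ in $C^1$. The other is making the energy-condition estimate uniform along the relevant compact set of null directions, which is needed because the approximating geodesics stay in a compact region of $TM$ only up to the focal time. I would first establish this uniformity by proving a null version of Proposition~\ref{Prop: approx weighted energy conditions} on the compact set $\{v\text{ null}, v\in\mathcal K\}$, normalized by an auxiliary Riemannian metric. I would then use the hypothesis $n\ge3$ and $N\in(-\infty,2]\cup[n,\infty]$ to ensure that $c_{N,\kappa}>0$ in the null Riccati inequality.
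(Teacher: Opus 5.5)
\textbf{There is a genuine gap in Step 3, caused by your choice of the outer approximation $\widehat L_\eps$.} The surrogate bound $\Ric^N_{\m_\eps}[\widehat L_\eps](v)\ge-\delta$ for $\widehat L_\eps$-null $v$ does not follow from anything you cite.

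Proposition~\ref{Prop: approx weighted energy conditions}\ref{appRN2} is stated and proved only for $\widecheck L_\eps$-null vectors, and this restriction is essential. Its proof goes through Proposition~\ref{Prop: Null approximate Ricci}, which writes $\widecheck L_\eps=L_\eps+\eps Q$ with $Q$ large. This forces $-c\eps\le L\le-\eps$ on the support of the mollifying density $\mu_{\eps,v}$ of any $\widecheck L_\eps$-null $v$, so that $\mu_{\eps,v}/a_{\eps,v}\in M_{c\eps}(\mathcal K')$. These are exactly the densities the distributional null $N$-energy condition of Definition~\ref{Definition: Weighted energy conditions} controls: densities supported in $\Omega^C$ near the null cone.

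For $L\prec\widehat L_\eps$ the situation is reversed. Every future $\widehat L_\eps$-null vector is $L$-spacelike, with $0<L(v)=O(\eps)$. So $\mathcal H_\eps\Ric^N_\m[L](v)$ tests the distribution on $L$-spacelike vectors outside $\Omega^C$, where the hypothesis says nothing. Since $\Ric^N_\m$ is only a distribution on $TM\setminus\mathbf 0$, there is no pointwise continuity to carry the bound across the cone. You would need an additional estimate controlling the mollified curvature across the $O(\eps)$ gap between the $L$- and $\widehat L_\eps$-null cones, and your proposal provides none.

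\textbf{Step 2 has related problems.} Compactness of $\partial J^+_{\widehat L_\eps}(S)$ does not give compactness of $\partial J^+_L(S)$ via $J^+_L(S)\subseteq J^+_{\widehat L_\eps}(S)$. Points of $J^+_L(S)\setminus I^+_L(S)$ typically lie in $I^+_{\widehat L_\eps}(S)$. Nor is global hyperbolicity of $\widehat L_\eps$ ``with the same Cauchy hypersurface'' automatic. Your alternative, showing that $L$-null generators stop maximizing, needs smooth \emph{maximizing} null geodesics converging to $L$-maximizers, and outer approximations cannot supply these.

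\textbf{The fix is to work with $\widecheck L_\eps$ throughout, as the paper does.}
\begin{enumerate}
\item Since $I^+_{\widecheck L_\eps}(S)\subseteq I^+_L(S)$, every $y\in J^+(S)\setminus(I^+(S)\cup S)$ is a limit of points $y_m$. Each $y_m$ is reached by a maximizing $\widecheck L_{\eps_m}$-null geodesic from $S$, and these converge in $C^2$ to an $L$-maximizer (Corollary~\ref{Corollary: Trappedsurfacemaxaslimit}).
\item Under the $\kappa$-completeness hypothesis, consider the set $F_{\mathcal K,1}$ generated by $L$-geodesics with initial data $\lambda V^\pm$, where $\varphi^0_\eta(\lambda)\le 2/(-\bar c_{N,\kappa}\theta^\pm_\kappa)$. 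This set is compact.
\item If $y\notin F_{\mathcal K,1}$, weighted null focusing for $\widecheck L_{\eps_m}$ with $r=2/3$ contradicts maximality. Here the bound of Proposition~\ref{Prop: approx weighted energy conditions}\ref{appRN2} now genuinely applies.
\item Hence $J^+(S)\setminus I^+(S)$ is compact for $L$ itself, and your topological argument from Step 1 concludes.
\end{enumerate}

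Minor point: positivity of $\bar c_{N,\kappa}$ comes from null admissibility of $\kappa$, not from the range of $N$ alone.
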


For all of the singularity theorems stated above, time reversal gives past distance bounds or past incompleteness; see Corollary \ref{cr:past-sing} and Remark \ref{remark: past versions of singularity theorems} for details.

Let us remark that, in the case of $C^1$ Lorentzian metrics, our unweighted singularity theorems recover the ones established by Graf in \cite{Graf-SingularityC1}, while the weighted theorems provide genuine extensions of those even for Lorentzian manifolds. 
A common feature of both approaches is that branching of geodesics is not excluded, while the synthetic singularity theorems using optimal transport (see \cite{CavallettiMondino, CavallettiManiniMondinosynthetic}) require nonbranching.
As for low regularity Finsler structures, it is unclear if curvature bounds imply nonbranching (cf.\ \cite{OhtaBranching, MagnaboscoCDbranching}), hence our theorems would not be subsumed by their synthetic counterparts \cite{CavallettiMondino, CavallettiManiniMondinosynthetic}.

\subsection{Notation and conventions}

We set $\N_0 :=\N \cup \{0\}$. 
All manifolds are assumed to be smooth, connected, paracompact, Hausdorff, and without boundary. 
We denote by $\tilde g$ a fixed complete Riemannian background metric on $M$. 
We denote by $\mathbf 0 \subseteq TM$ the zero section and by $\pi:TM \to M$ the canonical projection. 
Throughout, we will make use of the Einstein summation convention.
\subsection{Outlook}

Our work opens the possibility of further investigation of low regularity Finsler structures from various angles. 
For Lorentz--Finsler structures, the Hawking--Penrose singularity theorem \cite{KOSS} and the splitting theorem \cite{quintetnonsmooth} in $C^{(1,3)}$ regularity (or in $C^{(1,2)}$ regularity with $C^1$ weights) are natural next goals. 
It would further be interesting to investigate the links between our distributional energy conditions and corresponding bounds expressed via optimal transport \cite{BraunCalisti, braunsalamocandal}. 
Such bounds are likewise relevant in the setting of low regularity Riemann--Finsler structures; cf.\ \cite{OhtaMCP, OhtaCD} for the analogous results in the smooth case.

Furthermore, as already alluded to, relaxing the smoothness assumptions in the definition of semi-Riemann--Finsler structures allows for more flexibility in constructing examples that do not come from semi-Riemannian metrics, which is of great independent interest.
In particular, our results show that such low regularity Lorentz--Finsler structures are still able to make physically meaningful predictions about the formation of singularities.

\section{The geometry of low 
regularity Finsler spacetimes}

In this section, we describe our framework for Finsler spacetimes of low regularity.
Here, in contrast to the purely Lorentzian case, one allows for additional nonsmoothness in (vertical) tangent directions.
We refer to \cite{MinguzziRaychaudhuri,LuMinguzziOhtaSingularity} for the basics of Lorentz--Finsler geometry and Finsler spacetimes, and to \cite{BCS,Shlec,Ohta2021} for the positive definite setting of Finsler manifolds.

\subsection{Mixed regularity of functions on the tangent bundle}
\label{subsection: mixed regularity of maps}

Let $M$ be an $n$-dimensional smooth manifold and denote by $TM$ its tangent bundle.
If $(U,(x^i))$ is a chart on $M$, then a chart on the open set $TU \subseteq TM$ is obtained by mapping $v \in TU$ to $(x^1(\pi(v)),\ldots,x^n(\pi(v)), v^1,\ldots,v^n)$, where $\pi:TM \to M$ is the canonical projection and
\begin{align*}
    v = v^i \frac{\partial}{\partial x^i}\bigg|_{\pi(v)}.
\end{align*}
If $(V,(y^j))$ is another chart on $M$, then the transition map for the bundle chart on $TU \cap TV = T(U \cap V) \subseteq TM$ is given by $(x^i,v^i) \mapsto (y^j(x),v^i A^{j}_i(x)) = (y^j,w^j)$, where $A^j_i(x) := \frac{\partial y^j}{\partial x^i}(x)$.
The derivatives in the $(x^i,v^i)$-coordinates can be expressed in the new coordinates as follows:
\begin{align*}
    &\frac{\partial}{\partial x^i} 
    = \frac{\partial y^j}{\partial x^i}\frac{\partial}{\partial y^j} 
    + \frac{\partial w^j}{\partial x^i} \frac{\partial}{\partial w^j} 
    = A^j_i \frac{\partial}{\partial y^j} 
    + v^k \frac{\partial A^j_k}{\partial x^i} \frac{\partial}{\partial w^j},\\
    &\frac{\partial}{\partial v^i} 
    = \frac{\partial w^j}{\partial v^i} \frac{\partial}{\partial w^j} 
    = A^j_i \frac{\partial}{\partial w^j}.
\end{align*}
Note that, in the (horizontal) $x$-directions, mixed derivatives appear after a coordinate change, while in the (vertical) $v$-directions, no such derivatives appear. 
Given that both the transition map and its derivatives $A^j_i$ are smooth, the following definition is independent of the specific choice of a chart.

\begin{Definition}[$C^{(k,l),\alpha}$-regular maps]
    Let $M, N$ be smooth manifolds and let $k,l \in \mathbb{N}_0 \cup \{+\infty\}$, $\alpha \in [0,1]$, and $\mathcal{U} \subseteq TM$ be an open set.
    A map $f: \mathcal{U} \to N$ is said to be $C^{(k,l),\alpha}$-\emph{regular} if, in any bundle chart $(x^i,v^i)$ on $\mathcal{U}$, for its coordinate representation $f(x,v)$ the partial derivatives $\partial_x^{\beta} \partial_v^{\sigma} f$ exist and are
    locally $\alpha$-H\"older continuous 
    for all multi-indices $\beta,\sigma$ with $|\beta|\leq k$, $|\sigma| \leq k + l - |\beta|$.
    We write $C^{(k,l),\alpha}(\mathcal{U})$ for the space of $C^{(k,l),\alpha}$-regular functions on $\mathcal{U} \subseteq TM$. 
\end{Definition}

In the above, $\partial_x^\beta \partial_v^\sigma$ is a placeholder for any derivative of the form 
\begin{equation*}
    \partial_{x^{s_1}}^{\beta_1}\cdots \partial _{x^{s_n}}^{\beta_n} \partial_{v^{r_1}}^{\sigma_1}\cdots \partial_{v^{r_n}}^{\sigma_n} f, 
\end{equation*}
where $\beta,\sigma \in \N_0^n$ are multi-indices with $|\beta| = \sum_i \beta_i \leq k$, $|\sigma| = \sum_i \sigma_i \leq k+l-|\beta|$, and $(s_1,\ldots,s_n)$, $(r_1,\ldots,r_n)$ are arbitrary permutations of $(1,\ldots,n)$.

When $\alpha = 0$ (i.e., $\partial_x^\beta \partial_v^\sigma f$ is continuous), we omit it in the notation; namely $C^{(k,l)}:=C^{(k,l),0}$. 
Note that the usual regularity classes are obtained for $l = 0$: $C^k(TM) = C^{(k,0)}(TM)$ and $C^{k,\alpha}(TM) = C^{(k,0),\alpha}(TM)$.
By Schwarz's theorem, as long as one takes up to $k+l$ admissible derivatives in $x$ and $v$, one can order the various partial derivatives in any way without changing the result.

We remark that if $f:\mathcal{U} \to N$ is $C^{(k',l'),\alpha'}$-regular, then it is also $C^{(k,l),\alpha}$-regular provided $k \leq k'$, $k+l \leq k'+l'$ and $\alpha \leq \alpha'$. 
Moreover, note that $C^{k,\alpha} \subseteq C^{(k',l'),\alpha}$ as long as $k' + l' \leq k$.

\begin{Remark}[Generalization to vector bundles]
The fact that no mixed derivatives appear in the expression for $\partial / \partial v^i$ is a consequence of the vector bundle structure of $TM$.
Thus, more generally, it makes sense to consider maps from an open set $\mathcal{U} \subseteq \mathcal{V}$ of an arbitrary smooth vector bundle $\mathcal{V}$ over $M$ to any other smooth manifold $N$ that are of class $C^{(k,l),\alpha}$.
\end{Remark}

\subsection{Low regularity Lorentz--Finsler structures and Finsler spacetimes}

We now introduce low regularity Lorentz--Finsler manifolds and Finsler spacetimes, building on the notion of $C^{(k,l),\alpha}$-regularity.

\begin{Definition}[$C^{(k,l),\alpha}$ Lorentz--Finsler structures]
\label{Definition: Lorentz Finsler structure}
Let $M$ be a smooth manifold, $k,l \in \mathbb{N}_0 \cup \{+\infty\}$ with $k+l \geq 2$, and $\alpha \in [0,1]$. A function $L: TM \to \R$ is called a $C^{(k,l),\alpha}$ \emph{Lorentz--Finsler structure} on $M$ if
\begin{enumerate}
    \item[(1)] $L \in C^{(k,l),\alpha}(TM \setminus \mathbf0)$, where $\mathbf{0} \subseteq TM$ denotes the zero section;
    \item[(2)] $L(cv) = c^2 L(v)$ for all $v \in TM$ and $c>0$;
    \item[(3)] For any $v \in TM \setminus \mathbf 0$, the symmetric $(n \times n)$-matrix
    \begin{equation*}
        g_{ij}(v) := \frac{\partial^2 L}{\partial v^i \partial v^j}(v)
    \end{equation*}
    is non-degenerate with signature $(-,+,\ldots,+)$.
\end{enumerate}
We call the pair $(M,L)$ a $C^{(k,l),\alpha}$ \emph{Lorentz--Finsler manifold}.
\end{Definition}

Observe that any $C^{(k,l),\alpha}$ Lorentz--Finsler structure is continuous at the zero section $\mathbf{0} \subseteq TM$ (with $L=0$ on $\mathbf{0}$) as a consequence of the positive $2$-homogeneity asserted in condition (2). 
We also remark that $L(-v)$ may not coincide with $L(v)$ since $c>0$ in (2).
Moreover, by condition (3), for each $v \in T_xM \setminus \mathbf{0}$, the matrix $g_{ij}(v)$ (components of the \emph{fundamental tensor}) defines a Lorentzian metric $g_v$ on $T_xM$ via
\begin{equation}\label{eq:g_v}
g_v(w_1,w_2) :=g_{ij}(v)w_1^i w_2^j
\end{equation}
for $w_1,w_2 \in T_xM$.

\begin{Example}[Low regularity Lorentzian metrics]
Let $g$ be a $C^{k,\alpha}$ Lorentzian metric on $M$ for $k \in \N_0 \cup \{+\infty\}$ and $\alpha \in [0,1]$. Then it naturally induces a $C^{(k,\infty), \alpha}$ Lorentz--Finsler structure on $M$ via
\begin{equation*}
    L(v):= \frac{1}{2}g(v,v).
\end{equation*}
As is easily seen,
\begin{equation*}
    \frac{\partial^2 L}{\partial v^i \partial v^j}(v) = g\biggl( \frac{\partial}{\partial x^i}, \frac{\partial}{\partial x^j} \biggr)
\end{equation*}
for all $v \in TM \setminus \mathbf{0}$, so in this case, the induced Lorentzian metric $g_v$ as in \eqref{eq:g_v} is equal to $g$, and thus independent of the direction $v$.
\end{Example}

\begin{Example}[Low regularity Randers metrics]\label{ex:Randers}
An example of a low regularity \emph{Riemann--Finsler structure} (whose definition is exactly the same as that of a Lorentz--Finsler structure $L$, except that $g_{ij}(v)$ is required to be positive definite for all $v \in TM \setminus \mathbf{0}$) is given by \emph{Randers structures} of the form
\begin{equation*}
    L(v):=\frac{1}{2}\bigl(\sqrt{g(v,v)} + \beta(v)\bigr)^2,
\end{equation*}
where $g$ is a $C^{k,\alpha}$ Riemannian metric and $\beta$ is a $C^{k,\alpha}$ $1$-form such that $|\beta|_g < 1$. 
The resulting Randers metric $L$ is then of regularity $C^{(k,\infty),\alpha}$. 
Lorentzian Randers structures defined on all of $TM$ are difficult to describe; we refer to the discussion in \cite{Minguzzi-LightconesFinsler}.
\end{Example}

\begin{Example}[An explicit warped product-type example]
An explicit Lorentz--Finsler structure of regularity $C^{(1,3)}$, the class for which we establish our unweighted singularity theorems, can be constructed as follows. 
Let $M = \R \times \R^n$, $a(t):=1 + |t|^{3/2}$ and define $L: TM = (\R \times \R^n) \times (\R \times \R^n) \to \R$ by
\begin{equation*}
    2L\bigl((t,x),(\tau,v)\bigr):= - 5\tau^2 + a(t)^2 \sum_{i= 1}^n (v^i)^2 + \biggl( |\tau|^{9/2} + a(t)^{9/2} \sum_{i=1}^n |v^i|^{9/2} \biggr)^{4/9}.
\end{equation*}
An explicit computation shows that $L$ is of regularity $C^{(1,3)}$, but not of regularity $C^{(1,4)}$, and that its vertical Hessian has Lorentzian signature at every $(\tau,v) \neq 0$.
\end{Example}

Similarly to the Lorentzian case, we call $v \in TM$ \emph{timelike} if $L(v) < 0$, \emph{null} if $L(v) = 0$ and $v \not\in \mathbf{0}$, and \emph{spacelike} if $L(v) > 0$ or $v \in \mathbf{0}$.
Timelike and null vectors are collectively referred to as \emph{causal}.
We write $\Omega_x^{'T}$ for the set of timelike vectors in $T_xM$, and $\Omega^{'T}:=\bigcup_{x \in M} \Omega_x^{'T}$. 
Similarly, we write $\Omega_x^{'C}$ and $\Omega^{'C}$ for the respective sets of causal vectors.
Moreover, we set
\[ F(v):=\sqrt{-2L(v)} \]
if $v$ is causal. By Euler's homogeneous function theorem, $F(v) = \sqrt{-g_v(v,v)}$. 
It is a consequence of \cite[Lem.\ 1]{Minguzzi-LightconesFinsler} that $\Omega^{'T}_x = (\Omega^{'C}_x)^\circ$, $\partial \Omega^{'T}_x = \partial \Omega^{'C}_x$ and $\overline{\Omega^{'T}_x} = \Omega^{'C}_x \cup \{0\}$.

\begin{Definition}[$C^{(k,l),\alpha}$ Finsler spacetimes]
Let $(M,L)$ be a $C^{(k,l),\alpha}$ Lorentz--Finsler manifold. 
If there is a continuous timelike vector field $X$ on $M$, we call $(M,L)$ a $C^{(k,l),\alpha}$ \emph{Finsler spacetime} that is \emph{time-oriented} by $X$.
\end{Definition}

As in the smooth case, whenever there is a continuous timelike vector field $X$, there also exists a smooth timelike vector field $\widetilde{X}$ such that $X(x)$ and $\widetilde{X}(x)$ are in the same connected component of $\Omega_x^{'T}$ for every $x \in M$. 
This follows from a simple partition of unity argument by patching together smooth timelike vector fields obtained via local convolution in coordinates.

Given a $C^{(k,l), \alpha}$ Finsler spacetime $(M,L)$ time-oriented by $X$, we say that a causal vector $v \in \Omega_x^{'C}$ is \emph{future}(-\emph{directed}) if it lies in the same connected component of $\Omega_x^{'C}$ as $X(x)$.
We write $\Omega_x^T \subseteq \Omega_x^{'T}$ for the set of all future timelike vectors at $x$, similarly $\Omega_x^C \subseteq \Omega_x^{'C}$, as well as
\begin{equation*}
    \Omega^T := \bigcup_{ x \in M} \Omega_x^T, \qquad \Omega^C:=\bigcup_{x \in M} \Omega_x^C.
\end{equation*}

For the future timelike cone $\Omega^T_x$, consider its polar cone
\[
(\Omega^T_x)^*:=\bigl\{ \omega \in T_x^*M : \omega(v) < 0
\text{ for all $v \in \Omega^C_x$} \bigr\},
\]
which is an open convex cone in $T_x^*M$. 
For $\omega \in \overline{(\Omega_x^T)^*}$, define 
\begin{equation*}
    L^*(\omega) := -\frac{1}{2} \inf_{v \in \Omega_x^T \cap L^{-1}(-1/2)} \omega(v)^2.
\end{equation*}
Setting $F^*(\omega):=\sqrt{-2L^*(\omega)}$ for $\omega \in \overline{(\Omega_x^T)^*}$, for $v \in \Omega_x^C$, we have the \emph{reverse Cauchy--Schwarz inequality}:
\begin{equation*}
    \sqrt{4 L(v) L^*(\omega)} = F(v) F^*(\omega) \leq -\omega(v).
\end{equation*}

By the concavity property of $L$ in timelike directions, we obtain the \emph{Legendre transform} $\mathcal{L}^*: (\Omega_x^T)^* \to \Omega_x^T$, which sends $\omega$ to the unique element $v$ such that $L(v) = L^*(\omega) = \omega(v)/2$ and is a $C^{(k,l-1)}$-diffeomorphism (assuming $l \geq 1$).
A coordinate expression for $\mathcal{L}^*$ is
\begin{equation*}
    \mathcal{L}^*(\omega) = \frac{\partial L^*}{\partial \omega_i}(\omega) \frac{\partial}{\partial x^i},
\end{equation*}
where $\omega=\omega_i \diff x^i$, and the inverse map is given in the same way:
\[
(\mathcal{L}^*)^{-1}(v) =\frac{\partial L}{\partial v^i}(v) \diff x^i.
\]
Thus, in particular, $L^* \in C^{(k,l)}$ by the inverse function theorem.
Setting
\begin{equation*}
    g^*_{ij}(\omega):=\frac{\partial^2 L^*}{\partial \omega_i \partial \omega_j}(\omega)
\end{equation*}
for $\omega \in (\Omega_x^T)^*$,
we note that the matrix $(g^*_{ij}(\omega))$ also has the Lorentzian signature and satisfies (if $k+l \geq 3$)
\begin{equation*}
    \frac{\partial g^*_{ij}}{\partial \omega_k}(\omega) \omega_k = \frac{\partial g^*_{ik}}{\partial \omega_j}(\omega) \omega_k = 0
\end{equation*}
due to Euler's homogeneous function theorem, and that $(g^*_{ij}(\omega))$ is the inverse matrix of $(g_{ij}(\mathcal{L}^*(\omega)))$.

A piecewise $C^1$ curve $\eta:I \subseteq \R \to M$ in a $C^{(k,l), \alpha}$ Finsler spacetime $(M,L)$ is called \emph{timelike} if for all $t \in I$ for which $\dot \eta(t)$ exists, $\dot \eta(t)$ is a timelike vector. If $\eta$ is merely absolutely continuous, we say that it is \emph{causal} if for a.e.\ $t \in I$, $\dot \eta(t)$ is a causal vector. Similarly, we define what it means for timelike and causal curves to be \emph{future}(-\emph{directed}).
We define the \emph{chronological} (or \emph{timelike}) relation $x \ll y$ by the existence of a future timelike curve from $x$ to $y$, and the \emph{causal} relation $x \leq y$ by either $x= y$ or the existence of a future causal curve from $x$ to $y$. 
Then we employ the usual notation:
\begin{align*}
I^+(x) &:=\{ y \in M : x \ll y \}, \qquad
I^-(x) :=\{ y \in M : y \ll x \}, \\
J^+(x) &:=\{ y \in M : x \le y \}, \qquad
J^-(x) :=\{ y \in M : y \le x \}.
\end{align*}
Moreover, for $A \subseteq M$, we set $I^+(A):=\bigcup_{x \in A} I^+(x)$.

We define the \emph{Lorentz--Finsler arclength} and \emph{Lorentz--Finsler distance} (or \emph{time separation}) by
\begin{equation*}
    \mathbf L(\eta):=\int_0^1 F\bigl( \dot{\eta}(t) \bigr) \diff t, \qquad d(x,y):=\sup_{\eta} \mathbf{L}(\eta),
\end{equation*}
where the supremum runs over future causal curves $\eta:[0,1] \to M$ from $x$ to $y$. 
Note that, by convention, $d(x,y) = 0$ if there is no such curve. 
Given any subset $A \subseteq M$, we define $d_A(y):=\sup_{x \in A} d(x,y)$.
For any $y \in J^+(A)=\bigcup_{x \in A}J^+(x)$, we say that a future causal curve $\eta:[0,1] \to M$ with $\eta(0) \in A$ and $\eta(1) = y$ \emph{maximizes} the distance from $A$ to $y$ if $\mathbf L(\eta) = d_A(y)$.

The following lemma will be useful in several places.

\begin{Lemma}[Proper cone structures]
\label{Lemma: lowregFinsler cone structure}
Let $(M,L)$ be a $C^{(1,3)}$ Finsler spacetime. 
Then the future causal cone distribution $\Omega^C  \subseteq TM$ is a locally Lipschitz proper cone structure in the sense of \textnormal{\cite{Minguzziconestructure}}.
\end{Lemma}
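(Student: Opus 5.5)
We recall the notion from \cite{Minguzziconestructure}. A \emph{proper cone structure} is a distribution $x \mapsto C_x \subseteq T_xM\setminus\{0\}$ of closed, sharp, convex cones with nonempty interior that is upper semicontinuous. \emph{Locally Lipschitz} means that, in local coordinates, the map $x \mapsto C_x$ is locally Lipschitz with respect to the Hausdorff distance of the compact sections $C_x \cap \{\tilde g\text{-unit sphere}\}$, or equivalently of the sections $C_x \cap \{v^0 = 1\}$ for a suitable affine slice. The plan has two steps. First we check the pointwise cone properties of $\Omega^C_x$. Then we show Lipschitz dependence on $x$ using the regularity $L \in C^{(1,3)}$, which in particular gives that $L$ and $\partial_v L$ are $C^1$ jointly in $(x,v)$ on $TM\setminus\mathbf 0$.

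\textbf{Pointwise properties.} At a fixed $x$, $L_x := L|_{T_xM}$ is a $C^3$ (in fact $C^4$) function, positively $2$-homogeneous, whose Hessian has Lorentzian signature. This is exactly the setting of \cite[Lem.\ 1]{Minguzzi-LightconesFinsler}, which we have already recalled: $\Omega^{'T}_x$ is the interior of $\Omega^{'C}_x$, with $\overline{\Omega^{'T}_x}=\Omega^{'C}_x\cup\{0\}$. The same argument (in Minguzzi's paper, or via the reverse Cauchy--Schwarz inequality) shows the following: $F$ is concave on each component of $\Omega^{'T}_x$; $\Omega^{'T}_x$ has exactly two components, which are convex and sharp; and the future component is singled out by $X(x)$. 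Hence $\Omega^C_x\cup\{0\}$ is a closed, convex, sharp cone with nonempty interior $\Omega^T_x$. Its boundary is the future null cone $\{L_x=0\}$, which is a $C^1$ hypersurface away from $0$, because $\partial_vL(v)=g_v(v,\cdot)\neq 0$ for null $v\neq0$.

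\textbf{Lipschitz dependence.} Fix a chart around $x_0$ and choose coordinates with $X(x_0)=\partial_0$. Near $x_0$ we intersect the cone with the slice $\{v^0=1\}$. The null boundary there is the level set $\{L(x,1,\vec v)=0\}$. By the implicit function theorem we describe it in polar form $\vec v = r(x,\theta)\theta$ with $\theta\in S^{n-2}$. The implicit function theorem applies because the radial derivative equals
\[
\partial_r L(x,1,r\theta)=\partial_vL(x,1,r\theta)\cdot(0,\theta).
\]
This derivative is nonzero: the section of the cone is strictly convex and contains the origin of the slice in its interior, so the radial line is transversal to the boundary. In fact one can check directly that $g_v(v,(0,\theta))\neq 0$ at null $v$, since otherwise $(0,\theta)$ would be tangent to the cone. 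Because $L$ is $C^1$ in $(x,v)$, the radius $r$ is $C^1$ in $(x,\theta)$. It is therefore Lipschitz in $x$ uniformly in $\theta$ on compact sets, which gives local Lipschitz continuity of the sections in Hausdorff distance. Continuity, and hence upper semicontinuity, follows as well.

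\textbf{Main obstacle.} The delicate step is the uniform transversality, i.e.\ $\partial_r L\neq0$ uniformly over the compact set of $(x,\theta)$. Closely related is matching the precise definition of "locally Lipschitz proper cone structure" in \cite{Minguzziconestructure}, which is phrased via a local Lipschitz bound on the Hausdorff distance of the cone sections. I would first prove the uniform bound $|\partial_rL|\ge c>0$ by compactness of $\{(x,\theta)\}$ and continuity of $\partial_vL$. Then I would translate the $C^1$ graph description into the required Hausdorff estimate.

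Finally, the higher vertical regularity in $C^{(1,3)}$ is not essential beyond $C^{(1,1)}$ for this argument. It is presumably stated because $C^{(1,3)}$ is the standing hypothesis of the paper.
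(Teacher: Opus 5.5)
Your argument is essentially correct, but it is not the paper's route. The paper proves nothing here: it invokes \cite[Thm.\ 2.52]{Minguzziconestructure} and remarks that the joint $C^2$ hypothesis there is never used. You instead give a self-contained proof in two steps. First, fiberwise convexity and sharpness of the future component come from Minguzzi's light-cone results. Second, the implicit function theorem describes the future null boundary as a $C^1$ radial graph $r(x,\theta)$ over an affine slice, which gives Hausdorff--Lipschitz dependence of the sections. The gain is transparency: it shows exactly which regularity is used, namely joint $C^1$ regularity of $L$ and $\partial_vL$ on $TM\setminus\mathbf 0$ plus the fiberwise Lorentzian Hessian. That is precisely the content of the paper's remark. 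The citation buys brevity and an automatic match with Minguzzi's definitions. You prove two-sided continuity of the sections, not just closedness of $\Omega^C$. So you also obtain the non-degeneracy of the interior of $\Omega^C$ in $TM$ that enters Minguzzi's notion of properness, and your slightly incomplete recollection of that definition does no harm.

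Two points need repair.

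\emph{The slice.} Imposing only $\partial_0=X(x_0)$ does not make $\{v^0=1\}$ cut $\Omega^C_{x_0}$ in a bounded set. If $\mathrm{span}\{\partial_1,\dots,\partial_{n-1}\}$ at $x_0$ contains a future causal $w$, then $X(x_0)+sw\in\Omega^T_{x_0}$ for all $s\ge 0$, so $r(x_0,\theta)=\infty$ in that direction. Choose $\partial_1,\dots,\partial_{n-1}$ at $x_0$ spanning $\ker g_{X(x_0)}(X(x_0),\cdot)$. By the Legendre-transform property $g_X(X,\cdot)\in(\Omega^T_{x_0})^*$, this kernel meets $\overline{\Omega^T_{x_0}}$ only in $0$. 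Then define $r(x,\theta)$ as the parameter at which the ray $s\mapsto(1,s\theta)$ leaves the convex set $\Omega^C_x\cup\{0\}$. A line through the interior point $\partial_0$ meets that set in an interval, so this is well defined. Do not define $r$ as ``the'' zero of $L(x,1,r\theta)$, since the slice may also meet other components of the null cone. With this definition, the implicit function theorem at finitely many directions, together with convexity, gives boundedness and the $C^1$ graph description for all $x$ near $x_0$.

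\emph{Number of components.} ``Exactly two components'' holds only for $n\geq 3$. For $n=2$, take $L=\tfrac12|v|^2\cos 3\phi$ in Euclidean polar coordinates on $\R^2$. Its Hessian determinant is $-\tfrac72\cos^2 3\phi-\tfrac94\sin^2 3\phi<0$, and it has three timelike sectors. This does not affect the lemma, since you only use that the component containing $X(x)$ is a convex sharp cone, which holds in every dimension.
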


\begin{proof}
    This was already shown in \cite[Thm.\ 2.52]{Minguzziconestructure}, noting that the $C^2$ assumption in its statement is not needed in the proof.
\end{proof}

\subsection{Covariant derivative and geodesics}

Next, we introduce the covariant derivative under slightly better regularity.

\begin{Definition}[Chern connection]
Let $(M,L)$ be a $C^{(1,3),\alpha}$ Finsler spacetime.
Define
\begin{equation}\label{def:FormalChristoffels}
    \gamma^i_{jk}(v):= \frac{1}{2} g^{il}(v) \left( \frac{\partial g_{lk}}{\partial x^j} + \frac{\partial g_{jl}}{\partial x^k} - \frac{\partial g_{jk}}{\partial x^l}\right)(v)
\end{equation}
for $v \in TM \setminus \mathbf{0}$,
where $(g^{ij}(v))$ denotes the inverse matrix of $(g_{ij}(v))$.
Further, set for $v \in TM \setminus \mathbf{0}$
\begin{equation*}
    G^i(v):= \gamma^i_{jk}(v) v^j v^k, \qquad
    N^i_j(v):=\frac{1}{2} \frac{\partial G^i}{\partial v^j}(v).
\end{equation*}
By convention, we put $G^i(0) := 0$ and $N^i_j(0):=0$. 
The coefficients of the \emph{Chern connection} of $(M,L)$ are now defined for $v \in TM \setminus \mathbf{0}$ via
\begin{equation*}
    \Gamma^i_{jk}(v):=\gamma^i_{jk}(v) - \frac{1}{2} g^{il}(v) \left(\frac{\partial g_{lk}}{\partial v^m} N^m_j + \frac{\partial g_{jl}}{\partial v^m} N^m_k - \frac{\partial g_{jk}}{\partial v^m} N^m_l \right)(v).
\end{equation*}
\end{Definition}

\begin{Remark}[On regularities]\label{rm:regular}
Suppose that $(M,L)$ is a $C^{(1,2)}$ Finsler spacetime.
Invoking Euler's homogeneous function theorem, we see that the spray coefficients $G^{i}$ satisfy
\begin{equation}\label{def:SprayCoeffImproved}
    G^i(v)= g^{il}(v) \left( \frac{\partial^2 L}{\partial x^j \partial v^l}(v)v^j- \frac{\partial L}{\partial x^l}(v)\right).
\end{equation}
This allows us to define the nonlinear connection coefficients $N^{i}_j$ even in this regularity, as $\partial_x^{\beta} \partial_v^{\sigma} L$ is well-defined and continuous for all $|\beta| \leq 1, |\sigma| \leq 3-|\beta|$.
For the convenience of the reader, we summarize the regularities of the various objects associated with $L \in C^{(1,2)}$:
\begin{align*}
    g_{ij},\, g^{ij} &\in C^{(1,0)}(TM \setminus \mathbf{0}),\\
    G^{i} &\in C^{(0,1)}(TM \setminus \mathbf{0}),\\
    \gamma^i_{jk},\,N^i_j,\, \Gamma^i_{jk} &\in C^{(0,0)}(TM \setminus \mathbf{0}).
\end{align*}
\end{Remark}

\begin{Definition}[Covariant derivative]
Let $(M,L)$ be a $C^{(k,l),\alpha}$ Finsler spacetime with $k \geq 1$ and $l \geq 2$.
Let $Y = Y^i \partial_{x^i},V = V^i\partial_{x^i}$ and $W = W^i \partial_{x^i} \neq 0$ be vector fields on $M$ of regularity $C^{m_1,\alpha_1}, C^{m_2,\alpha_2}$ and $C^{m_3,\alpha_3}$, respectively, where $m_1 \geq 1$.
Then the \emph{covariant derivative} of $Y$ in the direction $V$ with respect to \emph{reference direction} $W$ is defined as
\begin{equation*}
    D^W_V Y := \left( V^j \frac{\partial Y^i}{\partial x^j} + \Gamma^i_{jk}(W) Y^j V^k\right) \frac{\partial}{\partial x^i}.
\end{equation*}
Evidently, it is a vector field of regularity $C^{\min(k-1,l-2, m_1-1, m_2,m_3),\min(\alpha,\alpha_1,\alpha_2,\alpha_3)}$.
\end{Definition}

Let $(M,L)$ be a $C^{(1,2)}$ Finsler spacetime and recall from Remark~\ref{rm:regular} that the coefficients $\gamma^i_{jk}$ are continuous in this case.

\begin{Definition}[Geodesics]
A $C^2$-curve $\eta:I \to M$ with an interval $I \subseteq \R$ is called a \emph{pregeodesic} if
\begin{equation*}
\Ddot{\eta}^i + \gamma^i_{jk}(\dot{\eta})\dot \eta^j \dot \eta^k = \lambda \dot{\eta}^i
\end{equation*}
on $I$ for some continuous function $\lambda:I \to \R$. It is called a \emph{geodesic} if $\lambda \equiv 0$. 
We may equivalently characterize (pre)geodesics via the Chern connection, i.e., $D^{\dot{\eta}}_{\dot{\eta}}\dot{\eta} =\lambda \dot{\eta}$ or $D^{\dot{\eta}}_{\dot{\eta}}\dot{\eta} = 0$.
\end{Definition}

Note that by Peano's theorem, for any initial datum $v \in TM$ there exists a (not necessarily unique) geodesic $\eta:I \to M$ defined on its maximal domain $I$ with $0 \in I$ and $\dot{\eta}(0) = v$. 
We say that $(M,L)$ is (\emph{future/past} \emph{timelike/null/causal}) \emph{geodesically incomplete} if there exists a solution of the geodesic equation of the respective quality whose maximal domain is bounded above (for future incompleteness) or below (for past incompleteness).

Along any geodesic $\eta$, $L(\dot{\eta})$ is constant since
\[
\frac{\diff}{\diff t}\bigl[ L(\dot{\eta}) \bigr]
= \frac{1}{2} \frac{\diff}{\diff t}\bigl[ g_{\dot{\eta}}(\dot{\eta},\dot{\eta}) \bigr]
=g_{\dot{\eta}}(D^{\dot{\eta}}_{\dot{\eta}}\dot{\eta},\dot{\eta})
=0
\]
(see \cite[Eq.\ (3.1)]{LuMinguzziOhtaSingularity} for the second equality).
As a consequence, we obtain the following.

\begin{Lemma}[Fixed causal character]
Geodesics in a $C^{(1,2)}$ Finsler spacetime have fixed causal character, i.e., they are either timelike, null, or spacelike, on the entirety of their domain.
\end{Lemma}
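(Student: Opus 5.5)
The plan is to derive the lemma from the constancy of $L(\dot\eta)$ along geodesics, established just above, together with a continuity argument. Let $\eta:I\to M$ be a geodesic in a $C^{(1,2)}$ Finsler spacetime. Since $\eta$ is $C^2$ and $L\in C^{(1,2)}(TM\setminus\mathbf 0)$, the map $t\mapsto L(\dot\eta(t))$ is $C^1$ on the open set $\{t:\dot\eta(t)\neq0\}$. Its derivative is computed by the chain rule: differentiate $L$ in $x$ and in $v$, using $\partial_{v^i}L = g_{ij}(v)v^j$ (Euler's theorem). Then substitute $\ddot\eta^i = -\gamma^i_{jk}(\dot\eta)\dot\eta^j\dot\eta^k$.

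The key step is to justify the displayed identity
\[
\frac{\diff}{\diff t}L(\dot\eta)=g_{\dot\eta}(D^{\dot\eta}_{\dot\eta}\dot\eta,\dot\eta)
\]
at this low regularity. Rather than citing the smooth computation of \cite{LuMinguzziOhtaSingularity}, I would verify it directly in coordinates. Writing $\partial_{x^j}L(v) = \tfrac12\partial_{x^j}g_{kl}(v)v^kv^l$ is valid because $L$ is $C^1$ in $x$ with $\partial_x\partial_v^2L$ continuous, and $2$-homogeneity gives $L=\tfrac12 g_{kl}v^kv^l$. This yields
\[
\frac{\diff}{\diff t}L(\dot\eta)=\tfrac12\partial_{x^j}g_{kl}\,\dot\eta^j\dot\eta^k\dot\eta^l+g_{il}\dot\eta^l\ddot\eta^i .
\]
Inserting the formula for $\gamma^i_{jk}$ from \eqref{def:FormalChristoffels}, the term $g_{il}\dot\eta^l\gamma^i_{jk}\dot\eta^j\dot\eta^k$ equals $\tfrac12\partial_{x^j}g_{kl}\dot\eta^j\dot\eta^k\dot\eta^l$ by symmetrization. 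Hence the derivative vanishes along a geodesic. Only continuity of $g_{ij}$, $\partial_x g_{ij}$ and $g^{ij}$ is used, all of which hold by Remark~\ref{rm:regular}.

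Next I treat the zero velocity. If $\dot\eta(t_0)=0$ for some $t_0$, then the geodesic equation cannot by itself force $\dot\eta\equiv0$, because of non-uniqueness. However, $|\gamma^i_{jk}(\dot\eta)\dot\eta^j\dot\eta^k|\le C|\dot\eta|^2$ locally, since $\gamma$ is $0$-homogeneous and continuous on the sphere bundle. A Gronwall-type argument then applies: $u=|\dot\eta|$ satisfies $u'\le Cu^2$ locally near $t_0$ with $u(t_0)=0$, which forces $u\equiv0$ in a neighborhood. So the set $\{\dot\eta=0\}$ is open. It is also closed, so by connectedness of $I$ either $\dot\eta\equiv0$ (the constant curve, which is spacelike by convention) or $\dot\eta$ never vanishes. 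In the latter case $L(\dot\eta)$ is a constant $c$ on $I$, and the causal character is determined by the sign of $c$ alone (null means $c=0$ with $\dot\eta\ne0$).

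Finally, the causal character of a causal geodesic cannot flip between future and past, since $\dot\eta$ moves continuously inside $\Omega^{'C}$, whose fibers have the two components separated. This refinement is not strictly required by the statement, but it follows at once. I expect the main obstacle to be the zero-velocity step, namely ruling out a geodesic that passes through $\dot\eta=0$ under non-uniqueness; the quadratic bound plus comparison handles it.
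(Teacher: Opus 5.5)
Your proposal is correct and follows the paper's route: fixed causal character follows from the constancy of $L(\dot\eta)$ along geodesics. Your version is more careful in two places. You verify $\frac{d}{dt}L(\dot\eta)=0$ directly from the $\gamma^i_{jk}$-form of the geodesic equation using only the continuity available in $C^{(1,2)}$, rather than invoking the smooth Chern-connection identity the paper cites. You also rule out a nonconstant geodesic passing through $\dot\eta=0$, using $|G^i(v)|\le C|v|^2$ and a Gronwall argument; the paper leaves this degenerate case implicit.
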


The following is a consequence of the basic ODE theory.

\begin{Proposition}[Existence time estimate for geodesics]
\label{Prop: Existence time for Leps geodesics}
Let $(M,L)$ be a $C^{(1,2)}$ Finsler spacetime and $(L_\eps)_{\eps >0}$ be smooth Lorentz--Finsler structures converging in $C^{(1,2)}_{\loc}$ to $L$ as $\eps \to 0$.
Let $\mathcal K \subseteq TM$ be compact and suppose that all $L$-geodesics with initial data in $\mathcal{K}$ are defined on $[0,\infty)$.
Then for any $T>0$ there exists $\eps_0=\eps_0(\mathcal{K},T)>0$ such that for any $\eps \in (0,\eps_0)$, all $L_\eps$-geodesics with initial data in $\mathcal{K}$ are defined at least on $[0,T]$.
\end{Proposition}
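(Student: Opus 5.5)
We argue by contradiction, combining the convergence of the geodesic sprays with the Peano/Arzelà--Ascoli compactness theory for ODEs with continuous right-hand side. Write the geodesic equation as a first-order system on $TM\setminus\mathbf 0$ (the zero section can be handled separately, since constant curves are geodesics for every structure):
\[
\dot x^i = v^i, \qquad \dot v^i = -G^i(x,v),
\]
and similarly with $G^i_\eps$ for $L_\eps$. By \eqref{def:SprayCoeffImproved}, $G^i$ is built from $g^{il}$, $\partial_x\partial_v L$ and $\partial_x L$, all of which are controlled by the $C^{(1,2)}$ norm of $L$ on compact subsets of $TM\setminus\mathbf 0$. Inverting $g_{ij}$ is continuous on nondegenerate matrices, so $C^{(1,2)}_{\loc}$-convergence $L_\eps\to L$ gives $G_\eps^i\to G^i$ locally uniformly on $TM\setminus\mathbf 0$, chart by chart. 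Since $G^i_\eps$ is smooth, $L_\eps$-geodesics are unique, although we will not need this.

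Suppose the conclusion fails. Then there are $\eps_k\to 0$, $v_k\in\mathcal K$, and maximal $L_{\eps_k}$-geodesics $\eta_k$ with $\dot\eta_k(0)=v_k$ whose maximal existence times satisfy $T_k\le T$. Passing to a subsequence, $v_k\to v\in\mathcal K$. By hypothesis every $L$-geodesic starting at $v$ is defined on $[0,\infty)$, in particular on $[0,T+1]$. We now use the standard compactness of solution sets. The set $\mathcal S$ of all $L$-geodesics with initial data in $\mathcal K$, restricted to $[0,T+1]$, has velocity curves forming an equicontinuous family, and it is closed under uniform limits because Peano solutions of a continuous ODE are stable. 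Hence their images lie in a compact set $\mathcal C\subseteq TM\setminus\mathbf 0$. Null and timelike data stay away from $\mathbf 0$ because $L(\dot\eta)$ is constant and geodesics do not reach zero velocity. This nonvanishing still has to be checked: if $\dot\eta(t_0)=0$, the homogeneity bound $|G(x,v)|\le C|v|^2$ gives a Gronwall-type estimate forcing $\dot\eta\equiv0$, contradicting $v\neq 0$. Pick a compact neighbourhood $\mathcal C'$ of $\mathcal C$ in $TM\setminus\mathbf 0$, measured with $\tilde g$.

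The key step, and the main obstacle, is showing that $\dot\eta_k$ stays inside $\mathcal C'$ on $[0,\min(T_k,T+1))$ for large $k$. Define the exit time $s_k$ as the first time at which $\dot\eta_k$ leaves the interior of $\mathcal C'$; if it never leaves, set $s_k=\min(T_k,T+1)$. On $[0,s_k]$ the right-hand sides $G_{\eps_k}$ are uniformly bounded on $\mathcal C'$, so the curves $\dot\eta_k$ are uniformly Lipschitz. Suppose infinitely many of them exit with $s_k\to s\le T$. By Arzelà--Ascoli a subsequence converges uniformly on $[0,s]$ (after extending constantly) to a curve $\gamma$ solving the $L$-geodesic system, because $G_{\eps_k}\to G$ uniformly on $\mathcal C'$ and we can pass to the limit in the integral equation. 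The initial datum of $\gamma$ is $v$, so $\gamma$ belongs to $\mathcal S$ and its velocity lies in $\mathcal C$. On the other hand $\dot\eta_k(s_k)\in\partial\mathcal C'$, which is at positive distance from $\mathcal C$; uniform convergence then gives a contradiction. Therefore $\dot\eta_k$ stays in $\mathcal C'$ up to time $\min(T_k,T+1)$.

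Finally, a solution of a smooth ODE whose velocity stays in the compact set $\mathcal C'$ on $[0,T_k)$ with $T_k\le T<\infty$ extends beyond $T_k$, by the standard escape-from-compacta criterion for maximal solutions. This contradicts the maximality of $T_k$ and proves the claim.

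To keep everything inside charts, we cover the compact projection $\pi(\mathcal C')$ by finitely many coordinate charts and run the uniform-convergence argument chart by chart, using a Lebesgue-number argument for the time steps. This bookkeeping is routine.
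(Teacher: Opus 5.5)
Your architecture (contradiction, a limit datum $v$, an exit-time argument, then the escape-from-compacta criterion) is reasonable. It amounts to a hands-on proof of the Hartman-type estimate that the paper quotes from \cite{KOSS} after extracting a $C^2_{\loc}$-limit of the $L_{\eps_i}$-geodesics. \textbf{The gap is the construction of the compact set $\mathcal C$.}

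You assert that the velocity curves of all $L$-geodesics with data in $\mathcal K$, restricted to $[0,T+1]$, form an equicontinuous family. But equicontinuity follows from $\ddot\eta^i=-G^i(\eta,\dot\eta)$ only if you already know that all these velocities stay in one fixed compact set, and that is exactly what has to be shown. Since $L$-geodesics are not unique, the hypothesis only says that each of the (possibly uncountably many) solutions is individually defined on $[0,\infty)$. Deducing a uniform bound on $[0,T+1]$ is the compactness part of Kneser's theorem on solution funnels, whose proof is the same kind of diagonal exit-time argument you are trying to run; it is the $\eps=0$ slice of the uniform control in Corollary~\ref{cr:Fcpt}. Without it, your exit-time step has nothing to compare against, so the argument is circular at its crucial point.

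The cleanest repair is to drop $\mathcal C$ and run the exit-time argument directly on the $\eta_k$:
\begin{enumerate}
\item Take a compact exhaustion $\mathcal C_1\subseteq\mathrm{int}\,\mathcal C_2\subseteq\mathcal C_2\subseteq\mathrm{int}\,\mathcal C_3\subseteq\cdots$ of $TM$ with $\mathcal K\subseteq\mathrm{int}\,\mathcal C_1$.
\item Since $T_k\le T$, the escape criterion forces $\dot\eta_k$ to hit each $\partial\mathcal C_j$ at a first time $s_k^{(j)}<T$.
\item On $[0,s_k^{(j)}]$ the $G_{\eps_k}$ are uniformly bounded. Arzel\`a--Ascoli and a diagonal subsequence then give an $L$-geodesic $\gamma$ with $\dot\gamma(0)=v$ and $\dot\gamma(s^{(j)})\in\partial\mathcal C_j$, where $s^{(j)}:=\lim_k s_k^{(j)}\le T$.
\item The $\partial\mathcal C_j$ are pairwise disjoint, so $s^{(j)}$ is strictly increasing with limit $s^*\le T$, and $\dot\gamma$ leaves every compact set as $t\uparrow s^*$.
\item Hence $\gamma$ is an inextendible $L$-geodesic from $v\in\mathcal K$ with finite existence time, contradicting the hypothesis.
\end{enumerate}
This is exactly the Hartman estimate the paper invokes.

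A minor additional point: if $\mathcal K$ meets or accumulates at $\mathbf 0$, you cannot work inside $TM\setminus\mathbf 0$. Work on all of $TM$ with $G^i(0)=0$ instead. This extension is continuous with $|G|\le C|v|^2$ by $2$-homogeneity, and $G^i_\eps\to G^i$ locally uniformly on $TM$ for the same reason.
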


Here, by the \emph{$C^{(k,l),\alpha}_{\loc}$-convergence}, we mean that for any pair of multi-indices $\beta,\sigma$ with $|\beta| \leq k$ and $|\sigma| \leq k +l -|\beta|$,
    \begin{equation}\label{eq:kla-conv}
        \partial_x^\beta \partial_v^\sigma L_\eps \to \partial_x^\beta \partial_v^\sigma L
    \end{equation}
locally in the $C^{\alpha}$-norm on $TM \setminus \mathbf 0$.

\begin{proof}
    Suppose that the claim is false, so that there is some $T>0$ as well as a sequence $\eps_i \to 0$ and $v_i \in \mathcal K$ such that there are $L_{\eps_i}$-geodesics $\eta_i$ with $\dot{\eta}_i(0) = v_i$ that are defined at most on $[0,T)$.
    By compactness, we may assume $v_i \to v \in \mathcal K$.
    Since the coefficients $\gamma^j_{kl}[L_{\eps_i}]$ corresponding to $L_{\eps_i}$ converge locally uniformly to the coefficients $\gamma^j_{kl}[L]$ of $L$, $\eta_i$ converges in $C^2_{\loc}$ to a solution $\eta$ of the $L$-geodesic equation with $\dot{\eta}(0) = v$.
    Then, an estimate of Hartman (see \cite[Prop.\ 2.5]{KOSS}) asserts that $\eta$ is defined at most on $[0,T)$, contradicting our assumption.
\end{proof}

An analog of the above result also holds if $L$-geodesics in $\mathcal{K}$ are defined on $[0,T)$; in this case for any $\widetilde{T} \in (0,T)$ there exists $\eps_0 >0$ such that for any $\eps \in (0,\eps_0)$, all $L_\eps$-geodesics with initial data in $\mathcal K$ are defined at least on $[0,\widetilde{T}]$.

We have the following compactness result as a consequence of these existence time estimates.

\begin{Corollary}[Compactness for geodesics]\label{cr:Fcpt}
Let $(M,L)$ be a $C^{(1,2)}$ Finsler spacetime and $(L_\eps)_{\eps>0}$ be smooth Lorentz--Finsler structures converging in $C^{(1,2)}_{\loc}$ to $L$ as $\eps \to 0$.
Let $\mathcal K \subseteq TM$ be compact and suppose that all $L$-geodesics with initial data in $\mathcal{K}$ are defined on $[0,T)$, with $T \leq +\infty$.
Then for any $\widetilde{T} \in [0,T)$ there exists $\eps_0=\varepsilon_0(\mathcal{K},\widetilde{T}) > 0$ such that the set
\begin{equation*}
    \mathcal{F}_{\mathcal{K}, \widetilde{T}, \varepsilon_0}
    := \bigl\{ \dot{\eta}(t) : \eta \text{ is an } L_{\varepsilon}\text{-geodesic},\, \dot{\eta}(0) \in \mathcal K,\, \varepsilon \in [0,\varepsilon_0],\, t \in [0,\widetilde{T}] \bigr\}
    \subseteq TM
\end{equation*}
is compact, where we set $L_0 := L$.
\end{Corollary}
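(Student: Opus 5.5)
I would argue by contradiction, reusing the mechanism in the proof of Proposition \ref{Prop: Existence time for Leps geodesics}. Fix $\widetilde T \in [0,T)$ and choose $T' \in (\widetilde T, T)$. By the analog of Proposition \ref{Prop: Existence time for Leps geodesics} for finite existence times, stated right after its proof, there is $\eps_1>0$ such that for all $\eps \in (0,\eps_1)$ every $L_\eps$-geodesic with initial data in $\mathcal K$ is defined at least on $[0,T']$, hence on $[0,\widetilde T]$. For $\eps=0$ this holds by hypothesis. Take $\eps_0 < \eps_1$; then $\mathcal F_{\mathcal K,\widetilde T,\eps_0}$ is well defined, in the sense that every geodesic in question exists on all of $[0,\widetilde T]$.

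For compactness, it suffices to show that every sequence $w_i = \dot\eta_i(t_i) \in \mathcal F_{\mathcal K,\widetilde T,\eps_0}$ has a subsequence converging to an element of the set. Here $\eta_i$ is an $L_{\eps_i}$-geodesic with $\eps_i \in [0,\eps_0]$, $\dot\eta_i(0)=v_i\in\mathcal K$, and $t_i\in[0,\widetilde T]$. Passing to subsequences, assume $\eps_i\to\bar\eps\in[0,\eps_0]$, $v_i\to v\in\mathcal K$ and $t_i\to\bar t\in[0,\widetilde T]$.

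The key step is a uniform bound: the curves $t\mapsto\dot\eta_i(t)$, $t\in[0,\widetilde T]$, should stay in a fixed compact subset of $TM$. I would argue by contradiction, following the proof of Proposition \ref{Prop: Existence time for Leps geodesics}. If $\dot\eta_i$ leaves every compact set before time $\widetilde T$, then the limiting argument there applies. The geodesic coefficients $\gamma[L_{\eps_i}]$ converge locally uniformly on $TM\setminus\mathbf 0$ to $\gamma[L_{\bar\eps}]$. This holds by $C^{(1,2)}_{\loc}$ convergence if $\bar\eps=0$, and by smooth dependence on $\eps$ if $\bar\eps>0$. If the family is not jointly continuous in $\eps$, I would simply shrink $\eps_0$ so that only the case $\bar\eps=0$ is relevant, by arguing with sequences $\eps_i\to0$ as in that proof. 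Together with Hartman's estimate \cite[Prop.\ 2.5]{KOSS}, a limit $L_{\bar\eps}$-geodesic with initial velocity $v$ would then fail to exist on $[0,T']$, contradicting the choice of $\eps_1$ or the hypothesis.

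The zero section also has to be excluded. Geodesics of a fixed structure cannot reach $\mathbf 0$ unless they are constant, and $\mathcal K$ may contain zero vectors only in the trivial case $G^i(0)=0$, so the curves stay away from $\mathbf 0$ or are identically zero. I expect this uniform-bound step to be the main obstacle.

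With the uniform bound in hand, the ODE $\ddot\eta_i=-\gamma[L_{\eps_i}](\dot\eta_i)\dot\eta_i\dot\eta_i$ gives uniformly bounded second derivatives. By Arzelà--Ascoli, $\eta_i\to\eta$ in $C^1([0,\widetilde T])$. Passing to the limit in the integrated equation, using local uniform convergence of the coefficients, shows that $\eta$ is an $L_{\bar\eps}$-geodesic with $\dot\eta(0)=v\in\mathcal K$. Hence $w_i=\dot\eta_i(t_i)\to\dot\eta(\bar t)\in\mathcal F_{\mathcal K,\widetilde T,\eps_0}$, which proves sequential compactness.
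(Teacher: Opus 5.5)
Your route matches what the paper intends when it calls the corollary a consequence of the existence-time estimates. You use the finite-time version of Proposition~\ref{Prop: Existence time for Leps geodesics} to make $\mathcal F_{\mathcal K,\widetilde T,\eps_0}$ well defined, then a Hartman-type limit argument for a uniform velocity bound, then Arzel\`a--Ascoli and passage to the limit in the geodesic equation. This is sound whenever the extracted parameters satisfy $\eps_i\to0$.

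\textbf{The gap} is the case $\eps_i\to\bar\eps>0$. The hypotheses only control $L_\eps$ as $\eps\to0$, so ``smooth dependence on $\eps$'' is not available. Your fallback does not help: shrinking $\eps_0$ never prevents a sequence in $[0,\eps_0]$ from accumulating at some $\bar\eps\in(0,\eps_0]$, for instance rationals converging to an irrational. For such a sequence nothing forces the $C^1$-limit $\eta$ to be an $L_{\bar\eps}$-geodesic, or an $L_\eps$-geodesic for any $\eps$. So $\dot\eta(\bar t)\in\mathcal F_{\mathcal K,\widetilde T,\eps_0}$ is unproved, and with $\eps_0$ fixed first, even your uniform bound does not follow there.

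Without some continuity in $\eps$ the set genuinely need not be closed. Take $L$ smooth, $f\in C^\infty_c(M)$, and set $L_\eps:=L$ for irrational $\eps$ and $L_\eps:=e^{2\eps f}L$ for rational $\eps$. Then velocities of $e^{2\bar\eps f}L$-geodesics from $\mathcal K$ with $\bar\eps$ irrational lie in the closure of $\mathcal F_{\mathcal K,\widetilde T,\eps_0}$, but in general not in the set itself.

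You need one of two repairs.
\begin{enumerate}
\item Assume that $\eps\mapsto L_\eps$ is continuous into $C^{(1,2)}_{\loc}$ for $\eps>0$. This is what the paper tacitly uses: every family it applies the corollary to depends smoothly on $(\eps,v)$, by Proposition~\ref{prop:PropertiesApproximation}\ref{PropApproximation2} and the construction of $\widecheck L_\eps$. With this assumption your argument is complete. For $\bar\eps>0$ the limit is the unique $L_{\bar\eps}$-geodesic from $v$, which is defined on $[0,T']$ since $\bar\eps<\eps_1$.
\item Prove only relative compactness, which is all that is used later, e.g.\ in Theorem~\ref{Theorem: Hawking version 2}. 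This holds for arbitrary families, provided you reverse the quantifiers. Suppose that for every $k$ the set with $\eps_0=1/k$ is not contained in the $k$-th member of a compact exhaustion of $TM$. The corresponding parameters satisfy $\eps_k\le 1/k$, so $\eps_k\to0$. Hartman's limit theorem then gives a subsequence converging in $C^1$ on $[0,\widetilde T]$ to an $L$-geodesic from $\mathcal K$, all of which exist on $[0,T)$. This is a contradiction.
\end{enumerate}

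Finally, the zero section, which you single out as the main obstacle, is harmless. By positive $2$-homogeneity, $|G^i_\eps(v)-G^i(v)|\le|v|_{\tilde g}^2\sup_{\mathbb S_B}|G^i_\eps-G^i|$ for $\pi(v)$ in a compact $B\subseteq M$. Hence the spray coefficients, with $G^i(0)=0$, converge locally uniformly on all of $TM$, and the limit argument runs there directly.
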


\subsection{Distributional curvature and energy conditions}
\label{ssc:curvature}

In the smooth case, the components of the curvature tensor $R : T_xM \rightarrow T^*_xM \otimes T_xM$ are defined as
\begin{equation}\label{eq:DistributionalCurvature}
    R^i_j(v) := \frac{\partial G^i}{\partial x^j}(v) - \frac{\partial N^i_j}{\partial x^k}(v)v^k + \frac{\partial N^i_j}{\partial v^k}(v)G^k(v) - N^i_k(v)N^k_j(v).
\end{equation}
In our setting, we must first give meaning to this expression, as it formally involves derivatives of (merely) continuous functions; we assume $L \in C^{(1,2)}$, so that $G^i \in C^{(0,1)}$ and $N^i_j \in C^{(0,0)}$, as we saw in Remark \ref{rm:regular}. 

As laid out in \cite[Sec.\ 3.2]{Graf-SingularityC1}, we can argue locally and consider \eqref{eq:DistributionalCurvature} on some open subset $U \times V$ of $\R^n \times \R^n$.
We then recast \eqref{eq:DistributionalCurvature} by interpreting the expressions that appear as (first-order) distributions. 
To do so, it suffices to define the action of each term on the dense subspace $C^1_c(U) \otimes C^1_c(V)$.
For the first term in \eqref{eq:DistributionalCurvature}, we simply set
\begin{equation*}
    \biggl\langle \frac{\partial G^i}{\partial x^j}, \varphi \otimes \psi \biggr\rangle = - \biggl\langle G^{i}, \biggr(\frac{\partial\varphi}{\partial x^j} \biggr) \otimes \psi \biggr\rangle = - \int_{U \times V} G^i(x,v)\frac{\partial\varphi}{\partial x^j}(x) \psi(v) \diff x \diff v
\end{equation*}
for $\varphi \otimes \psi \in C^1_c(U) \otimes C^1_c(V)$.
We can similarly interpret the second term, while for the third, the fact that $G^k \in C^{(0,1)}$ ensures $G^k(x, \, \cdot \,) \cdot \psi \in C^1_c(V)$ for each $x \in U$ and $\psi \in C^1_c(V)$, so that
\begin{equation*}
    \biggl\langle \frac{\partial N^i_j}{\partial v^k}G^k, \varphi\otimes \psi \biggr\rangle =  - \int_{U \times V} N^{i}_j(x,v) \varphi(x) \frac{\partial}{\partial v^k}\bigl[ G^k(x,v)\psi(v) \bigr] \diff x \diff v
\end{equation*}
is well-defined.
The last term in \eqref{eq:DistributionalCurvature} merely involves the multiplication of two continuous functions, which we can directly interpret as a regular distribution on $U \times V$.

The distributional \emph{Ricci curvature} (or \emph{Ricci scalar}) $\Ric$ of a $C^{(1,2)}$ Finsler spacetime $(M,L)$ is then defined locally as the distribution $R^{i}_{\, i}$. 
Since at most the first order derivatives of continuous functions appear in the local expression for $R^i_j$, one sees that $\Ric \in \mathcal{D}^{'(1)}(TM \setminus \mathbf 0)$, i.e., the Ricci curvature is a distribution of first order (see \cite{KOSS} for details regarding tensor distributions on smooth manifolds).

\begin{Definition}[Distributional strong energy condition]
\label{Definition: distributional strong energy}
We say that a $C^{(1,2)}$ Finsler spacetime $(M,L)$ satisfies the \emph{distributional strong energy condition} if $\Ric \geq 0$ on $\Omega^T$ in the distributional sense, i.e., $\langle \Ric, \mu\rangle \geq 0$ for any nonnegative volume density $\mu$ on $TM$ compactly supported in $\Omega^T$.
Similarly, we say that $\Ric \geq K$ \emph{timelike distributionally} for $K \in \R$ if the distribution $\Ric + 2 K L$ is nonnegative.
\end{Definition}

\begin{Remark}[Compatibility with standard strong energy condition in sufficient regularity]
    It is easily seen that this condition coincides with the usual strong energy condition in the case where $\Ric : TM \setminus \mathbf{0} \rightarrow \R$ is continuous.
    Indeed, if $\Ric(v) \geq 0$ for all $v \in \Omega^T$, then clearly $\langle \Ric, \mu \rangle \geq 0$ for any $\mu$ compactly supported in $\Omega^T$.
    On the other hand, if $\Ric(v)<0$ for some $v \in \Omega^T$, by continuity, this inequality continues to hold in a neighborhood $\mathcal{U} \subseteq TM$ of $v$ and by the openness of $\Omega^T$, we can assume that $\mathcal{U} \subseteq \Omega^T$.
    Hence, $\langle \Ric, \mu \rangle <0$ for any nonnegative, nontrivial $\mu$ compactly supported in $\mathcal{U}$. 
    The same argument shows that the general bound $\Ric \geq K$ agrees with its continuous counterpart if $\Ric$ is continuous.
\end{Remark}

The null energy condition is more subtle since the set of null vectors is invisible from the distributional viewpoint.

\begin{Definition}[Distributional null energy condition]
\label{Definition: null energy condition}
    We say that a $C^{(1,2)}$ Finsler spacetime $(M,L)$ satisfies the \textit{distributional null energy condition} if for any compact set $\mathcal{K} \subseteq TM \setminus \mathbf{0}$, we have
    \begin{equation*}
        \liminf_{\delta \to 0} \inf_{\mu \in M_\delta(\mathcal{K})} \langle \Ric, \mu\rangle \geq 0,
    \end{equation*}
    where $M_\delta(\mathcal{K})$ is the set of densities $\mu$ on $TM$ such that $\mu \geq 0$, $\supp \mu \subseteq \{|L| \leq \delta \} \cap \Omega^C \cap \mathcal{K}$, and $\int_{TM} \mu = 1$. 
    When $M_\delta(\mathcal{K}) = \varnothing$, the infimum is set to $+ \infty$.
\end{Definition}

Spelled out, this definition requires that for every $\eps >0$ and any compact set $\mathcal{K} \subseteq TM \setminus \mathbf{0}$, we can find some $\delta_0>0$ such that
    \begin{equation*}
            \langle \Ric, \mu \rangle \geq - \eps \quad \textnormal{for any } \mu \in M_\delta(\mathcal{K}),\, \delta \in (0,\delta_0].
    \end{equation*}
Note (in comparison with \cite{KOSS}) that we need to restrict $M_\delta(\mathcal{K})$ to future causal vectors due to the irreversibility of $L$ (i.e., $L(-v) \neq L(v)$, and thus $\Ric(-v) \neq \Ric(v)$).

\begin{Remark}[Compatibility with standard null energy condition in sufficient regularity]
    Again, this is consistent with the usual definition in the smooth case. 
    Suppose that the Ricci curvature $\Ric : TM \setminus \mathbf{0} \rightarrow \R$ is continuous and satisfies the pointwise null energy condition $\Ric(v) \geq 0$ for all future null vectors $v \in TM \setminus \mathbf{0}$. 
    By continuity, for any $\eps >0$, there exists an open neighborhood $\mathcal{U} \subseteq TM \setminus \mathbf{0}$ containing $\{ L = 0\} \cap \Omega^C$ such that $\Ric(v) > - \eps$ for all $v \in \mathcal{U}$. 
    Now, for any compact set $\mathcal{K} \subseteq TM \setminus \mathbf{0}$, there exists $\delta_0 > 0$ such that 
    \begin{equation*}
        \{|L| \leq \delta\} \cap \Omega^C \cap \mathcal{K} \subseteq \mathcal{U} \quad \textnormal{for all } \delta \in (0,\delta_0].
    \end{equation*}
    In fact, otherwise, the family $\mathcal{F}_\delta:=(\{|L| \leq \delta\} \cap \Omega^C \cap \mathcal{K}) \setminus \mathcal{U}$ constitutes a family of closed, nested, and nonempty sets, and hence, by the compactness of $\mathcal{K}$, we would have $\bigcap_{\delta >0} \mathcal{F}_\delta \neq \varnothing$. 
    This, however, contradicts $\bigcap_{\delta > 0} \{|L| \leq \delta\} \cap \Omega^C = \{L=0\} \cap \Omega^C \subseteq \mathcal{U}$.
    Therefore $\Ric > - \eps$ on the support of any $\mu \in M_\delta(\mathcal{K})$, thereby
    \begin{equation*}
        \langle \Ric, \mu \rangle > - \eps \int_{\{|L| \leq \delta\} \cap \Omega^C \cap \mathcal{K}} \mu = -\eps \quad \textnormal{for any } \mu \in M_\delta(\mathcal{K}),\, \delta \in (0,\delta_0].
    \end{equation*}
    Next, suppose that there is some future null vector $v_0 \in TM \setminus \mathbf{0}$ such that $\Ric(v_0) < 0$. 
    Then there exists some $\eps >0$ and a compact neighborhood $\mathcal{K} \subseteq TM \setminus \mathbf{0}$ of $v_0$ such that $\Ric(v) \leq - \eps$ for all $v \in \mathcal{K}$. 
    Hence, $\langle \Ric, \mu \rangle \leq -\eps$ for any $\delta >0$ and any $\mu \in M_\delta(\mathcal{K})$.
\end{Remark}

\subsection{Weighted Ricci curvature}
\label{Subsection: weights}

On a $C^{(1,2)}$ Finsler spacetime $(M,L)$, we can study $C^1$ measures and associated weighted Ricci curvature. 
Let $\mathfrak m$ be a $C^1$ measure on $M$ in the sense that it is locally represented as $\mathfrak m = \sigma(x) \diff x^1 \cdots \diff x^n$ with a positive $C^1$ function $\sigma$. 
Consider the distortion
\begin{equation*}
    \Psi(x,v):= - \log \sigma(x) + \frac{1}{2} \log \bigl| \det \bigl(g_{ij}(x,v) \bigr) \bigr|
\end{equation*}
for $v \in TM \setminus \mathbf 0$.
Then we have $\Psi \in C^{(1,0)}$ and $\mathfrak m =e^{-\Psi(V)} \sqrt{|\det (g_{ij}(V))|} \diff x^1 \cdots \diff x^n$ for any future timelike vector field $V$. 
Using the \emph{geodesic spray vector field} 
\begin{equation*}
    S:= v^i \frac{\partial}{\partial x^i} - G^i(x,v) \frac{\partial}{\partial v^i}
\end{equation*}
on $TM$ (giving geodesics as the projection of its integral curves; see \cite{Shspray}), we define the \emph{weighted Ricci curvature} as the distribution
\begin{equation*} 
    \Ric_{\mathfrak m}^N := \Ric + S(S\Psi) - \frac{(S\Psi)^2}{N - n}
\end{equation*}
on $TM \setminus \mathbf 0$, for $N \in \mathbb R \cup \{+\infty\} \setminus \{n\}$, where the last term vanishes definitionally if $N = +\infty$.
Note that $S\Psi \in C^{(0,0)}$. 
As in the case of $\Ric$, since $G^i \in C^{(0,1)}$, we see that $S(S\Psi)$ is a well-defined first order distribution on $TM \setminus \mathbf 0$. 
We further define $\Ric^{\infty}_{\mathfrak m} := \Ric + S(S\Psi)$, as well as $\Ric^n_{\mathfrak m}:=\Ric$ provided $S\Psi \equiv 0$.

\begin{Remark}[Weighted Ricci curvature in higher regularity]
If $L \in C^{(2,2)}$ and $\mathfrak m$ is a $C^2$-measure, then the above definition of weighted Ricci curvature agrees with the following usual one (see \cite{LuMinguzziOhtaSingularity,Ohta2021}).
Fix $v \in \Omega_x^T$, let $\eta:(-\eps,\eps) \to M$ be the geodesic with $\dot{\eta}(0)=v$, and put $\psi_\eta(t):=\Psi(\eta(t),\dot{\eta}(t))$.
Then, since $\Psi \in C^{(2,0)}$ and $\eta \in C^3$, we can define
\[
\Ric^N_{\mathfrak m}(v) :=\Ric(v) +\psi''_\eta(0) -\frac{\psi'_\eta(0)^2}{N-n}.
\]
\end{Remark}

\begin{Definition}[Weighted energy conditions]
\label{Definition: Weighted energy conditions}
    Let $(M,L,\mathfrak m)$ be a $C^{(1,2)}$ Finsler spacetime $(M,L)$ equipped with a $C^1$ measure $\mathfrak m$. 
    Recall $n=\dim M$.
\begin{enumerate}
\item[(1)] 
    Let $N \in (-\infty,1] \cup [n,\infty]$. We say that $(M,L,\mathfrak m)$ satisfies $\Ric^N_{\mathfrak{m}} \geq K$ \emph{timelike distributionally} for $K \in \R$ (or the \emph{distributional strong $N$-energy condition} when $K=0$) if $\langle \Ric^N_{\mathfrak m} + 2 KL, \mu \rangle \geq 0$ for any nonnegative volume density $\mu$ compactly supported in $\Omega^T$. 
\item[(2)]
    Let $N \in (-\infty,2] \cup [n,\infty]$ with $n \ge 3$. 
    We say that $(M,L,\mathfrak m)$ satisfies the \emph{distributional null $N$-energy condition} if for any compact set $\mathcal K \subseteq TM \setminus \mathbf 0$ we have
    \begin{equation*}
        \liminf_{\delta \to 0} \inf_{\mu \in M_\delta(\mathcal K)} \langle \Ric^N_{\mathfrak m},\mu \rangle \geq 0,
    \end{equation*}
    where $M_\delta(\mathcal{K})$ is as in Definition \ref{Definition: null energy condition}.
\end{enumerate}
\end{Definition}

Let us note that, in general, there may be no measure $\m$ on $M$ such that $S\Psi \equiv 0$ (see \cite{OhtaScurvature}, \cite[\S 10.3.2]{Ohta2021} in the positive definite case), so an unweighted Ricci bound can, in general, not be obtained as a special case of a weighted bound in the Finsler setting.

\begin{Remark}[From the distributional strong energy condition to the null energy condition]
Let $(M,L)$ be a $C^{(1,2)}$ Finsler spacetime. If $L$ satisfies $\Ric \geq K$ timelike distributionally in the sense of Definition \ref{Definition: distributional strong energy} for some $K \in \R$, then it satisfies the distributional null energy condition. 
Similarly, in the weighted case, if $(M,L,\m)$ is a $C^1$ weighted $C^{(1,2)}$ Finsler spacetime, $N \in (-\infty,1] \cup [n,\infty]$, $K \in \R$ and $\Ric^N_\m \geq K$ timelike distributionally, then $(M,L, \m)$ satisfies the distributional $N$-null energy condition. 
This was unclear even in the $C^1$ Lorentzian case \cite{Graf-SingularityC1}, however, we can establish it here due to our definition of (weighted) null energy condition, which is implied by \cite[Def.\ 5.1]{Graf-SingularityC1} for $C^1$ Lorentzian metrics and is sufficient for the purposes of establishing the Penrose singularity theorems (see Theorems \ref{theorem: unweighted penrose} and \ref{theorem: weighted penrose}).

Let us sketch the argument in the unweighted case with $K = 0$, the (un)weighted case with general $K$ is analogous. 
Suppose that $(M,L)$ is $C^{(1,2)}$ and satisfies the distributional strong energy condition. 
Fix a compact set $\mathcal K \subseteq TM \setminus \mathbf 0$ and let $\mu \in M_\delta(\mathcal K)$, i.e., $\mu$ is supported in $\{|L| \leq \delta\} \cap \Omega^C \cap \mathcal K$. 
Let $X$ be the smooth time orientation vector field of $(M,L)$ and $s > 0$. Define the map $F_s:TM \to TM$ via $F_s(v):= v + sX(\pi(v))$.
Then the push-forward $\mu_s :=(F_s)_* \mu$ is a nonnegative volume density with compact support such that $\mathrm{supp}(\mu_s) = F_s(\mathrm{supp}(\mu)) \subseteq \Omega^T$. 
Since $\mu_s \to \mu$ in the sense of compactly supported volume densities, we conclude that
\begin{equation*}
    \langle \Ric,\mu\rangle = \lim_{s \to 0} \langle \Ric,\mu_s\rangle \geq 0.
\end{equation*}
Taking the infimum over $\mu \in M_\delta(\mathcal K)$ and then sending $\delta \downarrow 0$, we conclude that the distributional null energy condition is satisfied.
\end{Remark}

\begin{Remark}[Rewriting of weighted Ricci curvature]
\label{rmk:WeightedRicciSimplified}
    For the convergence results of the next section, it will be beneficial to rewrite the expression for $\Ric_{\m}^N$; cf.\ \cite[Sec.\ 7]{ShenSCurvature}, \cite[Lem.\ 10.6]{Ohta2021}.
    To do so, decompose $\Psi$ into
    \begin{equation*}
        \tau(x,v) := \frac{1}{2} \log \bigl|\det \bigl(g_{ij}(x,v)\bigr)\bigr|, \quad
        u(x):= \log \sigma(x),
    \end{equation*}
    and recall from \eqref{def:SprayCoeffImproved} that
    \begin{equation*}
        G^i = g^{il} A_l, \quad \textnormal{where} \, A_l := v^j \frac{\partial^2 L}{\partial x^j \partial v^l} - \frac{\partial L}{\partial x^l}.
    \end{equation*}
    Using the symmetry of $g^{il}$ and Jacobi's formula $2 \frac{\partial \tau}{\partial x^j} = g^{il}\frac{\partial g_{il}}{\partial x^j}$, we get
    \begin{align*}
        g^{il}\frac{\partial A_l}{\partial v^i} 
        = g^{il}\left(\frac{\partial^2L}{\partial x^i\partial v^l} +v^j\frac{\partial g_{il}}{\partial x^j} -\frac{\partial^2L}{\partial v^i \partial x^l} \right)
        = g^{il}v^j\frac{\partial g_{il}}{\partial x^j} = 2v^j\frac{\partial\tau}{\partial x^j}.
    \end{align*}
    Moreover, differentiating the inverse
    matrix $\partial_{v^i} g^{il} = - g^{ij} (\partial_{v^i}g_{jk})g^{kl}$ and using the symmetry of the vertical derivatives of
    $L$, we obtain
    \begin{equation*}
        \frac{\partial g^{il}}{\partial v^i}A_l = -g^{ij}\frac{\partial g_{jk}}{\partial v^i}G^k = -g^{ij}\frac{\partial g_{ij}}{\partial v^k}G^k = -2G^k\frac{\partial\tau}{\partial v^k}.
    \end{equation*}
    Hence,
    \begin{equation*}
        \nu :=N^i_i = \frac{1}{2} \frac{\partial G^i}{\partial v^i}= v^j\frac{\partial\tau}{\partial x^j}
        -G^j\frac{\partial\tau}{\partial v^j} = S\tau.
    \end{equation*}
    Using this, we can rewrite the expression for $\Ric_{\m}^N$ as follows:
    Since $\Psi = \tau - u$, we have
    \begin{equation*}
        S\Psi = \nu - Su = \nu - v^i \frac{\partial u}{\partial x^i}, \qquad S^2\Psi = S\nu -  v^iv^j \frac{\partial^2 u}{\partial x^i \partial x^j} + G^i \frac{\partial u}{\partial x^i}.
    \end{equation*}
    Taking the trace of $R^i_j$ in \eqref{eq:DistributionalCurvature}, we find
    \begin{equation*}
        \Ric = \frac{\partial G^i}{\partial x^i} - S \nu - N^i_k N^k_i.
    \end{equation*}
    Thus,
    \begin{equation}
    \label{eq:WeightedRicciSimplified}
        \Ric_{\m}^N =\frac{\partial G^i}{\partial x^i} - N^i_j N^j_i- v^iv^j\frac{\partial^2 u}{\partial x^i\partial x^j} +G^i\frac{\partial u}{\partial x^i} -\frac{1}{N-n}\left(N^i_i-v^i\frac{\partial u}{\partial x^i}\right)^2,
    \end{equation}
    where the last term is understood as $0$ when $N=\infty$.
    Notice that, in this expression, the term $S\nu$ in the definition of $\Ric_{\m}^N$ has canceled with the term $S^2\tau = S \nu$ coming from the distortion, and hence, no derivatives of $N^i_i$ appear in \eqref{eq:WeightedRicciSimplified}.
    \end{Remark}

\section{Smooth approximations}
\label{subsection: Regularization, Legendre transform}

This section is devoted to the key technical tool for our discussion: smooth approximations of low regularity Lorentz--Finsler structures.
We construct them in two steps.

\subsection{Construction of regularizations}\label{ssc:L_e}

Let $(M,L)$ be a $C^{(k,l),\alpha}$ Finsler spacetime.
We first introduce a regularization scheme (following \cite{Graf-SingularityC1,KOSS}) that produces a one-parameter family of $C^\infty$ Lorentz--Finsler structures approximating $L$.

To this end, we pick an atlas $\{(U_a,  \psi_a)\}_{a \in A}$ of $M$ with relatively compact $U_a$ and choose a subordinate partition of unity $\{\zeta_a\}_{a \in A}$ on $M$.
Further, let $\{{\chi}_{a}\}_{a \in A}$ be a locally finite family of smooth, compactly supported functions on $M$ such that $\chi_a \equiv 1$ on a neighborhood of $\supp \zeta_a \subseteq U_a$, $0 \leq \chi_a \leq 1$, and that $\supp \chi_a \subseteq U_a$.
Finally, fix a Riemannian background metric $\tilde g$ on $M$.

Locally, in a chart $(U_a, \psi_a)$, set $\ell_a:=({\psi}_{a})_* (( \zeta_{a} \circ \pi) L)\colon T[\psi_a(U_a)] \to \R$, and define for (small) $\eps>0$ and $(x,v) \in T[\psi_a(U_a)]=\psi_a(U_a) \times \R^n$,
\begin{equation}\label{eq:HeaL}
\begin{split}
    \mathcal{H}_{\eps, a} L(x,v):= &\int_{\R^n \times \R^n} \ell_a \bigl( x-\eps y, v- \eps r_a(x,v)w \bigr) \rho(y,w) \diff y \diff w \\
    = \; &r_a(x,v)^2 (\ell_a \star \rho_\eps)\bigl( x,v/r_a(x,v)\bigr)
\end{split}
\end{equation}
(the latter expression is understood as $0$ at $v=0$),
where $r_a(x,v):=\| (\diff \psi_a^{-1})_x(v)\|_{\tilde g}$, $\rho$ is a convolution kernel on $\R^{2n}$ supported in $B_1(0)^2$, and $\rho_\eps := \eps^{-2n} \rho(\cdot / \eps)$
($\ell_a(x-\eps y,\cdot)$ reads as $0$ when $x-\eps y \not\in \psi_a(U_a)$).
Then, $\mathcal{H}_{\eps, a} L$ is positively $2$-homogeneous in $v$ and smooth away from the zero section.
Globally, we now define
\begin{equation}
\label{eq:convo}
    \mathcal{H}_\eps L:= \sum_{a \in A} (\chi_a \circ \pi)(\psi_a)^{*} (\mathcal{H}_{\eps,a} L).
\end{equation}
Since each $\mathcal{H}_{\eps,a} L$ is positively $2$-homogeneous and smooth away from the zero section, so is $\mathcal{H}_\eps L$.

We will also use the operator $\mathcal{H}_\eps$ to regularize $\Ric[L]$ (the Ricci curvature of $L$) by locally setting
\begin{equation*}
    \mathcal{H}_{\eps,a}(\Ric[L]) := r_a(x,v)^2 (\Ric[L]_a \star \rho_\eps)\bigl( x,v/r_a(x,v) \bigr),
\end{equation*}
where $\Ric[L]_a := (\psi_a)_*((\zeta_a \circ \pi) \Ric[L])$, and summing up these local pieces as in \eqref{eq:convo}.
To define this regularization globally, some care is required regarding the supports of the convolution kernels (in low regularity, $\Ric[L]$ is only defined as a distribution on the slit tangent bundle).
Since our approximation results concern only compact subsets of $TM \setminus \mathbf{0}$, this issue can be omitted, however.

We summarize the most important properties of $\mathcal{H}_\eps L$ in the following proposition.

\begin{Proposition}[Properties of the regularization]\label{prop:PropertiesApproximation}
Let $(M,L)$ be a $C^{(k,l),\alpha}$ Finsler spacetime.
\begin{enumerate}
    \item
    \label{PropApproximation1}
    We have $\mathcal{H}_\eps L \to L$ in $C^{(k,l)}_{\loc}$ as $\eps \rightarrow 0$, and when $\alpha >0$, in $C^{(k,l), \alpha'}_{\loc}$ \textnormal{(}in the sense of \eqref{eq:kla-conv}\textnormal{)} for every $\alpha' < \alpha$.
    Moreover, provided $\alpha>0$, for each compact set $\mathcal K \subseteq TM \setminus \mathbf 0$, there exist $\varepsilon_\mathcal K, c_\mathcal K > 0$ such that
    \begin{equation*}
        \|\partial_x^\beta \partial_v^\sigma \mathcal{H}_\eps L - \partial_x^\beta \partial_v^\sigma L\|_{\infty, \mathcal{K}} \leq c_\mathcal K \eps^{\alpha} \quad \text{for all } \eps \in (0, \eps_\mathcal{K}]
    \end{equation*}
    and $|\beta| \leq k, \, |\sigma | \leq k+l-|\beta|$.
    In particular, if $L \in C^{(k,l)}$ with $k \geq 1$, then
    \begin{equation*}
        \|\partial_x^\beta \partial_v^\sigma \mathcal{H}_\eps L - \partial_x^\beta \partial_v^\sigma L\|_{\infty, \mathcal{K}} \leq c_\mathcal K \eps \quad \text{for all } \eps \in (0, \eps_\mathcal{K}]
    \end{equation*}
    and $|\beta| \leq k-1$, $|\sigma | \leq k+l-1-|\beta|$.
    \item \label{PropApproximation2}
    There exists a smooth function $(\varepsilon,v) \mapsto L_{\varepsilon}(v)$, $(0,1) \times (TM \setminus \mathbf 0) \to \R$, such that
    \begin{enumerate}
    \item each $v \mapsto L_\varepsilon(v)$ is a smooth Lorentz--Finsler structure, time-orientable by the same vector field as $L$;
    \item for each compact set $\mathcal K \subseteq TM \setminus \mathbf 0$, there exists $\varepsilon_{\mathcal K}>0$ such that $L_\varepsilon|_{\mathcal K} = \mathcal{H}_\eps L|_{\mathcal K}$ for all $\eps \in (0, \eps_{\mathcal K}]$.
    \end{enumerate}
\end{enumerate}
\end{Proposition}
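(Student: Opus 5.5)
Part (i) reduces to a local statement in a single chart. By local finiteness, on a compact $\mathcal K \subseteq TM\setminus\mathbf 0$ only finitely many indices $a$ contribute to $\mathcal H_\eps L$. Each $\chi_a\equiv 1$ near $\supp\zeta_a$, and $\sum_a \zeta_a = 1$, so
\[
L=\sum_a(\chi_a\circ\pi)\,\psi_a^*\ell_a .
\]
It therefore suffices to show that, for each $a$, $\mathcal H_{\eps,a}L\to\ell_a$ in $C^{(k,l)}_\loc$ on $\psi_a(U_a)\times(\R^n\setminus\{0\})$, with the stated rates. The smooth factors $\chi_a\circ\pi$ and the chart pullbacks are harmless: horizontal derivatives of a pullback only produce mixed lower-order terms with smooth coefficients, as in Subsection~\ref{subsection: mixed regularity of maps}.

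For the local statement I use the second expression in \eqref{eq:HeaL}:
\[
\mathcal H_{\eps,a}L(x,v)=r_a(x,v)^2\,(\ell_a\star\rho_\eps)\bigl(x,\,v/r_a(x,v)\bigr).
\]
Here $r_a$ is smooth and positive away from $v=0$, and $\ell_a(x,v)=r_a^2\,\ell_a(x,v/r_a)$ by homogeneity. I write
\[
\mathcal H_{\eps,a}L-\ell_a = r_a^2\bigl[(\ell_a\star\rho_\eps)-\ell_a\bigr]\circ\Phi,\qquad \Phi(x,v):=\bigl(x,\,v/r_a(x,v)\bigr).
\]
The map $\Phi$ is smooth and sends a compact set in the slit bundle to a compact set near the unit sphere bundle, where $\ell_a$ is $C^{(k,l),\alpha}$. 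Applying $\partial_x^\beta\partial_v^\sigma$ with $|\beta|\le k$ and $|\sigma|\le k+l-|\beta|$, the chain and Leibniz rules express each derivative as a finite sum of smooth coefficients times $\bigl(\partial_x^{\beta'}\partial_v^{\sigma'}(\ell_a\star\rho_\eps-\ell_a)\bigr)\circ\Phi$. The key point is that $\Phi$ acts as the identity in $x$. A horizontal derivative of the composite therefore produces at most $|\beta|$ horizontal derivatives of $\ell_a$ plus extra vertical ones, and the total order never exceeds $|\beta|+|\sigma|$. Hence every pair $(\beta',\sigma')$ that appears is admissible: $|\beta'|\le|\beta|\le k$ and $|\beta'|+|\sigma'|\le k+l$.

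Convolution commutes with these derivatives, so it remains to use the standard facts $f\star\rho_\eps\to f$ locally uniformly for continuous $f$, and $\|f\star\rho_\eps-f\|\le C\eps^\alpha$ for locally $\alpha$-Hölder $f$. The latter gives the $C^{\alpha'}_\loc$ convergence for $\alpha'<\alpha$ by interpolation. The $O(\eps)$ statement uses that derivatives of one lower order are $C^1$, hence locally Lipschitz. A compact-support issue arises because $\ell_a$ is only regular off $v=0$. This is handled because $\Phi(\mathcal K)$ stays at distance about $1$ from $v=0$, so for $\eps$ small the kernel $\rho_\eps$ only samples points in a compact subset of the slit bundle.

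For part (ii), the difficulty is that $\mathcal H_\eps L$ is Lorentz--Finsler only where the convergence of $g_{ij}$ has kicked in, and this requires $\eps\le\eps(\mathcal K)$. By homogeneity it suffices to work on the $\tilde g$-unit sphere bundle, where $\{\mathcal K_j=\{\pi(v)\in\overline{B}_j\}\}$ is an exhaustion by compact sets. Part (i) with $k+l\ge2$ gives $\eps_j\downarrow0$ such that, for $\eps\le\eps_j$:
\begin{itemize}
\item the vertical Hessian of $\mathcal H_\eps L$ is nondegenerate Lorentzian on $\mathcal K_{j+1}$;
\item the time-orientation field $X$ remains $\mathcal H_\eps L$-timelike on $\mathcal K_{j+1}$.
\end{itemize}
I then choose a smooth positive function $\eps\mapsto$ ``scale'' adapted to position. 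Concretely, I pick a smooth function $h:(0,1)\times M\to(0,1)$ that is smooth in $(\eps,x)$, nondecreasing in $\eps$, equals $\eps$ when $\eps\le\eps_{j+1}$ and $x\in B_j$, and satisfies $h(\eps,x)\le\eps_{j+1}$ on $B_{j+1}\setminus B_j$. I define
\[
L_\eps(v):=\mathcal H_{h(\eps,\pi(v))}L(v).
\]
This is smooth in $(\eps,v)$ because the kernel dependence on the parameter is smooth, and it is $2$-homogeneous. It agrees with $\mathcal H_\eps L$ on $\mathcal K$ once $\eps$ is small. The Lorentzian signature and time-orientation by $X$ hold everywhere, since pointwise $h$ lies below the threshold for that region.

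The obstacle here is that $x$-derivatives of $h$ enter only horizontal derivatives, not the vertical Hessian, so the signature is unaffected. One must still check that $\mathcal H_{s}L$ is smooth jointly in $s>0$, which follows from differentiating under the integral in the first expression in \eqref{eq:HeaL}. I expect this parameter-gluing step, rather than part (i), to be the main technical point to state carefully.
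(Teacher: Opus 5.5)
Your proposal follows the paper's proof essentially step for step. In (i), the paper also rewrites $\mathcal H_{\eps,a}L-\ell_a=r_a^2\bigl[(\ell_a\star\rho_\eps)-\ell_a\bigr](x,v/r_a)$ via homogeneity and concludes by the product and chain rules. Your remark that $\Phi$ is the identity in $x$, so admissibility of $(\beta',\sigma')$ is preserved, is a useful detail the paper leaves implicit. In (ii), the paper sets $L_\eps(v)=\mathcal H_{\mu(\eps,\pi(v))}L(v)$ with a base-dependent parameter taken from \cite[Lem.\ 4.3]{HoermannKunzingerSteinbauer-WaveEq}, and uses exactly your observation that the vertical Hessian is unaffected.

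One detail of your hand-made $h$ fails as literally stated. Take $x\in\partial B_j\subseteq B_{j+1}\setminus B_j$. Continuity forces $h(\eps,x)=\eps$ for $\eps\le\eps_{j+1}$, while you also demand $h(\eps,x)\le\eps_{j+1}$ for all $\eps$. Hence $\partial_\eps h(\eps_{j+1},x)$ cannot exist. The fix is to use the slack you already have: $\eps_j$ controls all of $\mathcal K_{j+1}$, so requiring only $h\le\eps_j$ on $B_{j+1}\setminus B_j$ suffices, and with $\eps_{j+1}<\eps_j$ there is room to bend smoothly. Alternatively, simply cite the globalization lemma as the paper does.

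Similarly, the joint smoothness of $(s,v)\mapsto\mathcal H_sL(v)$ should be read off from $(\ell_a\star\rho_s)(z)=s^{-2n}\int\ell_a(y)\rho((z-y)/s)\diff y$, where all $s$-derivatives fall on the kernel. It should not come from the first expression in \eqref{eq:HeaL}, where they would fall on $\ell_a$, which has only finitely many derivatives.
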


\begin{proof}
\ref{PropApproximation1}
For $\ell_a \star \rho_\eps \to \ell_a$, these assertions follow from properties of the convolution, noting that $\| f - f\star\rho_\eps\|_{\infty} \leq \| f\|_{C^{0,\alpha}} \eps^\alpha$ provided that $f\colon \R^n \times \R^n \rightarrow \R$ is $\alpha$-H\"older continuous; see \cite[Proposition 3.5]{Graf-SingularityC1} for details. 
Now, to pass from $\ell_a \star \rho_\eps$ to $\mathcal{H}_{\eps, a}L$, we make use of the homogeneity of $\ell_a$ to write
\begin{align*}
    &\partial_x^\beta \partial_v^\sigma \mathcal{H}_{\eps, a}L(x,v) - \partial_x^\beta \partial_v^\sigma \ell_a(x,v) \\
    &= \partial_x^\beta \partial_v^\sigma \biggl[ r_a(x,v)^2 \biggl((\ell_a \star \rho_\eps)\biggl(x,\frac{v}{r_a(x,v)}\biggr) -  \ell_a\biggl(x,\frac{v}{r_a(x,v)}\biggr)\biggr) \biggr],
\end{align*}
and note that $r_a(x,v)^2$ and $v/r_a(x,v)$ are smooth on $\mathcal{K} \subseteq TM \setminus \mathbf{0}$. 
Hence, by the product and chain rules, the above estimates pass to $\mathcal{H}_{\eps}L$.

\ref{PropApproximation2}
Let $X$ be the timelike vector field determining the time orientation of $L$. 
For each compact set $ B \subseteq M$, the $\tilde g$-unit tangent bundle $\mathbb S_B$ over $B$ is compact. 
From the local uniform convergence of $\mathcal H_\varepsilon L$ and its second $v$-derivatives to those of $L$ established in \ref{PropApproximation1}, it follows that there exists $\varepsilon_B > 0$ such that, for every $\varepsilon \in (0,\varepsilon_B]$, the symmetric matrix $\partial_{v^i}\partial_{v^j}\mathcal H_\varepsilon L$ has Lorentzian signature on $\mathbb{S}_B$, as well as $\mathcal H_\varepsilon L(X(x)) < 0$ for every $x \in B$. 
Since $\mathcal H_\varepsilon L$ is positively $2$-homogeneous, its vertical Hessian is positively $0$-homogeneous. 
Hence, the signature assertion holds on all of $\pi^{-1}(B)\setminus \mathbf 0$.

Using the parameter construction in the proof of the \emph{globalization lemma} \cite[Lem.\ 4.3]{HoermannKunzingerSteinbauer-WaveEq}, with a compact exhaustion of $M$, we can therefore choose a smooth function $\mu : (0,1)\times M \rightarrow (0,1)$ such that, for every $(\varepsilon,x)$, the function $\mathcal{H}_{\mu(\varepsilon,x)}L$ has these properties on $T_xM\setminus\{0\}$, and such that $\mu(\varepsilon,x)=\varepsilon$ on a neighborhood of each fixed compact subset of $M$ for sufficiently small $\varepsilon >0$.
Set
\[
    L_\varepsilon(v)
    := \mathcal{H}_{\mu(\varepsilon,\pi(v))}L(v).
\]
This assignment is smooth away from the zero section and positively $2$-homogeneous. 
Moreover, since $\mu$ depends only on the base point,
\[
    \partial_{v^i}\partial_{v^j}L_\varepsilon(v)
    =            \partial_{v^i}\partial_{v^j}\mathcal{H}_{\mu(\eps,x)}L(v),
    \quad x=\pi(v).
\]
Thus, $L_\varepsilon$ has Lorentzian signature and is time-oriented by $X$, proving (a). 
For a compact set $\mathcal K \subseteq TM\setminus \mathbf 0$, eventually $\mu(\varepsilon,\cdot)=\varepsilon$ on a neighborhood of $\pi(\mathcal K)$, thus $L_\varepsilon=\mathcal H_\varepsilon L$ on a neighborhood of $\mathcal K$, proving (b).
\end{proof}

\begin{Remark}[On low regularity Finsler structures of general signature]
\label{Remark: general semi Riemann}
A majority of the regularization results we have presented so far apply to any signature.
One may define a \emph{semi-Riemann--Finsler structure} of regularity $C^{(k,l),\alpha}$ for $k + l \geq 2$ and $\alpha \in [0,1]$ in the same way as a Lorentz--Finsler structure in Definition \ref{Definition: Lorentz Finsler structure}, with the only difference being that the signature of $\partial^2 L/\partial v^i \partial v^j$ can be of general semi-Riemannian type $(-,\dots,-,+,\dots,+)$ with $s_-$ minuses and $s_+$ pluses, $s_- + s_+ = n$.
The case $s_- = 1$ corresponds to the Lorentz--Finsler setting, $s_- = 0$ to the Riemann--Finsler setting.

Given a $C^{(k,l),\alpha}$ semi-Riemann--Finsler structure $L$, one can construct the approximations $\mathcal{H}_{\eps}L$ and $L_\varepsilon$, which have the properties described in Proposition \ref{prop:PropertiesApproximation}, upon replacing all instances of ``Lorentz'' with ``semi-Riemann''.
Concerning the (weighted) Ricci curvature, the convergence is as good as in the Lorentzian case, namely
\begin{equation*}
    \Ric[L_\eps] -\mathcal{H}_\eps \Ric[L] \to 0
\end{equation*}
locally uniformly on $TM\setminus \mathbf 0$ if $L \in C^{(1,3)}$, which is proven along the same lines used in the proof of Proposition \ref{Prop: ApproxtimelikeRiccibound} below.
\end{Remark}

\subsection{Inner and outer approximations}\label{ssc:L_e^}

While the smooth one-parameter family of regularizations $(L_\eps)_{\eps>0}$ as in Proposition~\ref{prop:PropertiesApproximation}\ref{PropApproximation2} approximates $L$ well analytically, it does not incorporate any considerations concerning causality. To remedy this deficiency, we now introduce two further approximation schemes $\widecheck{L}_\varepsilon$ and $\widehat{L}_\varepsilon$ (in the spirit of \cite{ChruscielGrant,Graf-SingularityC1}), whose causality is strongly tied to that of $L$. 

\begin{Definition}[The ``wider lightcone" relation $\prec$]
\label{df:order}
We write $L_1 \prec L_2$, for two (not necessarily smooth) Lorentz--Finsler structures with time-orientation, if any future causal vector for $L_1$ is future timelike for $L_2$, i.e., if the future light cone of $L_2$ is ``wider" than that of $L_1$.    
\end{Definition}

\begin{Proposition}[Chru{\'s}ciel--Grant approximations]\label{prop:ChruscielGrantApprox}
Let $(M,L)$ be a $C^{(1,2)}$ Finsler spacetime.
Then there exist smooth Lorentz--Finsler structures $\widecheck{L}_\varepsilon$ and $\widehat{L}_\varepsilon$ on $M$, for $\eps>0$, time-oriented by the same vector field as $L$, such that
\begin{enumerate}
    \item \label{CG1}
    $\widecheck{L}_\varepsilon \prec L \prec \widehat{L}_\varepsilon$;
    \item \label{CG2}
    $\widecheck{L}_\varepsilon - L_\varepsilon \to 0$ and $\widehat{L}_\varepsilon - L_\varepsilon \to 0$ in $C^\infty_{\loc}(TM \setminus \mathbf 0)$ as $\eps \to 0$;
    \item \label{CG3}
    for any compact set $\mathcal{K} \subseteq TM \setminus \mathbf{0}$, there exist $c_{\mathcal{K}} >0$ and $\eps_0(\mathcal{K})>0$ such that, for all $\eps \in (0,\eps_0(\mathcal{K}))$, the fundamental tensors of $\widecheck{L}_\varepsilon$ and $L_\varepsilon$ satisfy
    \begin{equation}\label{eq:EstimatesInverseMetric}
        \| \check{g}_\eps- g_\eps \|_{\infty,\mathcal{K}} \leq c_{\mathcal{K}} \eps, \qquad \| \check{g}_\eps^{-1} - g_\eps^{-1} \|_{\infty,\mathcal{K}} \leq c_{\mathcal{K}} \eps,
    \end{equation}
    and we have
    \begin{equation}\label{eq:EstimatesCheckSpray}
        \| \widecheck{G}_{\eps}^{i} - G_\eps^{i} \|_{\infty, \mathcal{K}} \leq c_\mathcal{K} \eps, \qquad \| (\widecheck{N}_{\eps})^{i}_j - (N_\eps)^{i}_j \|_{\infty, \mathcal{K}} \leq c_\mathcal{K} \eps
    \end{equation}
    for the spray coefficients and the nonlinear connection coefficients.
\end{enumerate}
\end{Proposition}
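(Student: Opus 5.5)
The plan is to follow Chru\'sciel--Grant and Graf closely, but to replace the Lorentzian trick of adding a multiple of $\tilde g$ by a homogeneity-preserving perturbation. Fix a smooth complete Riemannian background metric $\tilde g$, and set $h(v):=\frac12\tilde g(v,v)$, a smooth positively $2$-homogeneous function whose vertical Hessian is $\tilde g$. For a smooth positive function $\lambda$ on $M$, consider
\begin{equation*}
\widehat L_{\eps}:=L_\eps-\lambda_\eps(\pi(v))\,h(v),\qquad \widecheck L_\eps:=L_\eps+\lambda_\eps(\pi(v))\,h(v),
\end{equation*}
where $L_\eps$ is the regularization from Proposition~\ref{prop:PropertiesApproximation}\ref{PropApproximation2}. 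Subtracting $\lambda h$ makes more vectors timelike, so the cones widen; adding $\lambda h$ narrows them. Both expressions are smooth away from $\mathbf 0$ and positively $2$-homogeneous. Their vertical Hessians are $g_\eps\mp\lambda_\eps\tilde g$, which stay Lorentzian and keep $X$ timelike as long as $\lambda_\eps$ is small relative to the local signature margin of $g_\eps$ on the $\tilde g$-unit sphere bundle over compact sets.

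The key step is to choose $\lambda_\eps$ so that \ref{CG1} holds while $\lambda_\eps\to0$ locally uniformly in $C^\infty$. Work over a compact exhaustion $B_1\subseteq B_2\subseteq\cdots$ of $M$. By Proposition~\ref{prop:PropertiesApproximation}\ref{PropApproximation1} with $k=1$, there are constants $c_j$ such that $\|L_\eps-L\|_{\infty}\le c_j\eps$ on the compact $\tilde g$-unit sphere bundle $\mathbb S_{B_j}$ for $\eps\le\eps_j$.

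For $\widehat L_\eps$: let $v\in\mathbb S_{B_j}$ be $L$-future causal, so $L(v)\le0$. Then $\widehat L_\eps(v)\le c_j\eps-\lambda_\eps/2<0$ once $\lambda_\eps>2c_j\eps$ on $B_j$. By homogeneity this extends to all nonzero future causal vectors over $B_j$.

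The future-component issue needs care. Both cones vary continuously, and $\lambda h$ only enlarges the negative set of $L_\eps$. A connectedness argument along the segment from $v$ to $X$ inside $\overline{\Omega^T_x}$ (a convex cone) then shows that $v$ lies in the future component. On that segment every point is $L$-causal, hence $\widehat L_\eps$-timelike, so the segment connects $v$ to $X$ within the $\widehat L_\eps$-timelike set.

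For $\widecheck L_\eps$: let $v\in\mathbb S_{B_j}$ be $\widecheck L_\eps$-future causal. Then $L(v)\le L_\eps(v)+c_j\eps\le -\lambda_\eps/2+c_j\eps<0$, so $v$ is $L$-timelike. The future-component argument is symmetric, using the convexity of the $\widecheck L_\eps$-cone.

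So $\lambda_\eps$ must exceed $2c_j\eps$ on $B_j$ and go to zero as $\eps\to0$ uniformly on compacts. It must also stay below the signature margin, which is uniform in small $\eps$ on each $B_j$ because $g_\eps\to g$ locally uniformly. I would construct $\lambda_\eps(x)=\eps\,\Lambda(x)$, with $\Lambda$ a smooth positive function satisfying $\Lambda>2c_j+1$ on $B_j$. The outer regions, where $\eps>\eps_j$, are handled with the globalization parameter trick \cite[Lem.\ 4.3]{HoermannKunzingerSteinbauer-WaveEq} already used in Proposition~\ref{prop:PropertiesApproximation}. There one replaces $\eps$ by $\mu(\eps,x)$, which is smaller far out, so that the requirements hold everywhere for every $\eps$. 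I expect this globalization to be the main technical obstacle, together with the matching of the $\mu$ used for $L_\eps$: the statement compares with $L_\eps$ itself, so the same $\mu$ should be used, or $\eps_j$ shrunk accordingly.

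Given $\lambda_\eps=\eps\Lambda$ on each compact set for small $\eps$, the remaining items are direct. For \ref{CG2}, $\widecheck L_\eps-L_\eps=\eps\Lambda h$, whose derivatives of every order tend to $0$ locally uniformly on $TM\setminus\mathbf 0$. For \ref{CG3}, $\check g_\eps-g_\eps=\eps\Lambda\tilde g$, which gives the first estimate. The inverse estimate follows from $A^{-1}-B^{-1}=A^{-1}(B-A)B^{-1}$ together with the uniform bounds on $g_\eps^{-1}$ on $\mathcal K$, which hold since $g_\eps\to g$ uniformly and $g$ is nondegenerate.

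For the sprays, use \eqref{def:SprayCoeffImproved}: $\widecheck G^i_\eps=\check g^{il}_\eps(\partial_x\partial_v\widecheck L_\eps\, v-\partial_x\widecheck L_\eps)$. The bracket differs from the $L_\eps$-version by $\eps$ times first derivatives of the smooth function $\Lambda h$, so it is $O(\eps)$ on $\mathcal K$. The brackets are uniformly bounded by the $C^{(1,2)}$ convergence, and combined with the inverse estimate this gives the spray estimate. The estimate for $N^i_j=\frac12\partial_vG^i$ follows the same way. This needs one more vertical derivative, of $\check g^{-1}$ and of $\partial_x\partial_vL_\eps$, and these are uniformly bounded on $\mathcal K$ because $L\in C^{(1,2)}$ gives convergence of $\partial_x\partial_v^2L_\eps$ and $\partial_v^3L_\eps$.
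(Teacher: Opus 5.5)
Your proposal is correct and has the same structure as the paper's proof: regularize, add $\pm\eps$ times a smooth fiberwise quadratic form whose coefficient dominates the local $O(\eps)$ error $|L_\eps-L|$ on unit vectors, globalize via $\mu(\eps,x)$, and derive (iii) from the resolvent identity; the only substantive difference is the form itself, since the paper's $Q=\sum_a C_a\xi_a\nu_a$ vanishes exactly on $\R X$ (so $X$ stays timelike for free, but positivity on $L$-null vectors needs the minimum $\kappa_a>0$), whereas your $h=\frac12\tilde g(v,v)$ is constant on the unit sphere bundle but needs $\lambda_\eps$ small (absorbed into the globalization) to keep $X$ $\widecheck L_\eps$-timelike. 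One bookkeeping fix: require $\Lambda>2c_j+1$ only on $B_j\setminus B_{j-1}$, not on all of $B_j$, since otherwise $\Lambda$ would have to exceed $2\sup_j c_j+1$ on $B_1$, which may be infinite.
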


\begin{proof}
We will only construct $\widecheck{L}_\eps$; the construction of $\widehat{L}_\eps$ is analogous.
Let $X$ be a smooth timelike vector field determining the time orientation of $(M,L)$. We cover $M$ by a locally finite family of charts $\{(U_a, \psi_a)\}_{a \in A}$ with relatively compact domains, and let $\{\xi_a\}_{a \in A}$ denote a respective partition of unity.

Now, fix a chart $(U_a,\psi_a)$ and assume that $X^n \neq 0$ in $U_a$. 
We set
\begin{equation*}
    \nu_a^{i} :=\diff x^{i} - \frac{X^{i}}{X^n} \diff x^{n}
\end{equation*}
for $1 \leq i \leq n-1$.
Then, $\{\nu^1_a, \dots, \nu^{n-1}_a, \diff x^n\}$ is a smooth coframe on $U_a$ such that $\nu^i_{a}(X) = 0$ for all $i$, and $\nu_a(w):=\sum_{i=1}^{n-1} \nu_a^i (w)^2$, $w \in TU_a$, is $2$-homogeneous with $\nu_{a}(w)=0$ precisely when $w$ is proportional to $X$.
By the relative compactness of $U_a$, the function $\nu_a$ attains the minimum value $\kappa_a$ on the compact set 
\begin{equation*}
    \mathcal{V}_a:=\{w \in T\overline{U}\!_a  \cap (\Omega^C \setminus \Omega^T): |w|_{\tilde g} = 1\},
\end{equation*}
where $\tilde g$ is again an arbitrary Riemannian background metric.
This constant $\kappa_a$ is positive, as $\nu_a(w) = 0$ for some $w \in \Omega_x^C \setminus \Omega_x^T$ implies that $w = c X(x)$ for some $c >0$ (both are future-directed), but then $L(w) = c^2 L(X(x)) < 0$, a contradiction.

Moreover, by Proposition \ref{prop:PropertiesApproximation}, there exist constants $c_a,\eps_a>0$ such that $|L_\eps(w) - L(w)| \leq c_a \eps$ for any $w \in \mathcal{V}_a$ and $\eps \in (0,\eps_a]$.
Choosing a constant $C_a > c_a/\kappa_a$, we now set
\begin{equation*}
    \widetilde{L}_\eps := L_\eps + \eps \sum_{a \in A} C_a\xi_a(\pi) \nu_{a}.
\end{equation*}
We claim that $\widetilde{L}_\eps \prec L$ on $T \overline{U}\!_a$ for $\eps \in (0, \eps_a]$. 
To this end, let $w \in T \overline{U}\!_a \cap (\Omega^C \setminus \Omega ^T)$ and suppose, without loss of generality, that $|w|_{\tilde g} = 1$ (i.e., $w \in \mathcal{V}_a$).
Since
\begin{equation*}
    L_\eps(w) = \sum_{a \in A} \xi_a \bigl( \pi(w) \bigr) \bigl( L_\eps(w) - L(w)\bigr) \\
    \geq - \eps \sum_{a \in A} \xi_a \bigl( \pi(w) \bigr)c_a
\end{equation*}
for $\eps \in (0, \eps_a]$, we have
\begin{equation*}
    \widetilde{L}_\eps(w) = L_\eps(w) + \eps \sum_{a \in A} C_a\xi_a \bigl(\pi(w) \bigr) \nu_a(w) \geq \eps \sum_{a \in A} \xi_a \bigl(\pi(w)\bigr) \bigl(C_a\nu_a(w) - c_a \bigr) > 0,
\end{equation*}
implying that, indeed, $\widetilde{L}_\eps \prec L$ on $T \overline{U}\!_a$ for $\eps \in (0, \eps_a]$.

Furthermore, setting $Q:=\sum_{a \in A} C_a\xi_a(\pi) \nu_{a}$, for any compact set $\mathcal{K} \subseteq TM \setminus \mathbf{0}$ and any $l \in \N$, there exist $c_{\mathcal{K},l},\eps_{\mathcal{K},l}>0$ such that 
\begin{equation} \label{eq:CGProofLCheckLEps}
    \| \widetilde{L}_\eps - L_\eps\|_{C^l(\mathcal{K})} =  \eps \| Q\|_{C^l(\mathcal{K})} \leq c_{\mathcal{K},l} \eps
\end{equation}
for all $\eps \in (0,\eps_{\mathcal{K},l}]$.
Hence, $\widetilde{L}_\eps - L_\eps \rightarrow 0$ in $C^{\infty}_{\loc}(TM \setminus \mathbf{0})$.
Then we also immediately see that $(\tilde g_\varepsilon)_{ij} = (g_\eps)_{ij} + \eps q_{ij}$, where $q_{ij}:=\partial_{v^i}\partial_{v^j} Q$, satisfies
\begin{equation} \label{eq:CGProofMetrics}
    \| \tilde g_\eps - g_\eps \|_{\infty, \mathcal{K}} \leq c_{\mathcal{K}} \eps
\end{equation}
on any compact set $\mathcal{K} \subseteq TM \setminus \mathbf{0}$, for some $c_{\mathcal K},\eps_{\mathcal K}>0$, and for all $\eps \in (0,\eps_{\mathcal K}]$.
This shows that $\tilde{g}_\eps$ has the same signature as $g_\eps$ on $\mathcal{K}$ for sufficiently small $\eps>0$.

We now use the same base-dependent globalization as in the proof of Proposition \ref{prop:PropertiesApproximation}\ref{PropApproximation2}.
For each compact $B \subseteq M$, apply the preceding estimates on the $\tilde g$-unit tangent bundle $\mathbb{S}_B$. 
By homogeneity, for all sufficiently small $\varepsilon>0$, the vertical Hessian of $\widetilde L_\varepsilon$ has Lorentzian signature on $\pi^{-1}(B)\setminus \mathbf 0$, and $\widetilde L_\varepsilon \prec L$ over $B$. 
Moreover, $X$ is $\widetilde{L}_\eps$-timelike since $Q(X)=0$. The parameter construction in \cite[Lem.\ 4.3]{HoermannKunzingerSteinbauer-WaveEq}
therefore gives a smooth function $\mu:(0,1) \times M \to (0,1)$ such that
\[
    \widecheck L_\varepsilon(v)
    := \widetilde L_{\mu(\varepsilon,\pi(v))}(v)
\]
is a smooth Lorentz--Finsler structure time-oriented by $X$, with $\widecheck L_\varepsilon \prec L$ everywhere. 
As in Proposition \ref{prop:PropertiesApproximation}\ref{PropApproximation2}, $\widecheck L_\varepsilon$ is positively $2$-homogeneous, and the vertical Hessian is obtained by evaluating that of $\widetilde L_{\mu(\eps,x)}$ with $x=\pi(v)$.
Furthermore, for every compact $\mathcal K \subseteq TM\setminus \mathbf 0$, we have $\widecheck L_\varepsilon=\widetilde L_\varepsilon$ on a
neighborhood of $\mathcal K$ for sufficiently small $\varepsilon >0$.
This completes the construction in \ref{CG1}, and the convergence claim in \ref{CG2} now directly follows from \eqref{eq:CGProofLCheckLEps}.

For the estimates in \ref{CG3}, notice that \eqref{eq:CGProofMetrics} immediately gives the first estimate in \eqref{eq:EstimatesInverseMetric}.
Moreover, since $g_\eps \rightarrow g$ locally uniformly and $g$ is non-degenerate, both $g_\eps$ and $\check g_\eps$ are uniformly non-degenerate on $\mathcal{K}$ for all $\eps \in (0,\eps_{\mathcal{K}}]$. 
In particular, their inverses are uniformly bounded on $\mathcal{K}$. 
Using
\begin{equation} \label{eq:matrixInversion}
    \check g_\eps^{-1}-g_\eps^{-1} = \check g_\eps^{-1}(g_\eps-\check g_\eps)g_\eps^{-1},
\end{equation}
we thus obtain
\begin{equation} \label{eq:CGProofInverse}
    \|\check g_\eps^{-1}-g_\eps^{-1}\|_{\infty,\mathcal{K}} \leq c_{\mathcal{K}}\eps.
\end{equation}
For the estimate of the spray coefficients, by \eqref{def:SprayCoeffImproved}, this amounts to analyzing the difference
\begin{equation}
\label{eq:G_eps}
    \widecheck G_\eps^i -G_\eps^i =( \check g_\eps^{il}-g_\eps^{il})
    (\widecheck{A}_\eps)_l + g_\eps^{il}\bigl( (\widecheck{A}_\eps)_l - (A_\eps)_l \bigr),
\end{equation} 
where $(A_\eps)_l := v^j \partial_{x^j} \partial_{v^l} L_\eps - \partial_{x^l} L_\eps$ and correspondingly for $(\widecheck A_\eps)_l$.
The first term is $O(\eps)$ thanks to \eqref{eq:CGProofInverse} and $\widecheck{L}_\eps \rightarrow L$ in $C^{(1,2)}_\loc(TM \setminus \mathbf{0})$, while the second term is $O(\eps)$ by \eqref{eq:CGProofLCheckLEps} and the local uniform boundedness of $(g_\eps)^{-1}$. 

Finally, to prove the same estimate for the nonlinear connection coefficients, we first claim that
\begin{equation*}
    \| \partial_v (\check g_\eps^{-1}-g_\eps^{-1}) \|_{\infty,\mathcal K}\leq c_{\mathcal K}\eps.
\end{equation*}
For this, we use the identity
\begin{equation*}
    \check g_\eps^{-1}-g_\eps^{-1}
    = -\eps \check g_\eps^{-1} q g_\eps^{-1}, \qquad q=(q_{ij}), 
\end{equation*}
following from $\widecheck{L}_\eps -L_\eps = \eps Q$, to get
\begin{equation*}
    \partial_{v}
    (\check g_\eps^{-1}-g_\eps^{-1}) 
    = -\eps (\partial_{v}\check g_\eps^{-1}) qg_\eps^{-1} 
    -\eps \check g_\eps^{-1}(\partial_{v}q)g_\eps^{-1}
    -\eps \check g_\eps^{-1}q(\partial_{v}g_\eps^{-1}).
\end{equation*}
Now, all the terms appearing in this expression are uniformly bounded on $\mathcal K$; the inverse metrics are uniformly bounded by the argument above, $q$ and $\partial_v q$ are smooth, and $\partial_v g_\eps^{-1}$, $\partial_v \check g_\eps^{-1}$ are uniformly bounded by the $C^{(1,2)}_{\mathrm{loc}}(TM \setminus \mathbf{0})$-convergence of $L_\eps$ and $\widecheck{L}_\eps$ to $L$.

Moreover, from \eqref{eq:CGProofLCheckLEps}, we get that $(\widecheck{A}_\eps - A_\eps)$ and $\partial_v (\widecheck{A}_\eps - A_\eps)$ are $O(\eps)$ on $\mathcal{K}$, while both $\widecheck{A}_\eps$ and $\partial_v \widecheck{A}_\eps$ are uniformly bounded on $\mathcal{K}$ as $\widecheck{L}_\eps \rightarrow L$ in $C^{(1,2)}_\loc(TM \setminus \mathbf{0})$. 
Differentiating the expression \eqref{eq:G_eps} for
$\widecheck G_\eps-G_\eps$, we obtain
\begin{equation}\label{eq:ProofCGN}
\begin{split}
    \partial_{v^j} \bigl(\widecheck G_\eps^i -G_\eps^i \bigr)
    &= \partial_{v^j} (\check g_\eps^{il}-g_\eps^{il})
    (\widecheck{A}_\eps)_l + (\check g_\eps^{il}-g_\eps^{il})
    \partial_{v^j} (\widecheck{A}_\eps)_l \\
    &\quad + \partial_{v^j} g_\eps^{il} \bigl( (\widecheck{A}_\eps)_l - (A_\eps)_l \bigr) + g_\eps^{il} \partial_{v^j} \bigl((\widecheck{A}_\eps)_l - (A_\eps)_l\bigr).
\end{split}
\end{equation}
The above arguments imply that each of these terms is $O(\eps)$ on $\mathcal{K}$, and hence, as desired,
\begin{equation*}
    \|(\widecheck N_\eps)^i_j-(N_\eps)^i_j\|_{\infty,\mathcal{K}}
    \leq c_{\mathcal{K}}\eps. \qedhere
\end{equation*}
\end{proof}

\subsection{Convergence of Ricci curvature}
\label{ssc:convergence}

We next analyze the behavior of the Ricci curvature for approximating Lorentz--Finsler structures $L_\eps$ and $\widecheck{L}_\eps$, denoted by $\Ric[L_\eps]$ and $\Ric[\widecheck{L}_\eps]$, as $\eps \to 0$.
Recall that $L_\eps$ was obtained from $\mathcal{H}_\eps L$ in Proposition \ref{prop:PropertiesApproximation}, and $\widecheck{L}_\eps$ from $\widetilde{L}_\eps$ in the proof of Proposition \ref{prop:ChruscielGrantApprox}, both via the globalization lemma.
Thus, on each compact set $\mathcal{K} \subseteq TM \setminus \mathbf{0}$, we have $L_\eps =\mathcal{H}_\eps L$ and $\widecheck{L}_\eps =\widetilde{L}_\eps$ for sufficiently small $\eps>0$.

\begin{Lemma}[Uniform convergence of $\Ric$]\label{lem:RicciCheckConvergence}
    Let $(M,L)$ be a $C^{(1,2)}$ Finsler spacetime. Then, on any compact set $\mathcal{K} \subseteq TM \setminus \mathbf{0}$, we have $\Ric [\widecheck{L}_\eps] - \Ric[L_\eps] \rightarrow 0$ uniformly as $\eps \to 0$.
\end{Lemma}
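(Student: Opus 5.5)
The plan is to use the explicit structure of the Chru\'sciel--Grant perturbation. Fix a compact set $\mathcal K \subseteq TM\setminus\mathbf 0$. By the proof of Proposition~\ref{prop:ChruscielGrantApprox}, for all sufficiently small $\eps>0$ we have, on a neighborhood of $\mathcal K$,
\[
\widecheck L_\eps = L_\eps + \eps Q \quad\text{and}\quad L_\eps=\mathcal H_\eps L,
\]
where $Q$ is a fixed smooth, $2$-homogeneous function independent of $\eps$. Throughout, I will use the trace of \eqref{eq:DistributionalCurvature} for the smooth structures,
\[
\Ric = \partial_{x^i}G^i - v^k\partial_{x^k}N^i_i + G^k\partial_{v^k}N^i_i - N^i_kN^k_i .
\]
I then compare the four terms for $\widecheck L_\eps$ and for $L_\eps$ one by one.

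\textbf{Step 1: growth of derivatives beyond the regularity of $L$.} The first step is a bookkeeping lemma. Every derivative $\partial_x^\beta\partial_v^\sigma L_\eps$ with $|\beta|\le 1$, $|\sigma|\le 3-|\beta|$ is locally uniformly bounded, by Proposition~\ref{prop:PropertiesApproximation}. One derivative beyond this range, namely $\partial_x^2\partial_v L_\eps$, $\partial_x^2 L_\eps$, $\partial_x\partial_v^3 L_\eps$ or $\partial_v^4L_\eps$, satisfies
\[
\eps\,\|\partial(\cdot)\|_{\infty,\mathcal K}\to 0 ,
\]
and not merely $O(1)$. To see this, write the extra derivative as falling on the mollifier. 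For a continuous $f$, using $\int\partial\rho=0$,
\[
\eps\,\partial(f\star\rho_\eps)(z)=\int \bigl(f(z-\eps y)-f(z)\bigr)\,\partial\rho(y)\diff y ,
\]
which tends to $0$ uniformly on compacts by uniform continuity. The factors $r_a^2$ and $v/r_a$ coming from the homogeneity-preserving convolution are smooth and harmless, by the chain rule as in Proposition~\ref{prop:PropertiesApproximation}. Consequently $\partial_xG_\eps$, $\partial_x N_\eps$, $\partial_vN_\eps$, $\partial_x g_\eps^{-1}\cdot(\cdot)$ and similar quantities are all $o(1/\eps)$ on $\mathcal K$, while $G_\eps$, $N_\eps$, $g_\eps^{\pm1}$, $\partial_v g_\eps^{\pm1}$ and $\partial_x g_\eps$ are $O(1)$.

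\textbf{Step 2: expanding the differences.} Next I expand each difference by the product rule, exactly as in \eqref{eq:G_eps} and \eqref{eq:ProofCGN}, using the exact identities
\[
\check g_\eps^{-1}-g_\eps^{-1}=-\eps\,\check g_\eps^{-1}qg_\eps^{-1},\qquad \widecheck A_\eps - A_\eps=\eps\bigl(v^j\partial_{x^j}\partial_vQ-\partial_xQ\bigr).
\]
Every term in the resulting expansion is a product of two kinds of factors:
\begin{itemize}
\item an explicit factor $\eps$ times smooth quantities built from $Q$, together with at most one further derivative of these;
\item geometric quantities of $L_\eps$ or $\widecheck L_\eps$, carrying at most one derivative beyond the $C^{(1,2)}$ range.
\end{itemize}
For example, $\partial_{x^i}(\widecheck G^i_\eps - G^i_\eps)$ contains $(\check g^{il}_\eps-g^{il}_\eps)\,\partial_{x^i}(\widecheck A_\eps)_l$, which is $O(\eps)\cdot o(1/\eps)=o(1)$. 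It also contains $\partial_{x^i}(\check g_\eps^{-1}-g_\eps^{-1})\,\widecheck A_\eps$, which is $O(\eps)$ because $\partial_x g_\eps$ is bounded. The terms $\partial_vN$ and $\partial_xN$ are handled the same way: differentiate \eqref{eq:ProofCGN} once more in $v$ or $x$. Each resulting term then has either a factor $\eps$ times an $o(1/\eps)$ quantity, or a factor $\eps$ times bounded quantities. The lower-order terms $G^k\partial_{v^k}N$ and $N N$ are treated with \eqref{eq:EstimatesCheckSpray}. For instance,
\[
\widecheck G_\eps\,\partial_v\widecheck N_\eps - G_\eps\,\partial_vN_\eps = (\widecheck G_\eps - G_\eps)\,\partial_v\widecheck N_\eps + G_\eps\,\partial_v(\widecheck N_\eps-N_\eps),
\]
where the first product is $O(\eps)\cdot o(1/\eps)$ and the second is $O(1)\cdot o(1)$.

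\textbf{Main obstacle.} The main point is precisely that a naive count only gives $O(\eps)\cdot O(1/\eps)=O(1)$ for the cross terms. This is because $\Ric[L_\eps]$ itself need not be bounded when $L\in C^{(1,2)}$. The improvement from $O(1/\eps)$ to $o(1/\eps)$ in Step 1, obtained by subtracting $f(z)$ inside the convolution, is what makes the difference tend to $0$ rather than merely remain bounded. I therefore expect the careful verification of Step 1 to be the decisive part of the argument. This includes checking that every derivative count in the expansion of Step 2 exceeds the $C^{(1,2)}$ range by at most one, and in particular the terms involving $\partial_v^2 g^{-1}$ and $\partial_x\partial_v^3 L_\eps$.
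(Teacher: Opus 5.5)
Your proposal is correct and is essentially the paper's proof. Both arguments expand $\Ric[\widecheck L_\eps]-\Ric[L_\eps]$ using $\widecheck L_\eps=L_\eps+\eps Q$ near $\mathcal K$, so that every term is either $O(\eps)$ or an $O(\eps)$ factor times a quantity carrying one derivative beyond the $C^{(1,2)}$ range. Both then close with the mollifier estimate $\eps\|\partial(f\star\rho_\eps)\|_{\infty,K}\to 0$ for continuous $f$; your splitting of the $G^k\partial_{v^k}N^i_i$ term differs from the paper's only cosmetically.

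One small bookkeeping slip: your explicit list in Step 1 omits $\partial_x^2\partial_v^2 L_\eps$, which enters through $\partial_x\partial_v A_\eps$ in $\partial_x N_\eps$. It is still a single extra derivative of the continuous function $\partial_x\partial_v^2 L$, so the same estimate covers it.
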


\begin{proof}
In view of \eqref{eq:DistributionalCurvature}, we shall estimate the difference
\begin{align*}
    \Ric[\widecheck L_\eps] - \Ric[L_\eps]
    &= \partial_{x^i}(\widecheck G_\eps^i-G_\eps^i)
    - \partial_{x^k}\bigl((\widecheck N_\eps)^i_{i}-( N_\eps)^i_{i}\bigr)v^k \\
    &\quad + \partial_{v^k}\bigl((\widecheck N_\eps)^i_{i}-(N_\eps)^i_{i}\bigr)\widecheck G_\eps^k 
    + \partial_{v^k}(N_\eps)^i_{i}\bigl(\widecheck G_\eps^k-G_\eps^k\bigr)\\
    &\quad - \bigl((\widecheck N_\eps)^i_{k}(\widecheck N_\eps)^k_{i}
    - (N_\eps)^i_{k}(N_\eps)^k_{i}\bigr).
\end{align*}
Our assumptions on the regularity of $L$ imply that $G^{i} \in C^{(0,1)}$ and $N^{i}_k \in C^{(0,0)}$; recall Remark \ref{rm:regular}. 
We first note that the last term 
\begin{equation*}
    (\widecheck N_\eps)^i_{k}(\widecheck N_\eps)^k_{i} 
    - (N_\eps)^i_{k}(N_\eps)^k_{i}
    = \bigl((\widecheck N_\eps)^i_{k}-(N_\eps)^i_{k}\bigr)(\widecheck N_\eps)^k_{i}
    + (N_\eps)^i_{k}\bigl((\widecheck N_\eps)^k_{i}-(N_\eps)^k_{i}\bigr)
\end{equation*}
tends to $0$ uniformly. 
This is because $N_\eps, \widecheck N_\eps$ are locally uniformly bounded and 
\begin{equation*}
    \|(\widecheck N_\eps)^i_k -(N_\eps)^i_k\|_{\infty,\mathcal{K}} \leq c_{\mathcal{K}}\eps
\end{equation*}
by the estimate \eqref{eq:EstimatesCheckSpray}. 

Next, we establish the convergence of the first three terms by showing
\begin{equation}
\label{eq:GNN}
    \|\partial_x(\widecheck G_\eps-G_\eps)\|_{\infty,\mathcal{K}}
    +
    \|\partial_x(\widecheck N_\eps-N_\eps)\|_{\infty,\mathcal{K}}
    +
    \|\partial_v(\widecheck N_\eps-N_\eps)\|_{\infty,\mathcal{K}}
    \rightarrow 0.
\end{equation}
For the first term $\partial_{x^i} (\widecheck{G}^{i}_\eps - G^{i}_\eps)$, we can essentially follow the same argument we gave for the proof of the estimate on the nonlinear connection coefficients in \eqref{eq:ProofCGN} with the important exception that now $\partial_{x^i} (\widecheck A_\eps)_l$, where 
\begin{equation*}
    (\widecheck A_\eps)_l= v^j \partial_{x^j} \partial_{v^l} \widecheck L_\eps - \partial_{x^l} \widecheck L_\eps
\end{equation*}
as in the above proof, is \emph{not} locally bounded since $\partial_{x^j} \partial_{v^l} L \in C^{(0,1)}$. 
However, we will show that
\begin{equation}\label{eq:eAto0}
    \eps \|\partial_{x^i} (\widecheck A_\eps)_l\|_{\infty, \mathcal{K}} \rightarrow 0 \quad \textnormal{as} \quad \eps \rightarrow 0,
\end{equation}
which is sufficient to conclude the convergence $\|\partial_{x^i}(\widecheck{G}_\eps^{i} - G_\eps^{i})\|_{\infty,\mathcal{K}} \to 0$ by virtue of \eqref{eq:EstimatesInverseMetric}. 
We reduce the situation to a standard mollifier estimate on $\R^{2n}$ as follows. 
First, as $\widecheck L_\eps = L_\eps + \eps Q$ on $\mathcal{K}$ (for small $\eps$), with smooth $Q$ as in the proof of Proposition \ref{prop:ChruscielGrantApprox}, it is sufficient to prove the claim for $L_\eps$.
Furthermore, in local coordinates, we can disregard cut-off and transition functions appearing in the convolution formula \eqref{eq:HeaL}, as they do not spoil the convergence (the terms arising from differentiating the smooth cut-off and transition functions contain at most first derivatives of the regularized coefficients and are therefore locally uniformly bounded). 
Hence, it suffices to study
\begin{equation}\label{eq: ProofRicReduction}
    \partial_{x^i}(f \star \rho_\eps),
\end{equation}
where $f$ is a continuous function on $\R^{2n}$ (representing either $\partial_{x^j} \partial_{v^l} L$ or $\partial_{x^l} L$). 
To prove this, using the integration by parts and $c \star \partial_{x^i}\rho_\eps=0$ for any constant $c$, we write
\begin{align*}
    \partial_{x^i}(f \star \rho_\eps)(z) 
    &=\int_{\R^{2n}} \partial_{x^i}f(z-y)\rho_\eps(y)\diff y
    =\int_{\R^{2n}} f(z-y)\partial_{x^i}\rho_\eps(y)\diff y \\ 
    &=\int_{\R^{2n}} \bigl(f(z-y)-f(z)\bigr)\partial_{x^i}\rho_\eps(y)\diff y.
\end{align*}
Letting $\omega_K$ denote the modulus of continuity of $f$ on a compact set $K \subseteq \R^{2n}$ and recalling $\rho_\eps(y)=\eps^{-2n}\rho(y/\eps)$, we have
\begin{alignat*}{2}
    |\eps\partial_{x^i} (f \star \rho_\eps)(z)|
    &\leq \eps \int_{\R^{2n}}\omega_K(|y|)|\partial_{x^i}\rho_\eps(y)|\diff y 
    =\eps^{-2n} \int_{\R^{2n}} \omega_K(|y|) |\partial_{x^i}\rho(y/\eps)| \diff y \\
    &= \int_{\R^{2n}}\omega_K(\eps|y|)|\partial_{x^i}\rho(y)| \diff y.
\end{alignat*}
Now, the right-hand side of this expression tends to $0$ by the dominated convergence theorem. 
Hence,
\begin{equation}
\label{eq:ConvergenceC0derivative}
    \eps\|\partial_{x^i}(f \star \rho_\eps)\|_{\infty,K}\rightarrow 0,
\end{equation}
giving the claim \eqref{eq:eAto0}. 

For the latter two terms in \eqref{eq:GNN}, we can apply a similar argument. 
Indeed, to control $\partial_{x^k} ((\widecheck N_\eps)^i_i - (N_\eps)^i_i)$, the potentially problematic terms are (recall \eqref{eq:ProofCGN})
\begin{equation*}
    \partial_{v^i}(\check g_\eps^{il} 
    - g_\eps^{il}) \partial_{x^k} (\widecheck A_\eps)_l, \qquad 
    (\check g_\eps^{il} - g_\eps^{il})\partial_{x^k}\partial_{v^i} (\widecheck A_\eps)_l, \qquad 
    \partial_{x^k}\partial_{v^i} \check g_\eps^{il}\bigl((\widecheck A_\eps)_l - (A_\eps)_l\bigr). 
\end{equation*}
All of these are controlled exactly as in the estimate for $\partial_{x^i}(\widecheck G^i_\eps-G^i_\eps)$.
Indeed, we have already seen that the respective factors involving the differences of $\widecheck{L}_\eps$- and $L_\eps$-quantities are $O(\eps)$, while the other factors can be reduced to a problem of the form \eqref{eq: ProofRicReduction} again. 
For $\partial_{v^k} ((\widecheck N_\eps)^i_i - (N_\eps)^i_i)$, the only two potentially problematic terms are
\begin{equation*}
    (\check g_\eps^{il}
    -g_\eps^{il})\partial_{v^k}\partial_{v^i} (\widecheck A_\eps)_l, \qquad \partial_{v^k} \partial_{v^i} g_\eps^{il}\bigl((\widecheck A_\eps)_l - (A_\eps)_l\bigr)
\end{equation*}
which can again be handled as above. 

Repeating the same argument one last time to get $\eps\|\partial_{v^k} (N_\eps)^i_i\|_{\infty,\mathcal{K}}\rightarrow 0$, we have thus proven that
$\Ric[\widecheck L_\eps] - \Ric[L_\eps] \rightarrow 0$
uniformly on $\mathcal{K}$, as claimed.
\end{proof}

To compare the Ricci curvature of $\widecheck{L}_\eps$ with that of $L$, we will need to improve the regularity from $C^{(1,2)}$ to $C^{(1,3)}$ (or $C^{(1,2),\alpha}$ with $\alpha>1/2$; see Remark \ref{rm:12alpha} below).
Note that, even under $L \in C^{(1,3)}(TM \setminus \mathbf{0})$, the Ricci curvature is defined only in the distributional sense (in horizontal directions) since it involves terms like $\partial_{x^i} G^i$ but $G^i \in C^{(0,2)}$.

\begin{Proposition}[Approximation of strong energy condition]
\label{Prop: ApproxtimelikeRiccibound}
    Let $(M,L)$ be a $C^{(1,3)}$ Finsler spacetime satisfying the distributional strong energy condition, and let $\mathcal{K} \subseteq \Omega^T$ be compact.
    Then for any $\delta >0$, there exists $\eps_0 =\eps_0(\mathcal{K},\delta) >0$ such that for all $\eps \in (0,\eps_0)$ and $v \in \mathcal{K}$ with $2 \widecheck{L}_\eps(v)=-1$, we have
    \begin{equation*}
        \Ric[\widecheck{L}_\eps](v) \ge - \delta.
    \end{equation*}
\end{Proposition}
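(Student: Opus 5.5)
By Lemma \ref{lem:RicciCheckConvergence}, $\Ric[\widecheck L_\eps]-\Ric[L_\eps]\to 0$ uniformly on compact subsets of $TM\setminus\mathbf 0$. It therefore suffices to prove the corresponding lower bound for $L_\eps$, and we split
\[
\Ric[L_\eps] = \mathcal{H}_\eps(\Ric[L]) + \bigl(\Ric[L_\eps]-\mathcal{H}_\eps(\Ric[L])\bigr).
\]
The plan is to show two things: the first term is nonnegative on $\mathcal K$ for small $\eps$, and the second tends to $0$ uniformly on $\mathcal K$. The normalization $2\widecheck L_\eps(v)=-1$ then only serves to keep $v$ in a compact set; homogeneity plus $\widecheck L_\eps\to L$ uniformly on $\mathcal K$ gives this. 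Because $\mathcal K$ can be enlarged to a compact neighbourhood $\mathcal K'\subseteq\Omega^T$, the distributional statement is applied there.

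For the first term, $\mathcal{H}_{\eps}(\Ric[L])(x,v)$ is locally a sum over charts of $\chi_a$ times $r_a^2\,(\Ric[L]_a\star\rho_\eps)(x,v/r_a)$. Fix $(x,v)$ with $v$ timelike. Each such expression is the action of $\Ric[L]$ on the nonnegative density $\zeta_a(y)\rho_\eps(x-y,\,v/r_a-w)\,\diff y\diff w$, pulled back appropriately. Its support lies in an $\eps$-neighbourhood of $(x,v/r_a)$, and $v/r_a$ is a positive multiple of $v$. Since $\Omega^T$ is an open cone, this support is contained in $\Omega^T$ once $\eps$ is small, uniformly for $v\in\mathcal K$ by compactness. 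The distributional strong energy condition therefore gives nonnegativity of every summand, and hence $\mathcal H_\eps(\Ric[L])\ge 0$ on $\mathcal K$.

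The main obstacle is the commutator estimate $\Ric[L_\eps]-\mathcal H_\eps(\Ric[L])\to0$ uniformly on $\mathcal K$, which is where $C^{(1,3)}$ is needed. As in the proof of Lemma \ref{lem:RicciCheckConvergence}, the cut-offs and the homogeneous rescaling $r_a$ only produce terms with at most first derivatives of regularized quantities, and these converge locally uniformly. The problem thus reduces to plain convolution in a chart. Write $\Ric=\partial_{x^i}G^i-\partial_{x^k}N^i_i\,v^k+\partial_{v^k}N^i_i\,G^k-N^i_kN^k_i$ and treat the terms one at a time.

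\begin{enumerate}
\item The quadratic term $N^i_kN^k_i$ converges by uniform convergence, since $N\in C^{(0,1)}$ under $C^{(1,3)}$.
\item For the term $\partial_{v^k}N^i_i\,G^k$, we have $\partial_vN$ continuous, so $\partial_v N_\eps\,G_\eps^k\to\partial_vN\,G^k$ and $(\partial_vN\,G)\star\rho_\eps\to\partial_vN\,G$, both uniformly.
\item For the horizontal derivative terms, $G_\eps=g_\eps^{-1}A_\eps$ is a nonlinear function of convolved quantities, while we need $\partial_x$ of $(g^{-1}A)\star\rho_\eps$. We use a Friedrichs-type lemma: for continuous $h$ and $\phi$ with $\phi\in C^1$ in $x$ (here $\phi=g^{-1}$, which is $C^{(1,\cdot)}$),
\[
\partial_x\bigl((\phi\star\rho_\eps)(h\star\rho_\eps)-(\phi h)\star\rho_\eps\bigr)\to 0
\]
locally uniformly. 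The proof writes the difference as an integral of $(\phi(z-y)-\phi(z))(h(z-y)-h(z))$ against $\partial\rho_\eps$, plus terms with $\partial_x\phi$, and bounds it by $\eps^{-1}\cdot O(\eps)\cdot\omega_h(\eps)\to0$, in the spirit of \eqref{eq:ConvergenceC0derivative}.
\item Replacing $g_\eps^{-1}$ by $(g^{-1})\star\rho_\eps$ costs $O(\eps)$ by Proposition \ref{prop:PropertiesApproximation}, and its product with $\partial_x A_\eps$, which is $o(\eps^{-1})$, vanishes. The same argument handles $\partial_xN^i_i$, using one more $v$-derivative; this is exactly where $C^{(1,3)}$ is needed, so that $\partial_vA$ and $\partial_v g^{-1}$ remain continuous.
\end{enumerate}

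Collecting the terms, $\Ric[\widecheck L_\eps]\ge -\delta$ on $\mathcal K$ for $\eps<\eps_0(\mathcal K,\delta)$.
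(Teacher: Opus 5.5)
Your proposal is correct and follows essentially the paper's own proof: the same splitting via Lemma~\ref{lem:RicciCheckConvergence}, nonnegativity of $\mathcal H_\eps\Ric[L]$ on $\mathcal K$ because the kernels are nonnegative and supported in $\Omega^T$, reduction to plain convolution in a chart, and a Friedrichs-type commutator estimate for the terms with two horizontal derivatives. Your form of that estimate, with $\phi\star\rho_\eps$ plus the $O(\eps)\cdot o(\eps^{-1})$ replacement of $g_\eps^{-1}$, is equivalent to the paper's version with $a_\eps=g_\eps^{il}$ and $a_\eps=\partial_{v^i}g_\eps^{il}$.

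One correction concerns where $C^{(1,3)}$ enters. $\partial_v A$ and $\partial_v g^{-1}$ are already continuous for $L\in C^{(1,2)}$. What $C^{(1,3)}$ actually supplies is that $\partial_v g^{-1}$ is $C^1$, so it can serve as the Lipschitz factor $\phi$ in your commutator lemma with $\partial_v g_\eps^{-1}-\partial_v g^{-1}=O(\eps)$. It also gives the continuity of $\partial_v N^i_i$ that you use for the term $\partial_{v^k}N^i_i\,G^k$.
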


\begin{proof}
We show that $\Ric[\widecheck L_\eps] - \mathcal{H}_\eps \Ric[L]$ tends to $0$ locally uniformly.
Then, as $\Ric[L] \geq 0$ on $\Omega^T$ in the distributional sense, we have $\mathcal{H}_\eps \Ric[L]\geq 0$ on $\mathcal{K} \subseteq \Omega^T$ for all sufficiently small $\eps >0$, giving the claim. 
To prove this convergence, we split the difference into two pieces:
\begin{equation*}
    \Ric[\widecheck L_\eps] - \mathcal{H}_\eps \Ric[L]= (\Ric[\widecheck L_\eps] - \Ric[L_\eps]) + (\Ric[L_\eps]-\mathcal{H}_\eps \Ric[L]).
\end{equation*}
By Lemma \ref{lem:RicciCheckConvergence}, we know that $\Ric[\widecheck{L}_\eps] - \Ric[L_\eps] \rightarrow 0$ locally uniformly, and thus the first piece tends to zero.
We shall show that the second piece also converges to zero.

For this, by homogeneity, it suffices to establish convergence on $\widetilde{\mathcal{K}}:= \{v/\|v\|_{\tilde g} : v \in \mathcal{K}\}$.
On a neighborhood of this set, for sufficiently small $\eps>0$, we have $L_\eps = \mathcal{H}_\eps L$.
By compactness, we can restrict to a fixed chart $(U_a, \psi_a)$.
We now estimate the terms arising from homogenization.
Recalling \eqref{eq:HeaL}, for $(x,v) \in \diff\psi_a(\widetilde{\mathcal{K}})$ we have
\begin{align*}
&\mathcal{H}_{\eps,a} L(x,v) - (\ell_a \star \rho_\eps)(x,v) \\
&= \int_{\R^n \times \R^n} \bigl\{ \ell_a\bigl( x-\eps y,v-\eps r_a(x,v)w \bigr) -\ell_a(x-\eps y,v-\eps w)\bigr\} \rho(y,w) \diff y \diff w.
\end{align*}
Hence, since $r_a(x,v)=1$,
\begin{align*}
&\partial_{x^i} \bigl( \mathcal{H}_{\eps,a} L(x,v) - (\ell_a \star \rho_\eps)(x,v) \bigr) \\
&= -\eps \partial_{x^i}r_a(x,v) \int_{\R^n \times \R^n} w^j \partial_{v^j} \ell_a(x-\eps y, v - \eps w) \rho(y,w) \diff y \diff w.
\end{align*}
As $\ell_a \in C^{(1,3)}$, it follows that $\partial_{x^i} \mathcal{H}_{\eps,a} L - \partial_{x^i}(\ell_a \star \rho_\eps)$ is $O(\eps)$. 
We can argue analogously for mixed derivatives $\partial^{\beta}_x \partial^{\sigma}_v$, provided $|\beta| \leq 1$ and $|\beta| + |\sigma| \leq 4$.
The case of two $x$-derivatives ($|\beta| = 2$, $|\sigma| \leq 2$) is handled as in the proof of Lemma \ref{lem:RicciCheckConvergence} using \eqref{eq:ConvergenceC0derivative}.
Hence, we may replace $\mathcal H_{\eps,a}L$ by the pure convolution term $\ell_a\star\rho_\eps$, up to an error converging to zero locally uniformly.

Now, recall that the coordinate expression for $\Ric[L]$ is given by
\begin{equation*}
    \frac{\partial G^i}{\partial x^i} - \frac{\partial N^i_i}{\partial x^k}v^k + \frac{\partial N^i_i}{\partial v^k}G^k - N^i_kN^k_i.
\end{equation*}
After expanding this, we see that the only terms containing second-order horizontal derivatives of $L$ are
\begin{equation*}
    g^{il} \partial_{x^i} A_l - \frac{1}{2}v^k \bigl( (\partial_{v^i}g^{il})\partial_{x^k}A_l + g^{il} \partial_{x^k} \partial_{v^i}A_l \bigr),
\end{equation*}
where we have used that $G^{i} = g^{il} A_l$ with $g^{il} \in C^{(1,1)}$ and $A_l := v^j \partial_{x^j} \partial_{v^l} L - \partial_{x^l} L \in C^{(0,2)}$, as well as $N_i^i = \frac{1}{2}((\partial_{v^i}g^{il})A_l + g^{il} \partial_{v^i}A_l) \in C^{(0,1)}$.
All the other terms contain at most first-order $x$-derivatives with total order at most $4$, hence the difference of these terms computed from $L_\eps$ and $L$ converges locally uniformly to zero by Proposition \ref{prop:PropertiesApproximation}\ref{PropApproximation1}.

For the terms containing second order $x$-derivatives, the key ingredient is the following \emph{Friedrichs-type commutator estimate}; cf.\ \cite[Lemma 4.9]{Graf-SingularityC1}:

Let $a_\eps \in C^1(\R^{2n})$, for $\eps \geq 0$, and suppose that $a_\eps \rightarrow a_0$ in $C^1_{\loc}$ as $\eps \rightarrow 0$   with $\| a_\eps - a_0\|_{\infty,K} \leq c_K \eps $ on any compact set $K \subseteq \R^{2n}$ with some $c_K>0$.
Then, for any $f \in C(\R^{2n})$, we have 
\begin{equation}\label{eq:Friedrichs}
    \partial_{z}^\sigma\bigl(a_\eps (f \star \rho_\eps)\bigr) - \partial_{z}^\sigma\bigl((a_0f) \star \rho_\eps\bigr) \rightarrow 0
\end{equation}
locally uniformly, where $z=(x,v)$, $|\sigma| \le 1$.
By the product rule and using $\partial_z^{\sigma} a_\eps \rightarrow \partial_z^{\sigma} a_0$, this takes the more suggestive form
\begin{equation*}
    a_\eps \partial_z^{\sigma} (f\star \rho_\eps) - (a_0 \partial_z^{\sigma} f)\star \rho_\eps\rightarrow 0,
\end{equation*}
where $\partial_z^{\sigma} f$ is interpreted distributionally.

We can apply this to our setting by noticing that $g^{il}_\eps \rightarrow g^{il}$ and $\partial_{v^i}g_\eps^{il} \rightarrow \partial_{v^i}g^{il}$ in $C^1_\loc$ (for the latter we need $L \in C^{(1,3)}$) with the norm of their difference being $O(\eps)$ on any compact set, again by Proposition \ref{prop:PropertiesApproximation} (and the formula for matrix inversion; cf.\ \eqref{eq:matrixInversion}).

Hence,
\begin{equation*}
    \Ric[L_\eps] - \sum_{a \in A} (\chi_a \circ \pi) (\psi_a)^{*} \bigl( (\psi_a)_*\bigl( (\zeta_a \circ \pi) \Ric[L] \bigr) \star \rho_\eps \bigr) \rightarrow 0
\end{equation*}
locally uniformly on $\widetilde{\mathcal{K}}$.
On this set, the latter term agrees with $\mathcal{H}_\eps \Ric[L]$ and by homogeneity, the convergence holds on all of $\mathcal K$, completing the proof.
\end{proof}

\begin{Remark}[Case of $L \in C^{(1,2),\alpha}$ with $\alpha>1/2$]\label{rm:12alpha}
\begin{enumerate}
\item[(a)]
One can alternatively prove Proposition \ref{Prop: ApproxtimelikeRiccibound} for $L \in C^{(1,2),\alpha}$ with $\alpha > 1/2$. 
In this case $g^{ij}\in C^{(1,0),\alpha}, A_l\in C^{(0,1),\alpha}$.
Hence, the first term in
\[
2N^i_i =(\partial_{v^i} g^{il}) A_l +g^{il} \partial_{v^i} A_l
\]
is merely the product of two locally $\alpha$-H\"older continuous functions.
Locally on $\R^{2n}$, the corresponding terms in the difference $\Ric[L_\eps] - \Ric[L]\star\rho_\eps$ hence reduce to a differentiated commutator of the form
\begin{equation*}
    \partial_z^{\sigma} \bigl(a_\eps(f \star \rho_\eps) - (a_0f) \star \rho_\eps \bigr)
\end{equation*}
where $z=(x,v)$, $| \sigma| \leq 1$ and $a_\eps,f\in C^{0,\alpha}_{\loc}(\R^{2n})$.
Here, the commutator estimate gives
    \begin{equation*}
        \bigl\|\partial_z^{\sigma} \bigl(a_\eps(f \star \rho_\eps) - (a_0f) \star \rho_\eps \bigr)\bigr\|_{\infty, K} \leq c_K \eps^{2 \alpha - 1}
    \end{equation*}
for any compact set $K \subseteq \R^{2n}$.
In particular, this applies to the commutator
\begin{equation*}
        \partial_{x^k} \Bigl( \bigl( (\partial_{v^i} g^{il}) \star \rho_\eps \bigr) (A_l \star \rho_\eps) -\bigl( (\partial_{v^i} g^{il}) A_l \bigr)\star \rho_\eps \Bigr)
    \end{equation*}
    in the second term of the Ricci curvature.
    The commutators arising from the third term are treated in the same way after rewriting
    \begin{equation*}
        (\partial_{v^k} N_i^{i}) G^k = \partial_{v^k}(N_i^{i}G^k) - N_i^{i} \partial_{v^k} G^k,
    \end{equation*}
    where $N^i_i$ is $\alpha$-H\"older continuous.
    As $2 \alpha -1 >0$, all of these terms converge to zero as $\eps \rightarrow 0$.
    The remaining terms are handled exactly as in the proof of Proposition \ref{Prop: ApproxtimelikeRiccibound}.

    \item[(b)]
    The condition $L \in C^{(1,2),\alpha}$ implies $g_{ij} \in C^{(1,0),\alpha}=C^{1,\alpha}$.
    Hence, for purely Lorentzian metrics $g$, this condition is stronger than $g \in C^{1}$; even though, for general $L$, it reduces the demands on vertical regularity.
    In this sense, these two conditions are independent.
\end{enumerate}
\end{Remark}

Similarly to Proposition \ref{Prop: ApproxtimelikeRiccibound}, one obtains the following approximation result for timelike lower Ricci curvature bounds with general $K \in \R$. 

\begin{Corollary}[Approximation of timelike Ricci curvature bounds]
\label{cor:GeneralRicciBound}
    Let $(M,L)$ be a $C^{(1,3)}$ Finsler spacetime such that $\mathrm{Ric} \geq K$ timelike distributionally, where $K \in \R$. 
    Then for any compact set $\mathcal{K} \subseteq \Omega^T$ and $\delta > 0$, there exists $\varepsilon_0 = \varepsilon_0(\mathcal{K},\delta) > 0$ such that for all $\varepsilon \in (0,\varepsilon_0)$ and $v \in \mathcal{K}$ with $2 \widecheck L_\varepsilon(v) = -1$, we have
    \begin{equation*}
        \Ric[\widecheck L_\varepsilon](v) \geq K - \delta.
    \end{equation*}
\end{Corollary}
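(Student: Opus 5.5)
The plan is to repeat the proof of Proposition \ref{Prop: ApproxtimelikeRiccibound}, applied to the distribution $\Ric[L] + 2KL$ instead of $\Ric[L]$. The one new ingredient is to check that the term $2KL$ behaves well under both regularizations, and that the normalization $2\widecheck L_\eps(v) = -1$ converts it into the constant $K$.

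First, I would write
\begin{equation*}
    \Ric[\widecheck L_\eps] + 2K\widecheck L_\eps - \mathcal{H}_\eps\bigl(\Ric[L] + 2KL\bigr)
    = \bigl(\Ric[\widecheck L_\eps] - \mathcal{H}_\eps \Ric[L]\bigr) + 2K\bigl(\widecheck L_\eps - \mathcal{H}_\eps L\bigr).
\end{equation*}
The proof of Proposition \ref{Prop: ApproxtimelikeRiccibound} already shows that the first bracket tends to $0$ locally uniformly on $TM\setminus\mathbf 0$. That argument used only $L\in C^{(1,3)}$ together with Lemma \ref{lem:RicciCheckConvergence}, and not the sign condition. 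For the second bracket, fix a compact $\mathcal K \subseteq \Omega^T$ and take $\eps$ small. Then $\mathcal{H}_\eps L = L_\eps$ near $\mathcal K$ by Proposition \ref{prop:PropertiesApproximation}\ref{PropApproximation2}, and $\widecheck L_\eps - L_\eps = \eps Q$ near $\mathcal K$ with $Q$ smooth. So this bracket is $O(\eps)$ uniformly on $\mathcal K$.

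The only point needing care is that $\mathcal{H}_\eps$, as defined on distributions, acts on the regular distribution $L$ consistently with its action on the function $L$. Locally, $\mathcal{H}_{\eps,a}$ is the same homogeneity-corrected convolution $r_a^2(\,\cdot\,\star\rho_\eps)(x,v/r_a)$ in both cases, applied to the localized piece $(\psi_a)_*((\zeta_a\circ\pi)L)$, and it is summed with the same cutoffs $\chi_a$. Hence the two agree, and this step is bookkeeping.

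Next, I would use the nonnegativity hypothesis. Since $\Ric + 2KL \ge 0$ distributionally on $\Omega^T$ and $\mathcal K$ is a compact subset of the open set $\Omega^T$, for small $\eps$ the kernels $\rho_\eps$ (transported through the charts and the homogeneity rescaling) are supported in $\Omega^T$ when centered at points of $\mathcal K$. Therefore $\mathcal{H}_\eps(\Ric[L]+2KL) \ge 0$ on $\mathcal K$. Because $\mathcal K$ is compact, the neighborhood and the threshold on $\eps$ can be chosen uniformly, so one $\eps_0(\mathcal K)$ works for all of $\mathcal K$, exactly as in Proposition \ref{Prop: ApproxtimelikeRiccibound}.

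Combining the two steps, for $v\in\mathcal K$ we get
\[
\Ric[\widecheck L_\eps](v) + 2K\widecheck L_\eps(v) \ge -\delta
\]
once $\eps<\eps_0(\mathcal K,\delta)$. At points with $2\widecheck L_\eps(v) = -1$ this reads $\Ric[\widecheck L_\eps](v) - K \ge -\delta$, which is the claim. I do not expect a serious obstacle: everything reduces to the previous proposition plus the elementary $O(\eps)$ convergence of $\widecheck L_\eps$ to $\mathcal H_\eps L$. The mildly delicate point is the consistency of $\mathcal H_\eps$ on $L$ as a function versus as a distribution, which holds by construction.
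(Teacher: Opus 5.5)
Your proposal is correct and follows essentially the same route as the paper. The paper also regularizes the nonnegative distribution $\Ric[L]+2KL$ via $\mathcal H_\eps$ (nonnegative on $\mathcal K$ for small $\eps$), invokes $\Ric[\widecheck L_\eps]-\mathcal H_\eps\Ric[L]\to 0$ from the proof of Proposition~\ref{Prop: ApproxtimelikeRiccibound} together with $\widecheck L_\eps-\mathcal H_\eps L\to 0$, and then evaluates at $2\widecheck L_\eps(v)=-1$. Your explicit $O(\eps)$ bound via $\eps Q$ and your consistency check for $\mathcal H_\eps$ acting on the regular distribution $L$ spell out details the paper leaves implicit.
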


\begin{proof}
By assumption $\Ric[L]+2KL\geq0$ timelike distributionally, thus
\[
    \mathcal H_\varepsilon\Ric[L]
    +2K \mathcal H_\varepsilon L \geq 0
\]
pointwise on $\mathcal K$ for $\eps >0$
small enough, since the kernels defining $\mathcal H_\varepsilon$ are
nonnegative and supported in $\Omega^T$.
In the proof of
Proposition \ref{Prop: ApproxtimelikeRiccibound}, we have already established that
\[
    \operatorname{Ric}[\widecheck L_\varepsilon]
    -\mathcal H_\varepsilon\Ric[L]\rightarrow0
\]
locally uniformly.
Since also
$\widecheck L_\varepsilon-\mathcal H_\varepsilon L\to0$ locally uniformly by Proposition \ref{prop:PropertiesApproximation}, the claim follows.
\end{proof}

Next, we consider the null case.

\begin{Proposition}[Approximation of null energy condition]
\label{Prop: Null approximate Ricci}
    Let $(M,L)$ be a $C^{(1,3)}$ Finsler spacetime satisfying the distributional null energy condition. 
    Then for any compact set $\mathcal K \subseteq TM \setminus \mathbf 0$ and $\delta > 0$, there exists $\varepsilon_0 = \varepsilon_0(\mathcal K,\delta)>0$ such that for all $\varepsilon \in (0,\varepsilon_0)$ and $v \in \mathcal{K} \cap \Omega^C[\widecheck{L}_\eps]$ with $\widecheck{L}_\varepsilon(v) = 0$, we have
    \begin{equation*}
        \Ric[\widecheck{L}_\eps](v) \ge - \delta.
    \end{equation*}
\end{Proposition}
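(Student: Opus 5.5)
Fix a compact $\mathcal K\subseteq TM\setminus\mathbf 0$ and $\delta>0$. The approach mirrors the proof of Proposition~\ref{Prop: ApproxtimelikeRiccibound}. From that proof together with Lemma~\ref{lem:RicciCheckConvergence} we already know that
\[
\Ric[\widecheck L_\eps]-\mathcal H_\eps\Ric[L]\to 0
\]
uniformly on compact subsets of $TM\setminus\mathbf 0$, and this step did not use any energy condition. By homogeneity we may normalize to $\|v\|_{\tilde g}=1$ and work on a compact neighbourhood $\mathcal K'$ of $\widetilde{\mathcal K}=\{v/\|v\|_{\tilde g}:v\in\mathcal K\}$. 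It then suffices to show
\[
\mathcal H_\eps\Ric[L](v)\ge -\delta/2
\]
for all small $\eps$ and all $v\in\mathcal K'$ that are future $\widecheck L_\eps$-null.

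The main step is to read $\mathcal H_\eps\Ric[L](v)$ as a pairing $\langle\Ric[L],\mu_{\eps,v}\rangle$. Unwinding \eqref{eq:HeaL} and \eqref{eq:convo}, $\mu_{\eps,v}$ is a sum over charts of $\chi_a\zeta_a$-weighted, pulled-back, rescaled mollifier densities $\rho_\eps$ centred at $v$. In a single chart this density is nonnegative, supported in the $\tilde g$-ball of radius $C\eps$ about $v$, and has total mass $r_a(x,v)^2\cdot(\text{Jacobian factor})=1+O(\eps)$ when $r_a=1$. The cutoffs $\chi_a\le1$, $\sum\zeta_a=1$ need care: to keep the pairing honest I would first use that $\chi_a\equiv1$ near $\supp\zeta_a$, so that for small $\eps$ the whole combination is $\sum_a\langle\zeta_a\Ric[L],\text{mollifier}\rangle$. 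Each summand is the pairing of $\Ric[L]$ with a nonnegative density of mass $\zeta_a(\pi(v))+O(\eps)$, and the masses therefore add up to $1+O(\eps)$.

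Next, the support of $\mu_{\eps,v}$ should lie in $\{|L|\le\delta'\}\cap\Omega^C\cap\mathcal K''$, for a fixed compact $\mathcal K''\supseteq$ the $C\eps$-neighbourhood of $\mathcal K'$ and with $\delta'=\delta'(\eps)\to0$. Two facts give this. First, $|\widecheck L_\eps-L|\le c\eps$ on $\mathcal K''$ by Propositions~\ref{prop:PropertiesApproximation} and~\ref{prop:ChruscielGrantApprox}, so $\widecheck L_\eps(v)=0$ gives $|L|\le c'\eps$ on the support, using the Lipschitz bound of $L$ on $\mathcal K''$. Second, $\widecheck L_\eps\prec L$ makes the future $\widecheck L_\eps$-null vector $v$ future $L$-timelike.

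I expect the main obstacle to be that the $C\eps$-ball around $v$ need not lie in $\Omega^C$: $v$ is close to the $L$-light cone, at distance only $O(\eps)$, so the mollifier may leak outside the cone. My plan is to replace $\widecheck L_\eps$ by an auxiliary inner approximation with a slightly larger margin. Concretely, repeat the construction in the proof of Proposition~\ref{prop:ChruscielGrantApprox} with $C_a$ replaced by $2C_a$; this is again $O(\eps)$-close to $L_\eps$ in $C^\infty_\loc$, so Lemma~\ref{lem:RicciCheckConvergence} still applies. Comparing with $\widecheck L_\eps$, for null $v$ the defect $\eps Q(v)\gtrsim\kappa_a\eps$ then keeps an $L$-cone margin of order $\eps$. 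With a mollifier radius $c\eps$ and $c$ small relative to this margin (rescale $\rho$ if needed, which does not affect the earlier convergence results), the support stays inside $\Omega^T\subseteq\Omega^C$.

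Finally, normalize $\mu_{\eps,v}$ to unit mass, so that it lies in $M_{\delta'}(\mathcal K'')$. The distributional null energy condition gives $\langle\Ric[L],\mu\rangle\ge-\delta/4$ for $\delta'$ small, and multiplying back by the mass $1+O(\eps)$ yields $\mathcal H_\eps\Ric[L](v)\ge-\delta/2$. Combined with the uniform convergence from the first paragraph and homogeneity back to $\mathcal K$, this gives the claim.
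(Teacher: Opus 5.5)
Your proposal is correct and is essentially the paper's proof. In both, you write $\mathcal H_\eps\Ric[L](v)=\langle\Ric[L],\mu_{\eps,v}\rangle$ with a nonnegative density of mass $1+O(\eps)$, force its support into $\{|L|\le c\eps\}\cap\Omega^C$ by enlarging the Chru\'sciel--Grant margin, apply the distributional null energy condition, and conclude via $\Ric[\widecheck L_\eps]-\mathcal H_\eps\Ric[L]\to0$. The only difference is bookkeeping in the margin step: the paper takes the $C_a$ large enough that $Q\ge A+B+1$ near unit null vectors, whereas you double $C_a$, compare with the original $\widecheck L_\eps\prec L$, and shrink the kernel; in both versions this amounts to redefining $\widecheck L_\eps$ with adjusted constants, which the earlier convergence results tolerate.
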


\begin{proof}
By positive $2$-homogeneity, it suffices to consider the case where $\mathcal K$ is contained in the $\tilde g$-unit tangent bundle over one of the relatively compact base neighborhoods, denoted by $U$, used in the construction of the approximation. 
For each small $\varepsilon>0$ and $v\in \mathcal K$, denote by $\mu_{\varepsilon,v}$ the nonnegative smooth test density on $\Omega^C$ for which
\[
    \mathcal H_\eps\Ric[L](v)
    =
    \langle\Ric[L],\mu_{\varepsilon,v}\rangle.
\]
Its total mass
\[
    a_{\varepsilon,v}:=\int_{TM}\mu_{\varepsilon,v}
\]
converges uniformly to $1$ on $\mathcal K$ due to the partition of unity construction.
Let $\mathcal K'\subseteq TM\setminus \mathbf 0$ be a compact set containing all these supports for sufficiently small $\varepsilon$.
We claim that there exists $c>0$ such that
\begin{equation*}
    \frac{\mu_{\varepsilon,v}}{a_{\varepsilon,v}}
    \in M_{c\varepsilon}(\mathcal K')
    \qquad
    \text{for all }v\in \mathcal{K} \cap\Omega^C[\widecheck L_\varepsilon] \cap \widecheck L_\eps^{-1}(0).
\end{equation*}
Indeed, Proposition \ref{prop:PropertiesApproximation} and the local Lipschitz continuity of $L$ give constants $A,B>0$ such that
\[
    |L_\varepsilon(v)-L(v)|\leq A\varepsilon,
    \qquad
    |L(w)-L(v)|\leq B\varepsilon
    \quad\text{for }w\in\operatorname{supp}\mu_{\varepsilon,v}.
\]
The functions $\nu_a$ used in the proof of Proposition \ref{prop:ChruscielGrantApprox} are strictly positive on $\tilde g$-unit future $L$-null vectors in their respective charts.
Possibly choosing larger constants $C_a$ in the construction of $Q$ (cf.\ again the proof of Proposition \ref{prop:ChruscielGrantApprox}), we can ensure that
\[
    Q(v)\geq A+B+1
\]
on a neighborhood of future $\tilde g$-unit null vectors with basepoints in $U$.
Since $\widecheck L_\varepsilon\to L$, the $\tilde g$-unit future $\widecheck L_\varepsilon$-null vectors are contained in that neighborhood for sufficiently small $\varepsilon > 0$. 
For such a vector $v$, using $\widecheck L_\varepsilon=L_\varepsilon+\varepsilon Q$, we obtain
\[
    L(v)
    \leq L_\varepsilon(v)+A\varepsilon
    =-\varepsilon\bigl(Q(v)-A\bigr).
\]
Consequently, for every $w\in\operatorname{supp}\mu_{\varepsilon,v}$ with $\widecheck{L}_\eps$-null $v$ as above,
\[
    -\varepsilon\bigl(Q(v)+A+B\bigr)
    =L_\eps(v) -\eps(A+B)
    \leq L(w)
    \leq-\varepsilon\bigl(Q(v)-A-B\bigr)
    \leq-\varepsilon.
\]
Since $Q$ is bounded on $\mathcal K$, it follows that
\[
    \operatorname{supp}\mu_{\varepsilon,v}
    \subseteq
    \{|L|\leq c\varepsilon\}\cap\Omega^C\cap \mathcal K'
\]
for some $c>0$, proving the claim.

By the distributional null energy condition, the uniform boundedness of $a_{\varepsilon,v}$ together with the first part of the proof implies
\[
    \liminf_{\varepsilon\to0}
    \inf_{v\in \mathcal K\cap\Omega^C[\widecheck L_\eps] \cap \widecheck L_\eps^{-1}(0)}
    \mathcal H_\varepsilon\Ric[L](v)\geq0.
\]
Together with the locally uniform convergence $\Ric[\widecheck L_\varepsilon] -\mathcal H_\varepsilon\Ric[L]\rightarrow0$ established in the proof of Proposition \ref{Prop: ApproxtimelikeRiccibound}, we obtain the claimed estimate.
\end{proof}

\subsection{Convergence of weighted Ricci curvature}

In the weighted case, a regularization of the measure is needed, for which we proceed as in the construction of $\mathcal H_\varepsilon$; see Subsection \ref{ssc:L_e} for the notation.
Consider a convolution kernel as in \eqref{eq:HeaL} of the form
\[
    \rho(y,w)=\rho^\mathsf x(y)\rho^\mathsf v (w),
\]
where $\rho^\mathsf x,\rho^\mathsf v$ are nonnegative smooth kernels on $\R^n$ of unit integral, supported in $B_1(0)$, and $\rho^\mathsf x$ is positive on a neighborhood of $0$.
Given a $C^1$ measure $\mathfrak m$ on $M$, write
\[
    (\psi_a)_*(\mathfrak m|_{U_a})
    =\sigma_a(x) \diff x^1 \cdots \diff x^n,
    \qquad
    p_a:=(\zeta_a\circ\psi_a^{-1})\sigma_a,
\]
where $p_a$ is extended by $0$ outside $\psi_a(U_a)$.
Define
\begin{equation*}
    \mathfrak m_\varepsilon
    :=
    \sum_{a\in A}
    \chi_a\,\psi_a^*
    \left[
        (p_a \star\rho^\mathsf x_\varepsilon)\,\diff x^1 \cdots \diff x^n
    \right],
    \qquad
    \rho^{\mathsf{x}}_\varepsilon
    :=\varepsilon^{-n}\rho^\mathsf x(\,\cdot\,/\varepsilon).
\end{equation*}
This is a smooth positive measure on $M$. The usual convolution estimates and local finiteness imply
\[
    \mathfrak m_\varepsilon\rightarrow\mathfrak m
    \quad\text{in }C^1_{\mathrm{loc}}.
\]
More precisely, if $\sigma_{\varepsilon,a}$ denotes the coordinate density of the full measure $\mathfrak m_\varepsilon$ in the chart $\psi_a$, then
\[
    \sigma_{\varepsilon,a}\rightarrow\sigma_a
    \quad\text{in }C^1_{\mathrm{loc}},
    \qquad
    \|\sigma_{\varepsilon,a}-\sigma_a\|_{\infty,B}
    =O(\varepsilon)
\]
on every compact $B\subseteq\psi_a(U_a)$.

Note that in the following approximation we need only the $C^{(1,2)}$ regularity, while the unweighted case required $C^{(1,3)}$ (recall Propositions \ref{Prop: ApproxtimelikeRiccibound}, \ref{Prop: Null approximate Ricci} and Corollary \ref{cor:GeneralRicciBound}).

\begin{Proposition}[Approximation of weighted energy conditions]
\label{Prop: approx weighted energy conditions}
Let $(M,L,\mathfrak m)$ be a $C^{(1,2)}$ Finsler spacetime $(M,L)$ equipped with a $C^1$ measure $\mathfrak m$, and let $N \in (-\infty,1] \cup (n,\infty]$ in \textit{(i)} and $N \in (-\infty,2] \cup (n,\infty]$ with $n \ge 3$ in \textit{(ii)}.
\begin{enumerate}
    \item\label{appRN1}
    If $(M,L,\mathfrak m)$ satisfies $\Ric_{\mathfrak m}^N \ge K$ timelike distributionally for $K \in \R$, then for every compact set $\mathcal K \subseteq \Omega^T$ and $\delta > 0$, there exists $\varepsilon_0 = \varepsilon_0(\mathcal K,\delta) > 0$ such that for all $\varepsilon \in (0,\varepsilon_0)$ and $v \in \mathcal K$ with $2\widecheck L_\varepsilon(v) = -1$, we have
    \begin{equation}
    \label{eq:WeightedGeneralRicciApprox}
        \Ric^N_{\mathfrak m_\varepsilon}[\widecheck L_\varepsilon](v) \geq K - \delta.
    \end{equation}
    \item\label{appRN2}
    If $(M,L,\mathfrak m)$ satisfies the distributional null $N$-energy condition, then for every compact set $\mathcal K \subseteq TM \setminus \mathbf 0$ and $\delta > 0$, there exists $\varepsilon_0 = \varepsilon_0(\mathcal K,\delta) > 0$ such that for all $\varepsilon \in (0,\varepsilon_0)$ and $v \in \mathcal K \cap \Omega^C[\widecheck{L}_\eps]$ with $\widecheck L_\varepsilon(v) = 0$, we have
    \begin{equation*}
        \Ric^N_{\mathfrak m_\varepsilon}[\widecheck L_\varepsilon](v) \geq -\delta.
    \end{equation*}
\end{enumerate}
\end{Proposition}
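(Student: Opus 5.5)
Following the unweighted arguments, I will show that
\[
\Ric^N_{\mathfrak m_\eps}[\widecheck L_\eps] - \mathcal H_\eps \Ric^N_{\mathfrak m}[L] \to 0
\]
locally uniformly on $TM\setminus\mathbf 0$. Given this, (i) follows exactly as in Corollary \ref{cor:GeneralRicciBound}: the nonnegativity of $\Ric^N_{\mathfrak m}+2KL$ on $\Omega^T$ gives $\mathcal H_\eps(\Ric^N_{\mathfrak m}+2KL)\ge 0$ on $\mathcal K$, and $\widecheck L_\eps-\mathcal H_\eps L\to0$. Statement (ii) follows verbatim from the proof of Proposition \ref{Prop: Null approximate Ricci}, with $\Ric$ replaced by $\Ric^N_{\mathfrak m}$. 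That argument only used the support properties of the test densities $\mu_{\eps,v}$ and the uniform convergence of the curvatures, not the form of the curvature.

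The key point is to work with the rewritten expression \eqref{eq:WeightedRicciSimplified}. In it, the term $S\nu$ has cancelled, so the only derivatives of continuous quantities that remain are $\partial_{x^i}G^i$. All other terms are continuous:
\begin{itemize}
\item $N^i_jN^j_i$ and $(N^i_i-v^i\partial_{x^i}u)^2$ involve $N\in C^{(0,0)}$ and $u\in C^1$;
\item $G^i\partial_{x^i}u$ is a product of continuous functions;
\item $v^iv^j\partial^2_{x^ix^j}u$ is a distribution, but it is linear in $u$.
\end{itemize}
I will first replace $\widecheck L_\eps$ by $L_\eps$. The continuous terms change by $O(\eps)$ by \eqref{eq:EstimatesCheckSpray}, and $\partial_{x^i}(\widecheck G^i_\eps-G^i_\eps)\to0$ is already contained in the proof of Lemma \ref{lem:RicciCheckConvergence} (this part used only $C^{(1,2)}$). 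Next, for $L_\eps$ versus $\mathcal H_\eps$ applied to each term:
\begin{itemize}
\item The zeroth-order terms converge because $G_\eps\to G$ and $N_\eps\to N$ locally uniformly, by Proposition \ref{prop:PropertiesApproximation}, and $\partial u_\eps\to\partial u$ uniformly with $u_\eps=\log\sigma_\eps$. Meanwhile $\mathcal H_\eps$ of a continuous function converges to that function uniformly. The homogenization correction $r_a$ contributes only $O(\eps)$, as in the proof of Proposition \ref{Prop: ApproxtimelikeRiccibound}.
\item For the Hessian term, $\partial^2_x u_\eps$ is essentially $\partial^2_x(\log\sigma\star\rho^{\mathsf x}_\eps)$ up to a commutator. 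Writing $\partial^2_x\log\sigma_\eps=\partial_x(\partial_x\sigma_\eps/\sigma_\eps)$, I will apply the Friedrichs estimate \eqref{eq:Friedrichs} with $a_\eps=1/\sigma_\eps\to1/\sigma$ in $C^1$ at rate $O(\eps)$ and $f=\partial_x\sigma$ continuous. Multiplication by the smooth factor $v^iv^j$ then commutes with the $v$-convolution up to $O(\eps)$.
\end{itemize}

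The main obstacle is $\partial_{x^i}G^i_\eps$ with only $C^{(1,2)}$ regularity. Here $G^i=g^{il}A_l$, with $g^{il}\in C^{(1,0)}$ converging in $C^1_{\loc}$ at rate $O(\eps)$ (by Proposition \ref{prop:PropertiesApproximation} with $k=1$ and matrix inversion), while $A_l$ is merely continuous. Thus $g^{il}_\eps A_{l,\eps}$ is, modulo $O(\eps)\cdot\partial_x$-terms controlled by \eqref{eq:ConvergenceC0derivative}, of the form $a_\eps(f\star\rho_\eps)$. The Friedrichs estimate \eqref{eq:Friedrichs} with $|\sigma|=1$ then gives
\[
\partial_{x^i}\bigl(g^{il}_\eps A_{l,\eps}\bigr)-\bigl(\partial_{x^i}(g^{il}A_l)\bigr)\star\rho_\eps\to0 .
\]
Here $A_{l,\eps}$ differs from $A_l\star\rho_\eps$ through the $x$-derivatives falling on the homogenization and cutoffs; these errors are $O(\eps)$ times derivatives of continuous functions, hence $o(1)$ by \eqref{eq:ConvergenceC0derivative}. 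This is precisely where $C^{(1,3)}$ is avoided: unlike $\partial_x N^i_i$ in the unweighted case, no $v$-derivative of $g^{il}$ needs $C^1$ convergence.

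Finally, I need to check that $\mathcal H_\eps$, defined chartwise with partitions of unity, commutes with these local identities up to $o(1)$ on compact sets, as in the end of the proof of Proposition \ref{Prop: ApproxtimelikeRiccibound}. I also need to handle $N\in(-\infty,1]\cup(n,\infty]$ uniformly; only finite $N\neq n$ enters, and there as a fixed coefficient.
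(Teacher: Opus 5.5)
Your proposal is correct and follows essentially the same route as the paper: you rewrite via \eqref{eq:WeightedRicciSimplified} so that only $\partial_{x^i}G^i$ and the Hessian of $u$ need care, handle the former by the Friedrichs estimate with $a_\eps=g^{il}_\eps$, $f=A_l$ and the latter by Friedrichs with $a_\eps=\sigma_\eps^{-1}$, $f=\partial_x\sigma$, compare $\widecheck L_\eps$ with $L_\eps$ via \eqref{eq:GNN} and \eqref{eq:EstimatesCheckSpray}, and conclude as in Corollary~\ref{cor:GeneralRicciBound} and Proposition~\ref{Prop: Null approximate Ricci}. One small imprecision: when you reintroduce $v^iv^j$ in the Hessian term, the resulting $O(\eps)$ coefficient multiplies the unbounded quantity $\partial_{x^i}(f_j\star\rho^{\mathsf x}_\eps)$, so that error is $o(1)$ by \eqref{eq:ConvergenceC0derivative} rather than $O(\eps)$, which is exactly how the paper argues this step.
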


\begin{proof}
For simplicity of the argument, we assume $M = \mathbb R^n$ and $\m_\varepsilon = \m \star \rho^\mathsf x_\varepsilon$. The general case follows easily from the arguments we give here.
\begin{enumerate}
\item As in the proof of Proposition \ref{Prop: ApproxtimelikeRiccibound}, we show that
\begin{equation}
\label{eq:ProofWeightedRicci}
    \| \Ric_{\m_\eps}^N[\widecheck L_\eps] - \mathcal H_\varepsilon(\Ric_{\m}^N[L]) \|_{\infty, \mathcal{K}} \rightarrow 0
\end{equation}
on any compact $\mathcal K\subseteq TM\setminus\mathbf 0$; cf.\ also the proof of Corollary \ref{cor:GeneralRicciBound}. By $2$-homogeneity, it suffices to restrict our attention to $v \in \mathcal K$ having unit $\tilde g$-norm. 

To this end, we first compare $\Ric_{\m_\eps}^N[L_\eps] - (\Ric_{\m}^N[L])\star \rho_\eps$ term by term using the formula 
\begin{equation*}
    \Ric_{\m}^N[L] =\frac{\partial G^i}{\partial x^i} - N^i_kN^k_i- v^iv^j\frac{\partial^2 u}{\partial x^i\partial x^j} + G^i\frac{\partial u}{\partial x^i} -\frac{1}{N-n}\left(N^i_i-v^j\frac{\partial u}{\partial x^j}\right)^2,
\end{equation*}
where $u=\log \sigma$, established in Remark \ref{rmk:WeightedRicciSimplified}.

The first term can be handled exactly as in Proposition \ref{Prop: ApproxtimelikeRiccibound}, using $G^i = g^{il} A_l$ and the fact that $g^{il} \in C^{(1,0)}$ and $A_l \in C^{(0,1)}$. 
Moreover, $C^{(1,2)}_\loc$-convergence of $L_\eps$ to $L$ gives
\begin{equation*}
    G^i_\eps \rightarrow G^i, \qquad (N_\eps)^i_j \rightarrow N^i_j
\end{equation*}
locally uniformly.
Thus, using $u_\eps := \log \sigma_\eps \rightarrow u = \log \sigma$ in $C^1_\loc$, we see that the second, fourth, and fifth terms in the difference converge to $0$ locally uniformly.

Finally, setting
\begin{equation*}
    a_\eps:= \sigma_\eps^{-1}, \qquad a:= \sigma^{-1}, \qquad f_j:= \frac{\partial \sigma}{\partial x^j},
\end{equation*}
we have 
\begin{equation*}
    a_\eps \rightarrow a \quad \textnormal{in}\ C^{1}_\loc, \qquad \|a_\eps - a\|_{\infty, \mathcal K} = O(\eps)
\end{equation*}
and we can write
\begin{equation*}
    \frac{\partial u_\eps}{\partial x^j} = a_\eps (f_j \star \rho_\eps^{\mathsf x}), \qquad \frac{\partial u}{\partial x^j} = a f_j.
\end{equation*}
Hence, applying the Friedrichs estimate \eqref{eq:Friedrichs}, we get
\begin{equation*}
    \frac{\partial^2 u_\eps}{\partial x^i \partial x^j} - \frac{\partial}{\partial x^i}\left( \frac{\partial u}{\partial x^j} \star \rho_\eps^{\mathsf x}\right) = \frac{\partial}{\partial x^i}\bigl(a_\eps (f_j \star \rho_\eps^{\mathsf x})\bigr) - \frac{\partial}{\partial x^i}\bigl((af_j) \star \rho_\eps^{\mathsf x}\bigr) \rightarrow 0
\end{equation*}
locally uniformly.
Now, convolving with respect to $\rho_\eps^{\mathsf v}$ and reintroducing the coefficients $v^iv^j$ locally produces an error term of the form
\begin{equation*}
    \left(v^iv^j-\int(v^i-w^i)(v^j-w^j) \rho_\eps^{\mathsf v}(w) \diff w \right)\partial_{x^i}(f_j \star \rho_\eps^{\mathsf x}).
\end{equation*}
As $|w|$ is $O(\eps)$ on the support of $\rho_\eps^{\mathsf v}$ and $f_j$ is continuous, we can appeal to \eqref{eq:ConvergenceC0derivative} to conclude $v^iv^j \partial_{x^i}\partial_{x^j} u_\eps - (v^iv^j \partial_{x^i}\partial_{x^j} u)\star \rho_\eps \rightarrow 0$ locally uniformly, proving that $\Ric_{\m_\eps}^N[L_\eps] - (\Ric_{\m}^N[L])\star \rho_\eps$ converges to $0$.

To finally get \eqref{eq:ProofWeightedRicci}, we use \eqref{eq:WeightedRicciSimplified} to see $\Ric_{\m_\eps}^N[\widecheck{L}_\eps] -\Ric_{\m_\eps}^N[L_\eps] \to 0$.
The Hessian terms cancel (since $u_\eps$ is common), while $\partial_{x^i}(\widecheck G_\varepsilon^i-G_\varepsilon^i)$
tends to $0$ locally uniformly by \eqref{eq:GNN}.
The difference of the terms $N^i_kN^k_i$ tends to $0$ locally uniformly by \eqref{eq:EstimatesCheckSpray} and the local uniform boundedness of the connection coefficients.
It therefore suffices to argue that both
\begin{equation*}
    (\widecheck G_\eps^i - G_\eps^i) \frac{\partial u_\eps}{\partial x^i}  \quad \textnormal{and} \quad \left((\widecheck N_\eps)^i_i - v^j\frac{\partial u_\eps}{\partial x^j}\right)^2- \left((N_\eps)^i_i -v^j\frac{\partial u_\eps}{\partial x^j}\right)^2
\end{equation*}
converge to $0$ locally uniformly, where the last term is omitted in the case $N = \infty$. 
This, in turn, follows easily from the local uniform boundedness of $\partial_{x^i} u_\eps$ and \eqref{eq:EstimatesCheckSpray}. 
From \eqref{eq:ProofWeightedRicci}, we simply repeat the argument given in the proof of Corollary \ref{cor:GeneralRicciBound} to get \eqref{eq:WeightedGeneralRicciApprox}.

\item By \eqref{eq:ProofWeightedRicci}, this is entirely analogous to the proof of Proposition \ref{Prop: Null approximate Ricci}. \qedhere
\end{enumerate}
\end{proof}

\subsection{Global hyperbolicity}

In this subsection, we introduce and discuss the concept of global hyperbolicity for low regularity Finsler spacetimes, following the Lorentzian work of S\"amann \cite{Saemann} (see also \cite{FathiSiconolfi} concerning continuous cone structures), to which we refer for more details, as many of the proofs are analogous to the Lorentzian case presented there. 
Our main goal is to prove the existence of $L$-maximizers that can be obtained as $C^2$-limits of $L_\varepsilon$-maximizers, where $L_\varepsilon$ is any suitable smooth Lorentz--Finsler approximation of $L$.

Let $(M,L)$ be a $C^{(0,2)}$ Finsler spacetime. 
We fix a complete Riemannian metric $\tilde g$ on $M$ and denote its distance function by $d_{\tilde g}$. 

\begin{Definition}[Non-total imprisonment]
    We say that $(M,L)$ is \emph{non-totally imprisoning} if no inextendible future causal curve is contained in a compact set.
\end{Definition}

The following is a consequence of a simple limit curve argument.

\begin{Lemma}[Boundedness of $\tilde g$-lengths]
    If $(M,L)$ is non-totally imprisoning, then for every compact set $K \subseteq M$, there is a constant $C > 0$ that bounds the $\tilde g$-lengths of all future causal curves contained in $K$.
\end{Lemma}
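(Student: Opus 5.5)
We argue by contradiction, using a limit curve argument. Suppose that for some compact set $K \subseteq M$ there is no such bound. Then there is a sequence of future causal curves $\eta_i$ contained in $K$ with $\tilde g$-lengths $\ell_i \to \infty$. We reparametrize each $\eta_i$ by $\tilde g$-arclength, so that $\eta_i:[0,\ell_i]\to K$ is $1$-Lipschitz with respect to $d_{\tilde g}$. To make the limit two-sided, we shift the parameter so that the domain becomes $[-\ell_i/2,\ell_i/2]$. Since $K$ is compact and the family is equi-Lipschitz, Arzel\`a--Ascoli and a diagonal argument give a subsequence converging locally uniformly on $\R$ to a $1$-Lipschitz curve $\eta:\R\to K$.

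The next step is to show that $\eta$ is a future causal curve. The cone distribution $\Omega^C$ is closed and convex in each fiber, and continuous as a set-valued map, since $L$ is at least $C^{(0,2)}$ and in particular continuous. The standard limit curve theorem for continuous (closed, convex, proper) cone structures therefore applies. Concretely, we work in a chart and use that the difference quotients $(\eta_i(t+h)-\eta_i(t))/h$ lie, up to small error, in the closed convex hull of the cones over a small neighbourhood. Passing to the limit, we get $\dot\eta(t)\in\overline{\Omega^C_{\eta(t)}}$ for almost every $t$.

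We also need $\dot\eta\neq 0$ almost everywhere, so that $\eta$ is genuinely causal and not partially constant. For this we use a \emph{time function} along the cones. Locally, choose a smooth function $\tau$ with $\diff\tau$ in the polar cone, i.e.\ $\diff\tau(v)\ge c|v|_{\tilde g}$ for future causal $v$ over a chart neighbourhood. Then $\tau\circ\eta_i$ increases at rate at least $c$ in the $\tilde g$-arclength parameter. This bound passes to the limit and shows that $\eta$ is not locally constant. After reparametrization, $\eta$ is therefore a future causal curve.

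Because it is defined on all of $\R$ with $\tilde g$-arclength parameter, $\eta$ has infinite $\tilde g$-length in both directions. Since $\tilde g$ is complete and $\eta$ stays in the compact set $K$, this does not on its own give inextendibility. We therefore argue differently: if $\eta$ had a future endpoint $p$, we could concatenate, but we do not need this. Instead, the portion $\eta|_{[0,\infty)}$ is a future causal curve of infinite $\tilde g$-length contained in $K$. The standard fact is that a causal curve with infinite $\tilde g$-length on $[0,\infty)$ admits no continuous extension as a \emph{causal} curve to $[0,\infty]$: its arclength parametrization would need a limit point, and a curve whose endpoint is approached only with infinite arclength is by definition inextendible in the Lipschitz/causal-curve sense used in the paper. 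Thus $\eta|_{[0,\infty)}$ is future inextendible, and so is any inextendible extension of it to the past. This gives an inextendible future causal curve contained in $K$, contradicting non-total imprisonment.

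The main obstacle is the limit step: justifying that the uniform limit of causal curves is causal and nondegenerate, using only continuity of the cones. We handle it with the local time-function bound above, or, if we may take regularity $C^{(1,3)}$, by directly citing the limit curve theorem for locally Lipschitz proper cone structures from \cite{Minguzziconestructure} via Lemma \ref{Lemma: lowregFinsler cone structure}.
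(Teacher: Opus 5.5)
Your overall strategy is the same "simple limit curve argument" the paper invokes without further detail: take a locally uniform limit of $\tilde g$-arclength parametrized causal curves in $K$ with unbounded length, show the limit is causal, and contradict non-total imprisonment. Your first steps are fine: Arzel\`a--Ascoli, closedness and convexity of the cones for the limit, and the local time function $\tau$. Note that the rate bound gives $\diff\tau(\dot\eta)\geq c$ a.e., which is what yields $\dot\eta\neq 0$ a.e.; "not locally constant" is weaker than what you need.

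The gap is in the decisive inextendibility step, where two claims are unjustified as written.
\begin{enumerate}
\item $\eta$ is not parametrized by $\tilde g$-arclength. It is only a $1$-Lipschitz limit, and $\tilde g$-length is merely lower semicontinuous under uniform convergence. So "defined on $\R$ with arclength parameter" does not give infinite length.
\item More importantly, infinite $\tilde g$-length does not make a curve inextendible "by definition". Inextendibility means there is no endpoint $\lim_{t\to\infty}\eta(t)$. A general curve in a compact set can have infinite length and still converge, for instance a spiral $t\mapsto t^{-1}(\cos t,\sin t)$. That this cannot happen for causal curves is a local version of the very lemma you are proving, so it cannot be cited as a standard fact or taken as definitional.
\end{enumerate}

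The repair uses the tool you already introduced. From $\diff\tau(\dot\eta_i)\geq c$ for the unit speed curves $\eta_i$, you get $\tau(\eta(t_2))-\tau(\eta(t_1))\geq c\,(t_2-t_1)$ whenever $\eta([t_1,t_2])$ lies in a compact subset of the domain of $\tau$. Now suppose $\eta(t)\to p$ as $t\to\infty$. Take a local time function $\tau_p$ on a relatively compact neighbourhood of $p$ containing $\eta([t_1,\infty))$. Then $\tau_p(\eta(t))\geq \tau_p(\eta(t_1))+c\,(t-t_1)\to\infty$, which contradicts the boundedness of $\tau_p$ near $p$. Hence $\eta$ has no future endpoint. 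The symmetric argument gives no past endpoint.

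So $\eta:\R\to K$ itself is the inextendible causal curve you need. Do not use "any inextendible extension to the past", since such an extension might leave $K$. Covering $K$ by finitely many such neighbourhoods also gives a uniform lower bound on $|\dot\eta|_{\tilde g}$, which is the correct justification of your infinite-length claim, although the argument above does not need it.
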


\begin{Definition}[Global hyperbolicity]
We say that $(M,L)$ is \emph{globally hyperbolic} if it is non-totally imprisoning and the causal diamond $J(x,y) := J^+(x) \cap J^-(y)$ is compact for every $x,y \in M$.
\end{Definition}

For the following two results only, we need a stronger notion of order between two $C^{(0,2)}$ Lorentz--Finsler structures $L_1,L_2$:
We write $L_1 \triangleleft L_2$ (resp.\ $L_1 \trianglelefteq L_2$) if, for every $v \in TM \setminus \mathbf 0$, the difference of the fundamental tensors $g^2_v - g^1_v$ is negative definite (resp.\ negative semidefinite).
From $g_v(v,v)=2L(v)$, we infer that $L_1 \triangleleft L_2$ implies $L_1(v)>L_2(v)$ for $v \neq 0$, thereby $L_1 \prec L_2$ (recall Definition \ref{df:order}).

\begin{Proposition}[Existence of smooth wider non-totally imprisoning structures]
    Let $(M,L)$ be a globally hyperbolic $C^{(0,2)}$ Finsler spacetime. 
    Then there exists a smooth time-oriented Lorentz--Finsler structure $L'$ on $M$ such that $L \triangleleft L'$ and $(M,L')$ is non-totally imprisoning.
\end{Proposition}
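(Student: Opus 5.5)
We follow the Lorentzian strategy of Sämann (and, for cone structures, Fathi--Siconolfi and Minguzzi), adapted to direction-dependent fundamental tensors. The first step is to find, for each compact set $B \subseteq M$, a positive number $\lambda_B$ and a smooth Lorentzian metric $h_B$ on a neighbourhood of $B$ such that $L \triangleleft L_{h_B}$ there, where $L_h(v) := \tfrac12 h(v,v)$. We want its cones to be only slightly wider than those of $L$, in the sense that $\widetilde X$-transverse causal vectors are controlled. Concretely, we take a smooth timelike vector field $X$ and the $\widetilde X$-dual one-form $\omega := \tilde g(X,\cdot)$. We then set
\[
L'(v) := \mathcal{H}_\eps L(v) - \lambda(\pi(v))\,\omega(v)^2 ,
\]
with $\eps$ and $\lambda$ replaced by smooth positive functions on $M$, chosen by the globalization procedure of Proposition \ref{prop:PropertiesApproximation}\ref{PropApproximation2} (the parameter construction of \cite[Lem.\ 4.3]{HoermannKunzingerSteinbauer-WaveEq}). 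The fundamental tensor of $L'$ is $g^{\eps}_v - 2\lambda\, \omega\otimes\omega$, where $g^{\eps}_v$ is the fundamental tensor of $\mathcal{H}_\eps L$. This added term is negative semidefinite but degenerate on $\ker\omega$, so it alone does not give $\triangleleft$. To repair this we instead subtract a Riemannian piece:
\[
L'(v) := \mathcal{H}_\eps L(v) - \lambda\,\tilde g(v,v) .
\]
Then $g'_v - g_v = (g^\eps_v - g_v) - 2\lambda \tilde g$. On the $\tilde g$-unit tangent bundle over a compact set we have $\|g^\eps - g\| \le \lambda/2$ for $\eps$ small, by the uniform $C^{(0,2)}$ convergence in Proposition \ref{prop:PropertiesApproximation}\ref{PropApproximation1}, which is where the vertical regularity $l\ge 2$ is used. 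Hence $g'_v - g_v$ is negative definite, and by $0$-homogeneity of the fundamental tensors this holds on all of $TM\setminus\mathbf 0$. For $\lambda$ small relative to the signature margin of $g$ on unit vectors, $g'_v$ keeps Lorentzian signature and $X$ stays $L'$-timelike. The structure $L'$ is smooth, since the functions $\eps(\cdot)$ and $\lambda(\cdot)$ depend only on the base point.

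The second and main step is to arrange that $(M,L')$ is non-totally imprisoning. We exploit global hyperbolicity of $L$ through a Cauchy time function, or more elementarily through compactness of diamonds. I would first try to produce a continuous (even locally Lipschitz) function $t$ that strictly increases along $L$-causal curves with a uniform rate on compacta. This would follow from the Fathi--Siconolfi / Minguzzi theory for the proper cone structure $\Omega^C$; Lemma \ref{Lemma: lowregFinsler cone structure} requires $C^{(1,3)}$ there, so at $C^{(0,2)}$ one works instead with the continuous cone structure and the results of \cite{FathiSiconolfi}, which give a smooth temporal function $t$ with $\diff t(v) < 0$ strictly on $\Omega^C\setminus\{0\}$. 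Since the set $\{\diff t(v) \le -c_B |v|_{\tilde g}\}$ is an open neighbourhood of the compact $\tilde g$-unit future cone over a compact set $B$, there is room to widen the cones slightly. Shrinking $\eps(\cdot)$ and $\lambda(\cdot)$ further, via the exhaustion, ensures that every future $L'$-causal vector still satisfies $\diff t(v) \le -\tfrac{c_B}{2}|v|_{\tilde g}$ over $B$.

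Then any future $L'$-causal curve contained in a compact set $B$ has $\tilde g$-length at most $2(\max_B t - \min_B t)/c_B$. Such a curve therefore cannot be inextendible, since it would have finite length and an endpoint by completeness of $\tilde g$. This gives non-total imprisonment of $(M,L')$. The main obstacle is this last step: we need a smooth temporal function for merely continuous cones, and we need the compatibility of the exhaustion-dependent choices of $\eps(\cdot)$ and $\lambda(\cdot)$ so that the widened cones remain inside $\{\diff t<0\}$ uniformly on each compact set. Should the temporal function be unavailable, I would fall back on Sämann's argument. There one shows directly that the $L$-diamonds are compact and uses a limit-curve argument to show that sufficiently slightly widened cones over a compact set cannot imprison. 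The widening parameters are then chosen along the exhaustion exactly as above.
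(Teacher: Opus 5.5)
Your proof is correct, but it obtains non-total imprisonment by a different mechanism than the paper.

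\textbf{The paper's route.} It follows \cite[Thm.\ 4.3]{Saemann} and never uses a time function.
\begin{enumerate}
\item It widens the cones only over a relatively compact $B$, via $L-\frac{s}{2}\beta_B(\pi(v))\tilde g(v,v)$.
\item A limit-curve contradiction with the global hyperbolicity of $L$ shows this widening is non-totally imprisoning for small $s$. It is in fact again globally hyperbolic, which is what allows the next step to iterate.
\item Such local widenings are patched inductively over the annuli of an exhaustion into a nonsmooth $L_\infty\triangleright L$.
\item Only at the end is $\frac12(L+L_\infty)$ regularized with a base-dependent parameter, so that $L\triangleleft L'\triangleleft L_\infty$. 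Then $L'$ inherits non-total imprisonment from $L_\infty$.
\end{enumerate}

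\textbf{Your route.} You widen and smooth in one stroke with $\mathcal H_{\eps(x)}L-\lambda(x)\tilde g$. This is fine; it is essentially the paper's last step merged with its widening. You then import the causal input as a smooth temporal function $t$ for the continuous cone structure $\Omega^C$. A uniform slope bound on $L'$-causal vectors gives an explicit $\tilde g$-length bound for causal curves in compact sets, hence non-total imprisonment.

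Once $t$ is fixed, your worry about compatibility along the exhaustion disappears. Set $c(x):=\min\{|\diff t(v)| : v\in\Omega^C_x,\ |v|_{\tilde g}=1\}$, which is continuous and positive. Requiring $|\diff t(v)|\ge c(x)/2$ for all $\tilde g$-unit future $L'$-causal $v$ at $x$ is then a pointwise smallness condition on $\eps(x),\lambda(x)$. The same globalization lemma handles it.

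\textbf{Comparison.} Your route is shorter and gives a quantitative bound. The price is that your black box carries essentially all the content of the proposition. A non-totally imprisoning $L'\triangleright L$ already witnesses stable causality of $L$, and in the cone-structure literature smooth temporal functions exist exactly under stable causality. The paper's argument, by contrast, uses only limit curves.

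If you take your route, cite a result proved for closed or continuous cone structures under the paper's definition of global hyperbolicity (non-total imprisonment plus compact diamonds), e.g.\ \cite{Minguzziconestructure}. Alternatively, verify that the notion used in \cite{FathiSiconolfi} is implied by the paper's definition.

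Two harmless slips:
\begin{enumerate}
\item You say $t$ increases along causal curves but then require $\diff t<0$ on future causal vectors.
\item The set $\{\diff t(v)\le -c_B|v|_{\tilde g}\}$ is closed rather than open; use a strict inequality with a smaller constant.
\end{enumerate}
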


\begin{proof}
    We can follow the lines of the proof of \cite[Thm.\ 4.3]{Saemann}. We sketch the most important points. 
    Let $\pi:TM \to M$ be the canonical projection.
    
    First, we show that for every relatively compact open set $B \subseteq M$, there exists a $C^{(0,2)}$ Lorentz--Finsler structure $\widetilde{L}$ such that $\widetilde{L} \triangleright L$ on $B$, $\widetilde{L} = L$ on $M \setminus B$, and that $(M,\widetilde{L})$ is non-totally imprisoning.
    To this end, fix an arbitrary smooth function $\beta_B$ on $M$ such that $\beta_B >0$ on $B$ and $\beta_B=0$ on $M \setminus B$, and set for $s>0$ and $v \in TM$
    \[
    L_s(v) :=L(v) -\frac{s}{2}\beta_B\bigl( \pi(v) \bigr) \tilde{g}(v,v).
    \]
    Since $L$ is $C^{(0,2)}$ and $B$ is relatively compact, for some $s'>0$ and any $s \in (0,s')$, $L_s$ is a Lorentz--Finsler structure time-oriented by the same vector field as $L$.
    By construction, clearly $L \triangleleft L_s$ on $B$ and $L=L_s$ on $M \setminus B$.
    Now, we can show that there exists $s'' \in (0,s')$ such that $L_s$ is non-totally imprisoning for every $s \in (0,s'')$.
    This is argued by contradiction. 
    Suppose that there is a sequence $(s_m)_{m \in \N}$ such that $s_m \to 0$ and $(M,L_{s_m})$ is totally imprisoning. 
    As in \cite[Thm.\ 4.3]{Saemann}, this leads to a contradiction to the global hyperbolicity of $(M,L)$. 
    Thus, choosing $\widetilde{L}=L_s$ for small $s>0$ shows the claim.
    Moreover, $\widetilde{L}$ itself is in fact globally hyperbolic again as in the proof of \cite[Thm.\ 4.3]{Saemann}. 

    Using this auxiliary result, one may proceed as follows. 
    Fix $x \in M$ and let $B_m:=B^{d_{\tilde g}}_m(x)$ be the open ball of center $x$ and radius $m$ with respect to $\tilde{g}$. 
    Then there is a globally hyperbolic Lorentz--Finsler structure $L_2$ such that $L_2 \trianglerighteq L$, $L_2 \triangleright L$ on $\overline{B}_2$, and $L_2 = L$ on $M \setminus \overline{B}_3$. 
    Similarly, we find a globally hyperbolic Lorentz--Finsler structure $L_3$ such that $L_3 \trianglerighteq L_2$, $L_3 \triangleright L_2$ on $\overline{B}_3 \setminus B_2$, and $L_3 = L_2$ on $M \setminus (\overline{B}_4 \setminus B_1)$. 
    Continuing this construction inductively, one obtains a sequence $(L_m)_{m \ge 3}$ such that $L_m \trianglerighteq L_{m-1}$, $L_m \triangleright L_{m-1}$ on $\overline{B}_m \setminus B_{m-1}$, and $L_m = L_{m-1}$ on $M \setminus (\overline{B}_{m+1} \setminus B_{m-2})$. 
    By construction, all $L_m$'s with $m \ge k+1$ ($\ge 2$) agree on $B_k$. 
    This allows one to define a global structure $L_\infty$ by setting it to be $L_m$ on $B_{m-1}$. 
    Then we have $L_\infty \trianglerighteq L_m \trianglerighteq L$ for all $m$. 
    Moreover, $L_\infty \triangleright L$ by construction, and $L_\infty$ is non-totally imprisoning since so is $L_m$ and $L_\infty=L_m$ on $B_{m-1}$.  
    
    Finally, let $L'':=(L+L_\infty)/2$ and note that $L \triangleleft L'' \triangleleft L_\infty$.
    Using the regularization and base-dependent parameter construction of Proposition \ref{prop:PropertiesApproximation}\ref{PropApproximation2} with the parameter chosen to preserve these strict inequalities between the vertical Hessians, we obtain a smooth Lorentz–Finsler structure $L'$ satisfying $L \triangleleft L' \triangleleft L_\infty$. Since $L_\infty$ is non-totally imprisoning, so is $L'$.
\end{proof}

\begin{Theorem}[Existence of smooth wider globally hyperbolic structures]\label{th:gh-stab}
    Let $(M,L)$ be a globally hyperbolic $C^{(0,2)}$ Finsler spacetime. 
    Then there exists a smooth Lorentz--Finsler structure $L'$, time-oriented by the same vector field as $L$, such that $L \prec L'$ and $(M,L')$ is globally hyperbolic.
\end{Theorem}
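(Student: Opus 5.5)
We will follow the Lorentzian strategy of Sämann \cite[Thm.\ 4.5]{Saemann}. The starting point is the preceding proposition, which provides a smooth $L_1$ with $L \triangleleft L_1$ and $(M,L_1)$ non-totally imprisoning. In general $L_1$ need not be globally hyperbolic, because its cones may be too wide far out. The plan is to shrink $L_1$ back towards $L$ in a base-dependent way. We look for a smooth $L'$ with $L \prec L' \prec L_1$ (indeed $L \triangleleft L' \trianglelefteq L_1$), whose cones are close enough to those of $L$ on larger and larger compact sets that causal diamonds stay compact. Non-total imprisonment of $L'$ then comes for free: every $L'$-causal curve is $L_1$-causal, and $(M,L_1)$ is non-totally imprisoning.

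To obtain the compactness of the diamonds $J'(x,y)$, we first use a Cauchy time function. Since $(M,L)$ is globally hyperbolic, the cone-structure theory of \cite{Minguzziconestructure} (applicable by the local Lipschitz/proper cone property, cf.\ Lemma~\ref{Lemma: lowregFinsler cone structure}, or more generally \cite{FathiSiconolfi} for continuous cones) gives a continuous, indeed smooth, Cauchy temporal function $\tau$ with $d\tau(v)>0$ on future $L$-causal vectors. By compactness on the $\tilde g$-unit bundle over each compact set, $d\tau$ remains positive on $L''$-causal vectors for any $L''$ whose cone is sufficiently close to that of $L$ there.

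Next, fix an exhaustion $K_m$ of $M$ by compacts. Using the slab structure given by $\tau$, we choose tolerances $\eta_m>0$ with the following property: if the cones of $L'$ over $K_m$ lie within $\eta_m$ of those of $L$, then $d\tau>0$ on $L'$-causal vectors and $\tau$ stays a Cauchy function. This amounts to showing that inextendible $L'$-causal curves meet every level set of $\tau$. We will argue via completeness of $\tilde g$ and a uniform bound $|d\tau(v)| \ge c_m |v|_{\tilde g}$ on causal vectors over $K_m$. Combined with non-total imprisonment, this shows that $L'$-causal curves leave every compact set while $\tau$ diverges along them. Then $J'(x,y)\subseteq \tau^{-1}([\tau(x),\tau(y)])$ intersected with a region controlled by the uniform bounds; a limit curve argument gives closedness, hence compactness.

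To construct $L'$ itself, set $L_t := (1-t)L + tL_1$ with a smooth base-dependent $t(x)\in(0,1]$ chosen small on $K_{m+1}\setminus K_m$ so that the cone tolerance $\eta_m$ holds. Because vertical Hessians interpolate linearly, $L\triangleleft L_t \trianglelefteq L_1$ and the signature is preserved. Finally we smooth $L_t$ with the parameter-dependent regularization of Proposition~\ref{prop:PropertiesApproximation}\ref{PropApproximation2}, choosing parameters small enough to keep the strict inequality $L\triangleleft L'$ and the tolerances. This gives $L\prec L'$ and the same time orientation $X$.

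The main obstacle is the third step: showing that closeness of cones on each compact set really forces compact diamonds, i.e.\ that $\tau$ remains Cauchy for $L'$. This is where we expect to lean most heavily on Sämann's limit-curve arguments, which require adapting them to non-reversible Finsler cones.
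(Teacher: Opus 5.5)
The gap is at the step you flag yourself, and the mechanism you propose for it does not work. Completeness of $\tilde g$ and non-total imprisonment, together with a bound $d\tau(v)\ge c_m|v|_{\tilde g}$ over $K_m$ whose constants may degenerate as $m\to\infty$, do \emph{not} force $\tau$ to diverge along inextendible $L'$-causal curves. Such a curve must leave every compact set, but its $\tau$-increment is only bounded below by $\sum_m c_m\cdot(\text{its $\tilde g$-length in the $m$-th shell})$, and this sum can converge.

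Here is a concrete example on $\R^2$, with $\tau=t$, Euclidean $\tilde g$ and time orientation $\partial_t$. Let $2L(v)=-(v^t)^2+(1+x^2)^{-1}(v^x)^2$ and $2L'(v)=-(v^t)^2+(2+x^2)^{-2}(v^x)^2$.
\begin{itemize}
\item $(\R^2,L)$ is globally hyperbolic with $t$ Cauchy: causal curves satisfy $|\dot x|\le\sqrt{1+x^2}$, so there is no blow-up in finite $t$.
\item $L\prec L'$.
\item $t$ is temporal for $L'$, so $L'$ is non-totally imprisoning, and $d\tau(v)\ge c_m|v|_{\tilde g}$ holds over $\{|x|\le m\}$.
\item Yet $x(t)=\sqrt2\tan(\sqrt2\,t)$ solves $\dot x=2+x^2$, so it is an inextendible $L'$-null curve along which $t$ stays bounded. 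Hence $t$ is not Cauchy for $L'$, and $L'$ has noncompact causal diamonds.
\end{itemize}
The $\tilde g$-angle between the null directions of $L$ and $L'$ is $O(1/|x|)$. So this is not ruled out merely by asking the cones to be close far out: the tolerances $\eta_m$ must be tied quantitatively to $\tau$, and your sketch never does this. The compactness step has the same defect. The slab $\tau^{-1}([\tau(x),\tau(y)])$ is noncompact whenever the Cauchy hypersurfaces are, and a ``region controlled by the uniform bounds'' needs a bound that does not degenerate.

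The missing ingredient is \emph{steepness} with respect to the complete metric. Take a smooth Cauchy temporal function with $d\tau(v)\ge 2|v|_{\tilde g}$ for all future $L$-causal $v$. Such functions exist for globally hyperbolic closed cone structures (Bernard--Suhr; Minguzzi's closed cone theory). This is an external input: the paper's proper cone lemma needs $C^{(1,3)}$, and in the paper's logic Cauchy hypersurfaces for $L$ are a \emph{consequence} of this theorem.

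Then choose the tolerance on each compact shell so that $d\tau(v)\ge|v|_{\tilde g}$ for all $L'$-causal $v$ there; compactness of the $\tilde g$-unit sphere bundle makes this possible. Every $L'$-causal curve from $x$ to $y$ then has $\tilde g$-length at most $\tau(y)-\tau(x)$. By Hopf--Rinow this gives three things: $\tau\to\pm\infty$ along inextendible $L'$-causal curves, non-total imprisonment, and $J'(x,y)$ contained in a compact $\tilde g$-ball. Closedness of $J'(x,y)$ follows from the limit curve theorem. Your auxiliary $L_1$ even becomes unnecessary.

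The paper takes a different route. It transplants Sämann's proof of \cite[Thm.\ 4.5]{Saemann}, using the non-totally imprisoning structure from the preceding proposition and continuous convex combinations $(1-f)L+fL_1$. This is why $\prec$ must be replaced by $\triangleleft$ there. No time function is involved.
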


\begin{proof}
    Using the non-total imprisonment result above, this is entirely analogous to \cite[Thm.\ 4.5]{Saemann}.
    We remark that $\prec$ in the proof of \cite[Thm.\ 4.5]{Saemann} is necessarily replaced with $\triangleleft$; then for $L_1 \triangleleft L_2$ and any continuous function $f:M \to [0,1]$, the continuous convex combination $(1-f)L_1 +fL_2$ is again a Lorentz--Finsler metric, time-orientable by the same vector field as $L_1$ (cf.\ \cite[Lem.\ 4.1]{Saemann}).
\end{proof}

We note that, as a consequence, every globally hyperbolic $C^{(0,2)}$ Finsler spacetime has a smooth Cauchy hypersurface, since any Cauchy hypersurface for $L'$ is also one for $L$. 
Here, by a \emph{Cauchy hypersurface} we mean a subset that is met exactly once by every inextendible future causal curve.

\begin{Theorem}[$L$-maximizers as limits of $\widecheck L_\varepsilon$-maximizers]
Let $(M,L)$ be a globally hyperbolic $C^{(1,2)}$ Finsler spacetime. 
Then for any $x < y$, there exists a maximizing $L$-geodesic from $x$ to $y$. 
Moreover, such a geodesic $\eta$ can be obtained as the $C^2$-limit of a sequence of $\widecheck L_{\varepsilon_m}$-geodesics $\eta_m$ as $\eps_m \to 0$.
\end{Theorem}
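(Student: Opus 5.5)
Since $\widecheck L_\varepsilon \prec L$, every $\widecheck L_\varepsilon$-causal curve is $L$-timelike. Hence each $(M,\widecheck L_\varepsilon)$ inherits global hyperbolicity from $L$: its causal diamonds sit inside those of $L$, and non-total imprisonment is inherited. The plan is to build $L$-maximizers as limits of smooth maximizers, reversing this inclusion in the limit.

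First we fix $x<y$. By Theorem~\ref{th:gh-stab} we choose a smooth globally hyperbolic $L'$ with $L\prec L'$. Then $y\in I^+_{L'}(x)$ whenever $x<_L y$ with $x\neq y$; if $x<y$ only via a null curve, we argue separately at the end. We approximate $y$ by points $y_m\to y$ with $y_m\in I^+_{\widecheck L_{\varepsilon_m}}(x)$. Concretely, take any $L$-causal curve from $x$ to $y$, push it slightly toward the future along the time orientation $X$ to obtain an $L$-timelike curve from $x$ to $y_m$, and use that on the compact image the $\widecheck L_\varepsilon$-cones converge to the $L$-cones (Proposition~\ref{prop:ChruscielGrantApprox}), so the curve becomes $\widecheck L_{\varepsilon_m}$-timelike for small $\varepsilon_m$. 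Smooth Avez--Seifert in the globally hyperbolic $(M,\widecheck L_{\varepsilon_m})$ (smooth Lorentz--Finsler theory, e.g.\ via Minguzzi's cone structures, Lemma~\ref{Lemma: lowregFinsler cone structure}) yields maximizing $\widecheck L_{\varepsilon_m}$-geodesics $\eta_m$ from $x$ to $y_m$.

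All $\eta_m$ lie in a fixed compact set $J_L(x,y')$ for some $y'$ slightly to the future of $y$, by $\widecheck L_\varepsilon\prec L$. By non-total imprisonment, their $\tilde g$-lengths are uniformly bounded. We parametrize each $\eta_m$ affinely on $[0,1]$. The key step, and the expected main obstacle, is a uniform bound on $\dot\eta_m(0)$ in $TM\setminus\mathbf 0$, i.e.\ bounded away from the zero section and from infinity relative to $\tilde g$. To get it, I would use the geodesic equation with coefficients $\widecheck G^i_\varepsilon\to G^i$ locally uniformly (Proposition~\ref{prop:ChruscielGrantApprox}\ref{CG3}) and a Gronwall-type comparison. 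The quantity $|\dot\eta_m|_{\tilde g}$ satisfies $|\tfrac{d}{dt}\log|\dot\eta_m|_{\tilde g}|\le C|\dot\eta_m|_{\tilde g}$ on the compact set, by 2-homogeneity of $G$. Integrating this, and noting that the $\tilde g$-length is both bounded above and bounded below (because $x\neq y$), gives two-sided bounds on $|\dot\eta_m|_{\tilde g}$. Then $\dot\eta_m(0)$ ranges in a compact subset of the $L'$-causal cone bundle.

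Arzel\`a--Ascoli applied to the second-order system, as in the proof of Proposition~\ref{Prop: Existence time for Leps geodesics} and Corollary~\ref{cr:Fcpt}, gives a subsequence converging in $C^2$ to a solution $\eta$ of the $L$-geodesic equation from $x$ to $y$. It is $L$-causal because $\widecheck L_{\varepsilon_m}(\dot\eta_m)\le0$ and $\widecheck L_\varepsilon\to L$ locally uniformly. For maximality we use lower semicontinuity of the length with respect to the inner cones: $\mathbf L(\eta)=\lim \mathbf L_{\widecheck L_{\varepsilon_m}}(\eta_m)=\lim d_{\widecheck L_{\varepsilon_m}}(x,y_m)$, by $C^1$ convergence and uniform convergence of $\widecheck F_\varepsilon$ on compact sets of the closed cones. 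So it remains to show $\liminf d_{\widecheck L_{\varepsilon_m}}(x,y_m)\ge d_L(x,y)$. For any $L$-causal $\gamma$ from $x$ to $y$ with length close to $d(x,y)$, the same pushing construction gives $\widecheck L_{\varepsilon_m}$-timelike curves to $y_m$ with lengths converging to $\mathbf L(\gamma)$; here we should choose $y_m$ consistently, e.g.\ as the endpoint of a fixed short $X$-flow from $y$. This gives $\mathbf L(\eta)\ge d(x,y)$, and the reverse inequality is trivial. The $d(x,y)=0$ case is handled identically, since the limit of the $\eta_m$ is still an $L$-causal geodesic.
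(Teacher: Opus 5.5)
Your proposal is correct in substance, but it takes a different route from the paper. The paper does the timelike case with the \emph{outer} approximations. It uses Theorem~\ref{th:gh-stab} to get a wider smooth globally hyperbolic $L'$, and globalizes $\widehat L_{\eps_m}$ so that $L\prec L_m\prec L'$. It confines the $L_m$-maximizers from $x$ to $y$ in $J_{L'}(x,y)$. It gets compactness of velocities by contradiction: a degenerating sequence would give an inextendible $L$-causal curve imprisoned in that diamond. The inequality $d(x,y)\le\liminf_m d_m(x,y)$ then comes essentially for free, because every $L$-causal competitor $c$ is admissible for $L_m$ and $\mathbf L(c)\le\mathbf L_m(c)+\sqrt{2C\delta_m}$. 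The passage to $\widecheck L_\eps$ and the null case are only indicated, the latter by reference to Graf.

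You work with $\widecheck L_\eps$ throughout and shift the endpoint to $y_m$. This buys three things:
\begin{itemize}
\item Confinement follows from $\widecheck L_\eps\prec L$ and global hyperbolicity of $L$ itself, so Theorem~\ref{th:gh-stab} is not needed; your $L'$ plays no role.
\item The cases $x\ll y$ and $d(x,y)=0$ are handled uniformly.
\item The velocity bound is explicit. Along a geodesic, $\tilde\nabla_{\dot\eta}\dot\eta$ is a difference of two sprays evaluated at $\dot\eta$, hence $2$-homogeneous with coefficients bounded uniformly in $m$. So your Gronwall estimate holds, and with the two-sided bounds on the $\tilde g$-length it pins $|\dot\eta_m|_{\tilde g}$ between positive constants.
\end{itemize}

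The price is that the inequality $\liminf_m d_{\widecheck L_{\eps_m}}(x,y_m)\ge d(x,y)$, trivial for outer approximations, now needs a \emph{uniform} push-up. This is also what the paper's one-line remark about $\widecheck L_\eps$ leaves implicit, and it is the step you should make precise.
\begin{itemize}
\item Take $y_m:=\phi_{s_m}(y)$ with $s_m\downarrow0$, not a fixed flow time.
\item Push competitors by $\gamma_s(t):=\phi_{s h(t)}(\gamma(t))$ with $h(t)=(e^{\lambda t}-1)/(e^{\lambda}-1)$ and $\lambda$ large.
\item Use only the local Lipschitz dependence of the cones (available since $L$ is $C^1$ on $TM\setminus\mathbf 0$), the uniform bound on $\tilde g$-speeds of affinely parametrized $L$-causal curves in $J(x,y)$, and the reverse triangle inequality for $F$. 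This gives $L(\dot\gamma_s)\le -cs^2$ with $c$ independent of the competitor $\gamma$.
\item Since $\widecheck L_\eps-L=O(\eps)$ on compact sets, choosing $\eps_m=o(s_m^2)$ makes all pushed competitors $\widecheck L_{\eps_m}$-causal simultaneously, with $\widecheck L_{\eps_m}$-lengths converging to $\mathbf L(\gamma)$ uniformly in $\gamma$.
\end{itemize}
Without this uniformity, or a diagonal argument over a maximizing sequence of competitors, the admissible $\eps$ depends on $\gamma$. Then a single sequence $(y_m,\eps_m)$ would not control the supremum defining $d(x,y)$.
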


\begin{proof}
    We first consider the case of $x \ll y$. 
    Fix a smooth globally hyperbolic Lorentz--Finsler structure $L'$ such that $L \prec L'$ given by Theorem \ref{th:gh-stab}.
    We can find a sequence $L_m(v):=\widehat L_{\mu(\eps_m, \pi(v))}$ satisfying $L \prec L_m\prec L'$, where $\mu$ is a smooth parameter chosen to globalize the lightcone relation using \cite[Lem.\ 4.3]{HoermannKunzingerSteinbauer-WaveEq} (compare with the proofs of Propositions \ref{prop:PropertiesApproximation} and \ref{prop:ChruscielGrantApprox}).
    Since $L_m$ is also globally hyperbolic and $x \ll y$ with respect to $L_m$, we find a timelike maximizing $L_m$-geodesic $\eta_m:[0,1] \to M$ from $x$ to $y$. 
    Note that there is a compact set $\mathcal K \subseteq TM \setminus \mathbf{0}$ such that $\dot \eta_m(t) \in \mathcal{K}$ for all $t \in [0,1]$ and $m \in \N$; otherwise one would find an inextendible $L$-causal curve (as a subsequential limit of $\eta_m$) in the compact causal diamond $J_{L'}(x,y)$ with respect to $L'$. 
    Therefore, up to going to a subsequence, $\eta_m$ converges to a causal $L$-geodesic $\eta:[0,1] \to M$ from $x$ to $y$ in $C^2$ (since $\ddot{\eta}^i_m =-\widehat{G}^i_m(\dot{\eta}_m) \to -G^i(\dot{\eta})=\ddot{\eta}^i$). 

    It remains to show that $\eta$ is $L$-maximizing. 
    To this end, let $c:[0,1] \to M$ be any Lipschitz $L$-causal curve from $x$ to $y$ parametrized proportionally to $\tilde g$-arclength; then it is contained in the compact diamond $K:=J_{L'}(x,y)$. 
    Set $C:=\sup_{t \in [0,1]} |\dot{c}(t)|^2_{\tilde g}$ (which is finite; cf.\ \cite[Lemma 2.7]{Saemann}),
    \begin{equation*}
        \delta_m:=\sup_{v \in TK \setminus \mathbf 0} \frac{|L(v) -  L_m(v)|}{|v|^2_{\tilde g}},
    \end{equation*}
    and note that $\delta_m \to 0$. 
    It follows that
    \begin{align*}
        \mathbf L(c) &= \int_0^1 \sqrt{-2L(\dot c)} \diff t
        \leq \int_0^1 \sqrt{-2 L_m(\dot c) + 2 C \delta_m} \diff t
        \leq \mathbf L_m(c) +      \sqrt{2C\delta_m} \\ 
        &\leq d_m(x,y) + \sqrt{2C\delta_m},
    \end{align*}
    where $\mathbf L_m$ and $d_m$ are the Lorentz--Finsler arclength and distance function for $L_m$, respectively. 
    Letting $m \to \infty$ and using the arbitrariness of $c$, we get $d(x,y) \leq \liminf_{m \to \infty} d_m(x,y)$. 
    On the other hand, the $C^2$-convergence $\eta_m \to \eta$ implies $d_m(x,y) = \mathbf L_m(\eta_m) \to \mathbf L(\eta) \leq d(x,y)$, so in fact $\lim_{m \to \infty} d_m(x,y) = \mathbf{L}(\eta) =d(x,y)$, thus $\eta$ is $L$-maximizing and timelike.

    Now, if we consider $\widecheck L_\eps$, then $x \ll y$ for $L$ implies $x \ll y$ for $\widecheck L_\eps$ provided $\eps$ is sufficiently small. 
    Thus, one may proceed as above, since every $\widecheck L_\eps$ is globally hyperbolic (by $\widecheck{L}_\eps \prec L$ and the global hyperbolicity of $L$), and find a sequence of $\widecheck{L}_{\eps_m}$-geodesics converging to a maximizing $L$-geodesic. 

    The case of $x < y$ is argued analogously. 
    We refer to \cite[Prop.\ 2.13]{Graf-SingularityC1} for more details. 
\end{proof}

Simple analogs of the above result (corresponding to the situations in Hawking's and Penrose's theorems, respectively) are the following.

\begin{Corollary}[$L$-maximizers as limits of $\widecheck L_\varepsilon$-maximizers for Cauchy hypersurfaces]
\label{Corollary: hypersurfacemaxaslimit}
    Let $(M,L)$ be a globally hyperbolic $C^{(1,2)}$ Finsler spacetime. 
    Suppose that $\Sigma \subseteq M$ is a $C^2$ spacelike Cauchy hypersurface. 
    Then for each $y \in I^+(\Sigma)$, there exists a timelike maximizing geodesic from $\Sigma$ to $y$, that can be obtained as the $C^2$-limit of $\widecheck{L}_{\eps_m}$-geodesics from $\Sigma$ to $y$ as $\eps_m \to 0$.
\end{Corollary}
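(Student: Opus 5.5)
We follow the proof of the preceding theorem, replacing the initial point $x$ by the hypersurface $\Sigma$. Fix $y \in I^+(\Sigma)$. Since $\widecheck L_\eps \prec L$, each $(M,\widecheck L_\eps)$ is globally hyperbolic: causal diamonds shrink and non-total imprisonment is inherited. We also check that $\Sigma$ remains a Cauchy hypersurface for $\widecheck L_\eps$. Every inextendible $\widecheck L_\eps$-causal curve is $L$-timelike, so it meets $\Sigma$ exactly once. Moreover, $\Sigma$ is $\widecheck L_\eps$-spacelike for small $\eps$ on compact portions, because its tangent spaces avoid the $L$-causal cones, which contain the $\widecheck L_\eps$-causal cones.

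Next we need $y \in I^+_{\widecheck L_\eps}(\Sigma)$ for small $\eps$. Choose $x \in \Sigma$ with $x \ll_L y$ and a future $L$-timelike curve $c$ from $x$ to $y$. On the compact set of $\tilde g$-normalized velocities of $c$, we have $L<0$ and $\widecheck L_\eps \to L$ uniformly, so $c$ is $\widecheck L_\eps$-timelike for small $\eps$. In a smooth globally hyperbolic spacetime with smooth Cauchy hypersurface $\Sigma$, the set $J^-_{\widecheck L_\eps}(y)\cap \Sigma$ is compact. Standard smooth Lorentz–Finsler theory then gives a maximizing $\widecheck L_\eps$-geodesic $\eta_m$ from $\Sigma$ to $y$. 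It is timelike, starts $\widecheck g_{\dot\eta_m}$-orthogonally to $\Sigma$, and satisfies $\mathbf L_m(\eta_m)=d^m_\Sigma(y)$.

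For compactness, all $\eta_m$ lie in $J^-_L(y)\cap J^+_L(\Sigma)$, which is compact by global hyperbolicity of $L$ and the Cauchy property; here we use that these curves are $L$-causal. Parametrize the $\eta_m$ on $[0,1]$. We then need a compact $\mathcal K\subseteq TM\setminus\mathbf 0$ containing all velocities $\dot\eta_m(t)$. We argue as in the theorem. If the velocities degenerate, to $\mathbf 0$ or to infinity after normalization, a limit curve argument produces an $L$-causal curve that is imprisoned in a compact set, or a constant limit. The latter is excluded since $y\notin\Sigma$, using a uniform lower bound on $\tilde g$-length from $\Sigma$ to $y$. Once $\mathcal K$ is found, the geodesic equations $\ddot\eta_m^i=-\widecheck G^i_{\eps_m}(\dot\eta_m)$ with $\widecheck G_\eps\to G$ locally uniformly, which follows from \eqref{eq:EstimatesCheckSpray} and $C^{(1,2)}_{\loc}$-convergence, give a subsequence converging in $C^2$ to an $L$-geodesic $\eta$ from some $x\in\Sigma$ to $y$. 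Compactness of $\Sigma\cap J^-(y)$ ensures that the initial points converge.

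Maximality follows the same length comparison as in the theorem. For any $L$-causal curve $c$ from $\Sigma$ to $y$ contained in the compact set $K=J^-_L(y)\cap J^+_L(\Sigma)$, we get $\mathbf L(c)\le \mathbf L_m(c)+\sqrt{2C\delta_m}$. Here we use $\widecheck L_\eps\le L$ on $L$-causal vectors, or simply $|\widecheck L_\eps-L|\le\delta_m|v|^2$ on $TK$. The catch is that $c$ need not be $\widecheck L_\eps$-causal. The clean fix is to use the outer approximations for the upper bound, exactly as in the theorem, via $L\prec \widehat L_\eps$: then $\mathbf L(c)\le \widehat d^m_\Sigma(y)+o(1)$. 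For the inner maximizers we instead get $\mathbf L_m(\eta_m)\to\mathbf L(\eta)$, and $\widecheck d^m_\Sigma(y)\ge \mathbf L_m(\text{any fixed $L$-timelike near-maximizer})\to$ its $L$-length. Combining these shows $\mathbf L(\eta)=d_\Sigma(y)$, and positivity of this length makes $\eta$ timelike.

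The main obstacle is the uniform velocity bound, i.e. the choice of $\mathcal K$, together with reconciling the inner approximation with the upper length bound. I would handle the first through the limit-curve argument in the compact set $K$, following \cite[Prop.\ 2.13]{Graf-SingularityC1}.
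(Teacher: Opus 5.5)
Your proposal is correct and follows the paper's route, which is the proof of the preceding theorem with $x$ replaced by $\Sigma$: the inner maximizers are trapped in the compact set $J^-(y)\cap J^+(\Sigma)$, a limit-curve argument gives the velocity bound, and $C^2$-convergence comes from $\widecheck G_\eps\to G$. Your maximality step, which compares $\widecheck d^{m}_\Sigma(y)$ with the length of a fixed $L$-timelike maximizer (eventually $\widecheck L_\eps$-timelike), correctly fills in what the paper's ``proceed as above'' leaves implicit, namely that an $L$-causal competitor need not be $\widecheck L_\eps$-causal.

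Two minor points. First, the aside ``$\widecheck L_\eps\le L$ on $L$-causal vectors'' is backwards: on future $L$-null vectors the construction gives $\widecheck L_\eps>0=L$. Second, the $L$-timelike maximizers you test against are best taken directly from the preceding theorem applied to pairs $x\in\Sigma$, $x\ll y$. The bound $\mathbf L(c)\le\widehat d^{m}_\Sigma(y)+o(1)$ alone does not produce them, and $\Sigma$ need not remain Cauchy for the wider $\widehat L_\eps$.
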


\begin{Corollary}[$L$-maximizers as limits of $\widecheck L_\varepsilon$-maxmizers for codimension-$2$ surfaces]
\label{Corollary: Trappedsurfacemaxaslimit}
    Let $(M,L)$ be a globally hyperbolic $C^{(1,2)}$ Finsler spacetime. 
    Suppose that $S \subseteq M$ is a $C^2$ compact spacelike submanifold of codimension $2$. 
    Then for any $y \in J^+(S) \setminus (I^+(S) \cup S)$, there exists a maximizing null geodesic from $S$ to $y$, which can be obtained as the $C^2$-limit of $\widecheck{L}_{\eps_m}$-geodesics from $S$ to $y_m$ as $\eps_m \to 0$, where $y_m \to y$.
\end{Corollary}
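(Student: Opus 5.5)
The plan is to mimic the proof of the preceding theorem on $L$-maximizers as limits of $\widecheck L_\varepsilon$-maximizers, now maximizing the distance from the compact set $S$ instead of from a point. First, fix $y \in J^+(S)\setminus(I^+(S)\cup S)$. Since $S$ is compact and $(M,L)$ is globally hyperbolic, $J^+(S)\cap J^-(y)$ is compact, and $y \in J^+(S)$ means there is some $x \in S$ with $x<y$. Because $y\notin I^+(S)$, every $L$-causal curve from $S$ to $y$ has zero length. Consequently, any causal curve from $S$ to $y$ maximizes $d_S(y)=0$.

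Second, the main step is to produce the curve as a limit. The paper's own construction of $\widecheck L_\eps$-maximizers does not directly apply, because $y$ need not lie in the causal future of $S$ for $\widecheck L_\eps$, which has narrower cones. I would therefore choose points $y_m \in I^+(S)$ with $y_m \to y$, for instance $y_m$ on a short future timelike curve starting at $y$. For each $m$, since $y_m \in I^+(S)$ for $L$ and the $\widecheck L_\eps$-cones converge to the $L$-cones, there is $\eps_m$, which I would take with $\eps_m\to0$, such that $y_m\in I^+_{\widecheck L_{\eps_m}}(S)$. Since $\widecheck L_{\eps_m}\prec L$, the structure $\widecheck L_{\eps_m}$ is globally hyperbolic, and compactness of $S$ gives a smooth $\widecheck L_{\eps_m}$-maximizing timelike geodesic $\eta_m$ from some $x_m \in S$ to $y_m$. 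This $\eta_m$ starts $\widecheck L_{\eps_m}$-orthogonally to $S$ by the first variation formula. I will normalize the parametrization to $[0,1]$ using $\tilde g$-arclength of the initial velocity, or equivalently fix the initial velocities to have unit $\tilde g$-norm.

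Third, I will extract a limit, as in the theorem on $L$-maximizers. All $\eta_m$ lie in the compact set $J^+_{L'}(S)\cap J^-_{L'}(\overline{\{y_m\}})$, where $L'$ is the smooth wider globally hyperbolic structure from Theorem \ref{th:gh-stab}. The key claim is that the velocities stay in a compact $\mathcal K\subseteq TM\setminus\mathbf 0$. If instead they degenerate to $0$ or blow up, a limit curve argument would give an inextendible causal curve imprisoned in a compact set, contradicting non-total imprisonment, or would collapse the curve to a point, which is impossible since $y\notin S$. With $\mathcal K$ in hand, Corollary \ref{cr:Fcpt}, together with the convergence $\ddot\eta^i_m=-\widecheck G^i_{\eps_m}(\dot\eta_m)\to -G^i(\dot\eta)$ (using Proposition \ref{prop:ChruscielGrantApprox}\ref{CG3} and uniform convergence of $G_\eps$), gives a subsequence converging in $C^2$ to an $L$-geodesic $\eta$ from $x\in S$ to $y$.

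Finally, $\eta$ is $L$-causal as a limit of $L$-causal curves, and it cannot be timelike, since otherwise $y\in I^+(S)$. By the fixed causal character lemma, $\eta$ is therefore null, and it maximizes since $d_S(y)=0$. Orthogonality to $S$ passes to the limit because $g_{\dot\eta_m(0)}(\dot\eta_m(0),\cdot)$ annihilates $T_{x_m}S$ and $\check g_{\eps_m}\to g$ locally uniformly. I expect the main obstacle to be the uniform compactness of the velocities in $TM\setminus\mathbf 0$, that is, ruling out degeneration of the parametrizations. I would handle it by following \cite[Prop.\ 2.13]{Graf-SingularityC1}, as the paper itself does for the case $x<y$.
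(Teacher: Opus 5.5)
Your proposal is correct and follows essentially the route the paper intends. The paper gives no separate proof: it treats this corollary as a direct analog of the theorem on $L$-maximizers as limits of $\widecheck L_\varepsilon$-maximizers in the case $x<y$, deferring to \cite[Prop.\ 2.13]{Graf-SingularityC1}. Your version carries this out by approximating $y$ with points $y_m\in I^+(S)$ that are $\widecheck L_{\varepsilon_m}$-chronologically reachable from $S$, taking $\widecheck L_{\varepsilon_m}$-maximizers from $S$, and extracting a $C^2$-limit. The velocity-compactness step you flag is left at the same level of detail in the paper. It can be made rigorous by reparametrizing by $\tilde g$-arclength: this length is uniformly bounded above by non-total imprisonment and bounded below because $y\notin S$. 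One then bounds the pregeodesic reparametrization factor using the locally uniform convergence of the coefficients $\gamma^i_{jk}$.
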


Here we say that $\Sigma$ (or $S$) is \emph{spacelike} if $T_x \Sigma \cap \overline{\Omega_x^T} =\{0\}$ for every $x \in \Sigma$. 
For a future null geodesic, being maximizing means that no two points in it are connected by a timelike curve (i.e., its image is achronal).

\section{Singularity theorems}
\label{section: singularity theorems}

With the approximation and convergence results from the previous section at our disposal, we can now prove the Hawking and Penrose singularity theorems.
We begin with the unweighted case in regularity $C^{(1,3)}$ and then proceed to the weighted case, where our results hold for $L \in C^{(1,2)}$ and $\mathfrak m \in C^1$.
Although we focus on future incompleteness, the standard reversing argument gives past incompleteness for all the corresponding singularity theorems (see Corollary \ref{cr:past-sing}, Remark \ref{remark: past versions of singularity theorems}).

\subsection{The Hawking singularity theorems}\label{ssc:Hawking}

To begin with, we suppose that the Lorentz--Finsler structure $L$ is smooth and introduce some necessary notions (see \cite{Minguzzi-LightconesFinsler,MinguzziRaychaudhuri} for more details).
Let $\Sigma \subseteq M$ be a $C^2$ spacelike hypersurface (that is, $T_x \Sigma \cap \overline{\Omega_x^T} =\{0\}$ for all $x \in \Sigma$). 
Then there exists a $C^1$ future timelike vector field $V$ on a neighborhood of $\Sigma$ satisfying
\[
D^V_V V \equiv 0, \qquad 
L(V) \equiv -\frac{1}{2}, \qquad 
\ker g_V(V,\cdot)|_{T_xM} =T_x\Sigma \,\ \text{for all}\ x \in \Sigma.
\]
By the latter two conditions, $V|_\Sigma$ can be regarded as the future unit normal vector field on $\Sigma$, and we extend it with the geodesic condition $D^V_V V \equiv 0$ (i.e., integral curves of $V$ are geodesics).
We also remark that, for every $w \in T_x\Sigma$,
\[
g_V(V,w) =\frac{\partial L}{\partial v^i}(V)w^i.
\]
Thus, $V(x)+T_x\Sigma$ is tangent to $L^{-1}(-1/2)$ in $T_xM$; this property determines $V(x)$.

Given $x \in \Sigma$, let $\eta:I \to M$ be the future timelike geodesic given as the integral curve of $V$ with $\eta(0)=x$.
For $t \in I$, we set $N_\eta(t):=\ker g_{\dot{\eta}}(\dot{\eta}(t),\cdot)$, which is an $(n-1)$-dimensional subspace of $T_{\eta(t)}M$.
Then we consider the endomorphism $B_t:N_\eta(t) \to N_\eta(t)$ defined by $B_t(w):=D_w^{\dot{\eta}} V$.
It follows from $g_V(V,V) \equiv -1$ and $V(\eta(t))=\dot{\eta}(t)$ that, for all $w \in N_\eta(t)$,
\[
w[g_V(V,V)] =2g_{\dot{\eta}}\bigl( B_t(w),\dot{\eta}(t) \bigr) =0,
\]
which indeed ensures $B_t(w)\in N_{\eta}(t)$.
We define the \emph{expansion} and \emph{shear} (of $V$) at $\eta(t)$ by
\begin{equation}\label{eq:expan}
\theta(t) :=\mathrm{trace}(B_t), \qquad 
\sigma(t) :=\mathrm{trace} \Biggl( \biggl( B_t -\frac{\theta(t)}{n-1}I_{N_{\eta}(t)} \biggr)^2 \Biggr),
\end{equation}
where $I_{N_{\eta}(t)}$ is the identity of $N_{\eta}(t)$. If $L$ is merely $C^{(1,3)}$, then the expansion is directly defined as a function on $\Sigma$ via $\theta:=\mathrm{trace}(w \mapsto D^V_wV)$, $w \in T\Sigma$.

With these notations, we introduce the following property, which will play a key role.

\begin{Definition}[$\beta$-contracting]\label{df:beta}
We say that $\Sigma$ is \emph{$\beta$-contracting} for $\beta>0$ if $\theta(0)<-\beta$ for every $x \in \Sigma$. 
In the case of $\beta=0$ (i.e., $\theta(0)<0$), we simply say that $\Sigma$ is \emph{contracting}.
\end{Definition}

When $L$ is smooth, we have the Finsler analog to the fundamental \emph{Raychaudhuri equation} (see \cite[Eq.\ (25)]{MinguzziRaychaudhuri} where $\dim M=n+1$): 
\begin{equation}\label{eq:Ray}
\theta'(t) =-\Ric\bigl( \dot{\eta}(t) \bigr) -\sigma(t) -\frac{\theta(t)^2}{n-1}.
\end{equation}

\begin{Proposition}[Timelike quantitative focusing]
\label{Prop: Hawking quantitative focusing}
Let $(M,L)$ be a smooth Finsler spacetime, $\Sigma \subseteq M$ be a $C^2$ spacelike hypersurface and $\eta : [0, b] \to M$ with
$x := \eta(0) \in \Sigma$  be a future unit speed timelike geodesic that maximizes the
distance from $\Sigma$, i.e., $\mathbf{L}(\eta\vert_{[0,t]}) = \sup_{y\in \Sigma} d(y,\eta(t))$ for all $t \in [0, b]$. 
Then $\dot{\eta}(0) \perp T_x\Sigma$ in the sense that $T_x\Sigma=\ker g_{\dot \eta(0)}(\dot \eta(0), \cdot)$. 
Furthermore, if $\theta(0)<-\beta<0$
and $\Ric(\dot{\eta}) \ge -\delta$ with $0 < \delta \le \frac{3\beta}{b} (1-r)$ for some
$0 < r < 1$, then $b \le\frac{n-1}{r\beta}$ necessarily holds.
\end{Proposition}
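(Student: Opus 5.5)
The proof has two parts: the orthogonality claim, which is a first-variation argument, and the length bound, which comes from comparing the Raychaudhuri equation \eqref{eq:Ray} with a Riccati ODE.

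\textbf{Orthogonality.} Since $\eta$ maximizes the distance from $\Sigma$, it is in particular locally maximizing among causal curves from $\Sigma$ to $\eta(b)$. We vary the initial point along a $C^2$ curve $s\mapsto x(s)\subseteq\Sigma$ with $x'(0)=w\in T_x\Sigma$ and keep the endpoint $\eta(b)$ fixed. For the Lorentz--Finsler length $\mathbf L=\int F(\dot\eta)\,\diff t$ with $F=\sqrt{-2L}$, the first variation formula for a unit speed geodesic reduces to the boundary term $\pm g_{\dot\eta(0)}(\dot\eta(0),w)$. This uses $\partial F/\partial v^i=-\partial_{v^i}L/F$ and $\partial_{v^i}L(v)w^i=g_v(v,w)$. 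Maximality forces this term to vanish for every $w\in T_x\Sigma$, so $T_x\Sigma\subseteq\ker g_{\dot\eta(0)}(\dot\eta(0),\cdot)$. Both spaces have dimension $n-1$, hence they are equal. Consequently $\dot\eta(0)=V(x)$, and $\eta$ is the integral curve of the unit normal geodesic field $V$, so the expansion $\theta(t)$ along $\eta$ is defined at least near $t=0$.

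\textbf{Comparison via Raychaudhuri.} The shear $\sigma(t)$ is the trace of the square of the trace-free part of $B_t$, and $B_t$ is $g_{\dot\eta}$-self-adjoint because $V$ is hypersurface orthogonal. Since $g_{\dot\eta}$ is positive definite on $N_\eta(t)$, this gives $\sigma\ge0$. Then \eqref{eq:Ray} yields
\[
\theta'\le \delta-\frac{\theta^2}{n-1},\qquad \theta(0)<-\beta .
\]
Let $t_0:=\frac{n-1}{r\beta}$ and suppose, for contradiction, that $b>t_0$. First I will show that $\theta$ cannot stay bounded on $[0,t_0]$. As long as $\theta\le -\beta$, we have $\theta^2/(n-1)\ge\beta|\theta|/(n-1)$. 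The cleanest comparison is to set $\psi:=-1/\theta>0$. Then
\[
\psi'=\theta'/\theta^2\le \frac{\delta}{\theta^2}-\frac1{n-1}\le \frac{\delta}{\beta^2}-\frac1{n-1},
\]
using $|\theta|\ge\beta$, which itself must be maintained by a bootstrap. This estimate controls the time for $\psi$ to reach $0$. The constants in the hypothesis $\delta\le\frac{3\beta}{b}(1-r)$ show that the intended argument is a slightly different ODE comparison: it should show that the expansion blows up to $-\infty$ by time $\frac{n-1}{r\beta}$. I will redo this comparison carefully with the given constant, using $b\delta\le3\beta(1-r)$ to absorb the $\delta$-term. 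Integrating $\theta'\le\delta$ gives $\theta(t)\le-\beta+\delta t$, which is where $b\delta$ enters, and this should keep $|\theta|\ge r\beta$ on $[0,b]$.

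\textbf{Focal point and contradiction.} Once $\theta\to-\infty$ at some $t_1\le t_0<b$, the point $\eta(t_1)$ is a focal point of $\Sigma$ along $\eta$. This holds because $\theta$ is the logarithmic derivative of the volume of the normal Jacobi fields: $\theta=(\det J)'/\det J$. The standard Finsler index-form argument (second variation in the Chern connection, with Jacobi fields taken with respect to $g_{\dot\eta}$) then produces a longer nearby curve from $\Sigma$ to $\eta(b)$. This contradicts the maximality of $\eta$, so $b\le\frac{n-1}{r\beta}$.

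\textbf{Main obstacle.} The delicate step is the ODE bootstrap that keeps $|\theta|$ large while absorbing the $\delta$-error with exactly the stated constants. The focal-point/index-form step is standard in the smooth Finsler setting \cite{MinguzziRaychaudhuri,LuMinguzziOhtaSingularity}, and I would cite it.
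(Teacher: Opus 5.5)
Your orthogonality step (first variation with fixed endpoint, leaving the boundary term $g_{\dot\eta(0)}(\dot\eta(0),w)$) is sound. It is a legitimate alternative to the paper's argument via the level set $d(\cdot,z)=\eps$ in a convex neighborhood. Your focal-point step is the same as the paper's.

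\textbf{The gap.} It lies in the step carrying the quantitative content. You never prove that $\theta$ blows up before time $b$ under $\delta\le 3\beta(1-r)/b$, and neither route you sketch can do it. Write $c=1/(n-1)$ and assume $b>1/(cr\beta)$. Then $\delta<3cr(1-r)\beta^2<c\beta^2$, so your bootstrap $\theta<-\beta$ is valid. However, $\psi'\le \delta/\beta^2-c$ for $\psi=-1/\theta$ only gives blow-up by time $\beta/(c\beta^2-\delta)$. The hypotheses guarantee this is $<b$ only when $r\le 1/3$. For instance, $c=\beta=1$, $r=1/2$, $b=9/4$, $\delta=2/3$ satisfy every assumption, yet this bound is $3>b$, while the true blow-up time is about $1.40$. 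The regime $r$ close to $1$ is exactly the one used in Theorem~\ref{Theorem: unweighted Hawking 1}, where $r\to1$. Your second idea goes the wrong way: integrating $\theta'\le\delta$ gives only $\theta\le-\beta+\delta b\le-(3r-2)\beta$, not $-r\beta$. Moreover, any lower bound on $|\theta|$ weaker than $\beta$ only worsens the $\psi$-estimate.

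\textbf{Missing ingredient: the exact Riccati comparison.} Compare $\theta$ with the solution of $\phi'=\delta-c\phi^2$, $\phi(0)=-\beta$. Equivalently, separate variables to get $t\le\int_\beta^\infty \diff u/(cu^2-\delta)$. The blow-up time is then exactly $\frac{1}{2c\beta}f(s)$, where $f(s)=\frac1s\log\frac{1+s}{1-s}$ and $s=\sqrt{\delta/c}/\beta<\sqrt{3r(1-r)}$. Two further facts are needed: that $f$ is increasing, and the elementary but non-trivial inequality $\frac r2 f\bigl(\sqrt{3r(1-r)}\bigr)<1$ on $(0,1)$ (Remark~\ref{Remark: monotonicities}). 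Together they give blow-up before $1/(cr\beta)<b$.

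This inequality is almost sharp near $r=1$: its left-hand side equals $1-\frac{(1-r)^2}{5}+O((1-r)^3)$. In contrast, replacing $\theta^2$ by $\beta^2$ overestimates the blow-up time by roughly $\frac{2s^2}{3c\beta}$, with $s^2\sim 3(1-r)$, a first-order loss. That is why no crude bootstrap can absorb the constant $3$.
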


The proof given below does not use the notion of mean curvature vector field, nor a second variation formula (compare with \cite[Lem.\ 4.10]{Graf-SingularityC1}), as these notions are less investigated in the Finslerian framework (see, e.g., \cite{Shlec}). 
It only uses the Raychaudhuri equation.

\begin{proof}
First, we prove the orthogonality. 
Suppose not and let $z=\eta(\eps)$, $\eps>0$, with $\eps$ so small that $z$ is included in a convex open neighborhood $U$ of $x=\eta(0)$.
In $U$ we consider the locus $d(\cdot,z)=\eps$ including $x$.
This is a local regular hypersurface tangent to $\ker g_{\dot{\eta}}(\dot{\eta}(0),\cdot)$; see \cite[Eq.\ (13)]{MinguzziConvexneighborhoods}.
If $T_x\Sigma=\ker g_V(V(x),\cdot)$ does not coincide with $\ker g_{\dot{\eta}}(\dot{\eta}(0),\cdot)$, then $\Sigma$ enters the region $d(\cdot,z)>\eps$, thus it is possible to connect $\Sigma$ to $z$ with a segment of longer length violating the maximality of $\eta$.
Hence, $\dot{\eta}(0) \perp T_x\Sigma$ holds.
In particular, $\dot{\eta}(0)=V(x)$.

Next we prove the inequality $b \le\frac{n-1}{r\beta}$. 
Put $c:=1/(n-1)$ for brevity and later convenience. 
From the Raychaudhuri equation \eqref{eq:Ray} and the hypothesis $\Ric(\dot{\eta}) \ge -\delta$, we get the inequality 
\begin{equation}\label{eq:Ray-ineq}
\theta'(t) \leq \delta - c\theta(t)^2.
\end{equation}
The solution $\phi(t)$ to the equality case with initial condition $\phi(0)=-\beta$, $\theta(0)<\phi(0)$, is obtained by integrating and using partial fraction decomposition:
\[
\frac{1}{2c\sqrt{\delta/c}}\log \left|\frac{\sqrt{\delta/c}+\phi(t)}{\sqrt{\delta/c}-\phi(t)}\right|=t+C .
\]
Solving  the logarithmic form for \(\phi\) gives the explicit solution
\[
\phi (t)=\sqrt{\delta/c} \cdot \frac{Ae^{2t\sqrt{c\delta}}-1}{Ae^{2t\sqrt{c\delta}}+1},
\]
where $A$ is a constant determined by the initial conditions. 
Note that 
\[
0 > -\beta\sqrt{c/\delta} =\phi(0)\sqrt{c/\delta} =\frac{A-1}{A+1}, 
\]
thus $\vert A\vert<1$ (it will be useful to invert it as $A=\frac{1-\beta\sqrt{c/\delta}}{1+\beta\sqrt{c/\delta}}$). 
Note that we have blow up if $-1<A<0$ which occurs if and only if $ \beta\sqrt{c/\delta}>1$ (blow up is at time $\hat t=- \frac{1}{2\sqrt{c\delta}} \log \vert A\vert$). Since the numerator of the expression for $\phi(t)$ is negative, the blow up is to $-\infty$.
This behavior can be qualitatively understood from the inequality $\theta' \leq c((\sqrt{\delta/c})^2 - \theta^2)$ above, because $\theta<-\sqrt{\delta/c}$ makes $\theta$ keep getting smaller.

Suppose the claim is false, so $b>\frac{1}{cr\beta}$, then, using the hypothesis $\delta \leq \frac{3\beta}{b}(1-r)$ and $r\in (0,1)$,
\[
\delta \le \frac{3\beta}{b} (1-r) < 3c\beta^2 r(1-r) < c\beta^2 \,\Rightarrow\, \sqrt{\delta/c} < \beta ,
\]
so we are in the blow-up case.
A comparison argument with the solution \(\phi(t)\) shows  that \(\theta(t)\leq \phi(t)\), and so \(\theta\) blows up to \(-\infty\) as well at a finite time $T>0$ satisfying
\begin{align*}
T &\le \hat t 
=\frac{1}{2\sqrt{c\delta}} \log \left\vert \frac{1+\beta\sqrt{c/\delta}}{1-\beta\sqrt{c/\delta}}\right\vert 
< \frac{b r}{2} \frac{\beta}{\sqrt{\delta/c}} \log \frac{1+\sqrt{\delta/c}/\beta}{1-\sqrt{\delta/c}/\beta}\\
&=\frac{br}{2} f\left(\frac{\sqrt{\delta/c}}{\beta}\right),
\end{align*}
where $f(s):=\frac{1}{s} \log \frac{1+s}{1-s}$ for $s \in (0,1)$ and we used $b>\frac{1}{cr\beta}$ again.

Set $s = \sqrt{\delta/c}/\beta$. 
Then the hypotheses \(\delta \leq \frac{3\beta}{b}(1-r)\) and \(b > \frac{1}{cr\beta}\) imply 
\[
s \le \sqrt{\frac{3(1-r)}{c\beta b}} \le \sqrt{3r(1-r)}
\]
and since $f$ is increasing on $[0,1)$ (see Remark \ref{Remark: monotonicities}\ref{mono1} below)
\[
T<b \biggl[\frac{r}{2} f\bigl(\sqrt{3r(1-r)}\bigr)\biggr].
\]
We claim that the term in square brackets provides an increasing function in $r$ and is bounded by 1 over $(0,1]$, and argue this in Remark \ref{Remark: monotonicities}\ref{mono2} below. 
We conclude that $T<b$ which implies that a focal point exists before $\eta(b)$, contradicting the maximality of $\eta$ in $[0,b]$ (see \cite[Prop.\ 5.1]{MinguzziRaychaudhuri}).
Hence, $b\le \frac{1}{cr\beta}= \frac{n-1}{r\beta}$.
\end{proof}

The proof of the previous result can be complemented by the following observation.

\begin{Remark}
\label{Remark: monotonicities}
Let us address some of the claims concerning monotonicity that appear in the proof of Proposition \ref{Prop: Hawking quantitative focusing} in more detail.
\begin{enumerate}
\item\label{mono1}
We want to show that \( f'(s) > 0 \) for \( s \in (0,1) \), where
\begin{equation}\label{eq:f'}
f'(s) = \frac{2s - (1-s^2) \log \left( \frac{1+s}{1-s} \right)}{s^2(1-s^2)}.
\end{equation}
The denominator \( s^2(1-s^2) > 0 \) on \( (0,1) \), so the sign of \( f'(s) \) is determined by the numerator:
\[
N(s) = 2s - (1-s^2) \log \left( \frac{1+s}{1-s} \right).
\]
Using the series expansion for \( \log \left( \frac{1+s}{1-s} \right) = 2 \sum_{n=0}^\infty \frac{s^{2n+1}}{2n+1} \), we compute:
\[
\begin{aligned}
N(s) &= 2s - (1-s^2) \cdot 2 \sum_{n=0}^\infty \frac{s^{2n+1}}{2n+1} \\
&= 2s - 2 \left( s - 2 \sum_{n=1}^\infty \frac{s^{2n+1}}{(2n+1)(2n-1)} \right) \\
&= 4 \sum_{n=1}^\infty \frac{s^{2n+1}}{(2n+1)(2n-1)}.
\end{aligned}
\]
For \( s \in (0,1) \), every term in the series is positive. Thus, \( N(s) > 0 \) for \( 0 < s < 1 \), and \( N(0) = 0 \). Therefore, \( f'(s) > 0 \) on \( (0,1) \), meaning that \( f(s) \) is  increasing.
\item\label{mono2}
We want to argue that $r \mapsto (r/2)f(\sqrt{3r(1-r)})$ is monotonically increasing and bounded above on $(0,1]$.
To this end, we set $s = s(r):=\sqrt{3r(1-r)}$ and $g(r):=(r/2)f(s(r))$. 
Direct differentiation gives, with the help of \eqref{eq:f'},
\[
g'(r)
=
\frac{(1-s^2)f(s)-2(2r-1)}
     {4(1-r)(1-s^2)}.
\]
For \(0<r\leq 1/2\), this is positive. 
For
\(1/2<r<1\), since
\(2r-1=\sqrt{1-\frac43s^2}\), it suffices to prove
\[
(1-s^2)f(s)>2\sqrt{1-\frac43s^2},
\qquad 0<s<\frac{\sqrt{3}}{2}.
\]
Define
\[
H(s):=
\frac{s^2f(s)^2}{4}
-\frac{s^2(3-4s^2)}{3(1-s^2)^2}.
\]
Then
\[
H(0)=H'(0)=0,
\qquad
\frac{\diff}{\diff s}\bigl((1-s^2)H'(s)\bigr)
=
\frac{16s^4}{3(1-s^2)^3}>0.
\]
Consequently, \(H(s)>0\), which yields the required inequality.
Thus \(g'(r)>0\) on \((0,1)\). Since
\(\lim_{r\uparrow1}g(r)=1\), we conclude that
\[
\frac{r}{2}f\!\left(\sqrt{3r(1-r)}\right)<1
\qquad\text{for all }0<r<1.
\]
\end{enumerate}
\end{Remark}

We are in a position to prove the first version of the Hawking singularity theorem.
Given a spacelike hypersurface $\Sigma \subseteq M$, we set $d_\Sigma(y):=\sup_{x \in \Sigma}d(x,y)$ for $y \in M$ (note that $d_\Sigma(y)>0$ if and only if $y \in I^+(\Sigma) =\bigcup_{x \in \Sigma} I^+(x)$). 

\begin{Theorem}[$C^{(1,3)}$ Hawking singularity theorem, version 1]
\label{Theorem: unweighted Hawking 1}
Let $(M,L)$ be a globally hyperbolic $C^{(1,3)}$ Finsler spacetime satisfying the distributional strong energy condition with a $C^2$ spacelike Cauchy hypersurface $\Sigma \subseteq M$ that is $\beta$-contracting with $\beta > 0$. 
Then we have $\sup_M d_\Sigma \leq (n-1)/\beta$. 
In particular, $(M,L)$ is future timelike geodesically incomplete.
\end{Theorem}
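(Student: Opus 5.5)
We argue by contradiction, following the strategy of Graf's $C^1$ Lorentzian proof, with the approximations $\widecheck L_\eps$ and the quantitative focusing result, Proposition \ref{Prop: Hawking quantitative focusing}. Suppose there is $y \in I^+(\Sigma)$ with $d_\Sigma(y) > (n-1)/\beta$. Fix $r\in(0,1)$ close to $1$ and $b$ with $(n-1)/(r\beta) < b < d_\Sigma(y)$.

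\emph{Step 1: approximate maximizers.} By Corollary \ref{Corollary: hypersurfacemaxaslimit} there is a timelike $L$-maximizer $\eta$ from $\Sigma$ to $y$. It is the $C^2$-limit of $\widecheck L_{\eps_m}$-geodesics $\eta_m$ from $\Sigma$ to $y$, and these can be taken $\widecheck L_{\eps_m}$-maximizing from $\Sigma$. Here we use that $\widecheck L_\eps \prec L$, so $(M,\widecheck L_\eps)$ is globally hyperbolic and $\Sigma$ is still an $\widecheck L_\eps$-Cauchy hypersurface. Moreover $\Sigma$ is $\widecheck L_\eps$-spacelike for small $\eps$ on compacts, since the $\widecheck L_\eps$-cones are narrower. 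By the convergence argument in that theorem, $d^{\widecheck L_{\eps_m}}_\Sigma(y) \to d_\Sigma(y)$. Reparametrize $\eta_m$ to $\widecheck L_{\eps_m}$-unit speed. For large $m$, $\eta_m$ is then defined on $[0,b_m]$ with $b_m > b$. Restricting to $[0,b]$ gives a unit-speed geodesic maximizing the distance from $\Sigma$. The velocities $\dot\eta_m$ all lie in a fixed compact set $\mathcal K\subseteq\Omega^T$. This holds because $\dot\eta_m \to \dot\eta$ uniformly, and $\dot\eta$ is timelike and future on the compact interval.

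\emph{Step 2: uniform curvature and expansion bounds.} Choose $\delta := \frac{3\beta}{b}(1-r)$, possibly shrunk. By Proposition \ref{Prop: ApproxtimelikeRiccibound} applied to $\mathcal K$, for $m$ large we get $\Ric[\widecheck L_{\eps_m}](\dot\eta_m)\ge-\delta$ along $\eta_m$, since $2\widecheck L_{\eps_m}(\dot\eta_m)=-1$. Next we need $\theta_m(0) < -\beta'$ for the $\widecheck L_{\eps_m}$-expansion of $\Sigma$. Here $\beta'$ is slightly less than $\beta$, with $r$ and $b$ chosen so that still $b > (n-1)/(r\beta')$. The expansion involves the unit normal $V_\eps$ and the Chern connection coefficients $\Gamma^i_{jk}$ along $\Sigma$. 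These are built from first derivatives of $L$: horizontal $x$-derivatives up to first order together with vertical derivatives. Since $\widecheck L_\eps\to L$ in $C^{(1,2)}_{\loc}$ (Propositions \ref{prop:PropertiesApproximation} and \ref{prop:ChruscielGrantApprox}), the normals and connection coefficients converge uniformly on compact parts of $\Sigma$. Hence $\theta_\eps\to\theta$ uniformly near the compact set of footpoints $\eta_m(0)$, which converge to $\eta(0)$. By continuity and the strict inequality $\theta<-\beta$, we get $\theta_m(0)<-\beta'$ at those footpoints.

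\emph{Step 3: contradiction and incompleteness.} Proposition \ref{Prop: Hawking quantitative focusing} for the smooth structure $\widecheck L_{\eps_m}$ on $[0,b]$ now gives $b\le (n-1)/(r\beta')$, a contradiction. Hence $d_\Sigma\le (n-1)/(r\beta')$ for all admissible $r,\beta'$. Letting $r\uparrow1$ and $\beta'\uparrow\beta$ yields $\sup_M d_\Sigma\le (n-1)/\beta$. For incompleteness, suppose every future unit-speed timelike geodesic normal to $\Sigma$ were defined on $[0,\infty)$. Take the normal geodesic from any $x\in\Sigma$ and the point $z$ it reaches at time $t>(n-1)/\beta$. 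Its length gives $d_\Sigma(z)\ge t$, a contradiction. So some future timelike geodesic is incomplete.

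\emph{Main obstacle.} The delicate step is Step 2's uniform control of the expansion, together with ensuring the $\eta_m$ are genuinely $\widecheck L_{\eps_m}$-maximizers from $\Sigma$ on all of $[0,b]$. One must check that the $\widecheck L_\eps$-unit normal field and $\theta_\eps$ on $\Sigma$ depend only on $C^{(1,2)}$ data, so that the $C^{(1,2)}_{\loc}$ convergence suffices. The lengths $b_m$ must also exceed $b$, which needs the lower semicontinuity $\liminf d^{\widecheck L_{\eps_m}}_\Sigma(y)\ge d_\Sigma(y)$. I would first try to obtain the latter from the inclusion of cones plus the $\sqrt{2C\delta_m}$ estimate used in the theorem on maximizers as limits of $\widecheck L_\eps$-maximizers.
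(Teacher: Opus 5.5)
Your proposal is correct and is essentially the paper's proof. Both use the $C^2$-convergent $\widecheck L_{\eps_m}$-maximizers from Corollary~\ref{Corollary: hypersurfacemaxaslimit}, locally uniform convergence of the expansions on $\Sigma$, the Ricci bound of Proposition~\ref{Prop: ApproxtimelikeRiccibound} on a compact set of velocities, and Proposition~\ref{Prop: Hawking quantitative focusing}. The paper works on the full intervals $[0,l_m]$ with $\delta_m=3\beta(1-r)/l_m$ instead of truncating at a fixed $b$, and needs no $\beta'<\beta$ because $\theta(\eta(0))<-\beta$ is already strict.

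One remark on your closing paragraph. $l_m\to d_\Sigma(y)$ follows directly from the $C^2$-convergence of the $\eta_m$ to the $L$-maximizer. The $\sqrt{2C\delta_m}$ estimate would not apply as stated, since the cones of $\widecheck L_\eps$ are narrower than those of $L$; that argument is tailored to the wider $\widehat L_\eps$.
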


\begin{proof}
    Suppose by contradiction that $d_\Sigma(y) > (n-1)/\beta$ for some $y \in I^+(\Sigma)$. 
    It follows from Corollary \ref{Corollary: hypersurfacemaxaslimit} that we can find a maximizing $L$-geodesic $\eta:[0,l] \to M$ from $\Sigma$ to $y$ which is the $C^2$ limit of $\widecheck{L}_{\eps_m}$-maximizers $\eta_m:[0,l_m] \to M$ from $\Sigma$ to $y$, where $l:=\mathbf L(\eta)=d_\Sigma(y)$ and we put $l_m := \mathbf L_{\eps_m}(\eta_m)$ for their $\widecheck L_{\varepsilon_m}$-Lorentz--Finsler arclength. 
    The set $\mathcal K:=\bigcup_m \mathrm{im}(\dot{\eta}_m) \cup \mathrm{im}(\dot{\eta})$ is compact in $TM$. 

    Denote by $\theta_m$ the expansion of $\Sigma$ with respect to $\widecheck L_{\eps_m}$ (evaluated at $t=0$ along the geodesic orthogonal to $\Sigma$, to be precise). 
    Then $\theta_m$ converges to the expansion $\theta$ of $L$ locally uniformly on $\Sigma$. 
    Hence, on a neighborhood of $\eta(0)$ in $\Sigma$, we have $\theta_m < -\beta < 0$ for large $m$. 
    Take $0 < r < 1$ such that $d_\Sigma(y) > (n-1)/r\beta$, and set $\delta_m:=\frac{3\beta}{l_m}(1-r)$. 
    By $\delta_m \to \delta :=\frac{3\beta}{l}(1-r)$ and Proposition \ref{Prop: ApproxtimelikeRiccibound}, for sufficiently large $m$ (depending only on $\mathcal K,r$), all $\widecheck L_{\eps_m}$-unit future timelike vectors $v$ in $\mathcal K$ satisfy $\Ric[\widecheck{L}_{\eps_m}](v) \geq -\delta_m$, so that $\Ric[\widecheck{L}_{\eps_m}](\dot{\eta}_m) \geq -\delta_m$.
    Hence we deduce from Proposition \ref{Prop: Hawking quantitative focusing} that $l_m \leq \frac{n-1}{r\beta}$. 
    Letting $m \to \infty$ and then $r \to 1$ shows $l \leq (n-1)/\beta$, contradicting the assumption.
\end{proof}

The $C^{(1,3)}$ Hawking theorem given above admits a natural version for past incompleteness, which we discuss now.
In complete analogy, the remaining unweighted and weighted theorems all admit such versions as well, we describe this briefly in Remark \ref{remark: past versions of singularity theorems} further below.

\begin{Corollary}[Past version of the $C^{(1,3)}$ Hawking singularity theorem, version 1]
\label{cr:past-sing}
Let $(M,L)$ be a globally hyperbolic $C^{(1,3)}$ Finsler spacetime satisfying the distributional strong energy condition.
Suppose that there exists a $C^2$ spacelike Cauchy hypersurface $\Sigma \subseteq M$ satisfying $\theta > \beta > 0$. 
Then 
\begin{equation*}
    \sup_{y \in \Sigma} d(\cdot,y) \leq \frac{n-1}{\beta}.
\end{equation*}
In particular, $(M,L)$ is past timelike geodesically incomplete.
\end{Corollary}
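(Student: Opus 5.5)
We will reduce the statement to Theorem \ref{Theorem: unweighted Hawking 1} by reversing time. Define the reversed structure $\bar L(v):=L(-v)$ on $TM$. It is again a $C^{(1,3)}$ Lorentz--Finsler structure: the regularity is preserved under the linear fiber map $v\mapsto -v$, positive $2$-homogeneity is clear, and the vertical Hessian satisfies $\bar g_{ij}(v)=g_{ij}(-v)$, so it has Lorentzian signature. We time-orient $(M,\bar L)$ by $-X$. Then future causal curves for $\bar L$ are exactly the reversed past causal curves for $L$. Consequently $\bar d(x,y)=d(y,x)$, and global hyperbolicity and the Cauchy property of $\Sigma$ carry over, since causal diamonds and inextendible causal curves are merely reparametrized backwards. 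Spacelikeness of $\Sigma$ is preserved because $\overline{\Omega^T_x}[\bar L]=-\overline{\Omega^T_x}[L]$.

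Next we check the curvature. For $\bar L$ the spray coefficients satisfy $\bar G^i(x,v)=G^i(x,-v)$ by \eqref{def:SprayCoeffImproved}, which is even in the $A_l$ and $g^{il}$ factors after the sign changes. Hence $\bar N^i_j(x,v)=-N^i_j(x,-v)$. Plugging into \eqref{eq:DistributionalCurvature} gives $\bar R^i_j(v)=R^i_j(-v)$ in the distributional sense: the push-forward of test densities under $v\mapsto -v$ matches the terms, with the sign in $\partial_v$ compensating those of $N$ and $v^k$. Since $-\Omega^T[\bar L]=\Omega^T[L]$ and the pullback of nonnegative densities supported in $\Omega^T[\bar L]$ gives nonnegative densities supported in $\Omega^T[L]$, the distributional strong energy condition transfers to $\bar L$. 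This distributional bookkeeping is the only delicate step. I would do it chartwise on product test functions $\varphi\otimes\psi$, replacing $\psi$ by $\psi(-\cdot)$, and check each of the four terms.

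Then we handle the expansion. The $\bar L$-future unit normal is $\bar V=-V$, and along its integral curves $D^{\bar V}_w\bar V$ computed with $\bar\Gamma$ equals $-D^V_wV$, because $\bar\Gamma^i_{jk}(v)=\Gamma^i_{jk}(-v)$. Therefore $\bar\theta=-\theta<-\beta$, so $\Sigma$ is $\beta$-contracting for $\bar L$. Theorem \ref{Theorem: unweighted Hawking 1} now gives $\sup_M\bar d_\Sigma\le (n-1)/\beta$, that is, $\sup_{y\in\Sigma}d(\cdot,y)\le(n-1)/\beta$.

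Finally, past incompleteness follows. A past-complete timelike geodesic for $L$ would yield a future-complete $\bar L$-geodesic. Starting from $\Sigma$, it would produce points with arbitrarily large $\bar d_\Sigma$, because in a globally hyperbolic spacetime a complete timelike geodesic eventually leaves any bounded-distance region. This would contradict the bound, exactly as in the final statement of Theorem \ref{Theorem: unweighted Hawking 1}.
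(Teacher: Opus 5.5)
Your proposal is correct and follows the paper's proof: apply Theorem~\ref{Theorem: unweighted Hawking 1} to the reversed structure $\bar L(v)=L(-v)$ with the reversed time orientation, using that its distributional Ricci curvature is the pullback of $\Ric$ under $v\mapsto -v$ and that $\Sigma$ becomes $\beta$-contracting. Your explicit checks of $\bar G^i$, $\bar N^i_j$, $\bar\Gamma^i_{jk}$ and $\bar V=-V$ fill in what the paper calls easily concluded. For the final incompleteness claim, you do not need global hyperbolicity: a unit-speed future $\bar L$-timelike geodesic $\eta$ from $\Sigma$ directly satisfies $\bar d_\Sigma(\eta(t))\ge t$, so its maximal domain is bounded by $(n-1)/\beta$.
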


\begin{proof}
    The proof follows by applying Theorem \ref{Theorem: unweighted Hawking 1} to the reverse Lorentz--Finsler structure $\overleftarrow{L}(v):=L(-v)$.
    Precisely, we choose the time orientation such that $v$ is $\overleftarrow{L}$-future timelike if and only if $-v$ is $L$-future timelike, and then the $\overleftarrow{L}$-Ricci curvature $\overleftarrow{\Ric}$ satisfies $\overleftarrow{\Ric} =\iota^*(\Ric)$ in $\mathcal{D}^{'(1)}(TM \setminus \mathbf 0)$, where $\iota:TM \setminus \mathbf 0 \to TM \setminus \mathbf 0$ is the fiber reflection $\iota(v):=-v$ and $\iota^*$ denotes the pullback under $\iota$. Due to this, it is easily concluded that $\overleftarrow{L}$ continues to satisfy the distributional strong energy condition and $\Sigma$ is $\beta$-contracting with respect to $\overleftarrow{L}$, yielding
    \[
    \sup_{x \in \Sigma} \overleftarrow{d}(x,\cdot)
    =\sup_{y \in \Sigma}d(\cdot,y)
    \le \frac{n-1}{\beta},
    \]
    where $\overleftarrow{d}(x,y)$ $(=d(y,x))$ denotes the Lorentz--Finsler distance with respect to $\overleftarrow{L}$.
\end{proof}

The second version deals with the case of $\beta=0$, upon replacing the Cauchy hypersurface with a compact spacelike hypersurface.

\begin{Theorem}[$C^{(1,3)}$ Hawking singularity theorem, version 2]
\label{Theorem: Hawking version 2}
Let $(M,L)$ be a $C^{(1,3)}$ Finsler spacetime satisfying the distributional strong energy condition, $\Sigma \subseteq M$ a $C^2$ compact spacelike hypersurface that is contracting. 
Then $(M,L)$ is future timelike geodesically incomplete.
\end{Theorem}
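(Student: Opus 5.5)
We argue by contradiction and assume $(M,L)$ is future timelike geodesically complete. The plan is to reduce to the setting of Theorem \ref{Theorem: unweighted Hawking 1}. In the smooth Lorentzian case one shows that $D(\Sigma)$ is globally hyperbolic, or else uses the Cauchy horizon argument. Here we instead mimic Graf's $C^1$ approach.

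\textbf{Step 1: a uniform $\beta$ and a bound on maximizer lengths.} Since $\Sigma$ is compact, $C^2$ and contracting, and $\theta$ is continuous on $\Sigma$, there is $\beta>0$ with $\theta<-\beta$ on all of $\Sigma$. By the convergence of expansions used in the proof of Theorem \ref{Theorem: unweighted Hawking 1}, the $\widecheck L_\eps$-expansions $\theta_\eps$ satisfy $\theta_\eps<-\beta$ on $\Sigma$ for small $\eps$.

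Let $V$ be the $L$-unit normal field. Its initial data form a compact set $\mathcal K_0\subseteq\Omega^T$. By completeness, all $L$-geodesics from $\mathcal K_0$ exist on $[0,\infty)$. Fix $T>(n-1)/\beta$. By Proposition \ref{Prop: Existence time for Leps geodesics} and Corollary \ref{cr:Fcpt}, the $\widecheck L_\eps$-normal geodesics with data near $\mathcal K_0$ exist on $[0,T]$ and their velocities lie in a fixed compact $\mathcal K\subseteq\Omega^T$. The $\widecheck L_\eps$-unit normals converge to $V$, so these data lie in a compact neighbourhood of $\mathcal K_0$.

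Proposition \ref{Prop: ApproxtimelikeRiccibound} then gives $\Ric[\widecheck L_\eps]\ge-\delta$ on unit vectors of $\mathcal K$, for any $\delta$ and small $\eps$. We choose $r$ with $(n-1)/(r\beta)<T$ and $\delta=3\beta(1-r)/T$. Arguing exactly as in Proposition \ref{Prop: Hawking quantitative focusing}, comparison with the Riccati solution shows that every $\widecheck L_\eps$-normal geodesic has a focal point before time $b_0:=(n-1)/(r\beta)<T$. Beyond $b_0$ it therefore stops being $\widecheck L_\eps$-maximizing from $\Sigma$.

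\textbf{Step 2: contradiction via the smooth structure $\widecheck L_\eps$.} For fixed small $\eps$, $\widecheck L_\eps$ is smooth and $\widecheck L_\eps\prec L$. We run the classical smooth argument for $(M,\widecheck L_\eps)$ and $\Sigma$.

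The future Cauchy development $D^+_\eps(\Sigma)$ is contained in $\exp(\{tV_\eps: t\le b_0\})$, which is compact. Hence the future Cauchy horizon $H^+_\eps(\Sigma)$ is compact.

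If $H^+_\eps(\Sigma)=\varnothing$, then $\Sigma$ is a Cauchy hypersurface of $D_\eps(\Sigma)=M$ up to the past. Every point then lies within $\widecheck L_\eps$-distance $b_0$ of $\Sigma$. On the other hand, the $\widecheck L_\eps$-geodesics with data in $\mathcal K$ extend to length $T>b_0$, and their endpoints lie in $D^+_\eps$ at distance exceeding $b_0$. This is a contradiction.

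If $H^+_\eps(\Sigma)\ne\varnothing$, it is compact and generated by past-inextendible $\widecheck L_\eps$-null geodesics. The standard argument (Hawking--Ellis, in the Finsler version of \cite{LuMinguzziOhtaSingularity}) yields a contradiction using strong causality near the compact horizon.

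The delicate point is that these null generators must be controlled for $\widecheck L_\eps$ and not for $L$. Because $\widecheck L_\eps\prec L$, causality conditions for $L$ transfer to $\widecheck L_\eps$. Alternatively, one can avoid horizons entirely. In that route we take $\eta$ an $L$-geodesic ray, use that $d_\Sigma$ along it exceeds $b_0$ for $L$, and use that $\widecheck L_\eps$-maximizers to $\eta(T)$ from the compact $\Sigma$ exist inside the compact set $J^+_\eps(\Sigma)\cap J^-_\eps(\eta(T))$.

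\textbf{Main obstacle.} The main obstacle is the global causal input needed to guarantee existence of maximizers from $\Sigma$ without assuming global hyperbolicity. We would try first to show that $J^+_\eps(\Sigma)\cap J^-_\eps(y)$ is compact for $y$ in $D^+_\eps(\Sigma)$. This is standard smooth causality theory for the smooth structure $\widecheck L_\eps$. We then take $y=\eta_\eps(t)$ with $t\in(b_0,T)$. Its $\widecheck L_\eps$-distance to $\Sigma$ is at least $t$ when $y\in D^+_\eps$, contradicting the focal bound. If instead $y\notin D^+_\eps$, we are in the horizon case above.
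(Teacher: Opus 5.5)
Your Step 1 matches the paper. You obtain a uniform $\beta$ by compactness, use $\theta_\eps\to\theta$ on $\Sigma$ and a compact set of $\widecheck L_\eps$-unit normals, get existence on $[0,T]$ from Proposition~\ref{Prop: Existence time for Leps geodesics} and Corollary~\ref{cr:Fcpt}, and apply the Ricci approximation. Proposition~\ref{Prop: Hawking quantitative focusing}, applied on each $[0,b']$ with $b'\le T$, then shows that no $\widecheck L_\eps$-normal geodesic maximizes the distance from $\Sigma$ beyond $b_0$.

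The gap is in Step 2, which is exactly the global causal input you flag.

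\textbf{Achronality of $\Sigma$.} You never make $\Sigma$ achronal. Version 2 assumes no causality condition at all, and a compact spacelike hypersurface may be met repeatedly by a closed timelike curve. So the following are not available:
\begin{itemize}
\item $D^+_\eps(\Sigma)$ and $H^+_\eps(\Sigma)$ in their usual sense;
\item global hyperbolicity of $\mathrm{int}\,D_\eps(\Sigma)$, which you need for maximizers from $\Sigma$ to points of $D^+_\eps(\Sigma)$;
\item the implication $H^+_\eps(\Sigma)=\varnothing\Rightarrow J^+_\eps(\Sigma)\subseteq D^+_\eps(\Sigma)$.
\end{itemize}
The paper first passes to a covering on which $\Sigma$ is an acausal partial Cauchy hypersurface. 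This is harmless: completeness lifts, geodesics project, and $L$-acausality passes to $\widecheck L_\eps\prec L$. After this, your treatment of the case $H^+_\eps(\Sigma)=\varnothing$ is fine.

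\textbf{The case $H^+_\eps(\Sigma)\neq\varnothing$.} This case is not actually argued.
\begin{itemize}
\item There is no causality condition on $L$ to transfer to $\widecheck L_\eps$.
\item Strong causality near a compact Cauchy horizon cannot be taken for granted; in Misner or Taub--NUT space the horizon is ruled by closed null geodesics.
\item You cannot quote the smooth Hawking theorem for $\widecheck L_\eps$ as a black box, since $\widecheck L_\eps$ only satisfies $\Ric\ge-\delta$ on a compact set of directions.
\end{itemize}

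\textbf{The missing ingredient.} The paper uses a curvature-free result: a compact acausal partial Cauchy hypersurface of a smooth Finsler spacetime is not future causally araying. That is, it admits a future timelike $\Sigma$-ray \cite[Thm.~8.11]{LuMinguzziOhtaSingularity} (cf.\ \cite[Thm.~6.48]{Minguzzicausality}). Apply it to $\widecheck L_\eps$ on the covering. It yields a $\widecheck L_\eps$-normal geodesic with initial velocity in your compact set, hence defined on $[0,T]$ and maximizing there, which contradicts Step 1. This covers both horizon cases at once, so the dichotomy is unnecessary.

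Your alternative route fails as stated. No ray is available for the nonsmooth $L$. Also, $J^+_\eps(\Sigma)\cap J^-_\eps(y)$ need not be compact for $y\notin D^+_\eps(\Sigma)$ without global hyperbolicity.
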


\begin{proof}
    The proof is similar in spirit to the first version; we give a sketch for completeness. 
    Assume by contradiction that $(M,L)$ is future timelike geodesically complete.
    As in the proof of the first version, we denote by $\theta$ and $\theta_\eps$ the expansions of $\Sigma$ with respect to $L$ and $\widecheck{L}_\eps$, respectively, as functions on $\Sigma$. 
    We remark that $\Sigma$ is also $\widecheck{L}_\eps$-spacelike since $\widecheck{L}_\eps$ has narrower future timelike cones than $L$. 
    By the compactness of $\Sigma$, we find $\beta:=-\max_\Sigma \theta/2 >0$, so that $\theta_\varepsilon < -\beta < 0$ on $\Sigma$ for sufficiently small $\eps>0$. 
    
    Let
    \begin{equation*}
        \mathcal K:= \bigcup_{0 \leq \varepsilon \leq 1} \{ v \in TM \,:\, \pi(v) \in \Sigma,\, v \in \Omega^T[\widecheck{L}_\eps],\, v \perp T\Sigma\text{ for } \widecheck{L}_\eps,\, 2\widecheck{L}_\varepsilon(v) = -1\}.
    \end{equation*}
    This is compact in $TM$. 
    Hence there exists $\varepsilon_0 > 0$ such that the set $\mathcal{F}_{\mathcal K, b, \varepsilon_0} \subseteq TM$ (as in Corollary \ref{cr:Fcpt}) is (well-defined and) compact, where we set $b:=4(n-1)/\beta$, thanks to the assumed geodesic completeness and Proposition \ref{Prop: Existence time for Leps geodesics}. 
    Let $0 < \delta < 3\beta/2b$, then there is $\eps_1 \in (0,\eps_0)$ such that for every $\eps \in (0,\eps_1)$ and $v \in \mathcal{F}_{\mathcal K, b, \varepsilon_0}$ with $2\widecheck{L}_\varepsilon(v) = -1$, we have $\Ric[\widecheck{L}_\eps](v) \geq -\delta$. 
    Therefore, it follows from Proposition \ref{Prop: Hawking quantitative focusing} (with $r=1/2$) that every $\widecheck{L}_\eps$-geodesic starting $\widecheck L_\varepsilon$-orthogonally to $\Sigma$ will stop maximizing the $\widecheck L_\varepsilon$-distance before proper time $2(n-1)/\beta$.

    At this point, we may proceed as in the smooth proof \cite[Thm.\ 8.13]{LuMinguzziOhtaSingularity} (cf.\ \cite[Thm.\ 6.49]{Minguzzicausality}). 
    Up to going to a covering, we may assume that $\Sigma$ is acausal and a partial Cauchy hypersurface for $\widecheck{L}_\eps$ with $\eps \in (0,\eps_1)$.
    The above argument shows that $\Sigma$ is future causally araying with respect to $\widecheck L_\varepsilon$, a contradiction to \cite[Thm.\ 8.11]{LuMinguzziOhtaSingularity} (see \cite[Thm.\ 6.48]{Minguzzicausality} for a proof).
\end{proof}

In order to obtain sharp versions of the Hawking theorems for general timelike Ricci bounds $\Ric \geq K$ for $K \in \R$, we require the following refined version of the estimate given in Proposition \ref{Prop: Hawking quantitative focusing}.

\begin{Proposition}[Timelike quantitative focusing for general bounds]
\label{Prop: quantitative focusing for general bounds}
Let $(M,L)$ be a smooth Finsler spacetime, $\Sigma\subseteq M$ a $C^2$ spacelike hypersurface, and $\eta:[0,b]\to M$ a future unit speed timelike
geodesic which maximizes the distance from $\Sigma$.
Let $K\in\mathbb R$ and $\beta>0$, with $\beta>\sqrt{(n-1)|K|}$ when $K<0$.
If $\theta(0)<-\beta$ and $\Ric(\dot\eta)\geq K-\delta$, where
\[
    0<\delta
    \leq(1-r)\left(K+\frac{\beta^2}{n-1}\right)
    \quad\text{for some }0<r<1,
\]
then we have
\begin{equation*}
    b\leq\frac{T_{K,\beta}}{r},
    \quad \text{where }
    T_{K,\beta}:=
    \begin{cases}
        \dfrac{n-1}{\beta},
        & K=0,\\[1.2ex]
        \sqrt{\dfrac{n-1}{K}}\,
        \operatorname{arctan}\!
        \left(\dfrac{\sqrt{(n-1)K}}{\beta}\right),
        & K>0,\\[1.6ex]
        \sqrt{\dfrac{n-1}{|K|}}\,
        \operatorname{arctanh}\!
        \left(\dfrac{\sqrt{(n-1)|K|}}{\beta}\right),
        & K<0.
    \end{cases}
\end{equation*}
\end{Proposition}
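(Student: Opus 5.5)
The argument follows the proof of Proposition \ref{Prop: Hawking quantitative focusing}, with the Riccati comparison now carried out around the shifted constant $K-\delta$. Orthogonality of $\dot\eta(0)$ to $T_x\Sigma$ is proved exactly as there, so $\dot\eta$ is the integral curve of the unit normal geodesic field $V$. From the Raychaudhuri equation \eqref{eq:Ray} and $\sigma\ge0$ we get
\[
\theta'(t)\le -(K-\delta)-c\,\theta(t)^2,\qquad c:=\frac{1}{n-1},
\]
on the domain where $\theta$ is finite. Set $K':=K-\delta$. The hypothesis on $\delta$ gives
\[
K'+c\beta^2\;\ge\; r\Bigl(K+c\beta^2\Bigr)>0 ,
\]
where positivity uses $\beta^2>(n-1)|K|$ when $K<0$. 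In every case, therefore, the comparison Riccati equation $\phi'=-K'-c\phi^2$, $\phi(0)=-\beta$, lies in the blow-up regime.

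We solve it explicitly in three cases. For $K'>0$, $\phi$ reaches $-\infty$ at
\[
\hat t=\frac{1}{\sqrt{cK'}}\arctan\Bigl(\frac{\sqrt{K'/c}}{\beta}\Bigr).
\]
For $K'=0$ the blow-up time is $\hat t=1/(c\beta)$. For $K'<0$ the condition $K'+c\beta^2>0$ means $\beta>\sqrt{|K'|/c}$, and
\[
\hat t=\frac{1}{\sqrt{c|K'|}}\operatorname{arctanh}\Bigl(\frac{\sqrt{|K'|/c}}{\beta}\Bigr).
\]
In every case $\hat t=T_{K',\beta}$ in the notation of the statement. Since $\theta(0)<-\beta=\phi(0)$, standard ODE comparison gives $\theta\le\phi$ as long as both are finite. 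Hence $\theta$ blows down to $-\infty$ at some $T\le T_{K-\delta,\beta}$, producing a focal point of $\Sigma$ along $\eta$ at or before $T$. By \cite[Prop.\ 5.1]{MinguzziRaychaudhuri} and the maximality of $\eta$ on $[0,b]$, this forces $b\le T_{K-\delta,\beta}$.

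It remains to prove the elementary inequality $T_{K-\delta,\beta}\le T_{K,\beta}/r$. I expect this to be the main obstacle, since one formula must handle $K$ and $K-\delta$ possibly having different signs. The plan is to write the blow-up time in the unified form
\[
T_{\kappa,\beta}=\int_{-\infty}^{-\beta}\frac{\diff\theta}{-\kappa-c\theta^2}=\int_\beta^\infty\frac{\diff s}{cs^2+\kappa},
\]
valid whenever $\kappa+c\beta^2>0$. This form is monotone decreasing in $\kappa$ and requires no case distinction. For $s\ge\beta$ we want
\[
cs^2+K-\delta\;\ge\; r\,(cs^2+K).
\]
Rearranged, this reads $(1-r)(cs^2+K)\ge\delta$. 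Because $cs^2+K$ is increasing in $s$, it suffices to check it at $s=\beta$, where it is exactly the hypothesis $\delta\le(1-r)(K+c\beta^2)$. Integrating the reciprocal inequality over $[\beta,\infty)$ then gives $T_{K-\delta,\beta}\le T_{K,\beta}/r$, and combined with $b\le T_{K-\delta,\beta}$ this yields $b\le T_{K,\beta}/r$. The remaining check is that this integral representation agrees with the three closed-form expressions in the statement.
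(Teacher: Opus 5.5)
Your proposal is correct and is essentially the paper's argument. In both, the decisive step is the pointwise bound $K-\delta+cu^2\ge r(K+cu^2)>0$ for $u\ge\beta$, integrated over $[\beta,\infty)$. The only difference is that the paper skips your explicit three-case Riccati solutions and the comparison principle: it uses the absence of focal points on $(0,b)$ to integrate $z'/(K-\delta+cz^2)\ge1$ for $z=-\theta$ directly, which gives $b\le\int_\beta^\infty \diff u/(K-\delta+cu^2)$ at once, so the closed-form evaluation is needed only for $\int_\beta^\infty \diff u/(K+cu^2)$.
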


\begin{proof}
Orthogonality follows as in Proposition \ref{Prop: Hawking quantitative focusing}. 
Let $c:=1/(n-1)$.
Since $\eta$ maximizes the distance from $\Sigma$, there is no focal point to $\Sigma$ along $\eta$ in $(0,b)$. 
Thus the expansion is defined on $[0,b)$, where the Raychaudhuri equation \eqref{eq:Ray} and the assumed Ricci curvature bound give
\begin{equation*}
\theta'\leq -K+\delta-c\theta^2.
\end{equation*}
Setting \(z:=-\theta\), we obtain
\begin{equation}
\label{eq: diff ineq for z}
z'\geq K-\delta+cz^2,
\qquad z(0)>\beta.
\end{equation}
Our assumptions on $K$ and $\beta$ ensure that \(K+c\beta^2>0\). 
Moreover, for every $u\geq\beta$, the hypothesis $\delta\leq(1-r)(K+c\beta^2)$ implies
\begin{align*}
K-\delta+cu^2
&\geq K+cu^2-(1-r)(K+c\beta^2)\\
&\geq K+cu^2-(1-r)(K+cu^2)\\
&= r(K+cu^2)
\geq r(K+c\beta^2)>0.
\end{align*}
In particular, $z$ cannot decrease to \(\beta\), thereby $z'(t)>0$ and $z(t)>\beta$ throughout $[0,b)$.

Dividing the differential inequality \eqref{eq: diff ineq for z} by its positive right-hand side and integrating, we find, for every $0<t<b$,
\begin{align*}
t
&\leq \int_0^t
   \frac{z'(s)}{K-\delta+cz(s)^2}\,\diff s 
=\int_{z(0)}^{z(t)}
   \frac{\diff u}{K-\delta+cu^2}
\leq\int_\beta^\infty
   \frac{\diff u}{K-\delta+cu^2}\\
&\leq\frac{1}{r}\int_\beta^\infty
   \frac{\diff u}{K+cu^2}.
\end{align*}
The last inequality follows from
\(K-\delta+cu^2\geq r(K+cu^2)>0\) for \(u\geq\beta\), which was established before. The remaining integral is finite and equals
\begin{equation*}
\int_\beta^\infty\frac{\diff u}{K+cu^2}
=
\begin{cases}
\displaystyle
\frac{1}{c\beta},
& K=0,\\[1.2ex]
\displaystyle
\frac{1}{\sqrt{cK}}
\left(\frac{\pi}{2}
-\arctan\!\left(\beta\sqrt{\frac{c}{K}}\right)\right),
& K>0,\\[1.2ex]
\displaystyle
\frac{1}{2\sqrt{c|K|}}
\log\!\left(
\frac{\beta\sqrt{c}+\sqrt{|K|}}
     {\beta\sqrt{c}-\sqrt{|K|}}
\right),
& K<0.
\end{cases}
\end{equation*}
For \(c=1/(n-1)\), these are precisely the displayed expressions for \(T_{K,\beta}\). 
Letting \(t\uparrow b\) therefore yields $b \leq T_{K,\beta}/r$ as claimed.
\end{proof}

\begin{Remark}[On the discrepancy between Propositions \ref{Prop: Hawking quantitative focusing} and \ref{Prop: quantitative focusing for general bounds} for $K = 0$]
For \(K=0\), the hypothesis on \(\delta\) in Proposition \ref{Prop: quantitative focusing for general bounds} becomes
\begin{equation*}
0<\delta\leq(1-r)\frac{\beta^2}{n-1},
\end{equation*}
whereas Proposition \ref{Prop: Hawking quantitative focusing} assumes
\begin{equation*}
0<\delta\leq(1-r)\frac{3\beta}{b}.
\end{equation*}
These are different sufficient conditions for the same conclusion \(b\leq(n-1)/(r\beta)\), thus Proposition \ref{Prop: Hawking quantitative focusing} does not follow from Proposition \ref{Prop: quantitative focusing for general bounds} with $K = 0$. 
The difference arises because the proof of Proposition \ref{Prop: quantitative focusing for general bounds} uses the comparison
\begin{equation*}
K-\delta+cu^2\geq r(K+cu^2)
\qquad (u\geq\beta)
\end{equation*}
with subsequent integral estimates, whereas Proposition \ref{Prop: Hawking quantitative focusing} uses an estimate involving the interval length $b$. 
We expect that a version of Proposition \ref{Prop: Hawking quantitative focusing} for general bounds $K$ can also be established. 
This discrepancy highlights that both results are independently of interest and it has no bearing on their applications in the Hawking singularity theorems (Theorems \ref{Theorem: unweighted Hawking 1}, \ref{Theorem: Hawking version 2}, \ref{Theorem: unweighted general Hawking 1}): 
The curvature error in the regularization can be made sufficiently small to satisfy either condition in Propositions \ref{Prop: Hawking quantitative focusing} and \ref{Prop: quantitative focusing for general bounds} for $K = 0$.
\end{Remark}

\begin{Theorem}[$C^{(1,3)}$ Hawking singularity theorem,
version 1, general $K$]
\label{Theorem: unweighted general Hawking 1}
Let $(M,L)$ be a globally hyperbolic $C^{(1,3)}$ Finsler spacetime satisfying $\Ric \geq K$ timelike distributionally for some $K\in\mathbb R$.
Suppose that $\Sigma\subseteq M$ is a $C^2$ spacelike Cauchy hypersurface that is $\beta$-contracting, where $\beta>0$ and additionally $\beta>\sqrt{(n-1)|K|}$ if $K<0$. 
Then $\sup_M d_\Sigma\leq T_{K,\beta}$, where $T_{K,\beta}$ is as in Proposition \ref{Prop: quantitative focusing for general bounds}.
In particular, $(M,L)$ is future timelike geodesically incomplete.
\end{Theorem}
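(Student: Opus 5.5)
The argument follows the proof of Theorem \ref{Theorem: unweighted Hawking 1}, with Proposition \ref{Prop: quantitative focusing for general bounds} replacing Proposition \ref{Prop: Hawking quantitative focusing} and Corollary \ref{cor:GeneralRicciBound} replacing Proposition \ref{Prop: ApproxtimelikeRiccibound}. We argue by contradiction. Suppose $d_\Sigma(y) > T_{K,\beta}$ for some $y \in I^+(\Sigma)$. Corollary \ref{Corollary: hypersurfacemaxaslimit} then gives a maximizing $L$-geodesic $\eta:[0,l]\to M$ from $\Sigma$ to $y$ with $l = d_\Sigma(y)$. This $\eta$ is the $C^2$-limit of $\widecheck L_{\eps_m}$-maximizers $\eta_m:[0,l_m]\to M$ from $\Sigma$ to $y$, parametrized by $\widecheck L_{\eps_m}$-arclength. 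Moreover, $l_m \to l$ and the set $\mathcal K := \bigcup_m \mathrm{im}(\dot\eta_m)\cup\mathrm{im}(\dot\eta)$ is a compact subset of $\Omega^T$.

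First, I fix the constants. The expansions $\theta_m$ of $\Sigma$ with respect to $\widecheck L_{\eps_m}$ converge locally uniformly to $\theta$ on $\Sigma$, because $\widecheck L_\eps \to L$ in $C^{(1,2)}_\loc$ and the unit normal field depends only on first-order data. Since $\theta < -\beta$ is strict, I can choose $\beta' > \beta$ slightly larger, with $\theta < -\beta'$ near $\eta(0)$. I then choose $r\in(0,1)$ close to $1$ so that $l > T_{K,\beta'}/r$. This is possible because $T_{K,\cdot}$ is continuous in $\beta$, so $T_{K,\beta'}$ is still below $l$. For $K<0$, the requirement $\beta'>\sqrt{(n-1)|K|}$ is automatic. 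Simpler still, I can keep $\beta$ itself, since $\theta_m(\eta_m(0)) < -\beta$ for large $m$ once $\eta_m(0)\to\eta(0)$, and use only $r$ close to $1$. Next I set
\[
\delta := (1-r)\Bigl(K + \frac{\beta^2}{n-1}\Bigr),
\]
which is positive by the assumption on $\beta$.

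Next, I apply Corollary \ref{cor:GeneralRicciBound} to the compact set $\mathcal K\subseteq\Omega^T$ with this $\delta$. For large $m$, every $\widecheck L_{\eps_m}$-unit vector $v\in\mathcal K$ satisfies $\Ric[\widecheck L_{\eps_m}](v)\ge K-\delta$; in particular $\Ric[\widecheck L_{\eps_m}](\dot\eta_m)\ge K-\delta$. Each $\eta_m$ is a unit-speed maximizer from $\Sigma$ for the smooth structure $\widecheck L_{\eps_m}$. The hypersurface $\Sigma$ is $\widecheck L_{\eps_m}$-spacelike, because the cones of $\widecheck L_{\eps_m}$ are narrower than those of $L$, and the initial expansion is $<-\beta$. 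Proposition \ref{Prop: quantitative focusing for general bounds} therefore gives $l_m \le T_{K,\beta}/r$. Letting $m\to\infty$ yields $l\le T_{K,\beta}/r$, and letting $r\uparrow 1$ yields $l \le T_{K,\beta}$, a contradiction. Finally, incompleteness follows because a complete timelike geodesic normal to $\Sigma$ would give $\sup_M d_\Sigma = \infty$ via global hyperbolicity, exactly as in the $K=0$ case.

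The main obstacle is making sure the expansion hypothesis of Proposition \ref{Prop: quantitative focusing for general bounds} transfers from $L$ to $\widecheck L_{\eps_m}$ at the moving base points $\eta_m(0)$. This needs the locally uniform convergence $\theta_m\to\theta$, which in turn uses the $C^{(1,2)}$-convergence of $\widecheck L_\eps$ and the $C^2$ regularity of $\Sigma$. By contrast, the curvature input is directly supplied by Corollary \ref{cor:GeneralRicciBound}.
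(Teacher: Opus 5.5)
Your proposal is correct and follows the paper's proof essentially verbatim: fix $r$ with $d_\Sigma(y)>T_{K,\beta}/r$, take $\widecheck L_{\eps_m}$-unit maximizers converging in $C^2$ to an $L$-maximizer, and get $\theta_m<-\beta$ at the initial points from uniform convergence of the expansions. The approximation corollary then gives $\Ric[\widecheck L_{\eps_m}](\dot\eta_m)\ge K-\delta$, and the general-$K$ quantitative focusing proposition yields $l_m\le T_{K,\beta}/r$. The detour through $\beta'$ and the final limit $r\uparrow 1$ are superfluous, since once $r$ is fixed with $d_\Sigma(y)>T_{K,\beta}/r$, letting $m\to\infty$ already gives the contradiction; they do no harm, however.
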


\begin{proof}
We follow the proof of Theorem \ref{Theorem: unweighted Hawking 1}.
Suppose that $d_\Sigma(y)>T_{K,\beta}$ for some $y\in I^+(\Sigma)$, and choose $r\in(0,1)$ sufficiently close to $1$ such that $d_\Sigma(y)>T_{K,\beta}/r$.
By Corollary \ref{Corollary: hypersurfacemaxaslimit}, we can find $\widecheck L_{\varepsilon_m}$-unit speed maximizing geodesics from $\Sigma$ to $y$ converging to an $L$-maximizer, with their proper-time lengths $l_m\to d_\Sigma(y)$.
As in the proof of Theorem \ref{Theorem: unweighted Hawking 1}, their tangents lie in a compact subset of $\Omega^T$, and their initial expansions satisfy $\theta_m(0)<-\beta$ for sufficiently large $m$.
Fix
\[
    0<\delta<
    (1-r)\left(K+\frac{\beta^2}{n-1}\right).
\]
By Corollary \ref{cor:GeneralRicciBound}, we get $\Ric[\widecheck L_{\varepsilon_m}](\dot\eta_m)\geq K-\delta$ for large $m$.
Proposition \ref{Prop: quantitative focusing for general bounds} therefore implies $l_m\leq T_{K,\beta}/r$. 
Letting $m \to \infty$ (equivalently, $\varepsilon_m \to 0)$ shows $d_\Sigma(y) \leq T_{K,\beta}/r$, contradicting our choice of $r$.
\end{proof}

\subsection{The Penrose singularity theorem}\label{sc:Penrose}

We first recall the notion of trapped surfaces (see \cite[\S 4]{MinguzziRaychaudhuri}, \cite[\S 8.1]{LuMinguzziOhtaSingularity}). Throughout this subsection, let $n=\dim M \geq 3$.
Let $S$ be a codimension $2$, orientable, $C^2$ spacelike submanifold (without boundary) in a $C^{(1,3)}$ Finsler spacetime $(M,L)$. 
Then, for each $x \in S$, since $T_x S \cap \overline{\Omega_x^T}=\{0\}$, there are exactly two hypersurfaces $H_x^{\pm} \subseteq T_xM$ containing $T_x S$ and tangent to $\partial \Omega_x^T$. 
By the orientability of $S$, we can take $C^1$ vector fields $V^\pm$ on $S$ such that $V^\pm(x) \in (H_x^\pm \cap \partial \Omega_x^T) \setminus \{0\}$ (unique up to a rescaling by a positive function on $S$), and $V^\pm$ satisfy $\ker g_{V^\pm}(V^\pm(x),\cdot)=H_x^\pm$.
We define the associated \emph{null expansions} $\theta^\pm(x)$ as the trace of the endomorphism $w \mapsto D^{V^\pm}_wV^\pm$ on $T_xS$. 

\begin{Definition}[Trapped surfaces]
    We say that $S$ is a (\emph{future}) \emph{trapped surface} if it is compact and $\theta^\pm <0$ on $S$.
\end{Definition}

\begin{Proposition}[Null quantitative focusing]
\label{Prop: Null quantitative focusing}
Let $(M,L)$ be a smooth Finsler spacetime, $S \subseteq M$ be a codimension $2$, orientable, $C^2$ spacelike submanifold and $\eta:[0,b] \to M$ with $x:=\eta(0) \in S$ be a future null geodesic that maximizes the distance from $S$. 
Then $\dot{\eta}(0) \perp T_xS$ in the sense that $T_xS + \R \dot{\eta}(0)= \ker g_{\dot{\eta}(0)}(\dot{\eta}(0),\cdot)$.
Moreover, under the normalization $\dot{\eta}(0)=V^+(x)$ or $V^-(x)$, if $\theta^\pm(x) < - \beta<0$ and $\Ric(\dot \eta) \geq - \delta$ with $0 < \delta \leq \frac{3 \beta}{b}(1-r)$ for some $0 < r < 1$, then we have $b \leq \frac{n-2}{r\beta}$.
\end{Proposition}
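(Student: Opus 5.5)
The plan is to follow the proof of Proposition \ref{Prop: Hawking quantitative focusing} almost verbatim. Two changes are needed: the timelike Raychaudhuri equation is replaced by its null counterpart, and the dimension $n-1$ of the normal space is replaced by the dimension $n-2$ of the screen space.

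\emph{Orthogonality.} I would argue by contradiction, as in the timelike case. Assume $T_xS + \R\dot\eta(0) \neq \ker g_{\dot\eta(0)}(\dot\eta(0),\cdot)$. Take $z=\eta(\eps)$ inside a convex neighbourhood of $x$. The boundary of $J^-(z)$ near $x$ is, to first order, the null hypersurface whose tangent space at $x$ is $\ker g_{\dot\eta(0)}(\dot\eta(0),\cdot)$ (cf.\ \cite{MinguzziConvexneighborhoods}). If $T_xS$ is not contained in this kernel, then $S$ meets $I^-(z)$. We then obtain a timelike curve from $S$ to $z$, which contradicts the maximality (achronality) of $\eta$. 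It follows that $T_xS \subseteq \ker g_{\dot\eta(0)}(\dot\eta(0),\cdot)$. Dimension counting, together with $\dot\eta(0)\notin T_xS$ (because $S$ is spacelike), gives the equality. In particular $\dot\eta(0)$ is a positive multiple of $V^+(x)$ or of $V^-(x)$.

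\emph{Focusing.} Extend $\dot\eta(0)$ to the null geodesic congruence orthogonal to $S$, and consider the screen-space endomorphism $B_t$ on the $(n-2)$-dimensional quotient $\ker g_{\dot\eta}(\dot\eta,\cdot)/\R\dot\eta$. Its trace is $\theta(t)$, and $\theta(0)=\theta^\pm(x)$. The Finsler null Raychaudhuri equation \cite{MinguzziRaychaudhuri} reads
\[
\theta' = -\Ric(\dot\eta) - \sigma - \frac{\theta^2}{n-2}, \qquad \sigma \ge 0 .
\]
Combined with $\Ric(\dot\eta)\ge-\delta$ it gives $\theta'\le \delta - c\theta^2$ with $c:=1/(n-2)$. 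From here the ODE comparison of Proposition \ref{Prop: Hawking quantitative focusing} carries over unchanged, including the explicit solution $\phi$, the blow-up time $\hat t$, and the monotonicity facts in Remark \ref{Remark: monotonicities} (none of which used the specific value of $c$). Suppose $b>1/(cr\beta)$. Then $\delta\le 3\beta(1-r)/b < c\beta^2$, so $\theta$ blows up to $-\infty$ at some time $T<b\cdot\frac r2 f(\sqrt{3r(1-r)})<b$. This produces a focal point of $S$ along $\eta$ strictly before $\eta(b)$. By the null analogue of \cite[Prop.\ 5.1]{MinguzziRaychaudhuri} (see \cite[\S 8]{LuMinguzziOhtaSingularity}), $\eta$ then fails to be maximizing (achronal) past that point, which is a contradiction. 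Hence $b\le (n-2)/(r\beta)$.

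\emph{Main obstacle.} The only step that is genuinely new is the null orthogonality and the correct setup of the expansion on the screen quotient. In particular one has to check that $\sigma\ge0$ there. In the null case the induced metric $g_{\dot\eta}$ restricted to the screen is positive definite, and $B_t$ is $g_{\dot\eta}$-self-adjoint, so $\sigma\ge 0$ holds. I would rely on \cite{MinguzziRaychaudhuri} for both the null Raychaudhuri equation and the statement that a focal point implies loss of maximality. Everything else is a direct substitution of $n-2$ for $n-1$.
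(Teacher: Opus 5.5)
Your proposal is correct and follows the paper's proof. The paper likewise obtains orthogonality as in the timelike case, showing $\ker g_{\dot\eta(0)}(\dot\eta(0),\cdot)=T_xS+\R\dot\eta(0)=H^\pm_x$, so that $\dot\eta(0)$ is a positive multiple of $V^\pm(x)$; it then runs the same scalar comparison with the null Raychaudhuri equation and coefficient $1/(n-2)$. Your two extra checks, that the comparison does not depend on the value of $c$ and that $\sigma\ge 0$ on the screen, make explicit what the paper leaves implicit.
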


\begin{proof}
This is shown in the same way as Proposition \ref{Prop: Hawking quantitative focusing}.
Precisely, we have
\[
\ker g_{\dot{\eta}(0)}\bigl(\dot{\eta}(0),\cdot\bigr)
=T_xS + \R \dot{\eta}(0)
=H^+_x \text{ or } H^-_x,
\]
which implies that $\dot{\eta}(0)=cV^+(x)$ or $cV^-(x)$ for some $c>0$.
We set $c=1$ by reparametrizing $\eta$.
Then, using the null Raychaudhuri equation (\cite[Eq.\ (31)]{MinguzziRaychaudhuri} with $\kappa=0$, where $\dim M=n+1$), we obtain the claimed estimate of $b$.
\end{proof}

\begin{Theorem}[$C^{(1,3)}$ Penrose singularity theorem]
\label{theorem: unweighted penrose}
Let $(M,L)$ be a globally hyperbolic $C^{(1,3)}$ Finsler spacetime satisfying the distributional null energy condition and containing a $C^2$ trapped surface $S$ and a noncompact Cauchy hypersurface. 
Then there exists a future inextendible null geodesic $\eta$ emanating orthogonally from $S$ that is future incomplete.
\end{Theorem}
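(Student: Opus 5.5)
We argue by contradiction, along the lines of Graf's $C^1$ Lorentzian argument and the smooth Finsler proof in \cite[Thm.\ 8.x]{LuMinguzziOhtaSingularity}. Assume every future inextendible null geodesic emanating orthogonally from $S$ (for $L$) is future complete. Let $\mathcal K\subseteq TM\setminus\mathbf 0$ be the compact set of future null vectors $V^\pm(x)$, $x\in S$, normalized via $\tilde g$, together with the corresponding $\widecheck L_\eps$-null orthogonal vectors for small $\eps$; these converge to the $L$-ones because the hyperplanes $H^\pm_x$ depend continuously on the $C^{(1,2)}$ structure. By compactness of $S$ and continuity, the $\widecheck L_\eps$-null expansions $\theta^\pm_\eps$ converge uniformly to $\theta^\pm$, so for $\beta:=-\tfrac12\max_S\max(\theta^+,\theta^-)>0$ we have $\theta^\pm_\eps<-\beta$ on $S$ for small $\eps$. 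Set $b:=4(n-2)/\beta$. By completeness, Proposition~\ref{Prop: Existence time for Leps geodesics} and Corollary~\ref{cr:Fcpt}, the set $\mathcal F:=\mathcal F_{\mathcal K,b,\eps_0}$ of tangents of $\widecheck L_\eps$-geodesics with initial data in $\mathcal K$, up to time $b$, is compact.

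Next we apply Proposition~\ref{Prop: Null approximate Ricci} to $\mathcal F$ with some $0<\delta<3\beta/(2b)$: for all small $\eps$, every $\widecheck L_\eps$-null future vector $v\in\mathcal F$ satisfies $\Ric[\widecheck L_\eps](v)\ge-\delta$. Null geodesics stay null, so this holds along every $\widecheck L_\eps$-null geodesic leaving $S$ orthogonally. Proposition~\ref{Prop: Null quantitative focusing} with $r=1/2$ then shows that none of them maximizes the $\widecheck L_\eps$-distance from $S$ beyond parameter $2(n-2)/\beta<b$. Hence $E_\eps:=\partial J^+_{\widecheck L_\eps}(S)$ is contained in the image of the compact set $\{\exp^{\widecheck L_\eps}(tv):v\in\mathcal K_\eps,\ t\in[0,2(n-2)/\beta]\}$. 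Geodesics are nonunique, but this set is still compact by Corollary~\ref{cr:Fcpt}. Thus $E_\eps$ is compact for each small fixed $\eps$.

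For the contradiction, note that $\widecheck L_\eps\prec L$, so $(M,\widecheck L_\eps)$ is globally hyperbolic and the noncompact $L$-Cauchy hypersurface is also $\widecheck L_\eps$-Cauchy. Since $\widecheck L_\eps$ is smooth, standard (Finsler) causality theory applies, see \cite{Minguzzicausality,LuMinguzziOhtaSingularity}. The achronal boundary $E_\eps=\partial J^+_{\widecheck L_\eps}(S)$ is a compact topological hypersurface. Projecting it along the integral curves of the smooth timelike vector field $X$ onto the Cauchy hypersurface gives an injective open continuous map whose image is compact and open. That image would therefore be the whole noncompact Cauchy hypersurface, which is a contradiction.

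The main obstacle is the step asserting that $E_\eps$ is generated by $\widecheck L_\eps$-null geodesics orthogonal to $S$ that maximize distance (so that the focusing cutoff applies), together with checking the hypotheses uniformly in $\eps$: that the normalized initial vectors and the expansion bounds converge, and that $\mathcal F$ is independent of $\eps$. Since $\widecheck L_\eps$ is smooth, the generator property is standard. The real work is the uniformity, which we would handle exactly as in the proof of Theorem~\ref{Theorem: Hawking version 2}, replacing timelike by null orthogonal directions.
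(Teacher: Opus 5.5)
Your outline is sound, but it takes a different route from the paper. The paper never fixes one approximant. It takes $\mathcal K$ to be the $L$-null normals $V^\pm(x)$ scaled up to $2(n-2)/(-\theta^\pm(x))$ and shows $J^+(S)\setminus I^+(S)\subseteq F_{\mathcal K,1}$ point by point. A hypothetical bad point $y$ is reached by an $L$-maximizer that is the $C^2$-limit of $\widecheck L_{\eps_m}$-maximizers (Corollary \ref{Corollary: Trappedsurfacemaxaslimit}). The approximate null energy condition is therefore needed only along that one convergent sequence, whose tangents automatically lie in a compact set. The final topological contradiction is then carried out for $L$ itself, via the causality theory of locally Lipschitz proper cone structures (Lemma \ref{Lemma: lowregFinsler cone structure}).

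You instead run the whole Penrose argument inside a single smooth, globally hyperbolic $(M,\widecheck L_\eps)$ with the same noncompact Cauchy hypersurface, much as the paper does for Theorem \ref{Theorem: Hawking version 2}. This removes the need for the limit-maximizer corollary and for any low-regularity causality theory. The price is uniform-in-$\eps$ control of the entire family of $\widecheck L_\eps$-null normal geodesics.

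That uniform control is where one step fails as written. Your $\mathcal K$ contains the $\widecheck L_\eps$-null normals $V^\pm_\eps$, and these are $L$-\emph{timelike} because $\widecheck L_\eps\prec L$. Proposition \ref{Prop: Existence time for Leps geodesics} and Corollary \ref{cr:Fcpt} require all $L$-geodesics with initial data in $\mathcal K$ to exist up to time $b$; indeed $\mathcal F_{\mathcal K,b,\eps_0}$ includes the $\eps=0$ geodesics from all of $\mathcal K$. Your contradiction hypothesis only covers $L$-null geodesics normal to $S$. Appealing to the proof of Theorem \ref{Theorem: Hawking version 2} does not help, since there full future timelike completeness is assumed.

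The repair is to rerun the proof of Proposition \ref{Prop: Existence time for Leps geodesics} with $\eps$-dependent initial sets, considering only $\widecheck L_\eps$-geodesics with $\dot\eta(0)\in\mathcal K_\eps$ (the $\widecheck L_\eps$-normals for the same $\eps$).
\begin{itemize}
\item Suppose these failed to exist up to time $b$ along some $\eps_i\to0$, $v_i\in\mathcal K_{\eps_i}$.
\item By the uniform convergence $V^\pm_\eps\to V^\pm$, the $v_i$ subconverge to some $v\in\mathcal K_0$.
\item The limit $L$-geodesic is then a null geodesic normal to $S$, hence complete by hypothesis, and Hartman's estimate gives the contradiction.
\item The same subsequence argument shows that the tangents of these geodesics on $[0,b]$ form a relatively compact subset of $TM\setminus\mathbf 0$. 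Proposition \ref{Prop: Null approximate Ricci} applies to its closure.
\end{itemize}
With this modification the rest of your proof goes through: the focusing cutoff at $2(n-2)/\beta$, the compactness of $\partial J^+_{\widecheck L_\eps}(S)$, and the projection onto the noncompact Cauchy hypersurface.
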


\begin{proof}
By way of contradiction, suppose that every future null geodesic starting orthogonally to $S$ is future complete. 
Fix future $L$-null normals $V^\pm$ of $S$ as above, which are of regularity $C^1$.
They are the uniform limits of future $\widecheck L_\varepsilon$-null normals $V^\pm_\varepsilon$ of $S$ (note that $S$ is also $\widecheck{L}_\eps$-spacelike). 
We write $\theta^\pm_\varepsilon$ for the null expansions of $V^\pm_\eps$ with respect to $\widecheck L_\varepsilon$. 
Consider the set
\[
\mathcal{K} :=\biggl\{ \lambda v : v=V^\pm(x),\, x \in S,\, 0 \le \lambda \le \frac{2(n-2)}{-\theta^\pm(\pi(v))} \biggr\}.
\]
This is a compact subset of $TM$ (since $S$ is compact by definition). 
We claim that
\begin{equation*}
     J^+(S) \setminus I^+(S) \subseteq F_{\mathcal K,1} :=\{ \eta(t) : \eta\ \text{is an $L$-geodesic},\, \dot \eta(0) \in \mathcal K,\, t \in [0,1]\}.
\end{equation*}
Note that $F_{\mathcal{K},1}$ is compact (thus so is $J^+(S) \setminus I^+(S)$ by global hyperbolicity if we show the claim). 

Suppose to the contrary that there is some $y \in J^+(S) \setminus I^+(S)$ not in $F_{\mathcal K , 1}$.
Since clearly $S \subseteq F_{\mathcal{K},1}$, necessarily $y \not\in S$.
According to Corollary \ref{Corollary: Trappedsurfacemaxaslimit}, any such $y$ is reached by a maximizing $L$-null geodesic $\eta:[0,b] \to M$ from $S$ that is the $C^2$-limit of maximizing $\widecheck{L}_{\eps_m}$-null geodesics $\eta_m:[0,b_m] \to M$ from $S$ with $\lim_{m \to \infty}\eps_m =0$, where $\eta$ is parametrized as $\dot{\eta}(0)=V^+(\eta(0))$ or $V^-(\eta(0))$, and we assume $\dot{\eta}(0)=V^+(\eta(0))$ without loss of generality, as well as $\dot{\eta}_m(0)=V^+_{\eps_m}(\eta_m(0))$.

By $y \not\in F_{\mathcal{K},1}$, we have $b>-2(n-2)/\theta^+(\eta(0))$, so that
\[
\theta^+\bigl( \eta(0) \bigr) <-\beta :=-\frac{2(n-2)}{b}.
\]
Then it follows from the $C^2$-convergence that, for sufficiently large $m$, $\theta^+_{\eps_m}(\eta_m(0)) <-\beta$ and $b_m > 3b/4$.
Moreover, we can assume $\Ric[\widecheck L_{\eps_m}](\dot \eta_m) \geq - \beta/b_m$ by virtue of Proposition \ref{Prop: Null approximate Ricci}.
Hence, from Proposition \ref{Prop: Null quantitative focusing} with $b=b_m$, $\delta=\beta/b_m$ and $r=2/3$, we infer that
\[
b_m \leq \frac{1}{r} \frac{n-2}{\beta} =\frac{3}{2} \frac{b}{2} <b_m,
\]
a contradiction.

From here, one may follow the smooth argument given for \cite[Thm.\ 8.4]{LuMinguzziOhtaSingularity} or \cite[Thm.\ 6.25]{Minguzzicausality} to conclude (keeping in mind Lemma \ref{Lemma: lowregFinsler cone structure}).
\end{proof}

\subsection{Weighted versions of the singularity theorems}
\label{Subsection: weighted singularity theorems}

In this final subsection, we give weighted versions of the Hawking and Penrose singularity theorems.
Our results are new even for $C^1$ Lorentzian metrics.

In the weighted case, we need to modify the notion of geodesic (in)completeness.
For this purpose, let us recall the following notion introduced in \cite{LuMinguzziOhtaSingularity} (called $\epsilon$-range; note that $\dim M=n+1$ there), inspired by Wylie and Woolgar's works in Riemannian and Lorentzian settings \cite{WW16,WW18,Wylie17}.

\begin{Definition}[$\kappa$-ranges]
For $N \in (-\infty,1] \cup [n,\infty]$, where $n=\dim M$, we define the \emph{timelike $\kappa$-range of $N$} to be those $\kappa \in \R$ which satisfy
\begin{equation*}
\begin{cases}
    \kappa = 0, & N = 1,\\
    |\kappa| < \sqrt{\frac{N-1}{N - n}}, & N \neq 1,n, \\
    \kappa \in \R, & N=n.
\end{cases}
\end{equation*}
Similarly, for $N \in (-\infty,2] \cup [n,\infty]$ (provided $n \ge 3$), we define the \emph{null $\kappa$-range of $N$} to be those $\kappa \in \R$ which satisfy
\begin{equation*}
\begin{cases}
    \kappa = 0, & N = 2,\\
    |\kappa| < \sqrt{\frac{N -2}{N - n}}, & N \neq 2,n, \\
    \kappa \in \R, & N=n.
\end{cases}
\end{equation*}
In both cases, we also say that $\kappa \in \R$ is \emph{timelike/null admissible for $N$} if it belongs to the corresponding $\kappa$-range of $N$.
\end{Definition}

Let $N \in (-\infty,1] \cup [n,\infty]$ and $\kappa$ be timelike admissible for $N$.
Given a $C^2$ spacelike hypersurface $\Sigma \subseteq M$, let $V$ be the future unit normal vector field on a neighborhood of $\Sigma$ as in Subsection \ref{ssc:Hawking}, and $\Psi$ be the distortion function associated with $\mathfrak m$ as in Subsection \ref{Subsection: weights}. 
Then, for an integral curve $\eta:[0,b] \to M$ of $V$ with $\eta(0) \in \Sigma$, we define the weighted counterparts to the expansion and shear of $V$ by
\begin{align*}
    \theta_\kappa(t) &:= \exp\biggl( \frac{2(1-\kappa)}{n-1} \Psi\bigl(\dot{\eta}(t)\bigr) \biggr) \Bigl(\theta(t) - S\Psi\bigl(\dot{\eta}(t)\bigr)\Bigr), \\
    \sigma_\kappa(t) &:= \exp\biggl( \frac{4(1-\kappa)}{n-1} \Psi\bigl(\dot{\eta}(t)\bigr) \biggr) \sigma(t),
\end{align*}
where $\theta$ and $\sigma$ are the unweighted expansion and shear defined in \eqref{eq:expan}.
We say that $\Sigma$ is \emph{$(\kappa,\beta)$-contracting} if $\theta_\kappa < -\beta < 0$ for all $\eta$ as above, where $\beta > 0$. 
Along $\eta$, it will be convenient to consider the parametrization
\begin{equation*}
\varphi_\eta(t) :=\int_0^t \exp\biggl( \frac{2(\kappa -1)}{n-1} \Psi\bigl(\dot{\eta}(s)\bigr) \biggr) \,\diff s.
\end{equation*}

When $L$ is smooth, the \emph{weighted Raychaudhuri inequality} asserts the following:
\begin{equation}\label{eq:wRay}
(\theta_\kappa \circ \varphi_\eta^{-1})' \le -\Ric^N_{\mathfrak m} \bigl((\eta \circ \varphi_{\eta}^{-1})' \big) - \sigma_{\kappa}(\varphi_{\eta}^{-1}) -c_{N,\kappa}\theta_{\kappa}(\varphi_{\eta}^{-1})^2
\end{equation}
on $\varphi_\eta([0,b])$ (see \cite[Prop.\ 5.7]{LuMinguzziOhtaSingularity}), where we set
\begin{equation*}
c_{N,\kappa} :=\frac{1}{n-1}\biggl( 1-\kappa^2 \frac{N-n}{N-1} \biggr) \,\ (N \neq 1),
\quad c_{\infty,\kappa}:=\frac{1-\kappa^2}{n-1},
\quad c_{1,0}:=\frac{1}{n-1}.
\end{equation*}
With this, we can generalize Proposition \ref{Prop: Hawking quantitative focusing} as follows.

\begin{Proposition}[Weighted timelike quantitative focusing]
\label{Prop: weighted timelike quant focusing}
Let $(M,L,\mathfrak m)$ be a smooth weighted Finsler spacetime, $\Sigma$ be a $C^2$ spacelike hypersurface and $\eta:[0,b]\to M$ with
$x := \eta(0) \in \Sigma$ be a future unit speed timelike geodesic that maximizes the distance from $\Sigma$. 
Let $N\in(-\infty,1]\cup[n,\infty]$, $\kappa$ be timelike admissible for $N$, and set $B:=\varphi_\eta(b)$. 
Assume that $\theta_\kappa(0)<-\beta<0$ and
\begin{equation}
\label{eq:Ric_N>delta}
    \Ric^N_{\mathfrak m}(\dot\eta)
    \geq
    -\delta\exp\left(
        \frac{4(\kappa-1)}{n-1}\Psi(\dot\eta)
    \right),
\end{equation}
where
\[
    0<\delta\leq\frac{3\beta}{B}(1-r)
    \qquad\text{for some }0<r<1.
\]
Then we have 
\[
    B\leq\frac1{c_{N,\kappa}r\beta}.
\]
When $N\in[n,\infty)$, we can take $\kappa=1$ and obtain
$b\leq(N-1)/(r\beta)$ under
$\operatorname{Ric}^N_{\mathfrak m}(\dot\eta)\geq-\delta$.
\end{Proposition}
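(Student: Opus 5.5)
The plan is to reduce the statement to the one-dimensional Riccati comparison already carried out in the proof of Proposition \ref{Prop: Hawking quantitative focusing}, now in the reparametrized variable $s=\varphi_\eta(t)\in[0,B]$. Orthogonality of $\dot\eta(0)$ to $T_x\Sigma$ is proved exactly as before, since it only uses maximality of $\eta$. Hence $\dot\eta=V$ along $\eta$, and the weighted expansion $\theta_\kappa$ is defined on $[0,b)$. There are no focal points to $\Sigma$ in $(0,b)$ by maximality.

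Set $\bar\theta(s):=\theta_\kappa(\varphi_\eta^{-1}(s))$. Since $\varphi_\eta'(t)=\exp(\frac{2(\kappa-1)}{n-1}\Psi(\dot\eta))$, we have $(\eta\circ\varphi_\eta^{-1})'=\exp(\frac{2(1-\kappa)}{n-1}\Psi)\dot\eta$. The Ricci curvature is $2$-homogeneous, so
\[
\Ric^N_{\mathfrak m}\bigl((\eta\circ\varphi_\eta^{-1})'\bigr)=\exp\Bigl(\tfrac{4(1-\kappa)}{n-1}\Psi\Bigr)\Ric^N_{\mathfrak m}(\dot\eta)\geq-\delta.
\]
This is precisely why the exponential weight appears in \eqref{eq:Ric_N>delta}. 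Dropping $\sigma_\kappa\ge0$ (the shear is a trace of a square of a $g$-self-adjoint endomorphism on a spacelike space) in the weighted Raychaudhuri inequality \eqref{eq:wRay} then gives
\[
\bar\theta'(s)\le\delta-c\,\bar\theta(s)^2,\qquad c:=c_{N,\kappa}>0,\qquad \bar\theta(0)<-\beta.
\]
Here positivity of $c_{N,\kappa}$ is exactly the timelike admissibility of $\kappa$. This is \eqref{eq:Ray-ineq} with $c$ in place of $1/(n-1)$ and $B$ in place of $b$.

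Next I repeat the argument of Proposition \ref{Prop: Hawking quantitative focusing} verbatim on $[0,B]$. The proof there never used $c=1/(n-1)$, only $c>0$ and the hypothesis $\delta\le\frac{3\beta}{B}(1-r)$. Assume $B>1/(cr\beta)$. Then $\sqrt{\delta/c}<\beta$, so the comparison solution $\phi$ blows up, and $\bar\theta\le\phi$ reaches $-\infty$ at some $\bar T$. The estimate via $f$ and Remark \ref{Remark: monotonicities} gives $\bar T<B$.

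Since $\varphi_\eta$ is a $C^1$ increasing bijection $[0,b]\to[0,B]$, the blow-up of $\bar\theta$ at $\bar T$ means $\theta_\kappa$ blows up at $\varphi_\eta^{-1}(\bar T)<b$. The factor $e^{2(1-\kappa)\Psi/(n-1)}$ is bounded and $S\Psi$ is bounded on the compact set $\dot\eta([0,b])$, so $\theta\to-\infty$ there as well. This gives a focal point before $\eta(b)$, contradicting maximality by \cite[Prop.\ 5.1]{MinguzziRaychaudhuri}. Hence $B\le 1/(c_{N,\kappa}r\beta)$.

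For the final assertion, take $N\in[n,\infty)$ and $\kappa=1$. Then $\varphi_\eta(t)=t$, so $B=b$, the weight in \eqref{eq:Ric_N>delta} is $1$, and $c_{N,1}=\frac{1}{n-1}\cdot\frac{n-1}{N-1}=\frac1{N-1}$. This yields $b\le (N-1)/(r\beta)$.

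The main obstacle is the bookkeeping of the reparametrization, i.e.\ checking the homogeneity factor and that blow-up transfers between parametrizations. I expect the Riccati analysis itself to carry over unchanged.
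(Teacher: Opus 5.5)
Your proposal is correct and follows the paper's proof. You reparametrize by $\varphi_\eta$ and use $2$-homogeneity of $\Ric^N_{\mathfrak m}$ to turn \eqref{eq:Ric_N>delta} into $\Ric^N_{\mathfrak m}\bigl((\eta\circ\varphi_\eta^{-1})'\bigr)\ge-\delta$. You then derive $\Theta'\le\delta-c_{N,\kappa}\Theta^2$ on $[0,B]$ from \eqref{eq:wRay} and rerun the scalar comparison of Proposition \ref{Prop: Hawking quantitative focusing} with $c_{N,\kappa}$ and $B$ in place of $1/(n-1)$ and $b$. The extra checks you supply are details the paper leaves implicit: only $c>0$ is used in that comparison, $c_{N,\kappa}>0$ is equivalent to timelike admissibility, and the blow-up transfers back through $\varphi_\eta^{-1}$ to a focal point before $\eta(b)$.
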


\begin{proof}
By the positive $2$-homogeneity of $\Ric^N_\m$ and \eqref{eq:Ric_N>delta}, we have 
\[
\begin{aligned}
    &\Ric^N_{\mathfrak m}
    \left(
        (\eta\circ\varphi_\eta^{-1})'
        (\varphi_\eta(t))
    \right)=
    \exp\left(
        \frac{4(1-\kappa)}{n-1}\Psi\bigl(\dot\eta(t)\bigr)
    \right)
    \Ric^N_{\mathfrak m}\bigl(\dot\eta(t)\bigr) \ge -\delta.
\end{aligned}
\]
Thus upon setting
$\Theta:=\theta_\kappa\circ\varphi_\eta^{-1}$, we get from \eqref{eq:wRay} that
\[
    \Theta'\leq\delta-c_{N,\kappa}\Theta^2
    \quad\text{on }[0,B],
    \qquad
    \Theta(0)<-\beta.
\]
The same scalar comparison as in the proof of Proposition \ref{Prop: Hawking quantitative focusing}, with interval length $B$ and coefficient
$c_{N,\kappa}$ gives the claim. 
\end{proof}

For a future-inextendible unit speed timelike geodesic $\eta:[0,T)\to M$, write $\varphi_\eta(T) :=\lim_{t\uparrow T}\varphi_\eta(t)$.
We say that $\eta$ is \emph{future $\kappa$-incomplete} if $\varphi_\eta(T)<\infty$. 
Note that the $1$-completeness boils down to the usual geodesic completeness, while $\kappa=1$ is admissible only for $N \in [n,\infty)$. 
We refer to \cite{WW16,WW18} for the case of $\kappa=0$.

In the arguments below, $\Psi_\varepsilon$ denotes the distortion associated with $(\widecheck L_\varepsilon,\mathfrak m_\varepsilon)$. 
By our approximation results (recall $\Psi \in C^{(1,0)}$), we have
\begin{equation*}
    \Psi_\varepsilon\to\Psi,
    \qquad
    \widecheck S_\varepsilon\Psi_\varepsilon\to S\Psi \qquad \text{in } C^0_{\mathrm{loc}}(TM\setminus \mathbf 0).
\end{equation*}

\begin{Theorem}[Weighted $C^{(1,2)}$ Hawking singularity theorem, version 1]
\label{theorem: weighted hawking 1}
Let $(M,L)$ be a globally hyperbolic $C^{(1,2)}$ Finsler spacetime, equipped with a $C^1$ measure $\mathfrak m$, satisfying 
\[
\Ric^N_\m \geq K\exp\biggl( \frac{4(\kappa-1)}{n-1}\Psi \biggr)
\]
timelike distributionally for some $K\in\R$, $N \in (-\infty,1] \cup [n,\infty]$ and $\kappa$ timelike admissible for $N$.
Suppose that $\Sigma\subseteq M$ is a $C^2$ spacelike Cauchy hypersurface with $\theta_\kappa <-\beta$,
where $\beta>0$ and, additionally, $\beta>\sqrt{|K|/c_{N,\kappa}}$
when $K<0$.
Then we have
\[
\sup_\eta \lim_{t \uparrow T} \varphi_\eta(t) \leq T^{(N,\kappa)}_{K,\beta},
\]
where $\eta:[0,T) \to M$ runs over all future unit speed timelike geodesics maximizing the distance from $\Sigma$, and we set 
\[
    T^{(N,\kappa)}_{K,\beta}
    :=
    \int_\beta^\infty
        \frac{\diff z}{K+c_{N,\kappa}z^2}
    =
    \begin{cases}
        \dfrac{1}{c_{N,\kappa}\beta},
        & K=0,\\[1.2ex]
        \dfrac{1}{\sqrt{c_{N,\kappa}K}}\,
        \arctan\!\left(
            \dfrac{1}{\beta}\sqrt{\dfrac{K}{c_{N,\kappa}}}
        \right),
        & K>0,\\[1.6ex]
        \sqrt{\dfrac{1}{c_{N,\kappa}|K|}}\,
        \operatorname{arctanh}\!\left(
            \dfrac{1}{\beta}\sqrt{\dfrac{|K|}{c_{N,\kappa}}}
        \right),
        & K<0.
    \end{cases}
\]
In particular, every such $\eta$ is future $\kappa$-incomplete and, when $N \in [n,\infty)$ and $\kappa=1$, we have $\sup_M d_\Sigma \le T^{(N,1)}_{K,\beta}$ and $(M,L)$ is future timelike geodesically incomplete.
\end{Theorem}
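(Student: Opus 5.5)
We argue by contradiction, mirroring the proof of Theorem \ref{Theorem: unweighted general Hawking 1} with the weighted focusing estimate. First, we need a weighted analogue of Proposition \ref{Prop: quantitative focusing for general bounds}. Let $(M,L,\m)$ be smooth and let $\eta:[0,b]\to M$ be a unit speed maximizer from $\Sigma$ with $\theta_\kappa(0)<-\beta$. Suppose $\Ric^N_\m(\dot\eta)\ge (K-\delta)\exp(\frac{4(\kappa-1)}{n-1}\Psi(\dot\eta))$ with $0<\delta\le(1-r)(K+c_{N,\kappa}\beta^2)$. We then claim $\varphi_\eta(b)\le T^{(N,\kappa)}_{K,\beta}/r$. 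To prove it, set $B:=\varphi_\eta(b)$ and $\Theta:=\theta_\kappa\circ\varphi_\eta^{-1}$. The homogeneity computation from Proposition \ref{Prop: weighted timelike quant focusing} converts the Ricci bound into
\[
\Ric^N_\m\bigl((\eta\circ\varphi_\eta^{-1})'\bigr)\ge K-\delta .
\]
Inserting this into \eqref{eq:wRay} and dropping $\sigma_\kappa\ge0$ gives $z':=-\Theta'\ge K-\delta+c_{N,\kappa}z^2$ on $[0,B)$; the interval is free of focal points by maximality. The integral argument of Proposition \ref{Prop: quantitative focusing for general bounds}, with $c$ replaced by $c_{N,\kappa}>0$, then yields the bound. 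Here $c_{N,\kappa}>0$ by admissibility of $\kappa$, and $K+c_{N,\kappa}\beta^2>0$ by the hypothesis on $\beta$.

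For the nonsmooth case, let $\eta:[0,T)\to M$ be a unit speed $L$-geodesic maximizing the distance from $\Sigma$, and suppose $\varphi_\eta(T)>T^{(N,\kappa)}_{K,\beta}$. Choose $r\in(0,1)$ and $t_0<T$ with $\varphi_\eta(t_0)>T^{(N,\kappa)}_{K,\beta}/r$, and put $y:=\eta(t_0)$. Apply Corollary \ref{Corollary: hypersurfacemaxaslimit} to obtain $\widecheck L_{\eps_m}$-maximizers $\eta_m$ from $\Sigma$ to $y$, converging in $C^2$ to an $L$-maximizer. If maximizers are not unique, we want the limit to be $\eta|_{[0,t_0]}$ itself. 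To arrange this, take $y$ slightly earlier, or approximate along the given $\eta$; the cleanest route is to note that the bound will be proven for every limit maximizer. It then transfers to $\eta$ because $\eta|_{[0,t_0]}$ restricted to $[0,t_0-\epsilon]$ is the unique maximizer to $\eta(t_0-\epsilon)$. Uniqueness holds since branching of a maximizer in its interior would contradict maximality by the usual corner-cutting argument. The tangents of the $\eta_m$ lie in a compact $\mathcal K\subseteq\Omega^T$. Their weighted expansions with respect to $(\widecheck L_{\eps_m},\m_{\eps_m})$ satisfy $\theta_{\kappa,m}(0)<-\beta$ for large $m$, because the unit normals, $\Psi_\eps$ and $\widecheck S_\eps\Psi_\eps$ converge locally uniformly, as does the unweighted expansion (as in Theorem \ref{Theorem: unweighted Hawking 1}). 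Also $\varphi_{\eta_m}(l_m)\to\varphi_\eta(t_0)$ by uniform convergence of $\Psi_{\eps_m}(\dot\eta_m)$.

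The main step is the curvature approximation. We need $\Ric^N_{\m_{\eps}}[\widecheck L_\eps](v)\ge (K-\delta)\exp(\frac{4(\kappa-1)}{n-1}\Psi_\eps(v))$ for unit $v\in\mathcal K$ and small $\eps$, a weight-dependent bound not literally covered by Proposition \ref{Prop: approx weighted energy conditions}. We would use \eqref{eq:ProofWeightedRicci}, i.e.\ $\Ric^N_{\m_\eps}[\widecheck L_\eps]-\mathcal H_\eps(\Ric^N_\m[L])\to0$ uniformly on $\mathcal K$. Combine it with positivity of $\mathcal H_\eps$, which gives $\mathcal H_\eps(\Ric^N_\m[L])\ge \mathcal H_\eps\bigl(Ke^{\frac{4(\kappa-1)}{n-1}\Psi}\bigr)$, where we read $2L$-type normalization homogeneously, i.e.\ the weight extended $2$-homogeneously as $-2KL\,e^{(\cdots)\Psi}$. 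Since this function is continuous, its regularization converges uniformly to itself, and $\Psi_\eps\to\Psi$ locally uniformly. Together these give the bound with an arbitrarily small loss $\delta$ on the compact unit set. The smooth proposition above then gives $\varphi_{\eta_m}(l_m)\le T^{(N,\kappa)}_{K,\beta}/r$. Letting $m\to\infty$ contradicts the choice of $t_0$.

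For the final claims: finiteness of the bound gives $\kappa$-incompleteness. When $\kappa=1$, $\varphi_\eta(t)=t$, so $d_\Sigma(y)=\varphi_\eta(t_0)\le T^{(N,1)}_{K,\beta}$ for every $y$ via maximizers. Timelike incompleteness follows because a complete orthogonal geodesic from $\Sigma$ would give points with $d_\Sigma$ arbitrarily large; this uses global hyperbolicity and the Cauchy property, as in the unweighted case.
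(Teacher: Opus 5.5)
Your route is the paper's: a weighted analogue of Proposition~\ref{Prop: quantitative focusing for general bounds}, obtained from \eqref{eq:wRay} after reparametrizing by $\varphi_\eta$, followed by the approximation argument of Theorem~\ref{Theorem: unweighted general Hawking 1} with \eqref{eq:ProofWeightedRicci}. Your treatment of the $\Psi$-dependent bound is correct and makes explicit what the paper leaves implicit, since Proposition~\ref{Prop: approx weighted energy conditions}\ref{appRN1} is stated for a constant $K$. That treatment applies positivity of $\mathcal H_\eps$ to the continuous $2$-homogeneous function $-2KL\,e^{\frac{4(\kappa-1)}{n-1}\Psi}$, then uses $\Psi_\eps\to\Psi$ and a positive lower bound on $e^{\frac{4(\kappa-1)}{n-1}\Psi_\eps}$ on compacts. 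For $N=n$, which that proposition excludes, you should pass to $N'\downarrow n$ as in Theorem~\ref{Theorem: Weighted Hawking 2}.

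\textbf{The gap.} The step that fails is the transfer from the limit maximizer $\tilde\eta$ of Corollary~\ref{Corollary: hypersurfacemaxaslimit} to the given $\eta$. Corner-cutting only shows the following: if $\gamma$ is another maximizer from $\Sigma$ to $p=\eta(t_0-\epsilon)$, then $\gamma$ followed by $\eta|_{[t_0-\epsilon,t_0]}$ is again a maximizer without a corner. In other words, $\gamma$ and $\eta$ reach $p$ with the same velocity. Concluding $\gamma=\eta$ then requires backward uniqueness for $\ddot x^i=-G^i(x,\dot x)$. That is exactly what is lost at this regularity: $G^i\in C^{(0,1)}$ is only continuous in $x$, and the paper stresses that geodesics are not determined by their initial data and that branching is not excluded. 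For $\kappa\neq1$ this matters, because $\varphi_\eta(t_0)=\int_0^{t_0}\exp\bigl(\frac{2(\kappa-1)}{n-1}\Psi(\dot\eta)\bigr)\,\diff t$ depends on the curve, not just on $y$. So your argument yields the bound and the $\kappa$-incompleteness only for maximizers arising as such limits. The paper's proof does not settle this either: it defers to Theorem~\ref{Theorem: unweighted general Hawking 1}, where the bounded quantity $d_\Sigma(y)$ is independent of the chosen maximizer. The $\kappa=1$ conclusions are unaffected.

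\textbf{A possible fix.} One way to close the gap is a selection argument in place of uniqueness; I sketch it without having checked every detail.
\begin{enumerate}
\item Take a smooth $h\ge 0$ with $h^{-1}(0)=\eta([0,t_0])$, and replace $\widecheck L_\eps$ by $\widecheck L_\eps+C\eps\,h(x)\tilde g(v,v)$.
\item This is still $L_\eps$ plus $\eps$ times a smooth $2$-homogeneous function. Hence Lemma~\ref{lem:RicciCheckConvergence} and \eqref{eq:ProofWeightedRicci} still hold for each fixed $C$, and the new cones stay inside those of $L$.
\item The curve $\eta$ is an unpenalized competitor. Comparing lengths, with $O(\eps)$ errors whose constants do not depend on $C$, should give $\int h(\gamma_\eps)\,\diff t\le c/C$ for the approximating maximizers.
\item Let $\eps\to0$ and then $C\to\infty$. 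The limit maximizers should converge in $C^1$ to a maximizer contained in $\eta([0,t_0])$. Since $\Sigma$ is Cauchy, that maximizer is $\eta$ itself, and $\varphi$ passes to the limit.
\end{enumerate}
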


\begin{proof}
Given $\eta$ as in the statement, we first generalize Proposition \ref{Prop: quantitative focusing for general bounds} assuming that $L$ and $\m$ are smooth:
Under the hypotheses $\theta_\kappa <-\beta$,
\[
\Ric^N_\m (\dot{\eta}) \ge (K-\delta) \exp\left(
        \frac{4(\kappa-1)}{n-1}\Psi(\dot\eta)
    \right)
\]
and $0<\delta \le (1-r)(K+c_{N,\kappa}\beta^2)$, we obtain
$\varphi_\eta(T) \leq T^{(N,\kappa)}_{K,\beta}/r$.
Note that, as in the proof of Proposition \ref{Prop: weighted timelike quant focusing}, the above curvature bound implies
\[
\Ric^N_\m \left(
        (\eta\circ\varphi_\eta^{-1})'
        (\varphi_\eta(t))
    \right) \geq K-\delta.
\]
Thus, the weighted Raychaudhuri inequality \eqref{eq:wRay} gives
\[
    \Theta' \leq -K+\delta-c_{N,\kappa}\Theta^2
    \quad\text{on }[0,\varphi_\eta(T)),
    \quad \text{where}\ \Theta:=\theta_\kappa \circ \varphi_{\eta}^{-1},
\]
which enables us to follow the proof of Proposition \ref{Prop: quantitative focusing for general bounds} by replacing $T_{K,\beta}$ with $T^{(N,\kappa)}_{K,\beta}$.

Having established the above, the proof is now analogous to that of Theorem \ref{Theorem: unweighted general Hawking 1}, using Proposition \ref{Prop: approx weighted energy conditions}\ref{appRN1}.
\end{proof}

\begin{Theorem}[Weighted $C^{(1,2)}$ Hawking singularity theorem, version 2]
\label{Theorem: Weighted Hawking 2}
Let $(M,L,\mathfrak m)$ be a $C^{(1,2)}$ Finsler spacetime equipped with a $C^1$ measure $\mathfrak m$ satisfying $\Ric^N_{\mathfrak m}\geq0$ timelike distributionally for some $N\in(-\infty,1]\cup[n,\infty]$.
Suppose that $\Sigma\subseteq M$ is a compact $C^2$ spacelike hypersurface such that $\theta_\kappa<0$ for some $\kappa$ which is timelike admissible for $N$.

Then there exists a future inextendible unit speed timelike geodesic $\eta:[0,T)\to M$ that starts orthogonally to $\Sigma$ and is future $\kappa$-incomplete.
In particular, when $N\in[n,\infty)$ and $\kappa=1$, this geodesic is future incomplete in the usual sense.
\end{Theorem}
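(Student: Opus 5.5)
We argue by contradiction, mirroring the proof of Theorem \ref{Theorem: Hawking version 2} but in the weighted setting. Suppose that every future inextendible unit speed timelike geodesic starting orthogonally to $\Sigma$ is future $\kappa$-complete, i.e., $\varphi_\eta(T)=\infty$ for all such $\eta$. We work with the approximations $\widecheck L_\eps$ and $\mathfrak m_\eps$. Since $\widecheck L_\eps\prec L$, the hypersurface $\Sigma$ remains $\widecheck L_\eps$-spacelike. The weighted expansions $\theta_{\kappa,\eps}$ of $\Sigma$ for $(\widecheck L_\eps,\mathfrak m_\eps)$ converge uniformly on the compact $\Sigma$ to $\theta_\kappa$. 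This uses $\Psi_\eps\to\Psi$ and $\widecheck S_\eps\Psi_\eps\to S\Psi$ in $C^0_{\loc}$, together with the $C^1$ convergence of the unit normals. Hence, setting $\beta:=-\max_\Sigma\theta_\kappa/2>0$, we have $\theta_{\kappa,\eps}<-\beta$ on $\Sigma$ for small $\eps$.

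Next we need a compact set on which the curvature approximation applies. Let $\mathcal K$ be the compact set of $\widecheck L_\eps$-unit future normals to $\Sigma$, $\eps\in[0,1]$, as in the proof of Theorem \ref{Theorem: Hawking version 2}. Since $\Psi$ is continuous, it is bounded on a compact neighbourhood of $\mathcal K$'s geodesic flow-out. Consequently $\varphi_\eta$ and $t$ are comparable on compact time intervals. The $\kappa$-completeness assumption, however, only gives $\varphi_\eta\to\infty$, not completeness in $t$. The key point is that we only need existence up to a $\varphi$-time $B_0:=4/(c_{N,\kappa}\beta)$.

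I would handle this as follows. For each $v\in\mathcal K$ (with $\eps=0$), the $L$-geodesic either exists forever or has $\varphi_\eta(T)=\infty$. In either case it reaches $\varphi$-time $B_0$ at some finite $t$. By a compactness and Hartman-type argument as in Proposition \ref{Prop: Existence time for Leps geodesics}, this $t$ is uniformly bounded by some $b$ over $\mathcal K$. Corollary \ref{cr:Fcpt} then makes $\mathcal F_{\mathcal K,b,\eps_0}$ compact. Moreover, by continuity of $\Psi_\eps$ and uniform convergence, the $\widecheck L_\eps$-geodesics from $\mathcal K$ also reach $\varphi_\eps$-time at least $B_0/2$ within $[0,b]$.

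I expect this uniformity step to be the main obstacle. Nonuniqueness of $L$-geodesics means we must argue via subsequential $C^2$ limits of $\widecheck L_{\eps}$-geodesics, as in the proof of Proposition \ref{Prop: Existence time for Leps geodesics}. The limit is an $L$-geodesic from $\mathcal K$, and its $\varphi$-time is then bounded below by the limiting argument.

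With this in place, apply Proposition \ref{Prop: approx weighted energy conditions}\ref{appRN1} with $K=0$ on $\mathcal F_{\mathcal K,b,\eps_0}$. This gives $\Ric^N_{\mathfrak m_\eps}[\widecheck L_\eps](v)\ge-\delta'$ for unit $v$ there, for small $\eps$. Since $\Psi_\eps$ is uniformly bounded on this compact set, we may write this as $\ge-\delta\exp(4(\kappa-1)\Psi_\eps/(n-1))$ with $\delta\le 3\beta(1-r)/B_0$ and $r=1/2$. Proposition \ref{Prop: weighted timelike quant focusing} then shows that every $\widecheck L_\eps$-geodesic leaving $\Sigma$ orthogonally stops maximizing the $\widecheck L_\eps$-distance to $\Sigma$ before $\varphi_\eps$-time $2/(c_{N,\kappa}\beta)<B_0/2$. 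These geodesics are defined long enough for this to apply. In $\widecheck L_\eps$-proper time, this happens before $b$. Hence $\Sigma$ is future causally araying for the smooth structure $\widecheck L_\eps$. After passing to a covering so that $\Sigma$ is an acausal partial Cauchy hypersurface, this contradicts \cite[Thm.\ 8.11]{LuMinguzziOhtaSingularity}, exactly as in the proof of Theorem \ref{Theorem: Hawking version 2}. In the case $N\in[n,\infty)$, $\kappa=1$, we have $\varphi_\eta(t)=t$, so the conclusion is ordinary incompleteness.
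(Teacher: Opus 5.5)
Your strategy is viable in principle: uniform focusing along all $\widecheck L_\eps$-normal geodesics means there is no $\widecheck L_\eps$-$\Sigma$-ray, which contradicts \cite[Thm.\ 8.11]{LuMinguzziOhtaSingularity}. However, the uniformity step fails as written.

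\textbf{The uniformity step.} Your limit argument does show that the proper times $t_\eta$ at which the $L$-geodesics from $\mathcal K$ reach $\varphi$-time $B_0$ are bounded by some $b$. It does \emph{not} show that every such geodesic is defined on $[0,b]$, and that is exactly the hypothesis of Corollary \ref{cr:Fcpt}. For $\kappa\neq1$, $\kappa$-completeness is not completeness. $\Psi$ is bounded over compact base sets, being $0$-homogeneous, but along a geodesic escaping to infinity it can make $\varphi_{\eta_1}$ blow up at a small proper time $T_{\eta_1}$. Meanwhile another normal geodesic may need proper time $t_{\eta_2}>T_{\eta_1}$ to reach $\varphi=B_0$. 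In that case Corollary \ref{cr:Fcpt} gives nothing for $\widetilde T=b$.

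You must truncate in $\varphi$-time instead. Prove directly, by subsequential $C^2$-limits as in Proposition \ref{Prop: Existence time for Leps geodesics}, applying the contradiction hypothesis to the limit $L$-geodesic, that for small $\eps$:
\begin{itemize}
\item every $\widecheck L_\eps$-geodesic from $\mathcal K$ reaches $\varphi_\eps$-time $B'$;
\item the velocities up to that $\varphi$-time form a compact subset of $\Omega^T$.
\end{itemize}
Note also that $2/(c_{N,\kappa}\beta)=B_0/2$, not $<B_0/2$. So you need the geodesics to reach $\varphi_\eps$-time strictly beyond $B_0/2$; otherwise a geodesic that is inextendible at exactly that time could still be a ray.

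\textbf{How the paper avoids this.} It takes the $\widecheck L_{\eps_m}$-$\Sigma$-rays $\eta_m$ provided by \cite[Thm.\ 8.11]{LuMinguzziOhtaSingularity} and extracts a limit $L$-$\Sigma$-ray $\eta$. It then uses $\kappa$-completeness only for this single $\eta$, to find $b<T$ with $\varphi_\eta(b)=B:=4/(c_{N,\kappa}\beta)$. $C^2$-convergence on $[0,b]$ gives the needed compactness and $B_m\to B$. The focusing estimate $B_m\le B/2$ then gives the contradiction, with no control needed over all normal geodesics.

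\textbf{The case $N=n$.} You do not treat it, and your argument does not cover it. Proposition \ref{Prop: approx weighted energy conditions}\ref{appRN1} is stated only for $N\in(-\infty,1]\cup(n,\infty]$. Moreover, $\Ric^n_{\m_\eps}[\widecheck L_\eps]$ is not even defined, because $\widecheck S_\eps\Psi_\eps\not\equiv0$ in general. The missing step is the paper's reduction. Since $S\Psi\equiv0$, we have $\Ric^{N'}_{\m}=\Ric^n_\m\ge0$ for every $N'>n$. A fixed $\kappa$ is timelike admissible for $N'$ close enough to $n$, so the argument can be run with $N'$ in place of $n$.
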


\begin{proof}
The proof is similar to that of Theorem \ref{Theorem: Hawking version 2}. 
Assume first that $N\ne n$ and suppose, by contradiction, that every future inextendible timelike geodesic starting orthogonally to $\Sigma$ is future $\kappa$-complete.
As in the proof of Theorem \ref{Theorem: Hawking version 2}, up to passing to a covering, we may assume that $\Sigma$ is a compact acausal partial Cauchy hypersurface. 
Since $\widecheck L_{\eps_m} \prec L$, $\Sigma$ is also a partial Cauchy hypersurface for $\widecheck L_{\eps_m}$.
Thus, by \cite[Thm.\ 8.11]{LuMinguzziOhtaSingularity}, there exist future timelike $\Sigma$-rays $\eta_m$ for $\widecheck L_{\varepsilon_m}$, where $\varepsilon_m \to 0$. 
These are timelike geodesics starting orthogonally to $\Sigma$, which we parametrize by $\widecheck L_{\varepsilon_m}$-unit speed.

As in Theorem \ref{Theorem: Hawking version 2}, their initial velocities lie in a common compact set. 
Applying the compactness argument for the geodesic equation from the proof of Proposition \ref{Prop: Existence time for Leps geodesics} along a compact exhaustion of $TM\setminus \mathbf 0$, we obtain a timelike $\Sigma$-ray $\eta:[0,T)\to M$ with respect to $L$, starting orthogonally to $\Sigma$.
For every $b<T$, the geodesics $\eta_m$ are defined on $[0,b]$ for all sufficiently large $m$ and converge to $\eta$ there in $C^2$.

By our contradiction assumption, $\eta$ is future $\kappa$-complete. 
Denote by $\theta_\kappa$ and $\theta_{\kappa,\varepsilon}$ the initial weighted expansions of $\Sigma$ for $(L,\mathfrak m)$ and $(\widecheck L_\varepsilon,\mathfrak m_\varepsilon)$, respectively.
Since $\theta_\kappa<0$, by the compactness of $\Sigma$ and the uniform convergence of the initial weighted expansions, we have
\[
    \beta:=-\frac12\max_\Sigma\theta_\kappa>0,
\]
so that $\theta_{\kappa,\eps}<-\beta$ on $\Sigma$ for sufficiently small $\eps$.

Put $c:=c_{N,\kappa}>0$ and $B:=4/(c\beta)$.
Since $\eta$ is future $\kappa$-complete (i.e., $\varphi_\eta(T)=\infty$), there exists $b<T$ such that $\varphi_\eta(b)=B$.
The $C^2$-convergence on $[0,b]$ implies that the velocities of
these segments lie in a common compact subset of $\Omega^T$ and $B_m:=\varphi_{\eta_m}(b)\to B$,
where $\varphi_{\eta_m}$ is computed using $\Psi_{\varepsilon_m}$.

Fix $0<\delta<3\beta/(4B)$. The functions
$\Psi_{\varepsilon_m}(\dot\eta_m)$ are uniformly bounded on $[0,b]$.
Consequently, Proposition \ref{Prop: approx weighted energy conditions}\ref{appRN1} gives
\[
    \operatorname{Ric}^{N}_{\mathfrak m_{\varepsilon_m}}
    [\widecheck L_{\varepsilon_m}](\dot\eta_m)
    \ge
    -\delta\exp\left(
        \frac{4(\kappa-1)}{n-1}
        \Psi_{\varepsilon_m}(\dot\eta_m)
    \right)
\]
on $[0,b]$ for all sufficiently large $m$.
Moreover,
$\delta<3\beta/(2B_m)$ for large $m$ as well.
Since every $\eta_m|_{[0,b]}$ maximizes the $\widecheck L_{\varepsilon_m}$-distance from $\Sigma$,
Proposition \ref{Prop: weighted timelike quant focusing} with $r=1/2$ therefore yields
\[
    B_m\le\frac{2}{c\beta}=\frac{B}{2}.
\]
Letting $m\to\infty$ gives $B\le B/2$, a contradiction.

When $N=n$ with $\theta_\kappa<0$, we have $S\Psi \equiv 0$ by convention, and hence $\Ric_\m^{N'} =\Ric_\m^n =\Ric$ for every $N'>n$.
Thus, for sufficiently small $N'>n$, $\Ric_\m^{N'} \ge 0$ holds and $\kappa$ is timelike admissible for $N'$.
Then the above argument yields the desired future timelike $\kappa$-incompleteness.

Finally, for $N\in[ n,\infty)$ and $\kappa=1$, since $\varphi_\eta(t)=t$, the $1$-completeness coincides with the ordinary geodesic completeness. 
\end{proof}

For a codimension $2$, orientable, $C^2$ spacelike submanifold $S\subseteq M$, with $n\geq3$, define the weighted null expansions on $S$ by
\[
    \theta_\kappa^\pm
    :=
    \exp\left(
        \frac{2(1-\kappa)}{n-2}\Psi(V^\pm)
    \right)
    \bigl(\theta^\pm-S\Psi(V^\pm)\bigr),
\]
where $\theta^\pm$ are the unweighted null expansions introduced in Subsection \ref{sc:Penrose}.
For a future inextendible null geodesic $\eta:[0,T)\to M$, set
\[
    \varphi_\eta^0(t)
    :=
    \int_0^t
    \exp\left(
        \frac{2(\kappa-1)}{n-2}\Psi\bigl(\dot\eta(s)\bigr)
    \right)\,\diff s,
    \qquad
    \varphi_\eta^0(T)
    :=
    \lim_{t\uparrow T}\varphi_\eta^0(t).
\]
We say that $\eta$ is \emph{future $\kappa$-incomplete} if $\varphi_\eta^0(T)<\infty$.

For $\kappa$ null admissible for $N$, we put
\[
    \bar c_{N,\kappa}
    :=
    \frac1{n-2}
    \left(1-\kappa^2\frac{N-n}{N-2}\right) \,\ (N\ne2),
    \quad \bar c_{\infty,\kappa}:=\frac{1-\kappa^2}{n-2},
    \quad \bar c_{2,0}:=\frac1{n-2}.
\]
The null weighted Raychaudhuri inequality \cite[Prop.\ 6.2]{LuMinguzziOhtaSingularity} gives the analog of Proposition \ref{Prop: weighted timelike quant focusing} for null maximizers from $S$, with $\varphi_\eta^0$ and $\bar c_{N,\kappa}$ in place of $\varphi_\eta$ and $c_{N,\kappa}$.

\begin{Theorem}[Weighted $C^{(1,2)}$ Penrose singularity theorem]
\label{theorem: weighted penrose}
Let $(M,L,\mathfrak m)$ be a globally hyperbolic $C^{(1,2)}$ Finsler spacetime of dimension $n\geq3$, equipped with a $C^1$ measure  $\mathfrak m$, satisfying the distributional null $N$-energy condition for some $N\in(-\infty,2]\cup[n,\infty]$.
Suppose that $M$ contains a noncompact Cauchy hypersurface and a compact orientable $C^2$ spacelike submanifold $S$ of codimension $2$ satisfying $\theta_\kappa^\pm<0$ for some $\kappa$ which is null admissible for $N$. 

Then there exists a future inextendible null geodesic $\eta:[0,T)\to M$, starting orthogonally to $S$, which is future $\kappa$-incomplete.
In particular, if $N\in[n,\infty)$ and $\kappa=1$, $\eta$ is future incomplete in the usual sense.
\end{Theorem}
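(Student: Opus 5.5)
We argue by contradiction, following the proof of Theorem \ref{theorem: unweighted penrose}, with the null weighted quantities in place of the unweighted ones. Suppose every future inextendible null geodesic starting orthogonally to $S$ is future $\kappa$-complete, i.e., $\varphi^0_\eta(T)=\infty$. First treat $N\neq n$; the case $N=n$ reduces to it as in Theorem \ref{Theorem: Weighted Hawking 2}. There $S\Psi\equiv0$, so $\Ric^{N'}_\m=\Ric$ for all $N'>n$, and $\kappa$ stays null admissible for $N'$ slightly larger than $n$.

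\textbf{Step 1: uniform initial contraction.} Fix $L$-null normals $V^\pm$ of $S$ and approximating $\widecheck L_\eps$-null normals $V^\pm_\eps$ (recall $S$ is $\widecheck L_\eps$-spacelike). The weighted null expansions $\theta^\pm_{\kappa,\eps}$ for $(\widecheck L_\eps,\m_\eps)$ converge uniformly on $S$ to $\theta^\pm_\kappa$. This uses the $C^{(1,2)}_\loc$ convergence of $\widecheck L_\eps$ to $L$, the convergence $\Psi_\eps\to\Psi$ and $\widecheck S_\eps\Psi_\eps\to S\Psi$ in $C^0_\loc$, and the compactness of $S$. Put $\beta:=-\tfrac12\max_{S,\pm}\theta^\pm_\kappa>0$. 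Then $\theta^\pm_{\kappa,\eps}<-\beta$ for small $\eps$.

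\textbf{Step 2: compactness of the horizon.} Set $B:=2/(\bar c_{N,\kappa}\beta)$. By $\kappa$-completeness, each null geodesic $\eta_x^\pm$ with $\dot\eta(0)=V^\pm(x)$ has some parameter $b$ with $\varphi^0_\eta(b)=B$. We need a uniform bound on this $b$ over $x\in S$, and we get it by lower semicontinuity plus compactness. The subtlety is that geodesics are nonunique, so we argue over all $L$-geodesics with initial data $V^\pm(x)$. By Peano and Corollary \ref{cr:Fcpt}, these have compact families of velocities, and $\varphi^0$ varies continuously under $C^2$ limits.

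Suppose some $y\in J^+(S)\setminus I^+(S)$ were not reached before parameter $B$. Corollary \ref{Corollary: Trappedsurfacemaxaslimit} gives a maximizing $L$-null geodesic $\eta$ from $S$ to $y$, and $\widecheck L_{\eps_m}$-maximizers $\eta_m$ from $S$ to $y_m$ converging to it in $C^2$, normalized by $\dot\eta_m(0)=V^+_{\eps_m}$. Then $B_m:=\varphi^0_{\eta_m}(b_m)\to\varphi^0_\eta(b)\ge B$, where $\varphi^0_{\eta_m}$ is computed with $\Psi_{\eps_m}$. Fix $\delta<3\beta/(4B)$. Since $\Psi_{\eps_m}(\dot\eta_m)$ is uniformly bounded and $\dot\eta_m$ lies in a compact set, Proposition \ref{Prop: approx weighted energy conditions}\ref{appRN2} gives
\[
\Ric^N_{\m_{\eps_m}}[\widecheck L_{\eps_m}](\dot\eta_m)\ge-\delta\exp\Bigl(\tfrac{4(\kappa-1)}{n-2}\Psi_{\eps_m}(\dot\eta_m)\Bigr)
\]
for large $m$. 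The exponential factor is harmless: it is bounded, so a slightly smaller $\delta$ absorbs it. The null analog of Proposition \ref{Prop: weighted timelike quant focusing} with $r=2/3$ then yields $B_m\le 3/(2\bar c_{N,\kappa}\beta)=3B/4$. This contradicts $B_m\to\ge B$. Hence
\[
J^+(S)\setminus I^+(S)\subseteq\{\eta(t):\dot\eta(0)=V^\pm(x),\ x\in S,\ \varphi^0_\eta(t)\le B\},
\]
and the right-hand side is compact by Step 2 and Corollary \ref{cr:Fcpt}.

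\textbf{Step 3: causal-theoretic conclusion.} The boundary $\partial J^+(S)=J^+(S)\setminus I^+(S)$ is now compact. By Lemma \ref{Lemma: lowregFinsler cone structure} we have a locally Lipschitz proper cone structure, so the topological argument of \cite[Thm.\ 8.4]{LuMinguzziOhtaSingularity}, \cite[Thm.\ 6.25]{Minguzzicausality} applies: projecting $\partial J^+(S)$ along a timelike flow onto the noncompact Cauchy hypersurface gives a contradiction. The main obstacle is the uniform parameter bound in Step 2. Nonuniqueness of $L$-geodesics forces the compactness and semicontinuity argument over all solutions, and we must also check that the $\kappa$-reparametrizations $\varphi^0_{\eta_m}$ converge uniformly.
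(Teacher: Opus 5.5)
Your proof follows the paper's route. You argue by contradiction and show that $J^+(S)\setminus I^+(S)$ lies in a compact set of $S$-orthogonal null geodesic segments of bounded $\varphi^0$-length. You do this by passing to $C^2$-limits of $\widecheck L_{\eps_m}$-maximizers and applying weighted null focusing with $r=2/3$, then finish with the topological argument of \cite[Thm.\ 8.4]{LuMinguzziOhtaSingularity}, reducing $N=n$ as in Theorem~\ref{Theorem: Weighted Hawking 2}. The structural difference is that you fix a uniform $\beta=-\frac12\max\theta^\pm_\kappa$ and a uniform threshold $B=2/(\bar c_{N,\kappa}\beta)$. The paper instead adapts $\beta:=2/(\bar c_{N,\kappa}\varphi^0_\eta(b))$ to the particular maximizer reaching $y$. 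That way the curvature error $\beta/\varphi^0_{\eta_m}(b_m)$ is exactly the admissible $\delta$ for $r=2/3$.

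With your uniform choice, one step fails as written. The focusing estimate with $r=2/3$ requires $\delta\le\frac{3\beta}{B_m}(1-r)=\beta/B_m$. You only know $B_m\to\varphi^0_\eta(b)\ge B$, and $\varphi^0_\eta(b)$ may well exceed $4B/3$. In that case any $\delta\in(\beta/\varphi^0_\eta(b),\,3\beta/(4B))$ violates the hypothesis for all large $m$. The choice $\delta<3\beta/(4B)$ is copied from Theorem~\ref{Theorem: Weighted Hawking 2}, where $\varphi_\eta(b)=B$ holds exactly; here it does not.

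The repair is immediate, and either of the following works:
\begin{enumerate}
\item Choose $\delta<\beta/\varphi^0_\eta(b)$. This is legitimate, since $\delta$ may depend on $y$ and Proposition~\ref{Prop: approx weighted energy conditions} applies for each fixed $\delta>0$.
\item Pick $b'<b$ with $\varphi^0_\eta(b')=B$ and apply focusing to $\eta_m|_{[0,b']}$. This segment still maximizes the $\widecheck L_{\eps_m}$-distance from $S$, and $\varphi^0_{\eta_m}(b')\to B$.
\end{enumerate}
In both cases the contradiction $B\le 3B/4$ (or $\varphi^0_\eta(b)\le 3B/4$) follows.

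Your Step~2 is more careful than the paper's loosely defined $\mathcal K$. Note, however, that Corollary~\ref{cr:Fcpt} does not apply verbatim: under $\kappa$-completeness the affine existence times need not be uniform. What you need is the Kamke-type fact used in the proof of Proposition~\ref{Prop: Existence time for Leps geodesics}. A subsequence of maximal solutions with converging initial data converges locally uniformly on the maximal domain of a limit maximal solution. That limit is again an $S$-orthogonal null geodesic, so along it $\varphi^0\to\infty$, and this yields your uniform parameter bound. Your handling of the factor $\exp\bigl(\frac{4(\kappa-1)}{n-2}\Psi\bigr)$ in the weighted curvature bound is correct; the paper's write-up of that step uses the unweighted $\Ric$.
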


\begin{proof}
Assume first that $N\ne n$. 
We can follow the same lines as Theorem \ref{theorem: unweighted penrose} by modifying $\mathcal{K}$ in its proof to
\[
\mathcal{K} :=\biggl\{ \lambda v : v=V^\pm(x),\, \dot \eta(0) = v, \, x \in S,\, 0 \le \varphi^0_\eta(\lambda) \le \frac{2}{-c\theta_\kappa^\pm(\pi(v))} \biggr\},
\]
where $\eta$ runs among geodesics and we set $c:=\bar{c}_{N,\kappa}$.
Indeed, to show $J^+(S) \setminus I^+(S) \subseteq F_{\mathcal{K},1}$, assuming $y \not\in F_{\mathcal{K},1}$ we have
\[
\theta^+_\kappa \bigl(\eta(0)\bigr)
<-\beta :=-\frac{2}{c\varphi^0_\eta(b)}.
\]
Then, under $\theta^+_{\kappa,\eps_m}(\eta_m(0))<-\beta$, $\varphi^0_{\eta_m}(b_m) >3\varphi^0_\eta(b)/4$ and $\Ric[\widecheck L_{\eps_m}](\dot{\eta}_m) \ge -\beta/\varphi^0_{\eta_m}(b_m)$, applying the weighted version of Proposition \ref{Prop: Null quantitative focusing} with $r=2/3$, we have
\[
\varphi^0_{\eta_m}(b_m) \le \frac{1}{r} \frac{1}{c\beta} =\frac{3}{2} \frac{\varphi^0_\eta(b)}{2} <\varphi^0_{\eta_m}(b_m), \]
a contradiction.

The case $N = n$ holds as argued in Theorem \ref{Theorem: Weighted Hawking 2}.
\end{proof}

We remark that our weighted singularity theorems, i.e., Theorems \ref{theorem: weighted hawking 1}, \ref{Theorem: Weighted Hawking 2} and \ref{theorem: weighted penrose}, are new even in the case $L(v) = g(v,v)/2$, where $g$ is a $C^1$ Lorentzian metric, and provide an extension of Graf's results \cite{Graf-SingularityC1}.

\begin{Remark}[Past versions of singularity theorems]
\label{remark: past versions of singularity theorems}
All of our unweighted (Theorems \ref{Theorem: unweighted Hawking 1}, \ref{Theorem: Hawking version 2}, \ref{Theorem: unweighted general Hawking 1} and \ref{theorem: unweighted penrose}) and weighted (Theorems \ref{theorem: weighted hawking 1}, \ref{Theorem: Weighted Hawking 2} and \ref{theorem: weighted penrose}) singularity theorems can be formulated to yield past timelike/null ($\kappa$-)incompleteness resp.\ a bound on the past Lorentz--Finsler distance.
The only difference is that the assumption on the timelike/null expansion needs to be reversed as in Corollary \ref{cr:past-sing}. 
The proofs all follow from the corresponding future versions applied to the reverse structure $\overleftarrow{L}(v):=L(-v)$, since the weighted and unweighted distributional timelike and null Ricci bounds are satisfied by $L$ if and only if they are satisfied by $\overleftarrow{L}$.
See \cite[Rem.\ 8.14]{LuMinguzziOhtaSingularity} for more details.
\end{Remark}

\section*{Data availability statement}

There is no data associated with this manuscript.

\section*{Conflict of interest statement}

The authors confirm that no conflict of interest exists for this work.

\section*{Use of AI declaration}

During the final stages of this work, parts of the manuscript were refined through discussions with large language models (LLMs).
The main arguments and techniques presented herein were developed independently by the authors, and LLMs were used solely to verify these results.

\section*{Acknowledgments}

The authors would like to thank the Bernoulli Centre at EPFL for the hospitality during the workshop ``Optimal Transport and Metric Geometry across Structures" in October 2025, and the Faculty of Mathematics of the University of Vienna during the workshop ``Curvature, Optimal Transport and Gravity" in September 2026.

SO is supported by the JSPS Grant-in-Aid for Scientific Research (KAKENHI) 22H04942, 24K00523, 24K21511, 26H01996, and by the JST CREST JPMJCR25Q2. He is also grateful to Universit\"at Bonn for its hospitality during his visit in Summer 2026.
This research was funded in part by the Austrian Science Fund (FWF) [Grants DOI \href{https://doi.org/10.55776/EFP6}{10.55776/EFP6}, \href{https://doi.org/10.55776/STA32}{10.55776/STA32}, \href{https://doi.org/10.55776/PAT1996423}{10.55776/PAT1996423} and \href{https://doi.org/10.55776/J4913}{10.55776/J4913}]. For open access purposes, the authors have applied a CC BY public copyright license to any author accepted manuscript version arising from this submission.

\addcontentsline{toc}{section}{References}
\printbibliography

 \end{document}